\theoremstyle{plain}
\newtheorem{theorem}{Theorem}[section]
\newtheorem{lemma}[theorem]{Lemma}
\newtheorem{corollary}[theorem]{Corollary}
\newtheorem{proposition}[theorem]{Proposition}
\theoremstyle{definition}
\newtheorem{definition}[theorem]{Definition}
\newtheorem{definitions}[theorem]{Definitions}
\newtheorem{example}[theorem]{Example}
\newtheorem{examples}[theorem]{Examples}
\newtheorem{notation}[theorem]{Notation}
\newtheorem{remark}[theorem]{Remark}
\newtheorem{remarks}[theorem]{Remarks}
\newtheorem{history}[theorem]{Historical Remarks}
\newtheorem{review}[theorem]{Review}
\numberwithin{equation}{theorem}
\numberwithin{figure}{theorem}
\newcommand{\abs}[1]{\vert#1\vert} 
\newcommand{\norm}[1]{\|#1\|} 
\newcommand{\gen}[1]{\langle#1\rangle}
\newcommand{\ray}{\,\,\rule[3pt]{.6cm}{0.4pt}\hskip 2pt\,\,}
\newcommand{\doubleray}{\begin{aligned}[t]\ray\\[-20pt]\ray\end{aligned}}
\DeclareMathOperator{\Out}{Out}
\DeclareMathOperator{\Aut}{Aut}
\DeclareMathOperator{\Sym}{Sym}
\DeclareMathOperator{\Stab}{Stab}
\DeclareMathOperator{\Artin}{Artin}
\DeclareMathOperator{\lab}{label}
\def \B {\mathcal{B}}
\def \naturals{\mathbb{N}}
\def \integers {\mathbb{Z}}
\def \reals{\mathbb{R}}
\def \complexes{\mathbb{C}}
\def\d1{\discretionary{-}{}{-}}
\begin{document}

\pagestyle{myheadings} \markboth{Actions of the braid group} {Llu\'{\i}s Bacardit and Warren Dicks}  
\title{Actions of the braid group, and  new algebraic proofs of results of  Dehornoy and Larue.}

\author
{Llu\'{\i}s Bacardit and Warren Dicks}

\date{\small\today}

\maketitle
 
 \bigskip

\begin{abstract} This article surveys many standard results 
about the braid group with emphasis on simplifying the usual algebraic proofs. 

We use van der Waerden's trick to illuminate the Artin-Magnus 
proof of the classic presentation of the algebraic mapping-class group
of a punctured disc.  

We give a simple, new proof of the
Dehornoy-Larue braid-group trichotomy, and, hence, recover the Dehornoy right-ordering of the braid group.  

We then turn to the Birman-Hilden theorem 
concerning braid-group actions on free products of cyclic groups, 
and the consequences derived by Perron-Vannier, and the connections with the Wada representations.
We recall the very simple Crisp-Paris proof of the Birman-Hilden theorem 
that uses the Larue-Shpilrain technique.
Studying ends of free groups permits a deeper understanding of the braid group; 
this gives us a generalization of the Birman-Hilden theorem.  
Studying Jordan curves in the punctured disc permits a still deeper understanding
of the braid group; this gave Larue, in his PhD thesis, correspondingly deeper results, and, in
an appendix, we recall  the essence of Larue's thesis, giving simpler combinatorial proofs.

\bigskip

{\footnotesize
\noindent \emph{{\normalfont 2000}\,Mathematics Subject Classification.} Primary: 20F36;
Secondary: 20F34, 20E05, 20F60.

\noindent \emph{Key words.} Braid group. Automorphisms of free groups.  Presentation.  Ordering. Ends of groups.}
\end{abstract}

\section{General Notation}

Let $\naturals$ denote the set of finite cardinals, $\{0,1,2,\ldots\}$.

Throughout, we fix an element $n$ of $\naturals$.

Let $i$, $j \in \integers$ and let $v$ be a symbol.  We define 
\begin{align*}
&[i{\uparrow}j]:=  \{k \in \integers \mid i \le k \text{ and } k \le j\},
\\
&[i{\downarrow}j]:= \{k \in \integers \mid i \ge k \text{ and } k \ge j\},
\\
&([i{\uparrow}j]):= \begin{cases}
(i,i+1,\ldots, j-1, j) \in \integers^{j-i+1} &\text{if $i \le j$,}\\
() \in \integers^{0}  &\text{if $i > j$,}
\end{cases}
\\
\\
&([i{\downarrow}j]):= \begin{cases}
(i,i-1,\ldots, j+1, j) \in \integers^{i-j+1} &\text{if $i \ge j$,}\\
() \in \integers^{0} &\text{if $i < j$.}
\end{cases}
\end{align*} 
Also, $v_{[i{\uparrow}j]} := \{v_k \mid k \in [i{\uparrow}j]\}$,
and this will usually be a subset of some ambient set, $G$.
If $i \le j$, $v_{([i{\uparrow}j])}:=(v_i,v_{i+1}, \ldots, v_{j-1},v_j) \in G^{j-i+1}$, and, if $G$ is a group,
$\Pi v_{[i{\uparrow}j]} := v_i   v_{i+1}   \cdots v_{j-1}   v_j \in G$.
If $i > j$, $v_{([i{\uparrow}j])}:= ()$, the $0$-tuple,
and $\Pi v_{[i{\uparrow}j]}:=1$, the empty product.  
We define $v_{[i{\downarrow}j]}$,  $v_{([i{\downarrow}j])}$ and $\Pi v_{[i{\downarrow}j]}$, analogously. 
Thus, if $i \ge  j$,    $\Pi v_{[i{\downarrow}j]}:= v_{i}v_{i-1} \cdots v_{j+1}v_j$.
Finally, $[i{\uparrow}\infty[\,\,\,\, :=\{k \in \integers \mid i \le k \}$.

For elements $a$, $b$ of a group $G$,   $\overline a := a^{-1}$,   $a^b := \overline b a b$,
   $a^{nb}:= \overline b a^n b$, and $[a]: = \{a^g\mid g \in G\}$,  the conjugacy class of $a$ in $G$.
The group of all automorphisms of $G$ will be denoted by~$\Aut(G)$.

An {\it ordering} of a set will mean a total order for the set.  An {\it ordered} set is one endowed with a
specific ordering. We will speak of $n$-tuples {\it for} a given set and  $n$-tuples {\it of  elements of} a given set.

\section{Outline}
\label{sec:intro}

 Let $\Sigma_{0,1,n} := \gen{ \{z_1\} \cup t_{[1{\uparrow}n]} \mid z_1 \Pi t_{[1{\uparrow}n]} = 1}$.  
Then $\Sigma_{0,1,n}$ is a one-relator group which is freely generated by the set $t_{[1{\uparrow}n]}$.

Let $\Out^+_{0,1,n}$ denote the subgroup of $\Aut(\Sigma_{0,1,n})$ consisting 
of all automorphisms of $\Sigma_{0,1,n}$ 
which map the set $\{z_1\} \cup \{[t_i]\}_{i\in[1{\uparrow}n]}$  to itself.
Let $\Out_{0,1,0}$ denote  $\Aut(\integers)$, and, for $n \ge 1$,  
let $\Out_{0,1,n}$ denote the group of all automorphisms of $\Sigma_{0,1,n}$ 
which map the set   $\{z_1, \overline z_1 \} \cup \{[t_i], [\overline t_i]\}_{i\in[1{\uparrow}n]}$ to itself.
Then $\Out^+_{0,1,n}$  is a subgroup 
of index two in $\Out_{0,1,n}$.  We call $\Out_{0,1,n}$ the algebraic mapping-class group of
the surface of genus $0$ with $1$ boundary component and $n$ punctures; 
see~\cite{DF05} for background on algebraic mapping-class groups.

Frequently, $\Out^+_{0,1,n}$  will be denoted $\B_n$ and called the {\it $n$-string braid group}. 
(The similar symbol $B_n$ denotes a certain Coxeter diagram.)

In Section~\ref{sec:gen}, we define $\sigma_{[1{\uparrow}n-1]} \subseteq \Out^+_{0,1,n}$,
we review Artin's 1925 proof that $\sigma_{[1{\uparrow}n-1]}$ 
generates $\Out^+_{0,1,n}$, and we present intermediate results that we shall apply in
subsequent sections.
In Section~\ref{sec:Artin}, we recall the definition of Artin groups, specifically $\Artin\gen{A_n}$, $\Artin\gen{B_n}$
and $\Artin\gen{D_n}$. 
In Section~\ref{sec:pres}, we verify Artin's 1925 result that $\Out^+_{0,1,n} \simeq \Artin\gen{A_{n-1}}$, by combining
Magnus' 1934 proof, Manfredini's observation that $\Out^+_{0,1,(n-1)\perp 1} \simeq \Artin\gen{B_{n-1}}$, 
and the van der Waerden trick, to condense the calculations involved.

In Section~\ref{sec:trich}, we use results of Section~\ref{sec:Artin}
 to recover the Dehornoy-Larue trichotomy for $\B_n$ and the Dehornoy right-ordering of $\B_n$; 
this represents a substantial simplification.  Let us emphasize
that we verify directly that $\Out^+_{0,1,n}$ satisfies the trichotomy, 
in contrast with the approach by Larue~\cite{Larue94}  of using
the trichotomy for  $\Artin\gen{A_{n-1}}$ to verify that $\Artin\gen{A_{n-1}}$ acts faithfully on~$\Sigma_{0,1,n}$.

In Section~\ref{sec:ends}, we review the action of $\B_n$ on the set of ends of $\Sigma_{0,1,n}$.
The argument of Thurston given in~\cite{ShortWiest} yields the Dehornoy right-ordering of $\B_n$, 
but not the trichotomy.  By analysing further, we obtain
new results about the $\B_n$-orbit of $t_1$ in $\Sigma_{0,1,n}$.

In  Section~\ref{sec:freeprods},  for  each $m \ge 2$, we introduce $\Out_{0,1,n^{(m)}}$, the algebraic mapping-class
group of the disc with $n$ $C_m$-points. 
We recall the Crisp-Paris proof of the Birman-Hilden result that the natural map from
$\Out_{0,1,n}$ to $\Out_{0,1,n^{(m)}}$ is injective, and then
modify an argument of Steve Humphries to show that there is a natural 
identification $\Out_{0,1,n^{(m)}}=\Out_{0,1,n}$.
The new results obtained in Section~\ref{sec:ends} then provide 
additional information in this context.

In Section~\ref{sec:finite index}, we review some applications by Perron-Vannier~\cite{PerronVannier}
of the above Birman-Hilden result, and  discuss connections with  
the actions given by Wada~\cite{Wada} and studied by
Shpilrain~\cite{Shpilrain} and Crisp-Paris~\cite{CrispParis1},~\cite{CrispParis2}.

Following a kind suggestion of Patrick Dehornoy, we studied 
the analysis of the  $\B_n$-orbit of $t_1$ in $\Sigma_{0,1,n}$ given by David Larue~\cite{LarueThesis94}.  
Larue's approach is combinatorial and uses polygonal curves in the punctured disc. 
By combining Larue's approach with Whitehead's use of graphs, we were able to simplify
 Larue's main arguments, and we record our combinatorial approach in an appendix.  We also show how
Larue's results imply the results we had obtained, more easily, by studying ends, in Section~\ref{sec:ends}.

\section{Artin's generators of $\B_n$}\label{sec:gen}

In this section we describe the famous generating set of $\B_n$.
Let us fix more notation related to 
$\Sigma_{0,1,n} = \gen{ \{z_1\} \cup t_{[1{\uparrow}n]} \mid z_1 \Pi t_{[1{\uparrow}n]} = 1}$
and $\B_n \le \Aut(\Sigma_{0,1,n})$.

\begin{notation}\label{not:basic} Let $m \in \naturals$.  
Consider an $m$-tuple $a_{([1{\uparrow}m])}$ for $t_{[1{\uparrow}n]} \cup \overline t_{[1{\uparrow}n]}$, and 
an element $w$ of $\Sigma_{0,1,n}$. 

 If
$\Pi a_{[1{\uparrow}m]} = w$ in $\Sigma_{0,1,n}$, 
we say that  $a_{([1{\uparrow}m])}$ is an {\it expression}
for~$w$.  We say that the expression $a_{([1{\uparrow}m])}$ is  {\it reduced} if, for all $j \in [1{\uparrow}n-1]$, 
$a_{j+1}\ne \overline a_j$ in $t_{[1{\uparrow}n]} \cup \overline t_{[1{\uparrow}n]}$.
For each element of $\Sigma_{0,1,n}$, there exists a unique reduced expression called the {\it normal form}.

Suppose that  $a_{([1{\uparrow}m])}$
is the normal form for $w$.  We define the {\it length} of $w$ to be $\abs{w}:= m$.
The set of elements of $\Sigma_{0,1,n}$ whose normal forms have $a_{([1{\uparrow}m])}$
 as an initial segment is denoted $(w{\star})$; and, the set of
elements of $\Sigma_{0,1,n}$ whose normal forms have $a_{([1{\uparrow}m])}$
 as a terminal segment is denoted  $({\star} w)$.  The elements of $(w{\star})$ are said to {\it begin with } $w$,
and the elements of $({\star} w)$  are said to {\it end with}~$w$.

Let $\Sym_n$ denote the group of permutations of $[1{\uparrow}n]$ acting on the right (on $[1{\uparrow}n]$).

Let $\phi \in \B_n$. 
  There exists
a unique permutation $\pi \in \Sym_n$, and a unique $(n+2)$-tuple $(w_{([0{\uparrow}n+1])})$ for $\Sigma_{0,1,n}$
 such that
$w_0 = 1$ and $w_{n+1} = 1$, and,
for each $i \in [1{\uparrow}n]$,  $w_i \not\in (t_{i^\pi}{\star})\cup (\overline t_{i^\pi}{\star})$ 
and  $t_i^\phi = t_{i^\pi}^{w_i}$.
For each $i \in [0{\uparrow}n]$, let $u_i = w_i \overline w_{i+1}$. 
If $j \in [i{\uparrow}n]$, then $\Pi u_{[i{\uparrow}j]} =  w_i \overline w_{j+1}$.
In particular, $\Pi u_{[i{\uparrow}n]} =   w_i$.
We define    $ \pi(\phi) := \pi$, $w_i(\phi) :=  w_i$, $i \in [0{\uparrow}n+1]$, and 
$u_i(\phi):=u_i$, $i \in [0{\uparrow}n]$.  
We write $\norm{\phi}:= \sum\limits_{i \in [1{\uparrow}n]} \abs{t_i^\phi}
= n + 2 \sum\limits_{i \in [1{\uparrow}n]} \abs{w_i(\phi)}$.

Let $\sigma_{[1{\uparrow}n-1]} \subseteq \B_n$ be the subset determined 
by, for all $i \in [1{\uparrow}n-1]$ and all
$k \in [1{\uparrow}n]$, 
$$t_k^{\sigma_{i}} = \begin{cases}
 t_k &\text{if } k \in [1{\uparrow}i-1]\cup[i+2{\uparrow}n],\\
t_{i+1} &\text{if } k = i,\\
t_i^{t_{i+1}} &\text{if } k = i+1.
\end{cases}$$
In the literature, $\sigma_i$ is sometimes represented in  $2\times n$-matrix notation, for example, 
in the format
$$\sigma_i =  \begin{pmatrix}
t_1 &\ldots &t_{i-1} &t_i &t_{i+1} &t_{i+2} &\ldots &t_n\\
t_1 &\ldots &t_{i-1} &t_{i+1} &   t_i^{t_{i+1}} &t_{i+2} &\ldots &t_n
\end{pmatrix}.$$
We shall often find it convenient to compress the dots and say that
 $\sigma_i$ and $\overline \sigma_i$ are {\it determined by} the  expressions
\medskip

\centerline{
\begin{tabular}
{
>{$}r<{$} 
@{} 
>{$}l<{$} 
@{\hskip .4cm} 
>{$}c<{$} 
@{\hskip .8cm}  
>{$}c<{$}
@{\hskip .8cm}
>{$}r<{$} 
@{} 
>{$}l<{$}
}
\setlength\extrarowheight{3pt}
 &\hskip -.4cm\underline{\scriptstyle k \in [1{\uparrow}i-1]}  &   & &
 &\hskip -1cm\underline{\scriptstyle k \in [i+1{\uparrow}n]}
\\[.15cm]
(&t_k&  t_i& t_{i+1}&  t_k&)^{\sigma_i}\\  
=(&t_k & t_{i+1}& t_i^{t_{i+1}}&  t_k&),
\end{tabular}
\quad \text{and} \quad
\begin{tabular}
{
>{$}r<{$} 
@{} 
>{$}l<{$} 
@{\hskip .4cm} 
>{$}c<{$} 
@{\hskip .8cm}  
>{$}c<{$}
@{\hskip .8cm}
>{$}r<{$} 
@{} 
>{$}l<{$}
}
\setlength\extrarowheight{3pt}
 &\hskip -.4cm\underline{\scriptstyle k \in [1{\uparrow}i-1]}  &   & &
 &\hskip -1cm\underline{\scriptstyle k \in [i+1{\uparrow}n]}
\\[.15cm]
(&t_k&  t_i& t_{i+1}&  t_k&)^{\overline\sigma_i}\\  
=(&t_k & t_{i+1}^{\overline t_i}& t_i&  t_k&). \hfill\qed
\end{tabular}}

\bigskip

\end{notation}
We shall apply the following result in different situations.

\begin{lemma}[Artin~\cite{Artin25}]\label{lem:art2}
 Let $\phi \in \B_n$.  Let $\pi = \pi(\phi)$ and, 
for each $i \in[0{\uparrow}n]$, let  $u_i = u_i(\phi)$.
\begin{enumerate}[\normalfont (i).] 
\vskip-0.7cm \null
\item  Suppose that there exists some $i \in [1{\uparrow}n-1]$ such that $u_i \in ({\star} \overline t_{(i+1)^\pi})$.
Then $\norm{{\sigma_i\phi}} \le \norm{\phi} -2$; moreover, for each $j \in[1{\uparrow}i]$,
$t_j^{\sigma_i\phi}$ and $t_j^{\phi}$ both begin with the same element of
$t_{[1{\uparrow}n]} \cup \overline t_{[1{\uparrow}n]}$. 
\vskip-0.7cm \null
\item  Suppose that there exists some $i \in [1{\uparrow}n-1]$ such that
$ u_i \in ( \overline t_{i^\pi}{\star})$.  Then   $\norm{\overline\sigma_i\phi}
 \le \norm{\phi} -2$;  moreover, for each $j \in[1{\uparrow}i-1]$,
$t_j^{\overline\sigma_i\phi}$ and $t_j^{\phi}$ both begin with the same element of
$t_{[1{\uparrow}n]} \cup \overline t_{[1{\uparrow}n]}$.
\vskip-0.7cm \null
\item  Suppose that, for each $i \in [1{\uparrow}n-1]$, $u_i \not\in ( \overline t_{i^\pi}{\star}) \cup
({\star} \overline t_{(i+1)^\pi})$.  Then $\phi = 1$.
\end{enumerate}
\end{lemma}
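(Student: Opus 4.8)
The three parts are designed to cooperate in a length-reduction argument, and their hypotheses are complementary: for a given $\phi$, exactly one of ``$u_i\in({\star}\overline{t_{(i+1)^\pi}})$ for some $i$'', ``$u_i\in(\overline{t_{i^\pi}}{\star})$ for some $i$'', ``neither holds'' is true. My plan is to prove (iii) first (it is the terminal case, and the only part that really uses membership of $\phi$ in $\B_n$, via the fact that $\phi$ fixes $z_1$), then (i), and then (ii) by a mirror-image of the argument for (i).

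For (iii), I would use that $\phi\in\B_n=\Out^+_{0,1,n}$ fixes $z_1$, hence fixes $\Pi t_{[1{\uparrow}n]}=\overline{z_1}$, so that $\prod_{i=1}^{n}t_i^\phi=\Pi t_{[1{\uparrow}n]}$. Writing $\pi=\pi(\phi)$, $w_i=w_i(\phi)$ and $u_i=u_i(\phi)$, the condition $w_i\notin(t_{i^\pi}{\star})\cup(\overline{t_{i^\pi}}{\star})$ from Notation~\ref{not:basic} makes each word $\overline{w_i}\,t_{i^\pi}\,w_i$, the normal form of $t_i^\phi$, reduced as written; telescoping through $u_i=w_i\overline{w_{i+1}}$ and using $w_0=w_{n+1}=1$ then gives
\[
\prod_{i=1}^{n}t_i^\phi \;=\; \prod_{i=1}^{n}\overline{w_i}\,t_{i^\pi}\,w_i \;=\; u_0\,t_{1^\pi}\,u_1\,t_{2^\pi}\,u_2\,\cdots\,u_{n-1}\,t_{n^\pi}\,u_n .
\]
The heart of the matter is to check that this last expression is already reduced. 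The only junctions where cancellation could occur are $u_{k-1}\mid t_{k^\pi}$ and $t_{k^\pi}\mid u_k$; the hypothesis of (iii), which is precisely $u_k\notin(\overline{t_{k^\pi}}{\star})\cup({\star}\overline{t_{(k+1)^\pi}})$, rules these out for $k\in[1{\uparrow}n-1]$, the two end junctions are ruled out because $u_0=\overline{w_1}$ does not end with $\overline{t_{1^\pi}}$ and $u_n=w_n$ does not begin with $\overline{t_{n^\pi}}$ (Notation~\ref{not:basic} again), and when some $u_k$ is empty the two exposed letters $t_{k^\pi},t_{(k+1)^\pi}$ are distinct since $\pi$ is a bijection. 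Granting this, the displayed word is the normal form of $\Pi t_{[1{\uparrow}n]}=t_1t_2\cdots t_n$, so comparing lengths forces every $u_k$ to be empty, hence $w_1=\cdots=w_n=1$, and comparing the two normal forms then forces $k^\pi=k$ for all $k$; thus $t_k^\phi=t_k$ for all $k$ and $\phi=1$.

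For (i), I would first expand, using the definition of $\sigma_i$, to get $t_k^{\sigma_i\phi}=t_k^\phi$ for $k\notin\{i,i+1\}$, $t_i^{\sigma_i\phi}=t_{i+1}^\phi$ and $t_{i+1}^{\sigma_i\phi}=\overline{t_{i+1}^\phi}\,t_i^\phi\,t_{i+1}^\phi$, so that $\norm{\sigma_i\phi}=\norm{\phi}-\abs{t_i^\phi}+\abs{\overline{t_{i+1}^\phi}\,t_i^\phi\,t_{i+1}^\phi}$; it then suffices to show $\abs{\overline{t_{i+1}^\phi}\,t_i^\phi\,t_{i+1}^\phi}\le\abs{t_i^\phi}-2$. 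Substituting $t_j^\phi=\overline{w_j}t_{j^\pi}w_j$ and collapsing the internal products $w_{i+1}\overline{w_i}=\overline{u_i}$ and $w_i\overline{w_{i+1}}=u_i$, this conjugate becomes $\overline{w_{i+1}}\bigl(\overline{t_{(i+1)^\pi}}\,\overline{u_i}\,t_{i^\pi}\,u_i\,t_{(i+1)^\pi}\bigr)w_{i+1}$; writing the hypothesis $u_i\in({\star}\overline{t_{(i+1)^\pi}})$ as a reduced factorisation $u_i=u_i'\,\overline{t_{(i+1)^\pi}}$ collapses the bracket to $\overline{u_i'}\,t_{i^\pi}\,u_i'$, so $\overline{t_{i+1}^\phi}\,t_i^\phi\,t_{i+1}^\phi=\overline{w_{i+1}}\,\overline{u_i'}\,t_{i^\pi}\,u_i'\,w_{i+1}$. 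On the other hand $w_i=u_i'\,\overline{t_{(i+1)^\pi}}\,w_{i+1}$ is reduced (using $w_{i+1}\notin(t_{(i+1)^\pi}{\star})$), so the normal form of $t_i^\phi$ is the reduced word $\overline{w_{i+1}}\,t_{(i+1)^\pi}\,\overline{u_i'}\,t_{i^\pi}\,u_i'\,\overline{t_{(i+1)^\pi}}\,w_{i+1}$, and $\overline{t_{i+1}^\phi}\,t_i^\phi\,t_{i+1}^\phi$ is obtained from it by deleting the two flanking letters $t_{(i+1)^\pi},\overline{t_{(i+1)^\pi}}$, hence has reduced length at most $\abs{t_i^\phi}-2$. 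The ``moreover'' clause is trivial for $j\in[1{\uparrow}i-1]$ (where $t_j^{\sigma_i\phi}=t_j^\phi$), and for $j=i$ the reduced forms just written show that $t_i^{\sigma_i\phi}=t_{i+1}^\phi$ and $t_i^\phi$ begin with the same letter (the first letter of $\overline{w_{i+1}}$, or $t_{(i+1)^\pi}$ if $w_{i+1}=1$). For (ii) I would run the same computation with $\sigma_i$ replaced by $\overline{\sigma_i}$ and ``ends with'' replaced everywhere by ``begins with''.

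The main obstacle, in each part, is the reducedness bookkeeping: in (iii) one has to dovetail the normal-form conditions on the $w$'s at the two ends, the hypothesis of (iii) in the middle, and the bijectivity of $\pi$, to see that the long product admits \emph{no} free cancellation; in (i)--(ii) one has to pinpoint exactly which pair of letters cancels. Everything else is routine manipulation in the free group $\Sigma_{0,1,n}$, the only external input being the (already available) fact that every element of $\B_n$ fixes $z_1$.
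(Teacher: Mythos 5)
Your proof is correct and follows essentially the same route as the paper: part (iii) rests on the reducedness of the telescoped product $u_0 t_{1^\pi}u_1\cdots t_{n^\pi}u_n$ and a length count against $\Pi t_{[1{\uparrow}n]}$, and parts (i)--(ii) on locating the single cancelling pair $t_{(i+1)^\pi},\overline t_{(i+1)^\pi}$ inside the reduced form of $t_i^\phi$, which is exactly the paper's estimate $\abs{t_{i+1}^{\sigma_i\phi}}\le 1+2\abs{v}+2\abs{w_{i+1}(\phi)}=\abs{t_i^\phi}-2$ with your $u_i'$ playing the role of $v$. One cosmetic quibble: in (iii), when some $u_k$ is empty the junction $t_{k^\pi}t_{(k+1)^\pi}$ stays reduced because both letters are positive (neither is the inverse of the other), not because $\pi$ is a bijection.
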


\begin{proof} 

\medskip

(i).  There exists some  
$v \in \Sigma_{0,1,n}-({\star} t_{(i+1)^\pi})$ such that $u_i = v\overline t_{(i+1)^\pi}$. 
Since $w_i(\phi) = u_i w_{i+1}(\phi)$, we have 
\begin{equation}\label{eq:ws}
w_i(\phi)  = v\overline t_{(i+1)^\pi}w_{i+1}(\phi).
\end{equation}
Since  $v \not \in ({\star} t_{(i+1)^\pi})$ and $w_{i+1}(\phi) \not\in (t_{(i+1)^\pi} {\star})$,
there is no cancellation in the expression $t_{i^\pi}^{v\overline t_{(i+1)^\pi}w_{i+1}(\phi)}$ for 
$t_i^\phi$; hence 
\begin{equation}\label{eq:i}
t_i^\phi \in 
(\overline w_{i+1}(\phi) t_{(i+1)^{\pi}}{\star}) \text{ and }
\abs{t_i^\phi} = 1 + 2\abs{v} + 2 +  2\abs{w_{i+1}(\phi)}.
\end{equation}

For all $j \in [1{\uparrow}i-1]\cup[i+2{\uparrow}n]$, $t_j^{\sigma_i\phi} = t_j^\phi$; hence, 
$t_j^{\sigma_i\phi}$ has the same first letter as $t_j^\phi$, and, 
$\abs{t_j^{\sigma_i\phi}}  =  \abs{t_{j}^{\phi}}$.

Since $t_i^{\sigma_i\phi} = t_{i+1}^\phi\in (\overline w_{i+1}(\phi)t_{(i+1)^\pi}{\star})$, 
we see, from~\eqref{eq:i}, that
$t_i^{\sigma_i\phi}$ has the same first letter as $t_i^\phi$.  Also, 
 $\abs{t_i^{\sigma_i\phi}}  =  \abs{t_{i+1}^{\phi}}$.

By~\eqref{eq:ws}, $w_i(\phi) \overline w_{i+1}(\phi)t_{(i+1)^{\pi}} =  v$; hence
$$t_{i+1}^{\sigma_i\phi} = (t_i^{t_{i+1}})^{\phi} = (t_{i^\pi}^{w_i(\phi)})^{(t_{(i+1)^\pi}^{w_{i+1}(\phi)})}
 = t_{i^\pi}^{ v w_{i+1}(\phi)}.$$
Hence, $\abs{t_{i+1}^{\sigma_i\phi}} 
\le 1 + 2\abs{v} + 2\abs{w_{i+1}(\phi)} \overset{\eqref{eq:i}}{=} \abs{t_i^\phi} -2$.

It now follows that $\norm{\sigma_i\phi} \le \norm{\phi} -2$, and (i) is proved.

\medskip

(ii).  There exists some  
$v \in \Sigma_{0,1,n} - ( t_{i^\pi}{\star})$ such that $u_i = \overline t_{i^\pi}v$. 
Since $w_{i+1}(\phi) = \overline u_i w_{i}(\phi)$, we have 
\begin{equation}\label{eq:wss}
w_{i+1}(\phi)  = \overline v t_{i^\pi}w_{i}(\phi).
\end{equation}
Since  $\overline v \not \in ({\star} \overline t_{i^\pi})$ and $w_{i}(\phi) \not\in (\overline t_{i^\pi} {\star})$,
there is no cancellation in the expression $t_{(i+1)^\pi}^{\overline v  t_{i^\pi}w_{i}(\phi)}$ for 
$t_{i+1}^\phi$; hence 
\begin{equation}\label{eq:ii}
\abs{t_{i+1}^\phi} = 1 + 2\abs{\overline v} + 2 +  2\abs{w_{i}(\phi)}.
\end{equation}

For all $j \in [1{\uparrow}i-1]\cup[i+2{\uparrow}n]$, $t_j^{\overline\sigma_i\phi} = t_j^\phi$; hence, 
$t_j^{\overline\sigma_i\phi}$ has the same first letter as $t_j^\phi$, and, 
$\abs{t_j^{\overline\sigma_i\phi}}  =  \abs{t_{j}^{\phi}}$.

Since $t_{i+1}^{\overline\sigma_i\phi} = t_{i}^\phi$, 
we see  that  $\abs{t_{i+1}^{\overline \sigma_i\phi}}  =  \abs{t_{i}^{\phi}}$.

By~\eqref{eq:wss}, $w_{i+1}(\phi) \overline w_{i}(\phi)\overline t_{i^{\pi}} =  \overline v$; hence
$$t_{i}^{\overline\sigma_i\phi} = (t_{i+1}^{\overline t_{i}})^{\phi} 
= (t_{(i+1)^\pi}^{w_{i+1}(\phi)})^{(\overline t_{i^\pi}^{w_{i}(\phi)})}
 = t_{i^\pi}^{ \overline v w_{i}(\phi)}.$$
Hence, $\abs{t_{i}^{\overline \sigma_i\phi}} 
\le 1 + 2\abs{\overline v} + 2\abs{w_{i}(\phi)} \overset{\eqref{eq:ii}}{=} \abs{t_{i+1}^\phi} -2$.

It now follows that $\norm{\overline\sigma_i\phi} \le \norm{\phi} -2$, and (ii) is proved.

\medskip
(iii).  Since $u_0 = \overline w_1(\phi)\not \in ({\star} \overline t_1^\pi)$ and
$u_{n} = w_n(\phi) \not \in (\overline t_n^\pi {\star})$, we see that
 there is no cancellation anywhere in the expression
$u_0 \mathop{\Pi}\limits_{i\in[1{\uparrow}n]}(t_{i^{\pi}} u_i)$.  Hence,  
$$\abs{u_0 \mathop{\Pi}\limits_{i\in[1{\uparrow}n]}(t_{i^{\pi}} u_i)}
 = \textstyle\sum\limits_{i\in[0{\uparrow}n]} \abs{u_{i}} + n,
\text{ that is, } \textstyle\sum\limits_{i\in[0{\uparrow}n]} \abs{u_{i}} 
= \abs{u_0\mathop{\Pi}\limits_{i\in[1{\uparrow}n]}(t_{i^{\pi}} u_i)} -n.$$

Recall that
$u_0\mathop{\Pi}\limits_{i\in[1{\uparrow}n]}(t_{i^{\pi}} u_i)
 = \mathop{\Pi}\limits_{i\in[1{\uparrow}n]} (t_{i^\pi}^{w_{i}(\phi)})
= (\mathop{\Pi}\limits_{i\in[1{\uparrow}n]}t_i)^\phi = \mathop{\Pi}\limits_{i\in[1{\uparrow}n]}t_i.$
Hence $$\abs{u_0\mathop{\Pi}\limits_{i\in[1{\uparrow}n]}(t_{i^{\pi}} u_i)} = n
\text{ and } \textstyle\sum\limits_{i\in[0{\uparrow}n]} \abs{u_{i}}= n-n =0.$$ 
Hence, all the elements of $ u_{[0{\uparrow}n]}$ are trivial.  

For each $i \in [0{\uparrow}n+1]$,
 $w_i = \Pi u_{[i{\uparrow}n]}$; hence, all the elements of  $w_{[1{\uparrow}n]}$ are trivial. 
 Also,
$ \mathop{\Pi}\limits_{i\in[1{\uparrow}n]}t_{i^\pi}
= u_0\mathop{\Pi}\limits_{i\in[1{\uparrow}n]}(t_{i^{\pi}} u_i)=  \mathop{\Pi}\limits_{i\in[1{\uparrow}n]}t_i$.
Hence $\pi$ is trivial.  Thus $\phi = 1$.
\end{proof}

The following is then immediate.

\begin{proposition}[Artin~\cite{Artin25}]\label{prop:artin} 
 For each  $\phi \in \B_n$, either  $\phi = 1$, or there exists some 
$\sigma_i^\epsilon \in \sigma_{[1{\uparrow}n-1]} \cup
\overline \sigma_{[1{\uparrow}n-1]}$  such that
 $\norm{\sigma_i^\epsilon \phi} \le \norm{\phi} -2$.  
Hence, $\gen{\sigma_{[1{\uparrow}n-1]}} =\B_n$.
\hfill\qed
\end{proposition}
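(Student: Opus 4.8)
The plan is to deduce Proposition~\ref{prop:artin} directly from Lemma~\ref{lem:art2} by a simple case analysis, followed by an induction on $\norm{\phi}$ to obtain the generation statement. First I would observe that for any $\phi \in \B_n$, exactly one of the three mutually exclusive alternatives of Lemma~\ref{lem:art2} must hold: either there is some $i \in [1{\uparrow}n-1]$ with $u_i(\phi) \in ({\star}\,\overline t_{(i+1)^\pi})$, or there is some $i$ with $u_i(\phi) \in (\overline t_{i^\pi}{\star})$, or for every $i \in [1{\uparrow}n-1]$ we have $u_i(\phi) \notin (\overline t_{i^\pi}{\star}) \cup ({\star}\,\overline t_{(i+1)^\pi})$. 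In the first case part~(i) of the Lemma gives $\norm{\sigma_i \phi} \le \norm{\phi} - 2$, so we may take $\sigma_i^{\epsilon} = \sigma_i$; in the second case part~(ii) gives $\norm{\overline\sigma_i \phi} \le \norm{\phi} - 2$, so we take $\sigma_i^{\epsilon} = \overline\sigma_i$; in the third case part~(iii) gives $\phi = 1$. This establishes the dichotomy asserted in the first sentence.

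For the generation statement, I would argue by induction on the nonnegative integer $\norm{\phi} = n + 2\sum_{i} \abs{w_i(\phi)}$. When $\phi = 1$, clearly $\phi \in \gen{\sigma_{[1{\uparrow}n-1]}}$. Otherwise, by the dichotomy just proved, there is some $\sigma_i^{\epsilon} \in \sigma_{[1{\uparrow}n-1]} \cup \overline\sigma_{[1{\uparrow}n-1]}$ with $\norm{\sigma_i^{\epsilon}\phi} \le \norm{\phi} - 2 < \norm{\phi}$, so by the inductive hypothesis $\sigma_i^{\epsilon}\phi \in \gen{\sigma_{[1{\uparrow}n-1]}}$, whence $\phi = \overline{\sigma_i^{\epsilon}}\,(\sigma_i^{\epsilon}\phi) \in \gen{\sigma_{[1{\uparrow}n-1]}}$ as well. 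Since $\B_n = \Out^+_{0,1,n}$ is exactly the set of such $\phi$, this shows $\gen{\sigma_{[1{\uparrow}n-1]}} = \B_n$. One should note that the induction is well-founded because $\norm{\phi}$ takes values in $\naturals$ and strictly decreases at each step; the base of the induction is subsumed in the $\phi = 1$ case, which occurs in particular once $\norm{\phi}$ is minimal (equal to $n$).

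There is essentially no obstacle here: the work has all been done in Lemma~\ref{lem:art2}, and the Proposition is the advertised "immediate" consequence. The only point requiring the slightest care is making explicit that the three hypotheses of the Lemma's three parts are jointly exhaustive — that is, that if neither case~(i) nor case~(ii) applies then the hypothesis of case~(iii) holds — which is a tautology once one writes out the negations. I would also remark, since it costs nothing and will be used later, that in the reducing cases the additional "moreover" clauses of Lemma~\ref{lem:art2}(i),(ii) are inherited through the induction, so one can arrange that the leading letters of $t_j^\phi$ for small $j$ are controlled along a reducing word; but for the bare statement of Proposition~\ref{prop:artin} this is not needed, and the one-line proof "The following is then immediate" is entirely justified.
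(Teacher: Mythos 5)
Your proposal is correct and follows exactly the route the paper intends: the paper states the Proposition is "immediate" from Lemma~\ref{lem:art2}, and your case analysis (if neither hypothesis (i) nor (ii) holds for any $i$, then the hypothesis of (iii) holds, forcing $\phi=1$) together with the induction on $\norm{\phi}$ is precisely that deduction spelled out. The only quibble is your opening claim that the three alternatives are "mutually exclusive" — cases (i) and (ii) can hold simultaneously — but this is harmless since your argument only uses that they are jointly exhaustive, as you yourself note later.
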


\begin{remarks}\label{rems:length} 
If  $w \in \Sigma_{0,1,n}$ has odd length, then $w^{\sigma_i}$ has odd length, and
$\abs{w^{\sigma_i}} \le 2\abs{w} + 1$,
with equality being achieved only if every odd letter of $w$ equals $t_{i+1}$.  
Similar statements hold with $\overline \sigma_i$ in place of $\sigma_i$.

Let $\phi \in \B_n$ and let $\abs{\phi}$ denote the minimum length of $\phi$ as a word in  
$\sigma_{[1{\uparrow}n-1]}$.
Thus,  $\abs{t_i^{\phi}} \le 2^{\abs{\phi}+1}-1$. Hence, $\norm{\phi} \le n 2^{\abs{\phi}+1}- n$. 
Proposition~\ref{prop:artin} gives an algorithm for writing  $\phi$ as a word in 
$\sigma_{[1{\uparrow}n-1]}$ of length at most $\frac{\norm{\phi}-n}{2}$, and we have now seen that 
 $\frac{\norm{\phi}-n}{2} \le \frac{n 2^{\abs{\phi}+1}-2n}{2} = n 2^{\abs{\phi}} - n.$
\hfill\qed
\end{remarks}

\section{Definition of  Artin groups}\label{sec:Artin}

\begin{definition} 
A {\it Coxeter diagram} $X$ consists of a set $V$
 together with a function $V \times V \to \naturals \cup\{\infty\},
\quad (x,y) \mapsto m_{x,y},$  such that, for all $x$, $y \in V$, $m_{x,x} = 0$ and $m_{x,y} = m_{y,x}$. 
The elements of $V$ are called the {\it vertices} of $X$, and,
for $(x,y) \in V \times V$, we say that $m_{x,y}$ is the {\it number of edges joining $x$ and $y$};
 we can depict $X$ in a natural way. 
We then define the {\it Artin group} of $X$, denoted $\Artin\gen{X}$, to be the group presented with generating set 
$V$ and 
relations saying that, for all $(x,y) \in V \times V$,
$$\begin{array}{rrlll}
&xy &= &yx &\text{ if \,\, } m_{x,y} = 0, \\
&xyx &= &yxy &\text{ if \,\, } m_{x,y} = 1, \\
&xyxy &=&yxyx &\text{ if \,\, } m_{x,y} = 2,\\
&&&\hskip -1.5cm\text{etc.}
\end{array}$$
Notice that if $m_{x,y} = \infty$, then no relation is imposed. Notice also that if $V$ is  
empty, then $\Artin\gen{X}$ is the trivial group. \hfill\qed
\end{definition}

\begin{notation} (i).  Let $A_{n}$ denote the Coxeter diagram
$$\,\,a_1  \ray  a_2   \ray  \cdots \ray a_{n-1}  \ray \,\,   a_{n}.$$ 
Clearly, $A_0$ is  empty.  We define $A_{-1}$ to be  empty also.  

Thus, in $A_{n}$, the vertex set   is $a_{[1{\uparrow}n]}$, and, 
for each $(i, j) \in [1{\uparrow}n]^2$, the number of edges joining 
$a_i$ to $a_j$ is $\begin{cases}
1 &\text{if } \abs{i-j} = 1,\\
0 &\text{if } \abs{i-j} \ne 1.
\end{cases}$

Thus, $\Artin\gen{A_n}$  has a presentation with generating set $a_{[1{\uparrow}n]}$ and relations saying that,
for each $(i, j) \in [1{\uparrow}n]^2$, 
$$\begin{array}{rrlll}
&a_ia_j &= &a_ja_i &\text{ if }  \abs{i-j} \ne 1,\\
&a_ia_ja_i &= &a_ja_ia_j &\text{ if } \abs{i-j} = 1.
\end{array}$$

(ii). Let $B_n$ denote the Coxeter diagram 
$$\,\,b_1  \ray  b_2   \ray  \cdots \ray b_{n-1}  \doubleray \,\,   b_n.$$
Here, the vertex set is $b_{[1{\uparrow}n]}$, and, 
for each $(i,j) \in [1{\uparrow}n]^2$, the number of edges joining 
$b_i$ to $b_j$ is $\begin{cases}
2 &\text{if } \{i,j\} = \{n-1,n\},\\
1 &\text{if } \abs{i-j} = 1 \text{ and } \{i,j\} \ne \{n-1,n\},\\
0 &\text{if } \abs{i-j} \ne 1.
\end{cases}$

(iii). For $n\ge 2$, let $D_n$ denote the Coxeter diagram 
\begin{align*} 
&\,\,d_1  \ray  d_2   \ray  \cdots \ray
 d_{n-3}\begin{aligned}
[b]d&\null_{n}\\[-2pt]\rule[-4pt]{0.4pt}{.6cm}\hskip2pt&\\[0pt]\ray d_n&\null_{-2}\ray
\end{aligned}    d_{n-1}.
\end{align*}
Here, the vertex set is $d_{[1{\uparrow}n]}$, and, 
for each $(i,j) \in [1{\uparrow}n]^2$, the number of edges joining 
$d_i$ to $d_j$ is $$\begin{cases}
1 &\text{if } \{i,j\} \in \{\{1,2\},  \{2,3\}, \ldots , \{n-2,n-1\},\{n-2,n\}\},\\
0 &\text{otherwise.}
\end{cases}$$
\vskip -0.8cm \hfill\qed
\end{notation}

\section{Artin's  presentation of $\B_n$}\label{sec:pres}

In this section, we verify Artin's result that
there exists an isomorphism 
$\gamma_{n} \colon \Artin\gen{A_{n-1}} \to \B_{n}$ determined by 
\begin{tabular}
{
>{$}r<{$}  
@{} 
>{$}c<{$} 
@{} 
>{$}l<{$}
}
\setlength\extrarowheight{3pt}
&&\hskip-.8cm\underline{\scriptstyle i \in [1{\uparrow}n-1]}
\\[.15cm]
(&a_i&)^{\gamma_{n}}\\  
=(&\sigma_i&)
\end{tabular}.
We express this by writing
$\B_n = \Artin\gen{\sigma_1 \ray \sigma_2 \ray \cdots \ray \sigma_{n-1}}.$

\begin{proposition}\label{prop:artin2}
There exists a homomorphism $\gamma_n\colon\Artin\gen{A_{n-1}}\to~\B_{n}$
determined by\begin{tabular}
{
>{$}r<{$}  
@{} 
>{$}c<{$} 
@{} 
>{$}l<{$}
}
\setlength\extrarowheight{3pt}
&&\hskip-.8cm\underline{\scriptstyle i \in [1{\uparrow}n-1]}
\\[.15cm]
(&a_i&)^{\gamma_{n}}\\  
=(&\sigma_i&)
\end{tabular}, and $\gamma_n$ is surjective.
\end{proposition}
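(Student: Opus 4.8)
The plan is to establish the two assertions separately: first that $\gamma_n$ is a well-defined homomorphism, and then that it is surjective. For the first, by the universal property of the Artin presentation $\Artin\gen{A_{n-1}}$, it suffices to check that the elements $\sigma_{[1{\uparrow}n-1]} \subseteq \B_n$ satisfy the defining relations of $A_{n-1}$, namely $\sigma_i\sigma_j = \sigma_j\sigma_i$ when $\abs{i-j}\ne 1$, and $\sigma_i\sigma_{i+1}\sigma_i = \sigma_{i+1}\sigma_i\sigma_{i+1}$ for $i \in [1{\uparrow}n-2]$. Since the elements of $\B_n$ are automorphisms of $\Sigma_{0,1,n}$ and $\Sigma_{0,1,n}$ is freely generated by $t_{[1{\uparrow}n]}$, it is enough to verify each relation by evaluating both sides on each generator $t_k$, $k\in[1{\uparrow}n]$, using the explicit expressions for $\sigma_i$ and $\overline\sigma_i$ recorded in Notation~\ref{not:basic}.

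For the commuting relations with $\abs{i-j}\ge 2$: the expression for $\sigma_i$ only alters $t_i$ and $t_{i+1}$ and leaves every other $t_k$ fixed, while $\sigma_j$ only alters $t_j$ and $t_{j+1}$; when $\abs{i-j}\ge 2$ these index sets $\{i,i+1\}$ and $\{j,j+1\}$ are disjoint, so on each generator at most one of the two automorphisms acts nontrivially and the other fixes the relevant letters, giving $t_k^{\sigma_i\sigma_j} = t_k^{\sigma_j\sigma_i}$ for all $k$. For the braid relation I would take $i\in[1{\uparrow}n-2]$ and compute $t_k^{\sigma_i\sigma_{i+1}\sigma_i}$ and $t_k^{\sigma_{i+1}\sigma_i\sigma_{i+1}}$ for $k\in\{i,i+1,i+2\}$ (the only indices where anything happens), carefully tracking conjugating elements; both sides should yield, for instance on $k=i$, the value $t_{i+1}$ composed through to $t_{i+2}$ (up to conjugation), and a short direct substitution confirms equality. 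This is the routine-calculation part and I would present it compactly, perhaps in the compressed tabular/matrix notation the paper already uses, rather than spelling out every conjugation.

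Surjectivity is then immediate: Proposition~\ref{prop:artin} (Artin) states that $\gen{\sigma_{[1{\uparrow}n-1]}} = \B_n$, and the image of $\gamma_n$ contains every $\sigma_i = a_i^{\gamma_n}$, hence contains the subgroup they generate, which is all of $\B_n$.

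The main obstacle is purely bookkeeping: getting the conjugators right in the braid relation, since $\sigma_i$ sends $t_{i+1}\mapsto t_i^{t_{i+1}}$ and composition of automorphisms acting on the right requires care about the order in which substitutions and conjugations are applied. There is no conceptual difficulty — the hard content (that $\sigma_{[1{\uparrow}n-1]}$ generates $\B_n$) is already supplied by Proposition~\ref{prop:artin}, and injectivity of $\gamma_n$ is deliberately not claimed here (it is the content of the subsequent sections combining Magnus' argument with the van der Waerden trick).
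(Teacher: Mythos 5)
Your proposal is correct and follows essentially the same route as the paper: verify the commuting and braid relations for $\sigma_{[1{\uparrow}n-1]}$ by evaluating on the free generators $t_{[1{\uparrow}n]}$ (the paper does this by explicit tables, including the braid-relation computation you defer as routine, and your disjoint-support argument for the commuting case is a valid shortcut for the same check), then deduce surjectivity from Proposition~\ref{prop:artin}.
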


\begin{proof} (a). Suppose that $1 \le i \le i+2\le j \le n-1$.  We have the following.

\medskip

\centerline{
\begin{tabular}
{
>{$}r<{$}  
@{\hskip0cm} 
>{$}l<{$} 
@{\hskip.5cm} 
>{$}c<{$} 
@{\hskip.9cm} 
>{$}c<{$} 
@{\hskip.5cm}  
>{$}c<{$} 
@{\hskip.5cm} 
>{$}c<{$} 
@{\hskip.9cm} 
>{$}c<{$}
@{\hskip .9cm}
>{$}r<{$} 
@{\hskip 0cm} 
>{$}l<{$}
}
\setlength\extrarowheight{100pt}
& \hskip-.4cm\underline{\scriptstyle k \in [1{\uparrow} i-1]}& &&\underline{\scriptstyle k\in[i+2{\uparrow} j-1]}& 
& &&\hskip-.9cm\underline{\scriptstyle 0k\in[j+2{\uparrow} n]}
\\[.15cm]
(&t_k  & t_i& t_{i+1}&  t_k &t_{j}&t_{j+1}& t_k&)^{\sigma_i\sigma_j}
\\[.08cm]=(&t_k& t_{i+1} & t_i^{t_{i+1}} &t_k 
& t_{j}& t_{j+1}& t_k&)^{\sigma_j}
\\[.08cm]=(&  t_k&  t_{i+1} 
& t_i^{t_{i+1}} &  t_k 
& t_{j+1}&  t_j^{t_{j+1}}&   t_k&)
\\[.1cm]=(&t_k&  t_{i} &  t_{i+1} &   t_k 
&t_{j+1}& t_j^{t_{j+1}}&  t_k&)^{\sigma_i}
\\[.08cm]=(&t_k  & t_i& t_{i+1}&  t_k &t_{j}&t_{j+1}& t_k&)^{\sigma_j\sigma_i}.
\end{tabular}}

\medskip

(b). Suppose that $1 \le i \le n-2$.  We have the following.

\medskip

\centerline{
\begin{tabular}
{
>{$}r<{$}  
@{\hskip0cm} 
>{$}l<{$} 
@{\hskip.5cm} 
>{$}c<{$} 
@{\hskip.9cm} 
>{$}c<{$}
@{\hskip.9cm} 
>{$}c<{$} 
@{\hskip1cm}  
>{$}r<{$}
@{\hskip 0cm} 
>{$}l<{$}
}
\setlength\extrarowheight{3pt}
&\hskip-.4cm\underline{\scriptstyle k \in [1{\uparrow} i-1]}& 
 &&&&\hskip-.8cm\underline{\scriptstyle  k\in[i+3{\uparrow} n]}
\\[.15cm]
(&t_k  & t_i& t_{i+1}&  t_{i+2} &t_k&)^{\sigma_i\sigma_{i+1}\sigma_i}
\\[0.08cm]=(&t_k  & t_{i+1}& t_i^{t_{i+1}}&  t_{i+2} & t_k& )^{\sigma_{i+1}\sigma_i}
\\[0.08cm]=(&t_k  & t_{i+2}&t_i^{t_{i+2}}& t_{i+1}^{t_{i+2}} &  t_k &)^{\sigma_i}
\\[0.08cm]=(&t_k  &  t_{i+2}& t_{i+1}^{t_{i+2}}& t_{i}^{t_{i+1}t_{i+2}} &  t_k &)
\\[0.1cm]=(&t_k  &  t_{i+1}&      t_{i+2}& t_{i}^{t_{i+1}t_{i+2}} &  t_k &)^{\sigma_{i+1}}  
\\[0.08cm]=(&t_k  &  t_{i}& t_{i+2}&  t_{i+1}^{t_{i+2}} &  t_k& )^{\sigma_i\sigma_{i+1}}
\\[0.08cm]=(&t_k  & t_i& t_{i+1}&  t_{i+2} &t_k &)^{\sigma_{i+1}\sigma_i\sigma_{i+1}}
\end{tabular}}

\medskip

Together, (a) and (b) show that there exists 
a homomorphism\newline $\gamma_n\colon\Artin\gen{A_{n-1}}\to~\B_{n}$
determined by\begin{tabular}
{
>{$}r<{$}  
@{} 
>{$}c<{$} 
@{} 
>{$}l<{$}
}
\setlength\extrarowheight{3pt}
&&\hskip-.8cm\underline{\scriptstyle i \in [1{\uparrow}n-1]}
\\[.15cm]
(&a_i&)^{\gamma_{n}}\\  
=(&\sigma_i&)
\end{tabular}.
By Artin's Proposition~\ref{prop:artin}, $\gen{\sigma_{[1{\uparrow}n-1]}} =\B_n$, and, hence, $\gamma_n$
is surjective.
\end{proof}

In the remainder of this section, we shall use induction on $n$ to show that the surjective homomorphism 
$ \gamma_n \colon \Artin\gen{A_{n-1}} \to \B_{n}$  
of Proposition~\ref{prop:artin2} is an isomorphism.  Notice that $\gamma_n$ endows 
 $\Artin\gen{A_{n-1}}$ with a canonical
action on~$\Sigma_{0,1,n}$.

\hskip-4.3pt The following is precisely~\cite[Proposition~1]{Manfredini} and, also, \cite[Proposition~2.1(2)]{CrispParis1}.

\begin{lemma}[Manfredini~\cite{Manfredini}]\label{lem:man} 
If $n\ge1$, then $$\Artin\gen{A_{n-1}} \ltimes \Sigma_{0,1,n}  = \Artin\gen{\,
  a_1 \ray  a_2   \ray \hskip-2pt \cdots  \hskip-2pt \ray  a_{n-1} \doubleray  \overline t_n\,} \simeq \Artin\gen{B_n}.$$
\end{lemma}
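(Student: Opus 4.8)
The plan is to identify the semidirect product $\Artin\gen{A_{n-1}} \ltimes \Sigma_{0,1,n}$ explicitly as an Artin group by writing down a presentation and recognising it as $\Artin\gen{B_n}$. Since $\Sigma_{0,1,n}$ is free on $t_{[1{\uparrow}n]}$ (equivalently, on $a_{[1{\uparrow}n-1]}$-conjugates, but more usefully on $t_{[1{\uparrow}n]}$ with the one relator $z_1 \Pi t_{[1{\uparrow}n]} = 1$ read as a definition of $z_1$), the semidirect product has an obvious presentation: take generators $a_{[1{\uparrow}n-1]} \cup t_{[1{\uparrow}n]}$, impose the braid relations among the $a_i$ (that is, the relations of $\Artin\gen{A_{n-1}}$), declare the $t_j$ to commute appropriately-free among themselves, and add the conjugation relations $t_j^{a_i} = t_j^{\sigma_i}$ dictated by Notation~\ref{not:basic}, namely $t_j^{\sigma_i} = t_j$ for $j \notin \{i, i+1\}$, $t_i^{\sigma_i} = t_{i+1}$, and $t_{i+1}^{\sigma_i} = t_i^{t_{i+1}}$.

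The key step is then a Tietze-transformation reduction: use the relations $t_i^{\sigma_i} = t_{i+1}$, i.e. $\overline a_i t_i a_i = t_{i+1}$, to eliminate $t_2, t_3, \ldots, t_n$ one at a time in favour of $t_1$ and the $a_i$. Concretely, $t_{i+1} = t_i^{a_i}$, so inductively every $t_j$ becomes a word in $t_1$ and $a_{[1{\uparrow}n-1]}$; after elimination only the generators $a_{[1{\uparrow}n-1]}$ and $\overline t_n$ survive (writing $b_i := a_i$ for $i \in [1{\uparrow}n-1]$ and $b_n := \overline t_n$), and the surviving relations should be exactly: the $A_{n-1}$ braid relations among the $b_i$; the relation that $b_n$ commutes with $b_i$ for $i \le n-2$ (these come from $t_n^{\sigma_i} = t_n$, which after the substitution become $t_n^{a_i} = t_n$ for $i \le n-2$, noting $t_n$ is $t_1$ conjugated by $a_{n-1}a_{n-2}\cdots a_1$ in some order, and one checks $a_i$ for $i \le n-2$ commutes through); and the single relation coming from $t_{i+1}^{\sigma_i} = t_i^{t_{i+1}}$ at $i = n-1$, which reads $t_n^{a_{n-1}} = t_{n-1}^{t_n}$, i.e. after substitution a relation between $a_{n-1}$ and $t_n = \overline b_n^{-1}$ that, upon simplification, is the length-$4$ (double-edge) Artin relation $b_{n-1}b_nb_{n-1}b_n = b_nb_{n-1}b_nb_{n-1}$. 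All the other conjugation relations $t_j^{a_i}$ for $j \notin \{i,i+1\}$ and the relations $t_{i+1}^{\sigma_i} = t_i^{t_{i+1}}$ for $i \le n-2$ will, after substitution, turn out to be consequences of the braid relations among the $a_i$ alone, hence redundant; this redundancy check is the bulk of the routine computation and is exactly the sort of thing van der Waerden's trick (invoked in the paper's introduction) or a direct induction handles cleanly.

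The main obstacle is the bookkeeping in that last redundancy check: one must verify that after eliminating $t_2, \ldots, t_n$, the relations $t_j^{a_i} = t_j^{\sigma_i}$ for $j \ne i, i+1$ do not impose anything new beyond the $A_{n-1}$ relations and the two relations involving $b_n$. I expect to handle this by induction on $n$, peeling off $a_1$ (or $a_{n-1}$): the subgroup generated by $a_{[2{\uparrow}n-1]} \cup t_{[2{\uparrow}n]}$ is, by the inductive hypothesis applied to the obvious sub-configuration, $\Artin\gen{A_{n-2}} \ltimes \Sigma_{0,1,n-1} \simeq \Artin\gen{B_{n-1}}$, and one then adjoins $a_1$ and $t_1$ and checks the remaining relations collapse to the $b_1 \ray b_2$ edge plus the commutations $[b_1, b_k] = 1$ for $k \ge 3$. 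Finally, the isomorphism $\Artin\gen{A_{n-1} \text{ with } a_{n-1} \doubleray \overline t_n \text{ appended}} \simeq \Artin\gen{B_n}$ is immediate from the definition of the Coxeter diagram $B_n$ in the Notation of Section~\ref{sec:Artin}, since the appended vertex $b_n = \overline t_n$ is joined to $b_{n-1}$ by a double edge and to nothing else.
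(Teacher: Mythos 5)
Your overall strategy --- present $\Artin\gen{A_{n-1}}\ltimes\Sigma_{0,1,n}$ on $a_{[1{\uparrow}n-1]}\cup t_{[1{\uparrow}n]}$, use the relations $t_{i+1}=t_i^{a_i}$ to eliminate all but one of the $t_j$, and recognise what survives as $\Artin\gen{B_n}$ --- is in substance the route the paper takes, phrased as a Tietze reduction rather than as a pair of mutually inverse homomorphisms: the paper sends $b_i\mapsto a_i$, $b_n\mapsto\overline t_n$ one way and $a_i\mapsto b_i$, $t_k\mapsto\mathfrak{t}_k:=\overline b_n^{\,\Pi\overline b_{[n-1{\downarrow}k]}}$ the other way, and your elimination produces exactly these $\mathfrak{t}_k$. (Minor slip: you say you eliminate $t_2,\ldots,t_n$ in favour of $t_1$ but then keep $\overline t_n$; you want to eliminate $t_1,\ldots,t_{n-1}$ in favour of $t_n$.) The $n=2$ computation identifying the double-edge relation is also exactly the paper's.

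The gap is that the entire content of the lemma sits inside the ``redundancy check'' you defer, and your stated reasons for deferring it do not hold up. The eliminated relations are not ``consequences of the braid relations among the $a_i$ alone'': after substitution they become identities in $\Artin\gen{B_n}$ that involve $b_n$, such as $\mathfrak{t}_m^{\overline b_{m+1}}=\mathfrak{t}_m$ and $\mathfrak{t}_m^{\overline b_m}=\mathfrak{t}_m\mathfrak{t}_{m+1}\overline{\mathfrak{t}}_m$, and their derivations must use the commutations $[a_i,\overline t_n]=1$ and the double-edge relation (in the free product $\Artin\gen{A_{n-1}}\ast\gen{b_n}$ they all fail). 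Nor is the van der Waerden trick the relevant tool: in this paper it is used to prove injectivity of $\phi_n$ in Theorem~\ref{th:magnus1}, whereas here what is required is a direct verification inside $\Artin\gen{B_n}$. Your proposed induction peeling off one strand is the right skeleton and coincides with the paper's decreasing induction, but at each stage one must actually derive the two identities above from the $B_n$ relations; the paper does this via a carefully ordered chain of six auxiliary identities (its steps (a)--(f)), and that chain \emph{is} the proof. As written, your argument asserts the conclusion of that computation rather than performing it.
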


\begin{proof}  
 For $n = 1$, the result is clear.

For $n = 2$, we have the following. 
\begin{align*}
 &\Artin\gen{A_1}
               \ltimes \Sigma_{0,1,2} = \gen{  \{a_{1}\} \cup t_{[1{\uparrow}2]}  \mid t_{1}^{a_{1}} = t_{2}, \, 
t_2^{a_{1}} = \overline  t_2 t_{1} t_2}
\\ 
&
=   \gen{  a_{1},  t_2 \mid t_2^{a_{1}} = \overline  t_2 t_2^{\overline  a_{1}} t_2}
=    \gen{ a_{1},  t_2  \mid (\overline  a_{1} t_2)(a_{1})  
=   (\overline  t_2 a_{1})(t_2 \overline  a_{1} t_2)} 
\\&= \gen{ a_{1},  t_2  \mid  (a_{1})( \overline  t_2 a_{1} \overline  t_2 )
= (\overline  t_2 a_{1})(\overline  t_2 a_{1})} 
= \Artin\gen{\,\,  a_{1}   \doubleray \,\, \overline  t_2  \,\, }.
\end{align*}

From the case $n = 2$, we see that there exists a  homomorphism\newline
$
\mu\colon \Artin\gen{\, B_n\,}\to \Artin\gen{A_{n-1}\,} \ltimes \Sigma_{0,1,n}
\text{ determined by \begin{tabular}{ 
>{$}r<{$} 
@{\hskip0cm}    
>{$}l<{$}   
@{\hskip0.2cm}    
>{$}r<{$}   
@{\hskip0cm}  
>{$}l<{$}}
\setlength\extrarowheight{3pt}
\\[-0.6cm]&  \hskip -.5cm \underline{\scriptstyle  i\in[1{\uparrow}n-1]}&& 
\\ [.15cm]
(&b_i& b_n&)^{\mu}\\
=(&a_i&\overline t_n&)
\end{tabular}}.
$

For each $k \in [1{\uparrow}n]$,
let $\mathfrak{t}_k$ denote the element $\overline b_n^{\Pi \overline b_{[n-1{\downarrow}k]}} $ of 
$\Artin\gen{B_n}$.  For each $i \in [1{\uparrow}n-1]$ and $k \in [1{\uparrow}n]$,
let $\mathfrak{t}_k^{\overline \sigma_i}$ denote 
$\mathfrak{t}_k$, resp. $\mathfrak{t}_{i}$, resp. $\mathfrak{t}_{i+1}^{\overline{\mathfrak{t}}_{i}}$,
if $k \in [1{\uparrow}i-1]\cup[i+2{\uparrow}n]$, resp. $k = i+1$, resp. $k = i$.
We shall see that   $\mathfrak{t}_k^{\overline b_{i}} = \mathfrak{t}_k^{\overline \sigma_{i}}$; 
this immediately implies that
 there exists a  homomorphism \newline
$
\overline \mu\colon \Artin\gen{A_{n-1}\,}
               \ltimes  \Sigma_{0,1,n}
 \to \Artin\gen{B_n\,}$  determined by 
\begin{tabular}
{ 
>{$}r<{$} 
@{\hskip0cm}    
>{$}l<{$}   
@{\hskip0.5cm}    
>{$}r<{$}   
@{\hskip0cm}  
>{$}l<{$}}
\setlength\extrarowheight{3pt}
\\[-0.6cm]&  \hskip -.5cm \underline{\scriptstyle  i\in[1{\uparrow}n-1]}&& \hskip -.5cm 
\underline{\scriptstyle  k\in[1{\uparrow}n]}
\\ [.15cm]
(&a_i&t_k&)^{\overline \mu}\\
=(&b_i&\mathfrak{t}_k&)
\end{tabular},
which is then clearly inverse to $\mu$, and the result will be proved.

For each $m \in [n{\downarrow}1]$,  we shall show, by decreasing induction on $m$,
that, for each $k \in [n{\downarrow}m]$ and each $i \in [n-1{\downarrow}m]$,
 $\mathfrak{t}_k^{\overline b_{i}} = \mathfrak{t}_k^{\overline \sigma_{i}}$.  For $m = n$,
this is trivial, and, for $m = n-1$, it follows from the case 
$n=2$.  Suppose that $m \in  [n-2{\downarrow}1]$.
\begin{enumerate}[(a).]
\vskip -0.7cm \null
\item For each $k \in [n{\downarrow}m+1]$ and each $i \in [n-1{\downarrow}m+1]$, 
$\mathfrak{t}_k^{\overline b_{i}} = \mathfrak{t}_k^{\overline \sigma_{i}}$, by hypothesis.
\vskip -0.7cm \null
\item For each $k \in [n{\downarrow}m+2]$,  $\mathfrak{t}_k \in \gen{  b_{[n{\downarrow}m+2]}}$ and, hence,
$\mathfrak{t}_k^{\overline b_{m}} = \mathfrak{t}_k = \mathfrak{t}_k^{\overline \sigma_{m}}$.
\vskip -0.7cm  \null 
\item $\mathfrak{t}_{m+1}^{\overline b_{m}}  =  \overline b_n^{\Pi \overline 
b_{[n-1{\downarrow}m+1]}\overline b_m}  =   \mathfrak{t}_m = \mathfrak{t}_{m+1}^{\overline \sigma_{m}}$.
\vskip -0.7cm  \null 
\item For each $i \in [n-1{\downarrow}m+2]$,  $\mathfrak{t}_{m}^{\overline b_{i}} \overset{\rm(c)}{=} 
\mathfrak{t}_{m+1}^{\overline b_{m}\,\overline b_{i}} 
= \mathfrak{t}_{m+1}^{\overline b_{i}\, \overline b_{m}}  \overset{\rm(a)}{=} \mathfrak{t}_{m+1}^{\overline b_{m}}
 \overset{\rm(c)}{=} \mathfrak{t}_{m}
= \mathfrak{t}_{m}^{\overline \sigma_{i}}$.
\vskip -0.7cm  \null 
\item $\mathfrak{t}_m^{\overline b_{m+1}} \hskip -2pt \overset{\rm(c)}{=} \hskip -2pt \mathfrak{t}_{m+1}^{\overline b_{m}\overline b_{m+1}} 
\overset{\rm(a)}{=}  \mathfrak{t}_{m+2}^{\overline b_{m+1}\overline b_{m}\overline b_{m+1}} 
\hskip -2pt= \hskip -2pt \mathfrak{t}_{m+2}^{\overline b_{m}\overline b_{m+1}\overline b_{m}}  
\hskip -2pt\overset{\rm(b)}{=} \hskip -2pt\mathfrak{t}_{m+2}^{\overline b_{m+1}\overline b_{m}} 
 \overset{\rm(a)}{=} \mathfrak{t}_{m+1}^{\overline b_{m}} \overset{\rm(c)}{=} \mathfrak{t}_m
\hskip -2pt = \mathfrak{t}_{m}^{\overline \sigma_{m+1}}.$
\vskip -0.7cm  \null 
\item  $\mathfrak{t}_m^{\overline b_{m}} = \mathfrak{t}_{m}^{ b_{m+1}   b_{m}\overline  b_{m+1}\overline b_{m}\overline b_{m+1}}
\overset{\rm(e)}{=} \mathfrak{t}_m^{ b_{m}\overline  b_{m+1}\overline b_{m}\overline b_{m+1}}
\overset{\rm(c)}{=} \mathfrak{t}_{m+1}^{\overline  b_{m+1}\overline b_{m}\overline b_{m+1}}\newline
\overset{\rm(a)}{=}\hskip-2pt ( \mathfrak{t}_{m+1} \mathfrak{t}_{m+2} \overline{\mathfrak{t}}_{m+1})^{\overline b_{m}\overline b_{m+1}} 
\hskip-2pt\overset{\rm(c),(b),(c)}{=}\hskip-2pt ( \mathfrak{t}_{m} \mathfrak{t}_{m+2} \overline{\mathfrak{t}}_{m})^{\overline b_{m+1}} 
\hskip-2pt\overset{\rm(e),(a),(e)}{=} \hskip-2pt\mathfrak{t}_{m} \mathfrak{t}_{m+1}\overline{\mathfrak{t}}_{m}\hskip-2pt = \mathfrak{t}_m^{\overline \sigma_{m}}.$
\end{enumerate}

Now the result follows by induction.
\end{proof}

\medskip

We write $\Stab(\Artin\gen{A_{n}};[t_{n+1}])$ to denote the $\Artin\gen{A_{n}}$-stabi\-lizer of
the conjugacy class $[t_{n+1}]$ under the  $\Artin\gen{A_{n}}$-action on $\Sigma_{0,1,n+1}$. 
The Reidemeis\-ter-Schreier rewriting technique automatically 
gives a useful presentation of $\Stab(\Artin\gen{A_{n}};[t_{n+1}])$, 
but applying the technique can be rather tedious.  
Once the presentation has been found, we can verify it directly using the
van der Waerden trick, as in the following proof.

\begin{theorem}[Magnus~\cite{Magnus34}]\label{th:magnus1} If $n \ge 1$, then there exists a homomorphism

\vskip -0.8cm

$$\phi_n\colon\hskip -.5pt \Artin\gen{A_{n-1}}\ltimes \Sigma_{0,1,n} \to \Artin\gen{A_{n}} \text{ determined by }
 \begin{tabular}{ 
>{$}r<{$} 
@{\hskip0cm}    
>{$}l<{$}   
@{\hskip0.5cm}    
>{$}r<{$}   
@{\hskip0cm}  
>{$}l<{$}}
\setlength\extrarowheight{3pt}
\\[-.6cm] & \hskip -.6cm \underline{\scriptstyle  i\in[1{\uparrow}n-1]} & &
\\[.15cm]
(&a_i&t_{n}\,\,&)^{\phi_n}\\ 
=(&a_i &\overline a_{n}^{\,\,2}&).
\end{tabular}$$

\vskip -0.5cm

\noindent
Moreover, the following hold.
\begin{enumerate}[\normalfont (i).]
\vskip-0.7cm \null
\item\label{it:1} $\phi_n$ is injective. 
\vskip-0.7cm \null
\item\label{it:3} For each  $i \in [1{\uparrow}n]$,
 $t_i^{\phi_n} = \overline a_i^{\,\,2\Pi  a_{[i+1{\uparrow}n]}}$
in $\Artin\gen{A_n}$.
\vskip-0.7cm \null
\item\label{it:2} The image of $\phi_n$ is $\Stab(\Artin\gen{A_{n}};[t_{n+1}])$.
\end{enumerate}
\end{theorem}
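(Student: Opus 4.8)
### Proof plan

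The plan is to first establish the existence of $\phi_n$, then prove the three assertions, using the Manfredini lemma (Lemma~\ref{lem:man}) as the engine and the van der Waerden trick to keep the verifications short. For the \emph{existence} of the homomorphism, I would check that the assignment $a_i \mapsto a_i$, $t_n \mapsto \overline a_n^{\,\,2}$ respects the defining relations of $\Artin\gen{A_{n-1}} \ltimes \Sigma_{0,1,n}$. The Artin relations among the $a_i$ obviously survive since they map to themselves in $\Artin\gen{A_n}$. The relation $z_1 \Pi t_{[1{\uparrow}n]} = 1$ of $\Sigma_{0,1,n}$ plays no role because $\Sigma_{0,1,n}$ is freely generated by $t_{[1{\uparrow}n]}$; what must be checked are the semidirect-product relations $t_k^{a_i} = t_k^{\sigma_i}$ for all $i \in [1{\uparrow}n-1]$, $k \in [1{\uparrow}n]$. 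Concretely, using part~\eqref{it:3} as the guiding formula, one defines $\tau_k := \overline a_k^{\,\,2\Pi a_{[k+1{\uparrow}n]}} \in \Artin\gen{A_n}$ for $k \in [1{\uparrow}n]$, notes $\tau_n = \overline a_n^{\,\,2}$, and verifies $\tau_k^{a_i} = \tau_k^{\sigma_i}$ directly from the braid relations in $\Artin\gen{A_n}$ --- this is the exact analogue of the computation (a)--(f) in the proof of Lemma~\ref{lem:man}, now with $b_i$ replaced by $a_i$ and $\mathfrak t_k = \overline b_n^{\,\Pi \overline b_{[n-1{\downarrow}k]}}$ replaced by $\tau_k$. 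So existence of $\phi_n$ and assertion~\eqref{it:3} come together, essentially for free from Lemma~\ref{lem:man}, once one identifies $\tau_k$ with $\mathfrak t_k^{-1}$-type expressions appropriately; the cleanest route is to reindex the Manfredini relations $b_i \leftrightarrow a_i$, $b_n \leftrightarrow$ (a new generator) and then absorb the new generator.

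For assertion~\eqref{it:1}, injectivity of $\phi_n$, I would compose with the Manfredini isomorphism. By Lemma~\ref{lem:man}, $\Artin\gen{A_{n-1}} \ltimes \Sigma_{0,1,n} \simeq \Artin\gen{B_n}$. On the other side, $\Artin\gen{A_n}$ contains $\Stab(\Artin\gen{A_n};[t_{n+1}])$, and the Reidemeister--Schreier / standard fact is that this stabilizer is isomorphic to $\Artin\gen{B_n}$ as well --- indeed this is the classical statement that the Artin group of type $B_n$ embeds in that of type $A_n$ as the subgroup fixing the last strand's conjugacy class. So the strategy is: show $\phi_n$ factors as $\Artin\gen{A_{n-1}} \ltimes \Sigma_{0,1,n} \xrightarrow{\ \sim\ } \Artin\gen{B_n} \xrightarrow{\ \iota\ } \Artin\gen{A_n}$ where $\iota$ is an injection with image $\Stab(\Artin\gen{A_n};[t_{n+1}])$, and then~\eqref{it:1} and~\eqref{it:2} follow simultaneously. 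Concretely, $\iota$ should send $b_i \mapsto a_i$ for $i \in [1{\uparrow}n-1]$ and $b_n \mapsto$ the image of $\overline t_n$, which under $\phi_n$ is $a_n^{\,2}$ --- wait, more precisely one tracks where $\overline t_n$ goes, namely to $a_n^{\,2} = (\overline a_n^{\,-2})$; let me instead say $b_n \mapsto a_n^{\,2}$, and one checks the $B_n$-relations hold for these images in $\Artin\gen{A_n}$ (the only nontrivial one being $a_{n-1} a_n^2 a_{n-1} a_n^2 = a_n^2 a_{n-1} a_n^2 a_{n-1}$, a consequence of the $A_n$ braid relation $a_{n-1}a_n a_{n-1} = a_n a_{n-1} a_n$). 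Injectivity of $\iota$ is then the substantive input: one argues that the subgroup of $\Artin\gen{A_n}$ generated by $a_{[1{\uparrow}n-1]}$ and $a_n^{\,2}$ has the presentation of $\Artin\gen{B_n}$ and no more relations.

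The hard part will be assertion~\eqref{it:2}, identifying the image of $\phi_n$ exactly with the stabilizer $\Stab(\Artin\gen{A_n};[t_{n+1}])$, together with the injectivity of $\iota$ that it rests on. One inclusion is easy: every generator in the image of $\phi_n$, namely $a_{[1{\uparrow}n-1]}$ and $\overline a_n^{\,2}$, fixes $[t_{n+1}]$ under the $\Artin\gen{A_n}$-action on $\Sigma_{0,1,n+1}$, since $a_{[1{\uparrow}n-1]}$ visibly fix $t_{n+1}$ and one computes $t_{n+1}^{\overline a_n^{\,2}\text{-action}}$ lies in $[t_{n+1}]$ directly from the formulas for $\sigma_n^{\pm}$ in Notation~\ref{not:basic}; hence the image lands in the stabilizer. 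The reverse inclusion --- that the stabilizer is no larger --- together with injectivity of $\phi_n$, is exactly the content one gets from a Reidemeister--Schreier computation on the index-$(n+1)$ (via $\Sym_{n+1}$-action) or rather on the appropriate coset space; the remark preceding the theorem explicitly flags that this is where Reidemeister--Schreier does the work and the van der Waerden trick is then used to verify the resulting presentation. So the realistic plan is: (1) build $\phi_n$ and prove~\eqref{it:3} by porting the Manfredini computation; (2) exhibit the factorization through $\Artin\gen{B_n}$ and prove $\iota$ injective with the stated image by a Reidemeister--Schreier presentation of $\Stab(\Artin\gen{A_n};[t_{n+1}])$, re-verified via van der Waerden; (3) conclude~\eqref{it:1} and~\eqref{it:2} from the factorization. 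The main obstacle is step~(2): getting the Reidemeister--Schreier presentation into exactly $B_n$ form and checking the van der Waerden action is well-defined, which is the genuinely computational heart of the argument.
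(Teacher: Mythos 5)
Your plan follows the paper's route: existence of $\phi_n$ via Manfredini's Lemma~\ref{lem:man} and the relation $a_{n-1}a_n^2a_{n-1}a_n^2=a_n^2a_{n-1}a_n^2a_{n-1}$ (which you correctly isolate as the only nontrivial check), part~\eqref{it:3} by transporting the $\mathfrak{t}_k$-computation, and parts~\eqref{it:1} and~\eqref{it:2} from a right $\Artin\gen{A_n}$-action on an $(n+1)$-element transversal. So the skeleton is right.

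The gap is in how injectivity is actually extracted. You reduce~\eqref{it:1} to the claim that $\gen{a_{[1{\uparrow}n-1]}\cup\{a_n^2\}}$ satisfies no relations beyond those of $\Artin\gen{B_n}$, and you propose to obtain this from a Reidemeister--Schreier presentation of the stabilizer, ``re-verified'' by van der Waerden. But that claim is exactly the statement to be proved, and the point of the paper's argument is that no Reidemeister--Schreier computation is performed at all. Writing $H=\Artin\gen{A_{n-1}}\ltimes\Sigma_{0,1,n}$ and $G=\Artin\gen{A_n}$, one builds a right $G$-action on the free left $H$-set $H\times v_{[1{\uparrow}n+1]}$ commuting with the $H$-action (this is the computational heart you defer, and it is a finite check of the $A_n$ relations on the transversal), and then observes that $v_{n+1}a_i=a_iv_{n+1}$ for $i\in[1{\uparrow}n-1]$ and $v_{n+1}a_n^2=\overline t_nv_{n+1}$, hence $v_{n+1}h^{\phi_n}=hv_{n+1}$ for every $h\in H$; freeness of the left $H$-action then gives injectivity in one line. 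Likewise the image is pinned down not by computing a presentation of $\Stab(G;[t_{n+1}])$ but by an index count: the map $\psi_n\colon H\times v_{[1{\uparrow}n+1]}\to G$, $hv_k\mapsto h^{\phi_n}\Pi\overline a_{[n{\downarrow}k]}$, is a surjection of right $G$-sets, so $H^{\phi_n}$ has index at most $n+1$ in $G$, while $\Stab(G;[t_{n+1}])$ has index exactly $n+1$ and contains $H^{\phi_n}$. Without the intertwining observation $v_{n+1}h^{\phi_n}=hv_{n+1}$ and the index comparison, your plan falls back on the genuinely tedious rewriting that the van der Waerden trick is introduced to avoid; with them, it closes exactly as in the paper.
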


\begin{proof} 
Let us write $G =  \Artin\gen{A_{n}}$ and 
$H = \Artin\gen{A_{n-1}}\ltimes \Sigma_{0,1,n}$.  

In $G$, $$(a_{n-1}a_{n}^2a_{n-1})^{a_{n}} \hskip-3.5pt
= \hskip-3.5pt(\overline a_{n} a_{n-1}a_{n})(a_{n}a_{n-1}a_{n}) \hskip-3.5pt = \hskip-3.5pt
(a_{n-1}a_{n} \overline a_{n-1})(a_{n-1}a_{n}a_{n-1}) \hskip-3.5pt = \hskip-3.5pt a_{n-1}a_{n}^2a_{n-1},$$
 and, hence,  $a_{n-1} a_{n}^2 a_{n-1} a_{n}^2 = a_{n}^2a_{n-1}a_{n}^2a_{n-1}$.  
By Lem\-ma~\ref{lem:man}, $H  \simeq \Artin\gen{B_n}$, and we see that
there exist a homomorphism 
$\phi_n\colon H \to G$ determined by 
$ \begin{tabular}{ 
>{$}r<{$} 
@{\hskip0cm}    
>{$}l<{$}   
@{\hskip0.5cm}    
>{$}r<{$}   
@{\hskip0cm}  
>{$}l<{$}}
\setlength\extrarowheight{3pt}
\\[-.6cm] & \hskip -.6cm \underline{\scriptstyle  i\in[1{\uparrow}n-1]} & &
\\[.15cm]
(&a_i&\overline t_{n}\,\,&)^{\phi_n}\\ 
=(&a_i & a_{n}^{\,\,2}&)
\end{tabular}$.

Let $v_{([1{\uparrow}n+2])} =([1{\uparrow}n+1])$, thought of as a generic $(n+1)$-tuple, and consider the 
free left $H$-set $H \times v_{[1{\uparrow}n+1]}$, with left $H$-transversal $v_{[1{\uparrow}n+1]}$. 

We construct a right $G$-action on $H \times v_{[1{\uparrow}n+1]}$ such that 
$H \times v_{[1{\uparrow}n+1]}$ becomes an $(H,G)$-bi-set.
For each $i \in [1{\uparrow}n]$, we define the right action 
of the generator $a_i \in G$  on the left $H$-set $H \times v_{[1{\uparrow}n+1]}$, 
by specifying the action on  the given left
$H$-transversal as follows.

\smallskip

\centerline{
\begin{tabular}
{
>{$}r<{$}  
@{} 
>{$}r<{$} 
@{\hskip .9cm} 
>{$}r<{$} 
@{\hskip.9cm} 
>{$}r<{$} 
@{\hskip .9cm}
>{$}r<{$} 
@{\hskip0cm} 
>{$}l<{$}
}
\setlength\extrarowheight{3pt}
&\hskip-3cm\underline{\scriptstyle k \in [1{\uparrow} i-1]}&&&&\hskip-1.2cm\underline{\scriptstyle k\in[i+2{\uparrow} n+1]}
\\ [.15cm]
(&v_k&  v_{i\phantom{+1}}& v_{i+1}&  v_k&)a_i\\
=(& a_{i-1}v_k & v_{i+1}& \overline t_i v_{i\phantom{+1}}&  a_iv_k&).
\end{tabular}}

\bigskip

\noindent We now verify that the relations of $G$ are respected.

(a). Suppose that $1 \le i < i+2\le j \le n$.  We have the following.
\medskip

\centerline{
\begin{tabular}
{
>{$}r<{$}  
@{} 
>{$}r<{$} 
@{\hskip .5cm} 
>{$}r<{$} 
@{\hskip .5cm} 
>{$}r<{$} 
@{\hskip .5cm} 
>{$}r<{$} 
@{\hskip .5cm} 
>{$}r<{$} 
@{\hskip.5cm} 
>{$}r<{$} 
@{\hskip 0cm}
>{$}r<{$} 
@{\hskip0cm} 
>{$}l<{$}
}
\setlength\extrarowheight{3pt}
&\underline{\scriptstyle k \in [1{\uparrow} i-1]}&&&\underline{\scriptstyle  k\in[i+2{\uparrow} j-1]} & 
&&\underline{\scriptstyle  k\in[j+2{\uparrow} n+1]}
\\[.15cm]
(&v_k  & v_i& v_{i+1}&  v_k &v_{j}&v_{j+1}& v_k &)a_ia_j
\\=(& a_{i-1}v_k& v_{i+1} & \overline t_i v_i &   a_iv_k 
& a_iv_{j}& a_i v_{j+1}&  a_iv_k &)a_j
\\=(& a_{i-1} a_{j-1}v_k&  a_{j-1}v_{i+1} 
& \overline t_i  a_{j-1}v_i &   a_i a_{j-1}v_k 
& a_i v_{j+1}&  a_i \overline t_j v_{j}&  a_i a_j v_k &)
\\=(& a_{j-1} a_{i-1}v_k&  a_{j-1}v_{i+1} 
&   a_{j-1}\overline t_iv_i &   a_{j-1} a_iv_k 
& a_i v_{j+1}&  \overline t_j  a_i v_{j}&  a_j a_i v_k &)
\\ =(& a_{j-1}v_k&  a_{j-1}v_{i} &  a_{j-1}v_{i+1} &   a_{j-1}v_k 
&v_{j+1}&\overline t_j v_{j}&  a_jv_k &)a_i
\\=
(&v_k  & v_i& v_{i+1}&  v_k &v_{j}&v_{j+1}& v_k &)a_ja_i.
\end{tabular}}

\bigskip

(b). Suppose that $1 \le i \le n-1$.  We have the following.

\medskip

\centerline{
\begin{tabular}
{
>{$}r<{$}  
@{} 
>{$}r<{$} 
@{\hskip .5cm} 
>{$}r<{$} 
@{\hskip .5cm} 
>{$}r<{$} 
@{\hskip .5cm} 
>{$}r<{$} 
@{\hskip.5cm} 
>{$}r<{$} 
@{\hskip0cm} 
>{$}l<{$}
}
\setlength\extrarowheight{3pt}
&\underline{\scriptstyle k \in [1{\uparrow} i-1]}&&&&\underline{\scriptstyle  k\in[i+3{\uparrow} n+1]}
\\[.15cm]
(&v_k  & v_i& v_{i+1}&  v_{i+2} &v_k &)a_ia_{i+1}a_i\\
=(& a_{i-1}v_k  & v_{i+1}& \overline t_iv_{i}&   a_iv_{i+2} & a_iv_k &)a_{i+1}a_i\\
=(& a_{i-1} a_iv_k  & v_{i+2}& \overline t_i a_i v_{i}& 
  a_i \overline t_{i+1}v_{i+1} & a_i a_{i+1}v_k &)a_i\\
=(& a_{i-1} a_i a_{i-1} v_k  &  a_iv_{i+2}& \overline t_i a_i  v_{i+1}& 
  a_i \overline t_{i+1}\overline t_i v_{i} & a_i a_{i+1} a_iv_k &)
\\
=(& a_{i} a_{i-1} a_{i} v_k  &  a_iv_{i+2}&  a_i \overline t_{i+1} v_{i+1}& 
\overline t_{i+1}\overline t_i   a_i v_{i} & a_{i+1} a_{i} a_{i+1}v_k &)
\\ 
=(& a_i a_{i-1}v_k  &  a_i v_{i+1}&    a_i v_{i+2}& 
 \overline t_{i+1}\overline t_i v_{i} & a_{i+1} a_iv_k &)a_{i+1}
\\ 
=(& a_{i}v_k  &  a_i v_{i}& v_{i+2}&  \overline t_{i+1}v_{i+1} & a_{i+1}v_k &)a_ia_{i+1}
\\
=(&v_k  & v_i& v_{i+1}&  v_{i+2} &v_k &)a_{i+1}a_ia_{i+1}.
\end{tabular}}

\bigskip

Now (a) and (b) prove that the relations of $G$ are respected.  Hence,
 we have a right $G$-action on $H \times v_{[1{\uparrow}n+1]}$.

Notice that $v_{n+1} \overline t_n^{\,\, \phi_n} = v_{n+1}a_{n}^2 = \overline t_n v_n a_{n} = \overline t_n v_{n+1}$.
Also, for each $i \in [1{\uparrow}n-1]$, $v_{n+1}a_i^{\overline \phi_n} = v_{n+1}a_{i} = a_{i}v_{n+1}$.
It follows that, for each $h \in H$, $v_{n+1}h^{\phi_n} = hv_{n+1}$. Hence, $\phi_n$ is injective.  
This proves (i).

Recall that $G = \Artin\gen{A_{n}}$.  

Let $i \in [1{\uparrow}n]$.

We shall show by decreasing induction on $i$ that
\begin{equation}\label{eq:eq}
a_{n}^{\Pi \overline a_{[n-1{\downarrow}i]}} = a_i^{\Pi a_{[i+1{\uparrow}n]}}.
\end{equation}
If $i=n$, then~\eqref{eq:eq} holds.  Now suppose that $i \ge 2$, and that~\eqref{eq:eq} holds.
Conjugating~\eqref{eq:eq} by $\overline a_{i-1}$ yields
\begin{align*}
a_{n}^{\Pi \overline a_{[n-1{\downarrow}i-1]}} = a_{i}^{\Pi a_{[i+1{\uparrow}n]}\overline a_{i-1}}
=  a_{i}^{\overline a_{i-1} \Pi a_{[i+1{\uparrow}n]}} = a_{i-1}^{ a_{i} \Pi a_{[i+1{\uparrow}n]}}
= a_{i-1}^{\Pi a_{[i{\uparrow}n]}}.
\end{align*}
By induction,~\eqref{eq:eq} holds.

Now $\overline t_i^{\phi_n} = (\overline t_n^{\Pi \overline a_{[n-1{\downarrow}i]}})^{\phi_n}
=   a_n^{2\Pi \overline a_{[n-1{\downarrow}i]}}  
\overset{\eqref{eq:eq}}{=}  a_i^{2\Pi  a_{[i+1{\uparrow}n]}}$.  This proves (ii).
Also, 
 $ \overline t_i^{\phi_n} \Pi \overline a_{[n{\downarrow}i]} = \Pi \overline a_{[n{\downarrow}i+1]}a_i$.

If  $k \in [1{\uparrow}i-1]$, then
 $$a_i^{\Pi a_{[k{\uparrow}n]}} 
= a_i^{\Pi a_{[k{\uparrow}i-2]}\Pi a_{[i-1{\uparrow}i]} \Pi a_{[i+1{\uparrow}n]}} 
= a_i^{\Pi a_{[i-1{\uparrow}i]} \Pi a_{[i+1{\uparrow}n]}}
= a_{i-1}^{\Pi a_{[i+1{\uparrow}n]}}= a_{i-1}.$$
Hence, $a_{i-1}\Pi \overline a_{[n{\downarrow}k]} = \Pi \overline a_{[n{\downarrow}k]}a_i$.

Let $\psi_n$ denote the map of sets   $$\psi_n \colon H \times v_{[1{\uparrow}n+1]} \to G, 
\quad hv_k \mapsto h^{\phi_n} \Pi \overline a_{[n{\downarrow}k]} \text{ for all }
hv_k = (h,v_k) \in H \times v_{[1{\uparrow}n+1]}.$$

Hence, for each $h \in H$, we have the following, in $G$.

\medskip

\centerline{
\begin{tabular}
{
>{$}r<{$}  
@{} 
>{$}r<{$}  
@{} 
>{$}r<{$}  
@{}
>{$}l<{$} 
@{\hskip.9cm} 
>{$}r<{$} 
@{} 
>{$}l<{$}  
@{\hskip.9cm}
>{$}r<{$}  
@{}
>{$}l<{$}  
@{\hskip.9cm}
>{$}r<{$}  
@{}
>{$}l<{$}  
@{} 
>{$}l<{$}
}
\setlength\extrarowheight{3pt}
&&&\hskip -.5cm \underline{\scriptstyle k \in\scriptstyle [1{\uparrow} i-1]}& && &
&&&\hskip-1.9cm\underline{\scriptstyle k\in\scriptstyle[i+2{\uparrow} n+1]} 
\\[.15cm]
(&h\phantom{\scriptstyle \phi_n}(&&v_k&&v_{i}&&v_{i+1}&&v_k&))^{\psi_n}a_i\\
=(&h^{\phi_n}(&&\Pi \overline a_{[n{\downarrow}k]}&&\Pi \overline a_{[n{\downarrow}i]}&&
 \Pi \overline a_{[n{\downarrow}i+1]}&&  
 \Pi \overline a_{[n{\downarrow}k]}&))a_i\\
=(&h^{\phi_n}(&a_{i-1}&\Pi \overline a_{[n{\downarrow}k]} &&  \Pi \overline a_{[n{\downarrow}i+1]}& 
\overline t_i^{\phi_n}  &\Pi \overline a_{[n{\downarrow}i]} &a_i&\Pi \overline a_{[n{\downarrow}k]}&))\\
=(&h\phantom{\scriptstyle \phi_n}(&a_{i-1}&v_k &&v_{i+1}& \overline t_i&v_{i}&  a_i&v_k &))^{\psi_n}\\
= (&h\phantom{\scriptstyle \phi_n}(&&v_k&& v_{i}&&v_{i+1}&&  v_k&)a_i)^{\psi_n}.
\end{tabular}}

\bigskip

\noindent This proves that $\psi_n$ is a map of right $G$-sets, and, hence, $\psi_n$ must be surjective.
Thus, $G = \bigcup\limits_{k\in[1{\uparrow}n+1]} H^{\phi_n} v_k^{\psi_n}$, and, hence,
the index of $H^{\phi_n}$ in $G$ is at most $n+1$.

Consider the action of $G$ on the set of conjugacy classes $\{[t_k]\}_{k \in [1{\uparrow}n+1]}$
in $\Sigma_{0,1,n+1}$.  For any  $i \in [1{\uparrow}n]$,    $a_i$ acts as the transposition $([t_i],[t_{i+1}])$.
In particular, the index of $\Stab(G;[t_{n+1}])$ in $G$ is $n+1$.
Also,  the elements of $a_{[1{\uparrow}n-1]} \cup \{a_n^2\}$ fix $[t_{n+1}]$, and, hence,
$H^{\phi_n} \le \Stab(G;[t_{n+1}])$.  By comparing indices, we see that
$H^{\phi_n} = \Stab(G;[t_{n+1}])$.  This proves (iii).
\end{proof}

\begin{theorem}[Artin] \label{th:artpres}
$\B_n = \Artin\gen{\sigma_1 \ray \sigma_2 \ray \cdots \ray \sigma_{n-1}}.$
\end{theorem}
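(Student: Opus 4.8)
The plan is to prove the equivalent statement that the surjection $\gamma_n\colon\Artin\gen{A_{n-1}}\to\B_n$ of Proposition~\ref{prop:artin2} is injective, by induction on $n$. For $n\le 1$ both groups are trivial. So suppose $n\ge 2$ and that $\gamma_{n-1}$ is an isomorphism. Since $\gamma_n$ is equivariant for the two actions on the set $\{[t_i]\}_{i\in[1{\uparrow}n]}$ of conjugacy classes, and since any element of $\ker\gamma_n$ acts trivially on $\Sigma_{0,1,n}$, we get $\ker\gamma_n\subseteq\Stab(\Artin\gen{A_{n-1}};[t_n])$. By Theorem~\ref{th:magnus1} with $n-1$ in place of $n$, the injective homomorphism $\phi_{n-1}$ carries $\Artin\gen{A_{n-2}}\ltimes\Sigma_{0,1,n-1}$ isomorphically onto $\Stab(\Artin\gen{A_{n-1}};[t_n])$, which by the inductive hypothesis is therefore isomorphic to $\B_{n-1}\ltimes\Sigma_{0,1,n-1}$. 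Moreover $\gamma_n$ carries $\Stab(\Artin\gen{A_{n-1}};[t_n])$ into $\Stab(\B_n;[t_n])$. So it suffices to prove that $\Lambda:=\gamma_n\circ\phi_{n-1}\colon\Artin\gen{A_{n-2}}\ltimes\Sigma_{0,1,n-1}\to\Stab(\B_n;[t_n])$ is injective.

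Next I would introduce the ``forget the last puncture'' homomorphism. The retraction $r\colon\Sigma_{0,1,n}\onto\Sigma_{0,1,n-1}$ with $r(t_n)=1$ and $r(t_i)=t_i$ for $i<n$ has kernel the normal closure $\langle\langle t_n\rangle\rangle$ of $t_n$; every $\phi\in\Stab(\B_n;[t_n])$ preserves $\langle\langle t_n\rangle\rangle$ and hence descends to an automorphism of $\Sigma_{0,1,n-1}$, which one checks lies in $\B_{n-1}$. This gives $q\colon\Stab(\B_n;[t_n])\to\B_{n-1}$, and I claim $q\circ\Lambda$ equals $\gamma_{n-1}$ composed with the projection $\Artin\gen{A_{n-2}}\ltimes\Sigma_{0,1,n-1}\onto\Artin\gen{A_{n-2}}$. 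On $a_i$ (for $i\le n-2$) this holds because $\Lambda(a_i)=\sigma_i$ fixes $t_n$ and descends to the corresponding generator of $\B_{n-1}$. On $t_j$ it amounts to $q(\Lambda(t_j))=1$: by Theorem~\ref{th:magnus1}(ii), $\Lambda(t_j)=\overline\sigma_j^{\,2\Pi\sigma_{[j+1{\uparrow}n-1]}}$, and a direct unwinding of the $\sigma_i$-action shows this automorphism sends $t_n$ to $t_jt_n\overline t_j$ and fixes each $t_i$ ($i\le n-1$) modulo $\langle\langle t_n\rangle\rangle$. Since $\gamma_{n-1}$ is injective, it follows that $\ker\Lambda\subseteq\Sigma_{0,1,n-1}$.

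It then remains to show $\Lambda$ is injective on $\Sigma_{0,1,n-1}$. For $w\in\Sigma_{0,1,n-1}$ the automorphism $\Lambda(w)$ lies in $\Stab(\B_n;[t_n])\subseteq\Out^+_{0,1,n}$, so $t_n^{\Lambda(w)}=g(w)\,t_n\,\overline{g(w)}$ for a unique $g(w)\in\Sigma_{0,1,n}$ whose reduced form does not end in $t_n^{\pm1}$. From $\Lambda(ww')=\Lambda(w)\Lambda(w')$ one gets $g(ww')=g(w)^{\Lambda(w')}\,g(w')$ up to a (possibly trivial) power of $t_n$; applying $r$ and using $r(x^{\Lambda(w')})=r(x)$ (which is exactly $q(\Lambda(w'))=1$ from the previous paragraph) shows that $r\circ g\colon\Sigma_{0,1,n-1}\to\Sigma_{0,1,n-1}$ is a homomorphism, and it fixes each $t_j$ because $g(t_j)=t_j$; hence $r\circ g=\mathrm{id}$. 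Therefore $g$ is injective, and if $\Lambda(w)=1$ then $g(w)$ centralises $t_n$, so $g(w)=1$ and $w=r(g(w))=1$. Combined with $\ker\Lambda\subseteq\Sigma_{0,1,n-1}$ this gives $\ker\Lambda=1$, so $\gamma_n$ is injective on $\Stab(\Artin\gen{A_{n-1}};[t_n])$ and hence $\ker\gamma_n=1$, completing the induction.

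The step I expect to be the main obstacle is the explicit computation that $\overline\sigma_j^{\,2\Pi\sigma_{[j+1{\uparrow}n-1]}}$ is the ``push of the last puncture along $t_j$'' — sending $t_n$ to $t_jt_n\overline t_j$ and acting trivially on $\Sigma_{0,1,n}/\langle\langle t_n\rangle\rangle$ — since this is the one place where one must genuinely manipulate the $\sigma_i$-action, tracking the image of every $t_i$, and it is where the bookkeeping gets heavy. Once that is in hand, the homomorphism identity $r\circ g=\mathrm{id}$ renders the final injectivity almost automatic. A subsidiary point needing care is the well-definedness of $q$: one must check that the automorphism of $\Sigma_{0,1,n-1}$ obtained by reducing modulo $\langle\langle t_n\rangle\rangle$ indeed fixes the distinguished element $z_1$ and permutes the classes $[t_i]$, so that it belongs to $\B_{n-1}=\Out^+_{0,1,n-1}$.
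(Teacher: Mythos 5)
Your proposal is correct in outline, but the crux of your argument is genuinely different from the paper's. Both proofs begin identically: induct on $n$, observe that $\ker\gamma$ lies in the stabilizer of $[t_{\text{last}}]$, and invoke Theorem~\ref{th:magnus1}(iii) to identify that stabilizer with the image of $\phi$, so that everything reduces to injectivity on $\Artin\gen{A_{n-2}}\ltimes\Sigma_{0,1,n-1}$. From there the paper goes \emph{up} a level: it applies $\phi_{n+1}$ to embed $\Artin\gen{A_{n}}\ltimes \Sigma_{0,1,n+1}$ into $\Artin\gen{A_{n+1}}$, shows that the tuple $(t^{\phi_n}_{([1{\uparrow}n])}, t_{n+1})$ becomes conjugate to $t_{([1{\uparrow}n+1])}$ there and hence freely generates a free subgroup, and reads off triviality of the $\Sigma$-component of a kernel element directly from the relation $t_{n+1}=t_{n+1}^{w_2}$ in that free group. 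You instead go \emph{down} a level, via the ``forget the last puncture'' map $q$ (the Birman exact sequence), which obliges you to verify the explicit point-pushing formula $t_n^{\Lambda(t_j)}=t_jt_n\overline t_j$ with trivial action modulo the normal closure of $t_n$; your $r\circ g=\mathrm{id}$ argument for injectivity on the $\Sigma_{0,1,n-1}$ factor is then clean and correct. The computation you flag as the main obstacle does hold (it is exactly the statement that $\phi_{n-1}$ carries $\Sigma_{0,1,n-1}$ onto the point-pushing subgroup, and one checks it by unwinding $\overline\sigma_j^{\,2\Pi\sigma_{[j+1{\uparrow}n-1]}}$ on each generator), but it is precisely the bookkeeping that the paper's upward embedding is designed to avoid: Theorem~\ref{th:magnus1}(ii) hands them the conjugacy of the two $(n+1)$-tuples with almost no further calculation, at the price of having already set up the van der Waerden bi-set machinery. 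So your route is more self-explanatory topologically, while the paper's buys a shorter verification; to make yours complete you would need to write out that one generator-by-generator computation.
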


\begin{proof} This is trivial for $n\le 1$.  Hence,  we may assume that $n \ge 1$ and that
the homomorphism
$\gamma_{n}\colon\Artin\gen{A_{n-1}} \to \B_{n}$, of Proposition~\ref{prop:artin2},
determined by\begin{tabular}
{
>{$}r<{$}  
@{} 
>{$}c<{$} 
@{} 
>{$}l<{$}
}
\setlength\extrarowheight{3pt}
&&\hskip-.8cm\underline{\scriptstyle i \in [1{\uparrow}n-1]}
\\[.15cm]
(&a_i&)^{\gamma_{n}}\\  
=(&\sigma_i&)
\end{tabular}
is an isomorphism; and it  remains to show that the surjective homomorphism
$\gamma_{n+1} \colon\Artin\gen{A_{n}} \to \B_{n+1}$
is  injective. 

Consider an element $w$ of the kernel of $\gamma_{n+1}$.  In particular, 
$w$ fixes $t_{n+1}$ in the $\Artin\gen{A_{n}}$-action on $\Sigma_{0,1,n+1}$.
By Theorem~\ref{th:magnus1}(iii), $w$ lies in the image of the homomorphism
$\phi_n\colon\hskip -.5pt \Artin\gen{A_{n-1}}\ltimes \Sigma_{0,1,n} \to \Artin\gen{A_{n}}$  determined by  
 \begin{tabular}{ 
>{$}r<{$} 
@{\hskip0cm}    
>{$}l<{$}   
@{\hskip0.5cm}    
>{$}r<{$}   
@{\hskip0cm}  
>{$}l<{$}}
\setlength\extrarowheight{3pt}
\\[-.6cm] & \hskip -.6cm \underline{\scriptstyle  i\in[1{\uparrow}n-1]} & &
\\[.15cm]
(&a_i&t_{n}\,\,&)^{\phi_n}\\ 
=(&a_i &\overline a_{n}^{\,\,2}&)
\end{tabular}\hskip -.25cm .
Thus, we may express $w$ as a product of two words $$w = w_1(a_{([1{\uparrow} n-1])})w_2(t^{\phi_n}_{([1{\uparrow} n])}).$$
Now,
\begin{equation}\label{eq:power}
\text{in $\Artin\gen{A_{n}}\ltimes \Sigma_{0,1,n+1}$, }
t_{n+1} = t_{n+1}^w = t_{n+1}^{w_1(a_{([1{\uparrow} n-1])})w_2(t^{\phi_n}_{([1{\uparrow} n])})}
= t_{n+1}^{w_2(t^{\phi_n}_{([1{\uparrow} n])})}.
\end{equation}

Consider  the homomorphism
$\phi_{n+1}\colon\hskip -.5pt \Artin\gen{A_{n}}\ltimes \Sigma_{0,1,n+1} \to \Artin\gen{A_{n+1}}$
  determined by 
 \begin{tabular}{ 
>{$}r<{$} 
@{\hskip0cm}    
>{$}l<{$}   
@{\hskip0.5cm}    
>{$}r<{$}   
@{\hskip0cm}  
>{$}l<{$}}
\setlength\extrarowheight{3pt}
\\[-.6cm] & \hskip -.6cm \underline{\scriptstyle  i\in[1{\uparrow}n]} & &
\\[.15cm]
(&a_i&t_{n+1}\,\,&)^{\phi_{n+1}}\\ 
=(&a_i &\overline a_{n+1}^{\,\,2}&)
\end{tabular}.
Let $i \in [1{\uparrow}n]$.  By Theorem~\ref{th:magnus1}(ii),
\begin{align*} (t^{\phi_n}_{i})^{\phi_{n+1}a_{n+1}} &=
(\overline a_i^{\,\,2\Pi  a_{[i+1{\uparrow}n]}})^{\phi_{n+1}a_{n+1}}
= (\overline a_i^{\,\,2\Pi  a_{[i+1{\uparrow}n]}})^{a_{n+1}}\\
&= (\overline a_i^{\,\,2\Pi  a_{[i+1{\uparrow}n+1]}}) = (t_i)^{\phi_{n+1}},\\
(t_{n+1})^{\phi_{n+1}a_{n+1}} &=
(\overline a_{n+1}^2)^{a_{n+1}} = \overline a_{n+1} ^2 = (t_{n+1})^{\phi_{n+1}}.
\end{align*}
Thus the two $(n+1)$-tuples
 $(t^{\phi_n}_{([1{\uparrow}n])}, t_{n+1})$ and $t_{([1{\uparrow}n+1])}$ 
for $\Artin\gen{A_{n}}\ltimes \Sigma_{0,1,n+1}$ become conjugate in $\Artin\gen{A_{n+1}}$
under $\phi_{n+1}$.
By Theorem~\ref{th:magnus1}(i), $\phi_{n+1}$ is injective. 
Since $t_{([1{\uparrow}n+1])}$ freely generates a free subgroup of $\Artin\gen{A_{n}}\ltimes \Sigma_{0,1,n+1}$,
we see that  $(t^{\phi_n}_{([1{\uparrow}n])}, t_{n+1})$ also freely generates  
a free subgroup of $\Artin\gen{A_{n}}\ltimes \Sigma_{0,1,n+1}$.  From~\eqref{eq:power},
 we see that $w_2$ must be the trivial word.

  Hence, $w = w_1(a_{([1{\uparrow}n-1])})$ in $\Artin\gen{A_{n}}$.
By the induction hypothesis, $w_1(a_{([1{\uparrow}n-1])}) = 1$ in $\Artin\gen{A_{n-1}}$.
Hence $w = 1$ in $\Artin\gen{A_{n}}$.  

Now the result holds by induction.
\end{proof}

Combining Lemma~\ref{lem:man}, Theorem~\ref{th:magnus1} and Theorem~\ref{th:artpres},  we have the following.

\begin{corollary}[Artin-Magnus-Manfredini]\label{cor:Manfred} If $n \ge 1$, then
\begin{align*} \B_n&=  \Artin\langle \sigma_1 
  \ray  \sigma_2   \ray  
\hskip-3pt \cdots \ray \sigma_{n-2}  \ray \sigma_{n-1}\rangle \simeq \Artin\gen{A_{n-1}},\\
\Stab(\B_n; [t_n])  &= \Artin\langle \sigma_1 
  \ray  \sigma_2   \ray  
\hskip-3.7pt \cdots \ray \sigma_{n-2}  \doubleray \sigma_{n-1}^2\rangle \simeq \Artin\gen{B_{n-1}},\\
\B_{n-1} \ltimes \Sigma_{0,1,n-1}&= \Artin\langle \sigma_1 
 \ray  \sigma_2   \ray  
\hskip-3pt \cdots \ray \sigma_{n-2}\doubleray \overline t_{n-1}\rangle  \simeq \Artin\gen{B_{n-1}}.\\ &\hskip 10.6cm \hfill\qed
\end{align*}
\end{corollary}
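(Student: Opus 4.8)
The plan is to read the three displayed lines off results already in hand, so the argument is essentially assembly and bookkeeping. First I would dispose of the degenerate case $n=1$: there $\Sigma_{0,1,0}$, $\B_0$, $\B_1$, $\Stab(\B_1;[t_1])$ and the Artin groups $\Artin\gen{A_0}$, $\Artin\gen{B_0}$ are all trivial, so all three lines hold vacuously. Assume henceforth that $n\ge 2$. The first line is then exactly Theorem~\ref{th:artpres}: the isomorphism $\gamma_n\colon\Artin\gen{A_{n-1}}\to\B_n$ carries $a_i$ to $\sigma_i$, giving $\B_n = \Artin\gen{\sigma_1 \ray \cdots \ray \sigma_{n-1}} \simeq \Artin\gen{A_{n-1}}$.

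For the third line I would invoke Lemma~\ref{lem:man} with $n-1$ in place of $n$ (legitimate since $n-1\ge 1$), obtaining $\Artin\gen{A_{n-2}}\ltimes\Sigma_{0,1,n-1} = \Artin\gen{a_1 \ray \cdots \ray a_{n-2} \doubleray \overline t_{n-1}} \simeq \Artin\gen{B_{n-1}}$. Transporting the left-hand factor along the isomorphism $\gamma_{n-1}\colon\Artin\gen{A_{n-2}}\to\B_{n-1}$ of Theorem~\ref{th:artpres} (which sends $a_i\mapsto\sigma_i$) replaces $\Artin\gen{A_{n-2}}$ by $\B_{n-1}$ and each generator $a_i$ by $\sigma_i$, yielding $\B_{n-1}\ltimes\Sigma_{0,1,n-1} = \Artin\gen{\sigma_1 \ray \cdots \ray \sigma_{n-2} \doubleray \overline t_{n-1}} \simeq \Artin\gen{B_{n-1}}$.

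For the second line I would apply Theorem~\ref{th:magnus1} with $n-1$ in place of $n$ (again legitimate since $n-1\ge 1$): its parts (i) and (iii) say that $\phi_{n-1}\colon\Artin\gen{A_{n-2}}\ltimes\Sigma_{0,1,n-1}\to\Artin\gen{A_{n-1}}$ is injective with image $\Stab(\Artin\gen{A_{n-1}};[t_n])$, so that $\Stab(\Artin\gen{A_{n-1}};[t_n]) \simeq \Artin\gen{A_{n-2}}\ltimes\Sigma_{0,1,n-1} \simeq \Artin\gen{B_{n-1}}$, the last step by Lemma~\ref{lem:man}. Moreover $\phi_{n-1}$ sends the generators $a_1,\dots,a_{n-2},\overline t_{n-1}$ of its domain to $a_1,\dots,a_{n-2},a_{n-1}^{\,2}$, so this stabilizer is generated by $a_1,\dots,a_{n-2},a_{n-1}^{\,2}$ subject to the $B_{n-1}$-relations. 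Finally $\gamma_n$, being an isomorphism $\Artin\gen{A_{n-1}}\to\B_n$ that carries the $\Artin\gen{A_{n-1}}$-action on $\Sigma_{0,1,n}$ to the $\B_n$-action, maps $\Stab(\Artin\gen{A_{n-1}};[t_n])$ isomorphically onto $\Stab(\B_n;[t_n])$ and the generators $a_1,\dots,a_{n-2},a_{n-1}^{\,2}$ to $\sigma_1,\dots,\sigma_{n-2},\sigma_{n-1}^{\,2}$; hence $\Stab(\B_n;[t_n]) = \Artin\gen{\sigma_1 \ray \cdots \ray \sigma_{n-2} \doubleray \sigma_{n-1}^{\,2}} \simeq \Artin\gen{B_{n-1}}$.

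The only genuine effort is keeping the index shift $n\mapsto n-1$ straight: one must confirm that the conjugacy class appearing, $[t_n]$ in $\Sigma_{0,1,n}$, is precisely the one produced by Theorem~\ref{th:magnus1} after the shift (where it was $[t_{n+1}]$ in $\Sigma_{0,1,n+1}$), and that the generating sets $\sigma_{[1{\uparrow}n-1]}$ and $\{\sigma_1,\dots,\sigma_{n-2},\sigma_{n-1}^{\,2}\}$ are exactly those labelling the Coxeter diagrams $A_{n-1}$ and $B_{n-1}$. No new mathematical obstacle arises: faithfulness of the relevant actions, the Magnus embedding, and Manfredini's identification have all been proved above, and this corollary merely packages them together.
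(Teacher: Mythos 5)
Your proposal is correct and follows the same route as the paper, which proves this corollary simply by the one-line remark that it is obtained by combining Lemma~\ref{lem:man}, Theorem~\ref{th:magnus1} and Theorem~\ref{th:artpres}; your write-up just makes the index shift $n\mapsto n-1$ and the transport along $\gamma_{n-1}$ and $\gamma_n$ explicit. The bookkeeping (including the identification of the image of $\phi_{n-1}$ with $\Stab(\Artin\gen{A_{n-1}};[t_n])$ and hence with $\Stab(\B_n;[t_n])$) is all accurate.
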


\begin{history}
In 1925, Artin~\cite{Artin25} found the above presentation of $\B_n$ by an intuitive
topological argument but, later, in~\cite{Artin47}, he indicated that there 
were difficulties that could be corrected.
In 1934, Magnus~\cite{Magnus34} gave an algebraic proof that the relations suffice.
In 1945, Mark\-ov~\cite{Markov} gave a similar algebraic proof.
In 1947, Bohnenblust~\cite{Bohnenblust}  gave a similar algebraic proof; 
in 1948, Chow~\cite{Chow} simplified the latter proof.
All these algebraic proofs of the sufficiency of the relations involve
the Reidemeister-Schreier rewriting process for the subgroup of index $n$.

Larue~\cite{Larue94}  
gave a new algebraic proof of the sufficiency of the relations, by using
the Dehornoy-Larue trichotomy~\cite{Dehornoy3} for braid groups. 
We shall proceed in the opposite direction.
Proofs of the trichotomy for $\Artin\gen{A_{n-1}}$ tend to be more 
difficult than proofs that $\Out^+_{0,1,n} = \Artin\gen{A_{n-1}}$, and
we shall now see that Artin's generation argument  easily gives the 
trichotomy for $\Out^+_{0,1,n}$.  
\hfill\qed
\end{history}

\section{The Dehornoy-Larue trichotomy}\label{sec:trich}

\begin{definitions}\label{defs:preorder}  Let $\phi \in \B_n$.

We say that $\phi$ is {\it $\sigma_1$-neutral}  if $\phi$ lies in the
subgroup of $\B_n$ generated by  $\sigma_{[2{\uparrow}n-1]}$.

We say that $\phi$ is {\it $\sigma_1$-positive} if $n \ge 2$ and $\phi$ can be expressed as the
product of a finite sequence of elements of
 $\sigma_{[1{\uparrow}n-1]} \cup \overline \sigma_{[2{\uparrow}n-1]}$
such that at least one term of the sequence is~$\sigma_1$. 
We say that $\phi$ is {\it $\sigma$-positive} if $n \ge 2$ and, for some $i \in[1{\uparrow}n-1]$, 
$\phi$ can be expressed as the
product of a finite sequence of elements of
 $\sigma_{[i{\uparrow}n-1]} \cup \overline \sigma_{[i+1{\uparrow}n-1]}$
such that at least one term of the sequence is~$\sigma_i$.  

We say that $\phi$ is {\it $\sigma_1$-negative} if $\overline \phi$ is $\sigma_1$-positive, 
that is, $n\ge 2$ and $\phi$ can be expressed as the
product of a finite sequence of elements of 
$\sigma_{[2{\uparrow}n-1]} \cup \overline \sigma_{[1{\uparrow}n-1]}$
such that at least one term of the sequence is~$\overline  \sigma_1$.  

If $\phi$ satisfies exactly one of the properties of being  $\sigma_1$-neutral, $\sigma_1$-positive 
$\sigma_1$-negative, we say that $\phi$ {\it satisfies the $\sigma_1$-trichotomy}.
\hfill\qed
\end{definitions}

\begin{history}
View $\Artin\gen{A_{n}}$ as a subgroup of $\Artin\gen{A_{n+1}}$ in a natural way,
and let $\Artin\gen{A_{\infty}}$ denote the union of the resulting chain; thus  
$\Artin\gen{A_{\infty}} = \gen{a_{[1{\uparrow}\infty[}}$.
Dehornoy~\cite[Theorem~6]{Dehornoy3} 
gave a one-sided ordering of $\Artin\gen{A_{\infty}}$;
the positive semigroup for this ordering
 is the set of `$a$-positive' elements of $\Artin\gen{A_{\infty}}$. 

Let $\phi \in \B_n$.
By replacing $\phi$ with $\overline \phi$ if necessary, we can apply
Dehornoy's result to deduce that there exists some $n' \ge n$ such that 
$\phi$ is $\sigma$-negative in $\B_{n'}$, or $\phi = 1$.  
Larue~\cite{LarueThesis94} showed that this implies that
$t_1^\phi \in ({t_1}{\star})$,  and that this in turn implies that
$\phi$ can be expressed as the
product of a finite sequence, of length at most $ \abs{\phi} + \frac{1}{4}n^2 3^{\abs{\phi}}$,
 of elements of
$\sigma_{[2{\uparrow}n-1]} \cup \overline \sigma_{[1{\uparrow}n-1]}$.   
Thus, every element of $\B_n$ satisfies 
the $\sigma_1$-trichotomy.  Larue's work is surveyed in~\cite[Chapter~5]{DDRW02}. 
Topological versions of these results can be found in~\cite{Fenn} and \cite[Chapter 6]{DDRW02}. 

We shall give elementary direct proofs of the foregoing results and replace 
Larue's bound $ \abs{\phi} + \frac{1}{4}n^2 3^{\abs{\phi}}$ with the much smaller bound $n 2^{\abs{\phi}}-n$.
Larue's proof contains interesting information that we shall rework in the Appendix.
\hfill\qed
\end{history}

Part (iii) of the following seems to be new.

\begin{lemma}\label{lem:new}
 Let $n \ge 1$ and let $\phi $ be an element of $ \B_n$ such that $t_1^\phi \in (t_1{\star})$.  
Let $\pi = \pi(\phi)$ and, 
for each $i \in[1{\uparrow}n]$, let  $u_i = u_i(\phi)$.
\begin{enumerate}[\normalfont (i).] 
\vskip-0.7cm \null
\item  Suppose that there exists some $i \in [1{\uparrow}n-1]$ such that
 $u_i \in ({\star} \overline t_{(i+1)^\pi})$.  Then $\norm{{\sigma_i\phi}} \le \norm{\phi} -2$ and
$t_1^{\sigma_i\phi} \in (t_1{\star})$; moreover, if $t_1^\phi = t_1$, then $i \in[2{\uparrow}n-1]$.
\vskip-0.7cm \null
\item  Suppose that there exists some $i \in [2{\uparrow}n-1]$ such that
$ u_i \in ( \overline t_{i^\pi}{\star})$.  Then   $\norm{\overline\sigma_i\phi}
 \le \norm{\phi} -2$ and $t_1^{\overline\sigma_i\phi} \in (t_1{\star})$. 
\vskip-0.7cm \null
\item  Suppose that, for each $i \in [1{\uparrow}n-1]$, $u_i \not\in ({\star} \overline t_{(i+1)^\pi})$ and,
for each $i \in [2{\uparrow}n-1]$, $u_i \not\in ( \overline t_{i^\pi}{\star})$.  Then $\phi = 1$.
\end{enumerate}
\end{lemma}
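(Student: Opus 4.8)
Parts (i) and (ii) are quick consequences of Lemma~\ref{lem:art2}. In (i) the inequality $\norm{\sigma_i\phi}\le\norm{\phi}-2$ is exactly Lemma~\ref{lem:art2}(i), and since $i\ge 1$ the case $j=1$ of that lemma shows $t_1^{\sigma_i\phi}$ and $t_1^\phi$ begin with the same letter, so $t_1^{\sigma_i\phi}\in(t_1{\star})$; for the ``moreover'', if $t_1^\phi=t_1$ and $i=1$, then $t_{1^\pi}^{w_1(\phi)}=t_1$ together with $w_1(\phi)\notin(t_{1^\pi}{\star})\cup(\overline t_{1^\pi}{\star})$ (whence $\overline{w_1(\phi)}\,t_{1^\pi}\,w_1(\phi)$ is reduced) forces $w_1(\phi)=1$ and $1^\pi=1$, so $u_1=\overline{w_2(\phi)}$, and then $u_1\in({\star}\overline t_{2^\pi})$ would give $w_2(\phi)\in(t_{2^\pi}{\star})$, a contradiction. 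In (ii) the inequality is Lemma~\ref{lem:art2}(ii), and since $i\ge 2$ the case $j=1$ of that lemma gives $t_1^{\overline\sigma_i\phi}\in(t_1{\star})$.

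For (iii) (assuming $n\ge 2$, the case $n=1$ being trivial) I will run the argument of Lemma~\ref{lem:art2}(iii) on the word $P:=u_0\,\prod_{i\in[1{\uparrow}n]}(t_{i^\pi}u_i)$, which equals $\Pi t_{[1{\uparrow}n]}$ (because $\phi$ fixes $z_1$, hence fixes $\overline z_1=\Pi t_{[1{\uparrow}n]}$), and so has length exactly $n$. As in Lemma~\ref{lem:art2}(iii), $u_0=\overline{w_1(\phi)}\notin({\star}\overline t_{1^\pi})$ and $u_n=w_n(\phi)\notin(\overline t_{n^\pi}{\star})$ hold automatically, and the two stated families of hypotheses show that every junction of $P$ is non-cancelling \emph{except possibly} the one between $t_{1^\pi}$ and $u_1$. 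If $u_1\notin(\overline t_{1^\pi}{\star})$, then $P$ is reduced, so $n=\abs P=n+\sum_{i\in[0{\uparrow}n]}\abs{u_i}$, every $u_i=1$, every $w_i(\phi)=1$, $\pi=1$ (comparing $\Pi t_{i^\pi}=\Pi t_i$), and $\phi=1$.

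It therefore remains to rule out $u_1\in(\overline t_{1^\pi}{\star})$, which is the only point at which the hypothesis $t_1^\phi\in(t_1{\star})$ is used. Write $u_1=\overline t_{1^\pi}v$ in reduced form. From $w_1(\phi)=u_1\,w_2(\phi)\notin(\overline t_{1^\pi}{\star})$ and the fact that $v$ does not begin with $t_{1^\pi}$, one deduces that $\overline{u_1}$ is an initial segment of $w_2(\phi)$, i.e.\ $w_2(\phi)=\overline{u_1}\,w_1(\phi)$ with no cancellation, so $\abs{w_2(\phi)}=\abs{u_1}+\abs{w_1(\phi)}$. Reducing $P$, the only cancelling junction is $t_{1^\pi}{\mid}u_1$; if $k$ pairs cancel there then $\abs{w_1(\phi)}+\abs{u_1}+\sum_{i\in[2{\uparrow}n]}\abs{u_i}=2k$, while $k\le\abs{u_0}+1=\abs{w_1(\phi)}+1$. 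Since $\sum_{i\in[2{\uparrow}n]}\abs{u_i}\ge\abs{w_2(\phi)}=\abs{u_1}+\abs{w_1(\phi)}$, this yields $\abs{w_1(\phi)}+\abs{u_1}\le k\le\abs{w_1(\phi)}+1$, forcing $\abs{u_1}=1$ (so $u_1=\overline t_{1^\pi}$), $k=\abs{w_1(\phi)}+1$, and $\sum_{i\in[2{\uparrow}n]}\abs{u_i}=\abs{w_2(\phi)}$ (no cancellation in $w_2(\phi)=u_2\cdots u_n$). Tracing which letters of the right half of $P$ are consumed by the $k$ cancellations shows, by a short induction, that every letter of $w_1(\phi)$ equals some $t_{j^\pi}$, so $w_1(\phi)$ cannot end in $\overline t_1$; since $t_1^\phi=\overline{w_1(\phi)}\,t_{1^\pi}\,w_1(\phi)\in(t_1{\star})$, this forces $w_1(\phi)=1$ and $1^\pi=1$. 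Then $w_2(\phi)=\overline{u_1}=t_1$, realised without cancellation as $u_2\cdots u_n$, so $u_j=t_1$ for the unique $j\in[2{\uparrow}n]$ with $u_j\ne1$; but then $P=\prod_{i\in[2{\uparrow}n]}(t_{i^\pi}u_i)=t_{2^\pi}\cdots t_{j^\pi}\,t_1\,t_{(j+1)^\pi}\cdots t_{n^\pi}$ is a reduced word of length $n$ (no cancellation, as $i^\pi\ne1$ for $i\ne1$) equal to $\Pi t_{[1{\uparrow}n]}=t_1t_2\cdots t_n$, whence $t_{2^\pi}=t_1$, i.e.\ $2^\pi=1=1^\pi$, which is absurd. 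Hence $u_1\notin(\overline t_{1^\pi}{\star})$, and $\phi=1$.

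The main obstacle is this last paragraph: isolating the single possibly-cancelling junction $t_{1^\pi}{\mid}u_1$ and showing that the forced identity $w_2(\phi)=\overline{u_1}\,w_1(\phi)$, combined with the tight length $\abs{\Pi t_{[1{\uparrow}n]}}=n$, leaves no room for that junction actually to cancel; everything else reduces to Artin's Lemma~\ref{lem:art2}.
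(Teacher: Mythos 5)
Your proofs of (i) and (ii) coincide with the paper's: both parts are read off from Lemma~\ref{lem:art2}(i),(ii), together with the observation that $t_1^\phi=t_1$ forces $w_1(\phi)=1$, hence $u_1=\overline{w_2(\phi)}\not\in({\star}\,\overline t_{2^\pi})$, so the index in (i) cannot be $1$. For (iii) you take a genuinely different route. The paper argues directly: from the reducedness of $u_1t_{2^\pi}u_2\cdots t_{n^\pi}u_n$ it gets $\sum_{i\in[1{\uparrow}n]}\abs{u_i}\le\abs{\overline t_{1^\pi}\overline u_0t_1}$; the hypothesis $t_1^\phi\in(t_1{\star})$ gives $\abs{\overline t_{1^\pi}\overline u_0 t_1}\le\abs{\overline u_0}=\abs{\Pi u_{[1{\uparrow}n]}}$; hence there is no cancellation in $\Pi u_{[1{\uparrow}n]}=w_1\not\in(\overline t_{1^\pi}{\star})$, whence $u_1\not\in(\overline t_{1^\pi}{\star})$ and Lemma~\ref{lem:art2}(iii) applies verbatim. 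You instead suppose $u_1\in(\overline t_{1^\pi}{\star})$ and drive it to a contradiction: your length bookkeeping correctly pins down $u_1=\overline t_{1^\pi}$, full cancellation of the prefix $u_0t_{1^\pi}$, and no cancellation in $u_2\cdots u_n=t_{1^\pi}w_1$; the positivity induction you only sketch does go through (each letter of $w_1$, read off from the identity $u_1t_{2^\pi}u_2\cdots t_{n^\pi}u_n=\overline t_{1^\pi}w_1t_1\cdots t_n$, is either a marker $t_{i^\pi}$, or $t_{1^\pi}$, or a strictly earlier letter of $w_1$, because position $2$ of the right-hand factor is already the marker $t_{2^\pi}$), and it forces $w_1=1$, $1^\pi=1$, and finally $2^\pi=1$, which is absurd. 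Both arguments are correct and invoke the hypothesis $t_1^\phi\in(t_1{\star})$ at exactly the same point; the paper's version is shorter and reduces cleanly to the earlier lemma, while yours extracts more explicit structure from the hypothetical bad configuration at the cost of a longer case analysis and one induction that you should spell out if you keep this route.
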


\begin{proof}  For each $i \in [0{\uparrow}n+1]$, let $w_i = w_i(\phi)$.

\medskip

(i).  The first part follows from Artin's Lemma~\ref{lem:art2}(i).
Notice that, if $t_1^\phi = t_1$, then $w_1 = 1$ and $u_1
= \overline w_2 \not \in  ({\star} \overline t_{2^\pi})$.

\medskip 

(ii) follows from Lemma~\ref{lem:art2}(ii).

\medskip

(iii). 
Recall that $u_0\mathop{\textstyle\prod}\limits_{i\in[1{\uparrow}n]}(t_{i^{\pi}} u_i)
 = \mathop{\textstyle\prod}\limits_{i\in[1{\uparrow}n]} (t_{i^\pi}^{w_{i}})
= (\mathop{\textstyle\prod}\limits_{i\in[1{\uparrow}n]}t_i)^\phi = \mathop{\textstyle\prod}\limits_{i\in[1{\uparrow}n]}t_i.$ 
Hence, \linebreak $u_0t_{1^\pi}u_1\mathop{\textstyle\prod}\limits_{i\in[2{\uparrow}n]}(t_{i^{\pi}} u_i)
= t_1\mathop{\textstyle\prod}\limits_{i\in[2{\uparrow}n]}t_i$, and, hence,
$u_1\mathop{\textstyle\prod}\limits_{i\in[2{\uparrow}n]}(t_{i^{\pi}} u_i)
= \overline t_{1^\pi} \overline u_0 t_1 \mathop{\textstyle\prod}\limits_{i\in[2{\uparrow}n]}t_i.$  
Since $u_n = w_n \not \in (\overline t_{n^\pi}{\star} )$, the hypotheses imply that there is no cancellation anywhere in
the expression
$u_1 \mathop{\textstyle\prod}\limits_{i\in[2{\uparrow}n]}(t_{i^{\pi}} u_i)$.  Hence, 
\begin{equation}\label{eq:2}
\textstyle\sum\limits_{i\in[1{\uparrow}n]} \abs{u_{i}} +  n-1 
=
\abs{u_1 \mathop{\textstyle\prod}\limits_{i\in[2{\uparrow}n]}(t_{i^{\pi}} u_i)} 
= \abs{\overline t_{1^\pi} \overline u_0 t_1 \mathop{\textstyle\prod}\limits_{i\in[2{\uparrow}n]}t_i} \le
\abs{\overline t_{1^\pi} \overline u_0 t_1} + n-1.
\end{equation}

Since $t_{1^\pi}^{\overline u_0} = t_{1^\pi}^{w_1} = t_1^\phi \in (t_1 {\star})$, we see that 
$u_0t_{1^\pi}  \in  ({{t_1}{\star}})$, and
\begin{equation}\label{eq:1}
\abs{\overline t_{1}u_0 t_{1^\pi}} =  -1 + \abs{u_0 t_{1^\pi}} \le -1 + \abs{u_0} + 1 = \abs{u_0}.
\end{equation} 

Since $\mathop{\textstyle\prod}\limits_{i\in[0{\uparrow}n]} u_i = w_0\overline w_{n+1} = 1$, we see that 
\begin{equation}\label{eq:3}
\mathop{\textstyle\prod}\limits_{i\in[1{\uparrow}n]} u_i = \overline u_0 = w_1 \not\in ({\overline t_{1^\pi}}{\star}).
\end{equation}
Now,
$\textstyle\sum\limits_{i\in[1{\uparrow}n]} \abs{u_{i}}
\overset{\eqref{eq:2}}{\le} 
\abs{\overline t_{1^\pi}\overline u_0 t_1} 
\overset{\eqref{eq:1}}{\le}  
\abs{\overline u_0} 
 \overset{\eqref{eq:3}}{=} 
\abs{\mathop{\textstyle\prod}\limits_{i\in[1{\uparrow}n]} u_i}.
$
 Therefore, there is no cancellation in $\mathop{\textstyle\prod}\limits_{i\in[1{\uparrow}n]} u_i$, and, by~\eqref{eq:3},
 $u_1 \not\in (\overline t_{1^\pi}{\star})$.  By Lemma~\ref{lem:art2}(iii), $\phi~=~1$.
\end{proof}

As in Remarks~\ref{rems:length}, we deduce the following 
from Lemma~\ref{lem:new} by induction on $\norm{\phi}$. 

\begin{corollary}[Larue~\cite{LarueThesis94}]\label{cor:larue} Let $n \ge 1$ and let $\phi \in \B_n$. 
\begin{enumerate}[\normalfont (i).]
\vskip-0.8cm \null
\item
Suppose that $t_1^\phi \in (t_1{\star})$.  Then
$\phi$ is $\sigma_1$-negative or $\sigma_1$-neutral.  In more detail, 
$\phi$ can be expressed as the
product of a sequence, of 
length at most   $n 2^{\abs{\phi}}-n$,  of elements of 
$\sigma_{[2{\uparrow}n-1]} \cup \overline \sigma_{[1{\uparrow}n-1]}$. 
\vskip-0.8cm \null
\item Moreover, $\phi$ is $\sigma_1$-neutral if and only if $t_1^\phi = t_1$. \hfill\qed
\end{enumerate}
\end{corollary}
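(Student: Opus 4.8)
The plan is to derive Corollary~\ref{cor:larue} from Lemma~\ref{lem:new} by induction on $\norm{\phi}$, exactly as Remarks~\ref{rems:length} derived the length bound in Proposition~\ref{prop:artin}. Fix $n \ge 1$ and $\phi \in \B_n$ with $t_1^\phi \in (t_1{\star})$. First I would set up the induction: if $\norm{\phi} = n$, then every $\abs{w_i(\phi)} = 0$, so $\phi$ induces only a permutation, and since $t_1^\phi = t_1$ one checks that permutation is trivial (using the fixed product relation $\Pi t_{[1{\uparrow}n]}$), hence $\phi = 1$, which is vacuously $\sigma_1$-neutral with the empty expression. For the inductive step, apply Lemma~\ref{lem:new}: either some $i \in [1{\uparrow}n-1]$ has $u_i \in ({\star}\overline t_{(i+1)^\pi})$, or some $i \in [2{\uparrow}n-1]$ has $u_i \in (\overline t_{i^\pi}{\star})$, or $\phi = 1$. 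In the first case, $\norm{\sigma_i\phi} \le \norm{\phi} - 2$ and $t_1^{\sigma_i\phi} \in (t_1{\star})$, so by induction $\sigma_i\phi$ is a product of at most $N(\sigma_i\phi)$ elements of $\sigma_{[2{\uparrow}n-1]} \cup \overline\sigma_{[1{\uparrow}n-1]}$, and then $\phi = \overline\sigma_i \cdot (\sigma_i\phi)$ is a product of at most $N(\sigma_i\phi)+1$ such elements, since $\overline\sigma_i \in \overline\sigma_{[1{\uparrow}n-1]}$. The second case is symmetric with $\overline\sigma_i$ prepended, noting $\sigma_i \in \sigma_{[2{\uparrow}n-1]}$ when $i \ge 2$. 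Either way the expression has no $\sigma_1$ among $\sigma_{[2{\uparrow}n-1]} \cup \overline\sigma_{[1{\uparrow}n-1]}$, so if it is nonempty it exhibits $\phi$ as $\sigma_1$-negative, and if empty then $\phi = 1$ is $\sigma_1$-neutral.

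For the explicit bound I would run the arithmetic as in Remarks~\ref{rems:length}. Since each application of Lemma~\ref{lem:new}(i) or (ii) drops $\norm{\cdot}$ by at least $2$ and adds one letter, the algorithm terminates in at most $\tfrac{\norm{\phi}-n}{2}$ steps, giving an expression of that length; and the bound $\abs{t_i^\phi} \le 2^{\abs{\phi}+1}-1$ from Remarks~\ref{rems:length} yields $\norm{\phi} \le n2^{\abs{\phi}+1}-n$, hence $\tfrac{\norm{\phi}-n}{2} \le n2^{\abs{\phi}}-n$, which is the claimed bound.

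For part~(ii), one direction is immediate: if $\phi$ is $\sigma_1$-neutral it lies in $\gen{\sigma_{[2{\uparrow}n-1]}}$, and every generator $\sigma_i$ with $i \ge 2$ fixes $t_1$ (by the displayed formulas for $\sigma_i$ in Notation~\ref{not:basic}, $t_1^{\sigma_i} = t_1$ for $i \ge 2$), so $t_1^\phi = t_1$. Conversely, suppose $t_1^\phi = t_1$. I would refine the induction above: the extra clause in Lemma~\ref{lem:new}(i) says that if $t_1^\phi = t_1$ then the index $i$ produced there satisfies $i \in [2{\uparrow}n-1]$, so $\overline\sigma_i \in \overline\sigma_{[2{\uparrow}n-1]}$; and in Lemma~\ref{lem:new}(ii) we already have $i \in [2{\uparrow}n-1]$. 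Moreover in both cases $t_1^{\sigma_i^{\pm}\phi} = t_1$ (the first case by the refined clause forcing $u_1 \notin ({\star}\overline t_{2^\pi})$, the second since $\sigma_i,\overline\sigma_i$ with $i\ge2$ fix $t_1$), so the induction stays within the regime $t_1^{(\cdot)} = t_1$ and produces an expression entirely in $\sigma_{[2{\uparrow}n-1]} \cup \overline\sigma_{[2{\uparrow}n-1]} \subseteq \gen{\sigma_{[2{\uparrow}n-1]}}$, i.e.\ $\phi$ is $\sigma_1$-neutral.

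The main obstacle, and the only point requiring care beyond bookkeeping, is verifying in part~(ii) that the reduction step preserves the condition $t_1^{(\cdot)} = t_1$ rather than merely $t_1^{(\cdot)} \in (t_1{\star})$ — this is exactly what the "moreover" clauses of Lemma~\ref{lem:new}(i),(ii) were stated to handle, so the work reduces to quoting them correctly and confirming that $\sigma_i$ and $\overline\sigma_i$ act trivially on $t_1$ for $i \ge 2$, which is visible from the defining expressions in Notation~\ref{not:basic}. Everything else is the routine induction on $\norm{\phi}$ together with the arithmetic already carried out in Remarks~\ref{rems:length}.
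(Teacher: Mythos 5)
Your proposal is correct and is essentially identical to the paper's own (very terse) proof, which simply says that the corollary follows from Lemma~\ref{lem:new} by induction on $\norm{\phi}$, with the length bound obtained by the arithmetic of Remarks~\ref{rems:length}; your handling of part (ii) via the ``moreover'' clauses of Lemma~\ref{lem:new}(i),(ii) is exactly the intended use of those clauses. One small wording slip: a nonempty expression in $\sigma_{[2{\uparrow}n-1]} \cup \overline \sigma_{[1{\uparrow}n-1]}$ need not contain $\overline\sigma_1$, in which case it witnesses $\sigma_1$-neutrality rather than $\sigma_1$-negativity, but the disjunction asserted in (i) holds either way, so nothing is lost.
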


\begin{notation}\label{not:alpha}  For each $i \in [1{\uparrow}n-1]$, let $\sigma_i'$
 and $\sigma_i''$ be the automorphisms of $\Sigma_{0,1,n}$ determined by

\medskip

\centerline{
\begin{tabular}
{
>{$}r<{$}  
@{} 
>{$}l<{$} 
@{\hskip.5cm} 
>{$}c<{$} 
@{\hskip .7cm}
>{$}r<{$} 
@{\hskip0cm} 
>{$}l<{$}
}
\setlength\extrarowheight{3pt}
&\hskip-.5cm\underline{\scriptstyle k \in [1{\uparrow}i]}  &   
   & & \hskip-1cm\underline{\scriptstyle k \in [i+2{\uparrow}n]} 
\\[0.15cm]
(&t_k&  t_{i+1}&  t_k&)^{\sigma_i'}\\  
=(&t_{k} & t_{i+1}^{t_{i}}&  t_k &),
\end{tabular}
\qquad\qquad\hskip-8pt
\begin{tabular}
{
>{$}r<{$}  
@{} 
>{$}l<{$} 
@{\hskip.5cm} 
>{$}c<{$} 
@{\hskip.9cm} 
>{$}c<{$} 
@{\hskip .5cm}
>{$}r<{$} 
@{\hskip0cm} 
>{$}l<{$}
}
\setlength\extrarowheight{3pt}
&\hskip-.5cm\underline{\scriptstyle k \in [1{\uparrow}{i-1}]}&&   &&\hskip-.6cm
\underline{\scriptstyle k \in [i+1{\uparrow}n]}
\\[.15cm]
(&t_k&t_i & t_{i+1}&  t_k&)^{\sigma_i''}\\  
=(&t_k &t_{i+1}& t_{i}&  t_k &).
\end{tabular}}

\bigskip

\noindent
Then $\sigma_i = \sigma_i' \sigma_i''$.
 The normal form in $t_{[1{\uparrow}n]}$ factorizes 
into an alternating product with factors which are normal forms of non-trivial elements of $\gen{t_{[i{\uparrow}i+1]}}$ 
 alternating with factors which are normal forms of non-trivial elements of 
$\gen{t_{[1{\uparrow}i-1]\cup[i+2{\uparrow}n]}}$.
On $\gen{t_{[i{\uparrow}i+1]}}$, $\sigma_i'$ acts as conjugation by $t_i$, while $\sigma_i''$ 
interchanges the two free generators.
On $\gen{t_{[1{\uparrow}i-1]\cup[i+2{\uparrow}n]}}$, $\sigma_i'$  and $\sigma_i''$  act as the identity map. 
\hfill\qed
\end{notation}

The next result gives three trichotomies, called (a), (b) and (c), which hold for elements of $\B_n$.
Attribution is not sharply defined, but it is reasonable to
 attribute (b) to Dehornoy~\cite{Dehornoy3},
and (a) and (c) to Larue~\cite{LarueThesis94}.

\begin{theorem}[Dehornoy-Larue~\cite{Dehornoy3},~\cite{LarueThesis94}]\label{th:D-L} Let $n \ge 1$, let
 $\phi \in \B_n$ and consider the following nine conditions.

\medskip

\centerline{
\begin{tabular}{r  @{}  l r   @{}  l  r  @{} l}
\setlength\extrarowheight{3pt}
 & \normalfont(a1). $t_1^{\phi} = t_1$.
 & & \normalfont(a2). $t_1^{\phi} \in (t_1{\star}) -\{t_1\}$.
&&\normalfont(a3). $t_1^{\phi} \not\in (t_1{\star})$.
\\  \\
& {\normalfont(b1).} $\phi$ is $\sigma_1$-neutral.
 & &{\normalfont(b2).} $\phi$ is $\sigma_1$-negative.
&& {\normalfont(b3).} $\phi$ is $\sigma_1$-positive.
\\   \\
&\normalfont(c1). $(t_1 {\star})^{\phi} = (t_1{\star})$
 & & \normalfont(c2).  $(t_1 {\star})^{\phi} \subset (t_1{\star})$.
&& \normalfont(c3).  $(t_1 {\star})^{\phi} \supset (t_1{\star})$.
\end{tabular}}

\medskip

 Then: \normalfont(a1) $ \Leftrightarrow $ (b1)  $\Leftrightarrow$  (c1);
(a2) $ \Leftrightarrow $  (b2)  $\Leftrightarrow$ (c2); (a3) $ \Leftrightarrow $  (b3)  $\Leftrightarrow$  (c3).

 \it Exactly one of \normalfont(b1),  (b2),  (b3), \it holds; that is,
$\phi$ satisfies the $\sigma_1$-trichotomy in $\B_n$.
\end{theorem}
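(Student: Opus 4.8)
The nine conditions form three rows (the $a$-, $b$-, $c$-conditions), and the claim is that within each of the three columns the three conditions are equivalent, plus the trichotomy. Since for any $\phi$ exactly one of (a1), (a2), (a3) visibly holds (either $t_1^\phi=t_1$, or $t_1^\phi\in(t_1{\star})-\{t_1\}$, or $t_1^\phi\notin(t_1{\star})$), the $\sigma_1$-trichotomy for $\phi$ will follow automatically once all the column-equivalences are established. So the plan is to prove, for $i\in\{1,2,3\}$, that (a$i$), (b$i$), (c$i$) are equivalent. The spine of the argument will be: Corollary~\ref{cor:larue}, together with the trivial remark that a $\sigma_1$-neutral automorphism fixes $t_1$ (each $\sigma_j$ with $j\ge 2$ fixes $t_1$), supplies the passages from the $a$-conditions to the $b$-conditions in rows $1$ and $2$; the $c$-column and the reverse passages come from tracking how an element of $\B_n$ moves the set $(t_1{\star})$; and the remaining passage, (a3)$\,\Rightarrow\,$(b3), which I would isolate as the crux, is the $\sigma_1$-positive mirror of Corollary~\ref{cor:larue}.

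For the $(t_1{\star})$-analysis I would view $\Sigma_{0,1,n}$ as the free product $\gen{t_1}\ast\gen{t_{[2{\uparrow}n]}}$, so that $(t_1{\star})$ is exactly the set of elements whose leading free-product syllable is a positive power of $t_1$. For $j\in[2{\uparrow}n-1]$ each $\sigma_j$ fixes $\gen{t_1}$ pointwise and carries $\gen{t_{[2{\uparrow}n]}}$ to itself, hence respects this syllable structure and fixes $(t_1{\star})$ setwise; so any $\sigma_1$-neutral $\phi$ has $(t_1{\star})^\phi=(t_1{\star})$, which is (b1)$\,\Rightarrow\,$(c1). A short computation from the defining expressions of $\sigma_1$, $\overline\sigma_1$ (for instance $t_1^{\overline\sigma_1}=t_1t_2\overline t_1$, $t_2^{\overline\sigma_1}=t_1$, $t_1^{\sigma_1}=t_2$) shows $(t_1{\star})^{\overline\sigma_1}\subseteq(t_1{\star})$ --- no element of $(t_1{\star})$ is moved out of it --- and the inclusion is proper because $t_1\in(t_1{\star})$ while $t_1\notin(t_1{\star})^{\overline\sigma_1}$ (since $t_1^{\sigma_1}=t_2\notin(t_1{\star})$). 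Feeding a $\sigma_1$-negative word through $(t_1{\star})$ one letter at a time --- the letters other than $\overline\sigma_1$ (all of the form $\sigma_j^{\pm 1}$ with $j\ge 2$) preserving the set, the first $\overline\sigma_1$ making it a proper subset, and every subsequent letter keeping it a proper subset --- gives (b2)$\,\Rightarrow\,$(c2); applying this to $\overline\phi$ and conjugating back gives (b3)$\,\Rightarrow\,$(c3). As ``$=$'', ``$\subsetneq$'', ``$\supsetneq$'' are mutually exclusive relations between $(t_1{\star})^\phi$ and $(t_1{\star})$, conditions (c1), (c2), (c3) are pairwise incompatible, hence so are (b1), (b2), (b3).

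Next I would close the loops. The passage (b1)$\,\Rightarrow\,$(a1) is immediate, and (a1)$\,\Rightarrow\,$(b1), (a2)$\,\Rightarrow\,$(b2) are read off from Corollary~\ref{cor:larue}(i),(ii). Conversely, (c1) and (c2) each force $t_1^\phi\in(t_1{\star})$ (it lies in $(t_1{\star})^\phi$), and (c3) forces $t_1^{\overline\phi}\in(t_1{\star})$ by the same containment for $\overline\phi$; in each case Corollary~\ref{cor:larue}(i) makes $\phi$, respectively $\overline\phi$, $\sigma_1$-negative or $\sigma_1$-neutral, and the incompatibility just proved --- applied through the $b\Rightarrow c$ passages already in hand --- selects the correct one, yielding (c1)$\,\Rightarrow\,$(b1), (c2)$\,\Rightarrow\,$(b2), (c3)$\,\Rightarrow\,$(b3). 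Thus (b$i$)$\,\Leftrightarrow\,$(c$i$) for all $i$; combining with the forward passages also gives (b2)$\,\Rightarrow\,$(a2), because (b2)$\,\Rightarrow\,$(c2) yields $t_1^\phi\in(t_1{\star})$ while $\phi$ is not $\sigma_1$-neutral (incompatibility), so $t_1^\phi\ne t_1$ by (a1)$\,\Leftrightarrow\,$(b1). At this point the only missing link is (a3)$\,\Leftrightarrow\,$(b3), and (b3)$\,\Rightarrow\,$(a3) is free: a $\sigma_1$-positive element is neither $\sigma_1$-neutral nor $\sigma_1$-negative, so by the first two columns it satisfies neither (a1) nor (a2), hence (a3).

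The one substantive step left, which I expect to be the main obstacle, is (a3)$\,\Rightarrow\,$(b3): \emph{if $t_1^\phi\notin(t_1{\star})$ then $\phi$ is $\sigma_1$-positive}. This is the $\sigma_1$-positive counterpart of Corollary~\ref{cor:larue}, essentially the substance of the Dehornoy--Larue theorem, so it will not come for free. I see two routes. The first is a direct induction on $\norm\phi$ parallel to Lemma~\ref{lem:new} and Corollary~\ref{cor:larue}: when $t_1^\phi\notin(t_1{\star})$ and $\phi\ne 1$ one would produce a reduction $\phi\mapsto\sigma_i^{\pm1}\phi$ lowering $\norm\phi$ and keeping $\phi$ reconstructible as a word over $\sigma_{[1{\uparrow}n-1]}\cup\overline\sigma_{[2{\uparrow}n-1]}$ containing at least one $\sigma_1$, with a length bound as in Remarks~\ref{rems:length}. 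The second, shorter, route is to first prove the auxiliary combinatorial fact that $t_1^\phi\notin(t_1{\star})$ forces $t_1^{\overline\phi}\in(t_1{\star})$ --- a comparison of the normal forms of $t_1^\phi$ and $t_1^{\overline\phi}$ in terms of the permutation $\pi(\phi)$ and the conjugators $w_i(\phi)$ --- and then to apply Corollary~\ref{cor:larue} to $\overline\phi$: it is then $\sigma_1$-negative or $\sigma_1$-neutral, the neutral case being excluded since $t_1^\phi\ne t_1$, so $\overline\phi$ is $\sigma_1$-negative and $\phi$ is $\sigma_1$-positive. The rest of the theorem is then just bookkeeping around Corollary~\ref{cor:larue} and the action on $(t_1{\star})$.
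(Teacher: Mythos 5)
Your bookkeeping is sound and, for eight of the nine implications, essentially reproduces the paper's argument: (a1)$\Leftrightarrow$(b1) from Corollary~\ref{cor:larue}(ii), (a2)$\Rightarrow$(b2) from Corollary~\ref{cor:larue}(i), (b2)$\Rightarrow$(c2) via the factorization $\sigma_1=\sigma_1'\sigma_1''$ and the injectivity argument, the mutual exclusivity of the $c$-conditions, and the closing of the loops by elimination. You also correctly isolate (a3)$\Rightarrow$(b3) as the one substantive remaining step. But that step is precisely where the proposal has a genuine gap: neither of your two proposed routes is carried out, and the second one as stated does not work. You propose to show that $t_1^\phi\notin(t_1{\star})$ forces $t_1^{\overline\phi}\in(t_1{\star})$ by ``a comparison of the normal forms of $t_1^\phi$ and $t_1^{\overline\phi}$ in terms of $\pi(\phi)$ and the $w_i(\phi)$.'' There is no such elementary comparison: from $t_i^\phi=t_{i^\pi}^{w_i}$ one only gets $t_{i^\pi}^{\overline\phi}=t_i^{(\overline w_i)^{\overline\phi}}$, so the normal-form data of $\overline\phi$ is expressed in terms of $\overline\phi$ itself, and the leading letter of $w_1(\overline\phi)$ is not controlled by the leading letter of $w_1(\phi)$. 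The auxiliary fact you want is exactly the ``comparison property'' at the heart of the Dehornoy--Larue theorem --- it is equivalent to the implication being proved, not a lemma on the way to it --- and Corollary~\ref{cor:larue} gives you no purchase on it, since that corollary requires the hypothesis $t_1^\psi\in(t_1{\star})$ for some $\psi$, which is what you are trying to establish. Your first route (a mirror of Lemma~\ref{lem:new} by induction on $\norm{\phi}$) is viable in principle but is only gestured at; the delicate point is identifying the invariant that is actually preserved under the reductions (it cannot be $t_1^\psi\notin(t_1{\star})$, since the terminal value $\psi=1$ violates it), and you do not identify it.

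The paper resolves (a3)$\Rightarrow$(b3) by a reflection trick that avoids both of your routes. From $t_1^\phi=\overline w_1 t_{1^\pi}w_1\notin(t_1{\star})$ one deduces by a short normal-form argument that $\overline t_1^{\,\phi t_1}\in(\overline t_1{\star})$. One then introduces the involution $\zeta\in\Out^-_{0,1,n}$ determined by $t_k\mapsto\overline t_k^{\,\Pi\overline t_{[k-1{\downarrow}1]}}$, which satisfies $\zeta^2=1$ and $\sigma_i^\zeta=\overline\sigma_i$, and checks that $t_1^{\phi^\zeta}=\overline t_1^{\,\phi t_1\overline t_1\zeta}\in(t_1{\star})$. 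Corollary~\ref{cor:larue} then applies to $\phi^\zeta$, expressing it over $\sigma_{[2{\uparrow}n-1]}\cup\overline\sigma_{[1{\uparrow}n-1]}$, and conjugating back by $\zeta$ expresses $\phi$ over $\overline\sigma_{[2{\uparrow}n-1]}\cup\sigma_{[1{\uparrow}n-1]}$, giving (b3) or (b1); (b1) is excluded since (a1) fails. Without this device, or a fully worked-out mirror of Lemma~\ref{lem:new}, your proof of the trichotomy is incomplete.
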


\begin{proof}  (a1) $\Leftrightarrow$ (b1) by Corollary~\ref{cor:larue}(ii). 
We shall use (a1) and (b1) interchangeably in the remainder of the proof.  

(b1) $\Rightarrow$ (c1).  If $\phi$ is $\sigma_1$-neutral, then so is $\overline \phi$.
It follows that $(t_1{\star})^\phi \subseteq (t_1{\star})$
and $(t_1{\star})^{\overline\phi} \subseteq (t_1{\star})$.  Thus,  $(t_1{\star})^\phi = (t_1{\star})$.

(a2)  $\Rightarrow$ (b2).  If (a2) holds, then
Corollary~\ref{cor:larue}(i) shows that (b1) or (b2) holds.  Since
(a1) fails, (b1) fails.  Thus (b2) holds. 

(b2)  $\Rightarrow$ (c2).  
Using  Notation~\ref{not:alpha}, we see that
\begin{equation*}
(t_1{\star})^{\overline \sigma_1} = (t_1{\star})^{\overline\sigma_1''\overline \sigma_1'} 
= (t_2{\star})^{\overline \sigma_1'} \subseteq  (t_1t_2{\star}) \subset (t_1{\star}). 
\end{equation*}
Since the composition of injective self-maps of $(t_1{\star})$ 
can be bijective only if all the factors are bijective, we see that (b2)  $\Rightarrow$ (c2). 

(a3) $\Rightarrow$ (b3).
We translate into algebra the crucial reflection
argument of~\cite[Corollary 5.2.4]{DDRW02}.

Suppose that (a3) holds.

With Notation~\ref{not:basic}, let $w_1 = w_1(\phi)$ and $\pi = \pi(\phi)$.
Then $\overline w_1 t_{1^\pi} w_1 =  t_1^\phi  \not \in (t_1{\star})$.  
It follows that $\overline w_1 t_{1^\pi} \not \in (t_1{\star})$.
Hence, $\overline w_1 \, \overline t_{1^\pi} \not \in (t_1 {\star})$.
Hence,\newline $\overline t_1^\phi =  \overline w_1 \, \overline t_{1^\pi} w_1 \not \in (t_1{\star}) \cup \{1\}$.
  On conjugating by $t_1$, we see that
$\overline t_1^{  \phi  t_1} \in (\overline t_1{\star})$.

Let $\zeta$  be the automorphism of $\Sigma_{0,1,n}$ determined by
\begin{tabular}
{
>{$}r<{$}  
@{} 
>{$}c<{$} 
@{\hskip0cm} 
>{$}l<{$}
}
\setlength\extrarowheight{3pt}
&\underline{\scriptstyle k \in [1{\uparrow}n]}&
\\[.15cm]
(&t_k&)^{\zeta}\\  
=(&\overline t_k^{\,\,\Pi \overline t_{[k-1{\downarrow}1]}} &)
\end{tabular}.
For each $k \in [1{\uparrow}n]$, $(\Pi t_{[1{\uparrow}k]})^\zeta =  \Pi \overline t_{[k{\downarrow}1]}.$
It follows that $\zeta^2 = 1$.  Notice that $\zeta$ belongs to $\Out^{-}_{0,1,n}:= \Out_{0,1,n}-\Out^{+}_{0,1,n}$.  Also,
\begin{tabular}
{
>{$}r<{$}  
@{} 
>{$}l<{$} 
@{\hskip .5cm}
>{$}c<{$} 
@{\hskip0cm} 
>{$}l<{$}
}
\setlength\extrarowheight{3pt}
\\[-.5cm]&&&\hskip-1.8cm\underline{\scriptstyle k \in [2{\uparrow}n]}
\\[.15cm]
(&t_1&t_k\phantom{\scriptstyle{\,\,\Pi\overline t_{[k-1{\downarrow}2]}}}&)^{\overline t_1\zeta}\\  
=(&\overline t_1 &\overline t_k^{\,\,\Pi\overline t_{[k-1{\downarrow}2]}} &)
\end{tabular}.
Hence,
\begin{align*}
t_1^{\phi^\zeta} = t_1^{\zeta\phi\zeta} = \overline t_1^{\phi t_1 \overline t_1\zeta}   \in 
&(\overline t_1{\star})^{\overline t_1\zeta} \subseteq (t_1{\star}).
\end{align*}
By Corollary~\ref{cor:larue}(i), $\phi^\zeta$ can be expressed as the product of a finite sequence of elements 
of $\sigma_{[2{\uparrow}n-1]} \cup \overline \sigma_{[1{\uparrow}n-1]}$.  
It is not difficult to check that, for each $i \in [1{\uparrow}n-1]$, 
$\sigma_i^\zeta = \overline \sigma_i$ in $\Out_{0,1,n}$.
Hence $\phi^{\zeta^2} (= \phi)$ can be expressed as the product of a finite sequence of elements of
$\sigma^{\zeta}_{[2{\uparrow}n-1]} \cup \overline \sigma^{\zeta}_{[1{\uparrow}n-1]}
(= \overline\sigma_{[2{\uparrow}n-1]} \cup  \sigma_{[1{\uparrow}n-1]})$.  
Hence, (b3) or (b1) holds.  Since (a3) holds, (a1) fails, and (b1) fails.  Thus (b3) holds.

(b3) $\Rightarrow$ (c3).  If $\phi$ is $\sigma_1$-positive, then
 $\overline \phi$ is $\sigma_1$-negative, 
and, by  (b2) $\Rightarrow$ (c2), $(t_1{\star})^{\overline\phi} \subset (t_1{\star})$ and, hence, 
$(t_1{\star}) \subset  (t_1{\star})^\phi $.

(c1) $\Rightarrow$ (a1).  Suppose that (a1) fails.  Then (a2) or (a3) holds.  Hence (c2) or (c3) holds.
Hence (c1) fails.

(c2) $\Rightarrow$ (a2) and  (c3) $\Rightarrow$ (a3) are proved similarly. 

Thus the desired equivalences hold.

Since exactly one of (a1), (a2), (a3) holds, exactly one of (b1), (b2), (b3) holds.
\end{proof}

The following gives {\it  the Dehornoy right-ordering of $\B_n$}; recall the
definition of $\sigma$-positive from Definitions~\ref{defs:preorder}.

\begin{theorem}\label{th:order} For each $\phi \in \B_n$ exactly one of the following holds: $\phi = 1$;
 $\phi$ is $\sigma$-positive; $\overline \phi$ is $\sigma$-positive.  The set of $\sigma$-positive elements of
$\B_n$ is the positive cone of a right-ordering of $\B_n$.  
\end{theorem}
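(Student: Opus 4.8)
The plan is to deduce Theorem~\ref{th:order} from the $\sigma_1$-trichotomy of Theorem~\ref{th:D-L}, essentially by induction on the largest index $i$ for which $\sigma_i$ can occur. First I would establish the trichotomy statement: given $\phi \in \B_n$, apply Theorem~\ref{th:D-L} to find that exactly one of (b1), (b2), (b3) holds. If (b3) holds, $\phi$ is $\sigma_1$-positive, hence $\sigma$-positive (take $i = 1$ in Definitions~\ref{defs:preorder}). If (b2) holds, $\overline\phi$ is $\sigma_1$-positive, hence $\overline\phi$ is $\sigma$-positive. If (b1) holds, $\phi$ lies in the subgroup generated by $\sigma_{[2{\uparrow}n-1]}$, which is canonically a copy of $\B_{n-1}$ acting on $\gen{t_{[2{\uparrow}n]}}$; here I would recurse, applying the result inductively in $\B_{n-1}$ with the shifted generators $\sigma_{[2{\uparrow}n-1]}$ playing the role of $\sigma_{[1{\uparrow}n-2]}$. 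The base case $n = 1$ is trivial since $\B_1$ is trivial. This shows every $\phi \neq 1$ is $\sigma$-positive or has $\sigma$-positive inverse.

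Next I would prove that these three alternatives are mutually exclusive, which is the crux. Suppose $\phi$ is $\sigma$-positive and pick the smallest index $i$ witnessing this, so $\phi$ is a product of elements of $\sigma_{[i{\uparrow}n-1]} \cup \overline\sigma_{[i+1{\uparrow}n-1]}$ with at least one factor $\sigma_i$. I claim such $\phi$ cannot be $1$ and cannot also have $\overline\phi$ $\sigma$-positive. The key observation is that $\sigma_{[i{\uparrow}n-1]}$ generates a copy of $\B_{n-i+1}$ acting on $\gen{t_{[i{\uparrow}n]}}$, and inside this subgroup "$\sigma_i$-positive" has its literal meaning. So it suffices to show: a $\sigma_1$-positive element of $\B_m$ is never trivial and never $\sigma_1$-negative. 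But that is exactly the content of the $\sigma_1$-trichotomy in Theorem~\ref{th:D-L} — (b3) excludes (b1) and (b2). The remaining subtlety is that $\phi$ could in principle be $\sigma$-positive with witnessing index $i$ \emph{and} $\overline\phi$ $\sigma$-positive with a \emph{different} witnessing index $j$; without loss of generality $i \le j$, and since $\sigma_{[j{\uparrow}n-1]} \cup \overline\sigma_{[j+1{\uparrow}n-1]} \subseteq \sigma_{[i{\uparrow}n-1]} \cup \overline\sigma_{[i+1{\uparrow}n-1]} \cup \overline\sigma_{[i{\uparrow}j]}$, one checks that $\overline\phi$ being $\sigma$-positive at level $j \ge i$ forces, after restricting to $\gen{t_{[i{\uparrow}n]}}$, that $\overline\phi$ is $\sigma_i$-negative or $\sigma_i$-neutral in $\B_{n-i+1}$, contradicting that $\phi$ is $\sigma_i$-positive there. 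I expect this bookkeeping with the two possibly-unequal witnessing indices to be the main obstacle; the cleanest route is probably to argue directly that if $\phi$ is $\sigma_i$-positive in $\B_{n-i+1}$ then $t_i^\phi \in (t_i{\star}) - \{t_i\}$ by Theorem~\ref{th:D-L}(a2)$\Leftrightarrow$(b3), and this property of $t_i^\phi$ is visibly incompatible with $\phi = 1$ and, via the same theorem applied to $\overline\phi$, with $\overline\phi$ being $\sigma_j$-positive for any $j \ge i$.

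Finally, for the positive-cone statement, let $P$ be the set of $\sigma$-positive elements of $\B_n$. The trichotomy just proved says $\B_n = P \sqcup \{1\} \sqcup P^{-1}$, which is the first cone axiom. For the second, $P \cdot P \subseteq P$: given $\phi$ $\sigma$-positive at level $i$ and $\psi$ $\sigma$-positive at level $j$, put $k = \min(i,j)$; then both $\phi$ and $\psi$ are products of elements of $\sigma_{[k{\uparrow}n-1]} \cup \overline\sigma_{[k+1{\uparrow}n-1]}$, and the product $\phi\psi$ is visibly such a product too, containing at least one factor $\sigma_k$ — but one must rule out that all the $\sigma_k$'s get cancelled, leaving a word with no $\sigma_k$ at all; this is handled by the same trichotomy, since if $\phi\psi$ were $\sigma_k$-neutral or $\sigma_k$-negative in $\B_{n-k+1}$ one could, for instance with $i = k$, write $\phi = (\phi\psi)\overline\psi$ as a product of level-$(\ge\!k)$-non-positive and level-$(\ge\!k)$-non-positive pieces... so instead I would again pass to the $t_k$-picture: $t_k^{\phi\psi} = (t_k^\phi)^\psi \in (t_k{\star})^\psi \subseteq (t_k{\star})$ with strict-along-the-way behaviour forcing $t_k^{\phi\psi} \neq t_k$ by Theorem~\ref{th:D-L}(c2)/(c3), hence $\phi\psi$ is $\sigma_k$-positive in $\B_{n-k+1}$ and therefore $\sigma$-positive in $\B_n$. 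Once $P \sqcup \{1\} \sqcup P^{-1} = \B_n$ and $PP \subseteq P$ are in hand, $P$ is by definition the positive cone of a right-ordering of $\B_n$, namely $\phi < \psi \iff \phi\overline\psi \in P^{-1} \iff \overline\phi\psi \in P$, completing the proof.
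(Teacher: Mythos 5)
Your proposal is correct and is in essence the paper's argument: everything is reduced to the $\sigma_1$-trichotomy of Theorem~\ref{th:D-L} applied inside the shifted copies $\gen{\sigma_{[i{\uparrow}n-1]}}\simeq \B_{n-i+1}$. The paper dispatches the index bookkeeping that occupies most of your second paragraph with a single device: it takes $i$ to be the \emph{largest} element of $[1{\uparrow}n-1]$ with $\phi\in\gen{\sigma_{[i{\uparrow}n-1]}}$; then the image of $\phi$ in $\B_{n-i+1}$ is automatically not $\sigma_1$-neutral, the trichotomy there gives ``$\sigma_1$-positive or $\sigma_1$-negative but not both'', and since any witnessing index for $\sigma$-positivity of $\phi$ or of $\overline\phi$ must be $\le i$, while a witness $<i$ is excluded because an element of $\gen{\sigma_{[i{\uparrow}n-1]}}$ is $\sigma_1$-neutral in every lower-level copy, one concludes at once that exactly one of $\phi$, $\overline\phi$ is $\sigma$-positive. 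Two smaller points. First, $P\cdot P\subseteq P$ needs no cancellation argument: $\sigma$-positivity in Definitions~\ref{defs:preorder} is an \emph{existential} condition on expressions, so concatenating the two witnessing expressions at level $k=\min(i,j)$ already exhibits $\phi\psi$ as $\sigma$-positive, and your worry about ``all the $\sigma_k$'s cancelling'' addresses a requirement the definition does not impose. Second, your fallback arguments have the signs of the trichotomy reversed in two places: $\sigma_1$-positivity corresponds to (a3)/(c3), i.e.\ $t_1^{\phi}\notin(t_1{\star})$ and $(t_1{\star})^{\phi}\supset(t_1{\star})$, whereas you invoke (a2) and write $(t_k{\star})^{\psi}\subseteq(t_k{\star})$, which are the negative/neutral alternatives; the argument survives after swapping these, but as written the citations are to the wrong clauses.
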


\begin{proof} Suppose that $\phi \ne 1$.

Let $i$ be the largest element of $[1{\uparrow}n-1]$ such that $\phi \in \gen{\sigma_{[i,n-1]}}$.
The natural subscript-shifting isomorphism from $\gen{t_{[i{\uparrow}n]}}$
to $\Sigma_{0,1,n-i+1}$ induces an isomorphism from $\gen{\sigma_{[i{\uparrow}n-1]}}$ to
$B_{n-i+1}$. Notice that $\phi$  is mapped to an 
element of $B_{n-i+1}$ which is not $\sigma_1$-neutral;
by Theorem~\ref{th:D-L}, this image is $\sigma_1$-positive or $\sigma_1$-negative but not both.
Hence exactly one of $\phi$, $\overline \phi$ is $\sigma$-positive.

It is easy to see that the product of two $\sigma$-positive elements of $\B_n$ is
$\sigma$-positive.  

Hence the set of $\sigma$-positive elements of $\B_n$ is the
positive cone for a right-ordering of $\B_n$, the Dehornoy right-ordering. 
\end{proof}

\section{Ends, right-orderings and squarefreeness}\label{sec:ends}

\begin{review}  A (reduced) {\it end} of $\Sigma_{0,1,n}$ is a function
$$[1{\uparrow}\infty[ \,\,\,\,\to \,\,\,\, t_{[1{\uparrow}n]} \cup \overline t_{[1{\uparrow}n]}, \quad i \mapsto a_i,$$
such that, for each $i \in [1{\uparrow}\infty[ $, $a_{i+1} \ne \overline a_i$.  The function is then
represented as a right-infinite reduced product, $a_1a_2\cdots$ or $\Pi a_{[1{\uparrow}\infty[}$.

We denote the set of ends of $\Sigma_{0,1,n}$ by $\mathfrak{E}(\Sigma_{0,1,n})$,
or simply by $\mathfrak{E}$ if there is no risk of confusion.  

An element of $\Sigma_{0,1,n} \cup \mathfrak{E}(\Sigma_{0,1,n})$ is said to be {\it squarefree} 
if, in its reduced expression, no two consecutive terms are equal; for example:
 $(t_1t_2)^\infty$ is a squarefree end;  $t_1t_2t_2t_3$ is a non-squarefree word.  

For each $w \in\Sigma_{0,1,n}$,
we define the {\it shadow} of $w$ in $\mathfrak E$ to be
$$(w{\blacktriangleleft}) := \{\Pi a_{[1{\uparrow}\infty[} \in \mathfrak E \mid
\text{ $\Pi a_{[1{\uparrow}\abs{w}]} = w$} \}.$$
Thus, for example, $(1{\blacktriangleleft}) = \mathfrak{E}$. 

We shall now give  $\mathfrak{E}$ an ordering, $<$.  
The first step is, for each $w \in \Sigma_{0,1,n}$, to assign 
an ordering, $<$, to a partition of $(w{\blacktriangleleft})$ into $2n$ or $2n-1$ subsets, depending 
as $w=1$ or $w\ne 1$,  as follows.  We set
\begin{align*}
&(t_1{\blacktriangleleft}) < (\overline t_1{\blacktriangleleft}) < 
(t_2{\blacktriangleleft}) < (\overline t_2{\blacktriangleleft}) < \cdots 
< (t_n{\blacktriangleleft}) < (\overline t_n{\blacktriangleleft}). 
\intertext{If $i \in [1{\uparrow}n]$ and 
$w \in  ({\star}\overline t_i)$, then we set}
(w \overline t_i{\blacktriangleleft}) &<  (w t_{i+1}{\blacktriangleleft})
 < (w \overline t_{i+1}{\blacktriangleleft}) < 
(w t_{i+2}{\blacktriangleleft}) < (w \overline t_{i+2}{\blacktriangleleft}) < \cdots\\
&\cdots <(w t_{n}{\blacktriangleleft}) < (w \overline t_{n}{\blacktriangleleft}) < 
(w t_{1}{\blacktriangleleft}) < (w \overline t_{1}{\blacktriangleleft}) <
(w t_2{\blacktriangleleft}) < \cdots \\
&\cdots < (w t_{i-1}{\blacktriangleleft}) < (w \overline t_{i-1}{\blacktriangleleft}) . 
\intertext{If $i \in [1{\uparrow}n]$ and 
$w \in ({\star} t_i)$, then we set}
(w   t_{i+1}{\blacktriangleleft}) &< (w   \overline t_{i+1}{\blacktriangleleft}) < 
(w  t_{i+2}{\blacktriangleleft}) < (w   \overline t_{i+2}{\blacktriangleleft}) < \cdots\\
&\cdots <(w  t_{n}{\blacktriangleleft}) < (w \overline t_{n}{\blacktriangleleft}) < 
(w   t_{1}{\blacktriangleleft}) < (w   \overline t_{1}{\blacktriangleleft}) <
(w  t_2{\blacktriangleleft}) < \cdots \\
&\cdots < (w  t_{i-1}{\blacktriangleleft}) < (w \overline t_{i-1}{\blacktriangleleft})
 < (w  t_i{\blacktriangleleft}). 
\end{align*}
Hence, for each $w \in \Sigma_{0,1,n}$, we have
an ordering $<$ of a partition of $(w{\blacktriangleleft})$ into $2n$ or $2n-1$ subsets.

If $\Pi a_{[1{\uparrow}\infty[}$ and $\Pi b_{[1{\uparrow}\infty[}$ 
are two different (reduced) ends, then there exists $i \in \naturals$ such that
$\Pi a_{[1{\uparrow}i]} = \Pi b_{[1{\uparrow}i]}$ in $\Sigma_{0,1,n}$, and
$a_{i+1} \ne b_{i+1}$ in $t_{[1{\uparrow}n]} \cup \overline t_{[1{\uparrow}n]}$.
Let $w = \Pi a_{[1{\uparrow}i]} = \Pi b_{[1{\uparrow}i]}$. Then 
$\Pi a_{[1{\uparrow}\infty[}$ and $\Pi b_{[1{\uparrow}\infty[}$ lie in $(w{\blacktriangleleft})$,
but lie in different elements of the partition of
$(w{\blacktriangleleft})$ into $2n$ or $2n-1$ subsets.  
We then order $\Pi a_{[1{\uparrow}\infty[}$ and $\Pi b_{[1{\uparrow}\infty[}$ according to the
order of the elements of the partition of
$(w{\blacktriangleleft})$ that they belong to. 
This completes the definition of the ordering $<$ of $\mathfrak{E}$.

We remark that the smallest element of $\mathfrak E$ is $\overline z_1^\infty = 
(\Pi t_{[1{\uparrow}n]})^\infty$ and the largest element of $\mathfrak E$ is
$z_1^\infty = (\Pi \overline t_{[n{\downarrow}1]})^\infty$.
\hfill\qed
\end{review}

\begin{review}  Following Nielsen-Thurston~\cite{Cooper},~\cite{ShortWiest},
we now define the action of $\B_n$ on $\mathfrak{E}(\Sigma_{0,1,n})$ and show that it respects the ordering;
our treatment will be quite elementary compared to the usual approaches.

We assume that $n \ge 2$, and we first define the action of $\sigma_1$ on $\mathfrak{E}$.

Consider any reduced end $\mathfrak{e} \in \mathfrak{E}$.  There is then a unique factorization
$\mathfrak{e} = \Pi  w_{[1{\uparrow}i]}$ or $\mathfrak{e} = \Pi  w_{[1{\uparrow}\infty[}$,
where, in the former case,  $w_{([1{\uparrow}i-1])}$  is a finite sequence of 
non-trivial words, and $w_i$ is a reduced end,  and, in the latter case,
$w_{([1{\uparrow}\infty[)}$ is  an infinite sequence of non-trivial words, 
and in both cases, the $w_j$ alternate 
between elements of  $\gen{t_{[1{\uparrow}2]}} \cup \mathfrak{E}(\gen{t_{[1{\uparrow}2]}})$, 
and elements  of $\gen{t_{[3{\uparrow}n]}}\cup \mathfrak{E}(\gen{t_{[3{\uparrow}n]}})$. 
We shall express this factorization as $\mathfrak{e} = [w_1][w_2] \cdots$.

Recall, from Notation~\ref{not:alpha}, that we have the factorization $\sigma_1 = \sigma_1' \sigma_1''$.
On $\gen{t_{[1{\uparrow}2]}} \cup \mathfrak{E}(\gen{t_{[1{\uparrow}2]}})$, 
$\sigma_1'$ acts as conjugation by $t_1$, while $\sigma_1''$ interchanges the two free generators.
On $\gen{t_{[3{\uparrow}n]}}$, $\sigma_1'$ and $\sigma_1''$ act as the identity map. 
This completes the description of the action of $\sigma_1'$, $\sigma_1''$ and $\sigma_1$ on $\mathfrak{E}$. 

It is not difficult to show that, for any reduced ends $\Pi a_{[1{\uparrow}\infty[}$ and $\Pi b_{[1{\uparrow}\infty[}$,
if  $(\Pi a_{[1{\uparrow}\infty[})^{\sigma_1} = \Pi b_{[1{\uparrow}\infty[}$, then
for all $i,j \in \naturals$, if $j \ge 2i$, then  
$(\Pi a_{[1{\uparrow}j]})^{\sigma_1} \in (\Pi b_{[1{\uparrow}i]}{\star})$.
Thus,  $(\Pi a_{[1{\uparrow}\infty[})^\sigma_1$ is the limit of $(\Pi a_{[1{\uparrow}j]})^{\sigma_1}$ as $j$ tends to
$\infty$.

It is clear that $\sigma_1'$, $\sigma_1''$ and, hence, $\sigma_1$ act bijectively on $\mathfrak{E}$.
Hence we have the action of $\overline \sigma_1$ on $\mathfrak{E}$.  It is then not difficult to verify that
we have an action of $\B_n$ on $\mathfrak{E}$.  

We next show that $\sigma_1$ respects the ordering of $\mathfrak{E}$.  We do this by considering all the
ways that two reduced ends can be compared, and the resulting effect of $\sigma_1'$ and $\sigma_1$.
We represent the information in tables.  
In all of the following, we understand that $t_1a$, $\overline t_1b$, $t_2c$, and $\overline t_2 d$
are reduced expressions for elements of 
$\gen{t_{[1{\uparrow}2]}} \cup \mathfrak{E}(\gen{t_{[1{\uparrow}2]}})$, and 
 $b \ne 1$.  Since  $a$ does not begin with~$\overline t_1$,  
 $a^{\sigma_1''} t_2$ begins with $t_1$ or $\overline t_1$ or $t_2$. 
We make the convention that $\Sigma_{0,1,n}$ acts trivially on the right on~$\mathfrak{E}$.

\medskip

\centerline{
\begin{tabular}{>{$}r<{$} @{}  >{$}l<{$}||>{$}r<{$} @{} >{$}l<{$} ||>{$}r<{$} @{}  >{$}l<{$}}
\setlength\extrarowheight{3pt}
 & \hskip -1cm (\cdots ][w t_1{\blacktriangleleft})
 & & \hskip -1cm  (\cdots ][w t_1{\blacktriangleleft})^{\sigma_1'}
&& \hskip -1cm  (\cdots ][w t_1{\blacktriangleleft})^{\sigma_1}
\\ 
 \hhline{------}   &&& & &
\\ 
[-.45cm] \hhline{------} &&& & &
\\  
[-.3cm] \cdots ][w t_1\phantom{]}& \phantom{[}t_2c][\cdots 
& \cdots ][(\overline t_1 w) t_1 \phantom{]}&\phantom{[}t_2(ct_1)][\cdots 
&\cdots ][(\overline t_2 w^{\sigma_1''}) t_2 \phantom{]}&\phantom{[}t_1(c^{\sigma_1''} t_2)][\cdots
\\
\cdots][w t_1\phantom{]}&\phantom{[}\overline t_2d][\cdots 
& \cdots ][(\overline t_1 w) t_1 \phantom{]}&\phantom{[}\overline t_2(dt_1)][\cdots 
& \cdots ][(\overline t_2 w^{\sigma_1''}) t_2 \phantom{]}&\phantom{[}\overline t_1(d^{\sigma_1''} t_2)][\cdots
\\
\cdots][w t_1]&[t_3{\uparrow}\overline t_n \cdots  
& \cdots ][(\overline t_1 w) t_1 \phantom{]}&\phantom{[} t_1][t_3{\uparrow}\overline t_n \cdots 
& \cdots ][(\overline t_2 w^{\sigma_1''}) t_2 \phantom{]}&\phantom{[} t_2][t_3{\uparrow}\overline t_n \cdots
\\
 \cdots ][w t_1\phantom{]}& \phantom{[}t_1a][\cdots 
&\cdots ][(\overline t_1 w) t_1 \phantom{]}&\phantom{[}t_1(at_1)][\cdots 
&\cdots ][(\overline t_2 w^{\sigma_1''}) t_2\phantom{]}&\phantom{[}t_2(a^{\sigma_1''} t_2)][\cdots
\end{tabular}}
\medskip
\noindent Here, the case $w=1$ does not present any problems.  

\medskip

\centerline{
\begin{tabular}{>{$}r<{$} @{}  >{$}l<{$}||>{$}r<{$} @{} >{$}l<{$} ||>{$}r<{$} @{}  >{$}l<{$}}
\setlength\extrarowheight{3pt}
 & \hskip -1cm (\cdots ][w \overline t_1{\blacktriangleleft})
 & & \hskip -1cm  (\cdots ][w \overline t_1{\blacktriangleleft})^{\sigma_1'}
&& \hskip -1cm  (\cdots ][w \overline t_1{\blacktriangleleft})^{\sigma_1}
\\ 
 \hhline{------}   &&& & &
\\ 
[-.45cm] \hhline{------} &&& & &
\\  
[-.3cm] 
 \cdots ][w \overline  t_1\phantom{]}& \phantom{[}\overline  t_1b][\cdots 
 &\cdots ][(\overline t_1 w)\phantom{]} &\phantom{[}\overline  t_1\overline t_1(bt_1)][\cdots 
&\cdots ][(\overline t_2 w^{\sigma_1''})\phantom{]}  & \phantom{[}\overline  t_2\overline  t_2(b^{\sigma_1''} t_2)][\cdots
\\
 \cdots ][w \overline  t_1\phantom{]}& \phantom{[}\overline  t_1][t_3{\uparrow}\overline t_n\cdots 
 &\cdots ][(\overline t_1 w)\phantom{]} &\phantom{[}\overline  t_1][t_3{\uparrow}\overline t_n\cdots 
&\cdots ][(\overline t_2 w^{\sigma_1''})\phantom{]}  &\phantom{[} \overline  t_2][t_3{\uparrow}\overline t_n\cdots
\\
 \cdots ][w \overline t_1\phantom{]}& \phantom{[}t_2c][\cdots 
 &\cdots ][(\overline  t_1 w)\phantom{]}  &\phantom{[}\overline t_1 t_2(ct_1)][\cdots 
&\cdots ][(\overline t_2 w^{\sigma_1''})\phantom{]} &\phantom{[}\overline  t_2 t_1(c^{\sigma_1''} t_2)][\cdots
\\
\cdots][w \overline t_1\phantom{]}&\phantom{[}\overline t_2d][\cdots 
& \cdots ][(\overline t_1 w)\phantom{]} &\phantom{[}\overline  t_1 \overline t_2(dt_1)][\cdots 
& \cdots ][(\overline t_2 w^{\sigma_1''})\phantom{]}  &\phantom{[}\overline  t_2 \overline t_1(d^{\sigma_1''} t_2)][\cdots
\\
\cdots][w \overline  t_1]&[t_3{\uparrow}\overline t_n \cdots  
&  \cdots ][(\overline t_1 w)]&[t_3{\uparrow}\overline t_n \cdots 
& \cdots ][(\overline t_2 w^{\sigma_1''})]&[t_3{\uparrow}\overline t_n \cdots
\end{tabular}}
\medskip
\noindent
Here,  $w$ does not end with $t_1$, and, hence,
 $(\overline t_2 w^{\sigma_1''})$ ends with $t_1$, $\overline t_1$ or $\overline t_2$.
\medskip

\centerline{
\begin{tabular}{>{$}r<{$} @{}  >{$}l<{$}||>{$}r<{$} @{} >{$}l<{$} ||>{$}r<{$} @{}  >{$}l<{$}}
\setlength\extrarowheight{3pt}
 & \hskip -1cm (\cdots ][w t_2{\blacktriangleleft})
 & & \hskip -1cm  (\cdots ][w t_2{\blacktriangleleft})^{\sigma_1'}
&& \hskip -1cm  (\cdots ][w t_2{\blacktriangleleft})^{\sigma_1}
\\ 
 \hhline{------}   &&& & &
\\ 
[-.45cm] \hhline{------} &&& & &
\\  
[-.3cm] 
\cdots][w t_2]&[t_3{\uparrow}\overline t_n \cdots  
&  \cdots ][(\overline t_1 w)t_2\phantom{]}&\phantom{[}t_1][t_3{\uparrow}\overline t_n \cdots 
& \cdots ][(\overline t_2 w^{\sigma_1''})t_1\phantom{]}&\phantom{[}t_2][t_3{\uparrow}\overline t_n \cdots
\\
 \cdots ][w t_2\phantom{]}&\phantom{[}   t_1a][\cdots 
 &\cdots ][(\overline t_1 w)t_2\phantom{]}&\phantom{[}t_1(at_1)][\cdots 
&\cdots ][(\overline t_2 w^{\sigma_1''})t_1\phantom{]}&\phantom{[}  t_2(a^{\sigma_1''} t_2)][\cdots
\\
 \cdots ][w t_2\phantom{]}&\phantom{[} \overline  t_1b][\cdots 
 &\cdots ][(\overline t_1 w)t_2\phantom{]}&\phantom{[}\overline t_1(bt_1)][\cdots 
&\cdots ][(\overline t_2 w^{\sigma_1''})t_1\phantom{]}&\phantom{[} \overline  t_2(b^{\sigma_1''} t_2)][\cdots
\\
 \cdots ][w t_2\phantom{]}&\phantom{[} \overline  t_1][t_3{\uparrow}\overline t_n\cdots 
 &\cdots ][(\overline t_1 w)t_2] &[t_3{\uparrow}\overline t_n\cdots 
&\cdots ][(\overline t_2 w^{\sigma_1''})t_1]  &[t_3{\uparrow}\overline t_n\cdots
\\
 \cdots ][w t_2\phantom{]}&\phantom{[} t_2c][\cdots 
 &\cdots ][(\overline  t_1 w)t_2\phantom{]}&\phantom{[} t_2(ct_1)][\cdots 
&\cdots ][(\overline t_2 w^{\sigma_1''})t_1\phantom{]}&\phantom{[} t_1(c^{\sigma_1''} t_2)][\cdots
\end{tabular}}

\bigskip

\centerline{
\begin{tabular}{>{$}r<{$} @{}  >{$}l<{$}||>{$}r<{$} @{} >{$}l<{$} ||>{$}r<{$} @{}  >{$}l<{$}}
\setlength\extrarowheight{3pt}
 & \hskip -1cm (\cdots ][w \overline  t_2{\blacktriangleleft})
 & & \hskip -1cm  (\cdots ][w \overline t_2{\blacktriangleleft})^{\sigma_1'}
&& \hskip -1cm  (\cdots ][w \overline  t_2{\blacktriangleleft})^{\sigma_1}
\\ 
 \hhline{------}   &&& & &
\\ 
[-.45cm] \hhline{------} &&& & &
\\  
[-.3cm] 
 \cdots ][w \overline t_2\phantom{]}& \phantom{[}\overline t_2d][\cdots 
 &\cdots ][(\overline  t_1 w)\overline t_2\phantom{]}  & \phantom{]}\overline t_2(dt_1)][\cdots 
&\cdots ][(\overline t_2 w^{\sigma_1''})\overline t_1\phantom{]} & \phantom{]}\overline t_1(d^{\sigma_1''} t_2)][\cdots
\\
\cdots][w \overline t_2]&[t_3{\uparrow}\overline t_n \cdots  
&  \cdots ][(\overline t_1 w)\overline t_2\phantom{]}&\phantom{]}t_1][t_3{\uparrow}\overline t_n \cdots 
& \cdots ][(\overline t_2 w^{\sigma_1''})\overline t_1\phantom{]}&\phantom{]}t_2][t_3{\uparrow}\overline t_n \cdots
\\
 \cdots ][w \overline t_2\phantom{]}& \phantom{]}  t_1a][\cdots 
 &\cdots ][(\overline t_1 w)\overline t_2\phantom{]} &\phantom{]}t_1(at_1)][\cdots 
&\cdots ][(\overline t_2 w^{\sigma_1''})\overline t_1 \phantom{]} &  \phantom{]} t_2(a^{\sigma_1''} t_2)][\cdots
\\
 \cdots ][w \overline t_2\phantom{]}& \phantom{]}\overline  t_1b][\cdots 
& \cdots ][(\overline t_1 w)\overline t_2\phantom{]} &\phantom{]}\overline t_1(bt_1)][\cdots 
&\cdots ][(\overline t_2 w^{\sigma_1''}) \overline t_1\phantom{]}  & \phantom{]}\overline  t_2(b^{\sigma_1''} t_2)][\cdots
\\
 \cdots ][w \overline t_2\phantom{]}&\phantom{]} \overline  t_1][t_3{\uparrow}\overline t_n\cdots 
& \cdots ][(\overline t_1 w)\overline t_2] &[t_3{\uparrow}\overline t_n\cdots 
&\cdots ][(\overline t_2 w^{\sigma_1''}) \overline t_1 ]  &[t_3{\uparrow}\overline t_n\cdots
\end{tabular}}

\bigskip

\centerline{
\begin{tabular}{>{$}r<{$} @{}  >{$}l<{$}||>{$}r<{$} @{} >{$}l<{$} ||>{$}r<{$} @{}  >{$}l<{$}}
\setlength\extrarowheight{3pt}
 & \hskip -.5cm (\cdots  t_3{\blacktriangleleft})
 & &  (\cdots   t_3{\blacktriangleleft})^{\sigma_1'}
&&  (\cdots   t_3{\blacktriangleleft})^{\sigma_1}
\\ 
 \hhline{------}   &&& && 
\\ 
[-.45cm] \hhline{------} &&& & &
\\  
[-.3cm] 
 \cdots t_3\phantom{]} & \phantom{[}t_4{\uparrow}\overline t_n\cdots 
&  \cdots t_3\phantom{]} &  \phantom{[}t_4{\uparrow}\overline t_n \cdots 
& \cdots  t_3\phantom{]} &   \phantom{[}t_4{\uparrow}\overline t_n\cdots
\\
 \cdots t_3]& [t_1a][\cdots 
 & \cdots t_3]& [(at_1)][\cdots 
& \cdots t_3]& [(a^{\sigma_1''} t_2)][\cdots
\\
 \cdots t_3]& [\overline t_1b][\cdots 
 & \cdots t_3]& [\overline t_1 \overline t_1 (b t_1)][\cdots 
& \cdots t_3]& [\overline t_2 \overline t_2 (b^{\sigma_1''} t_2)][\cdots
\\
 \cdots t_3]& [\overline t_1][t_3{\uparrow}\overline t_n\cdots 
 & \cdots t_3]& [\overline t_1 ][t_3{\uparrow}\overline t_n\cdots 
& \cdots t_3]& [\overline t_2 ][t_3{\uparrow}\overline t_n\cdots
\\
 \cdots t_3]& [t_2c][\cdots 
 & \cdots t_3]& [\overline t_1t_2(c t_1)][\cdots 
& \cdots t_3]& [\overline t_2t_1(c^{\sigma_1''} t_2)][\cdots
\\
 \cdots t_3]& [\overline t_2d][\cdots 
 & \cdots t_3]& [\overline t_1 \overline t_2 (d t_1)][\cdots 
& \cdots t_3]& [\overline t_2 \overline t_1 (d^{\sigma_1''} t_2)][\cdots
\\
 \cdots t_3\phantom{]}&  \phantom{[}t_3\cdots 
 & \cdots t_3\phantom{]}&  \phantom{[}t_3\cdots 
& \cdots t_3\phantom{]}&  \phantom{[}t_3 \cdots
\end{tabular}}
\medskip

The remaining tables are clearly of the same form as the last one.
Thus we have proved that the action of $\sigma_1$ respects the ordering of $\mathfrak{E}$.
It follows that the action of $\overline \sigma_1$ respects the ordering of $\mathfrak{E}$.
Similarly, the actions of  $\sigma_{[2{\uparrow}n-1]} \cup \overline \sigma_{[2{\uparrow}n-1]}$
respect the ordering of $\mathfrak{E}$.  Hence $\B_n$ acts on $(\mathfrak{E},\le)$.
\hfill\qed
\end{review}

\begin{remarks}[Thurston~\cite{ShortWiest}]
 The (right) action of $\B_n$ on $(\mathfrak{E},\le)$ gives rise to many right orderings of 
$\B_n$.  

Let us use the left-to-right lexicographic ordering on  $(\mathfrak{E}^{n},\le)$, and consider the
$\B_n$-orbit of $t^\infty_{([1{\uparrow}n])}:=(t_i^\infty)_{i\in [1{\uparrow}n]}$.  It is not difficult to show
 that the $\B_n$-stabilizer of 
 $t^\infty_{([1{\uparrow}n])}$ is trivial. 
 Thus we have an injective map 
$$\B_n \to \mathfrak{E}^n, \qquad \phi \mapsto t^{\infty\phi}_{([1{\uparrow}n])}:=((t_i^\infty)^\phi)_{i\in [1{\uparrow}n]}.$$ 
Let $\le$ denote the ordering of $\B_n$  induced by pullback from $\mathfrak{E}^n$.  Clearly $\le$ is a right-ordering of~$\B_n$.

If $n\ge2$ and $\phi \in \B_n$ is $\sigma_1$-negative, then, as in the proof of Theorem~\ref{th:D-L}(b2)$\Rightarrow$(c2),
we have $(t_1{\blacktriangleleft})^\phi \subset (t_1{\blacktriangleleft})$. Since 
$\max(t_1{\blacktriangleleft}) = t_1^\infty $,
we see that $(t_1^\infty)^\phi < t_1^\infty$. Hence $\phi < 1$ and $1 < \overline\phi$. 
Similar arguments with $(t_i{\blacktriangleleft})$, $i \in [2{\uparrow}n]$, show
that, if $\phi \in \B_n$ is $\sigma$-positive (resp. $\sigma$-negative), then $1 < \phi$ (resp. $1 > \phi$).
Hence the right-ordering we have obtained from $(\mathfrak{E}^{n},\le)$ coincides with the Dehornoy right-ordering.
However, the study of ends does not seem to readily yield the $\sigma_1$-trichotomy.
\hfill\qed
\end{remarks}

The following will be useful in the study of squarefreeness.

\begin{lemma} \label{lem:int} Let $n \ge 1$, let $i \in [1{\uparrow}n]$, and let 
$w \in \Sigma_{0,1,n} -  ({\star}{t_i}) - ({\star}\overline t_i)$.  Then, in $\mathfrak{E}(\Sigma_{0,1,n})$,
the following hold:
\begin{enumerate}[\normalfont (i).]
\vskip -0.7cm \null
\item\label{it:int1} $w t_i  \overline w((\Pi t_{[1{\uparrow}n]})^\infty) 
\le    w t_i ((\Pi t_{[i{\uparrow}n]} \Pi t_{[1{\uparrow}i-1]})^\infty) = \min(w t_i t_i{\blacktriangleleft});$
\vskip -0.7cm \null
\item \label{it:int22}$
\min(w t_i t_i{\blacktriangleleft}) 
< \max(w \overline t_i \overline t_i{\blacktriangleleft});$
\vskip -0.7cm \null
\item\label{it:int3} $\max(w \overline t_i \overline t_i{\blacktriangleleft})
= w \overline t_i ((\Pi \overline t_{[i{\downarrow}1]}\Pi \overline t_{[n{\downarrow}i+1]})^\infty)
\le  w \overline t_i \overline w((\Pi \overline t_{[n{\downarrow}1]})^\infty);$
\vskip -0.7cm \null
\item\label{it:int4} $(w t_i t_i{\blacktriangleleft}) \cup (w \overline t_i \overline t_i{\blacktriangleleft})\subseteq 
[w t_i  \overline w((\Pi t_{[1{\uparrow}n]})^\infty) , \, w \overline t_i 
\overline w((\Pi \overline t_{[n{\downarrow}1]})^\infty)].$
\vskip -0.7cm \null
\item\label{it:int5}
If   $n\ge 3$, then  one of the following holds:
\begin{enumerate}[\normalfont (a).]
\vskip -0.7cm \null
\item $t_1((\Pi \overline t_{[n{\downarrow}1]})^\infty) \,\,<\,\,   w t_i  \overline w((\Pi t_{[1{\uparrow}n]})^\infty)$;
\vskip -0.7cm \null
\item $t_1((\Pi \overline t_{[n{\downarrow}1]})^\infty)\,\, >\,\,   w \overline t_i 
\overline w((\Pi \overline t_{[n{\downarrow}1]})^\infty)$;
\end{enumerate} 
and, hence, $t_1((\Pi \overline t_{[n{\downarrow}1]})^\infty) \not 
\in [w t_i  \overline w((\Pi t_{[1{\uparrow}n]})^\infty) , \, w \overline t_i 
\overline w((\Pi \overline t_{[n{\downarrow}1]})^\infty)]$, that is,
$t_1(z_1^\infty) \not 
\in [w t_i  \overline w(\overline z_1^\infty) , \, w \overline t_i 
\overline w(z_1^\infty)]$
\end{enumerate}
\end{lemma}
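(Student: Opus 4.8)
The plan is to work entirely inside the ordered set $\mathfrak E(\Sigma_{0,1,n})$ using the explicit description of the ordering on shadows given in the Review, and to prove each item essentially by unwinding definitions; the one genuinely substantive point is item~(v), which I treat last. First I would settle~(i). Since $w\notin({\star}t_i)\cup({\star}\overline t_i)$, the word $wt_i$ is already reduced, so $(wt_it_i{\blacktriangleleft})$ is a genuine subset of the partition of $(wt_i{\blacktriangleleft})$; among all the subsets $(wt_ia{\blacktriangleleft})$ with $a$ an admissible next letter, the description after ``If $i\in[1{\uparrow}n]$ and $w\in({\star}t_i)$'' (applied with $wt_i$ in the role of $w$) puts $(wt_it_i{\blacktriangleleft})$ \emph{last}; its minimum is obtained by always taking, at each subsequent step, the smallest admissible continuation, which cyclically is $t_i,t_{i+1},\dots,t_n,t_1,\dots,t_{i-1}$ — i.e.\ the end $wt_i((\Pi t_{[i{\uparrow}n]}\Pi t_{[1{\uparrow}i-1]})^\infty)$. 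For the inequality $wt_i\overline w((\Pi t_{[1{\uparrow}n]})^\infty)\le wt_i((\Pi t_{[i{\uparrow}n]}\Pi t_{[1{\uparrow}i-1]})^\infty)$, note both ends begin with the common reduced prefix $wt_i$, and after that prefix the right-hand end is, by the previous sentence, the smallest end in $(wt_it_i{\blacktriangleleft})$, while the left-hand end certainly lies in $(wt_it_i{\blacktriangleleft})$ once one checks that $\overline w((\Pi t_{[1{\uparrow}n]})^\infty)$, prepended with $wt_i$, starts with $wt_it_i$ — which holds because $\overline w$ does not end with $\overline t_i$ (as $w\notin({\star}t_i)$) so there is no cancellation at the $t_i\overline w$ junction, and $\overline w\,\Pi t_{[1{\uparrow}n]}$ begins with $t_i$ after the leading $\overline w$ is consumed, giving a second $t_i$; in the degenerate case $w=1$ the left side is literally $t_i(\Pi t_{[1{\uparrow}n]})^\infty = t_it_{i+1}\cdots$, consistent with the claim.

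Next, (ii) is immediate from the global placement of shadow-blocks: for the common prefix $w$ (with $w$ not ending in $t_i^{\pm1}$, so that the relevant partition of $(w{\blacktriangleleft})$ is the $2n$-block one if $w=1$, or the $2n$- or $2n-1$-block one in general, but in all cases $(wt_i{\blacktriangleleft})$ precedes $(w\overline t_i{\blacktriangleleft})$), every end beginning $wt_i$ is strictly below every end beginning $w\overline t_i$; in particular $\min(wt_it_i{\blacktriangleleft})<\max(w\overline t_i\overline t_i{\blacktriangleleft})$. Part~(iii) is the mirror image of~(i): the description after ``If $i\in[1{\uparrow}n]$ and $w\in({\star}\overline t_i)$'' (applied to $w\overline t_i$, which ends in $\overline t_i$) places $(w\overline t_i\overline t_i{\blacktriangleleft})$ \emph{first}, so its maximum is obtained by taking at each step the largest admissible continuation, cyclically $\overline t_i,\overline t_{i-1},\dots,\overline t_1,\overline t_n,\dots,\overline t_{i+1}$, giving $w\overline t_i((\Pi\overline t_{[i{\downarrow}1]}\Pi\overline t_{[n{\downarrow}i+1]})^\infty)$; and $w\overline t_i\overline w((\Pi\overline t_{[n{\downarrow}1]})^\infty)$ lies in that block (no cancellation at $\overline t_i\overline w$ since $w\notin({\star}\overline t_i)$, and a second $\overline t_i$ appears), hence is $\ge$ the maximum. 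Then~(iv) is just the conjunction: $(wt_it_i{\blacktriangleleft})\cup(w\overline t_i\overline t_i{\blacktriangleleft})$ has its $\le$-minimum $\ge$ the left endpoint by~(i) and its $\le$-maximum $\le$ the right endpoint by~(iii), so the whole set lies in the displayed closed interval.

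It remains to prove~(v), and this is where the real work is; I expect it to be the main obstacle. Assume $n\ge3$. Write $L:=wt_i\overline w((\Pi t_{[1{\uparrow}n]})^\infty)$ and $R:=w\overline t_i\overline w((\Pi\overline t_{[n{\downarrow}1]})^\infty)$, and let $e:=t_1((\Pi\overline t_{[n{\downarrow}1]})^\infty)=t_1(z_1^\infty)$. The strategy is to compare $e$ with $L$ and with $R$ by examining the first letter at which $e$ diverges from each. The key observation is that $L$ begins with $w t_i$ and $R$ begins with $w\overline t_i$, so if $w\ne1$ they share the common reduced prefix $w$, whose first letter is some $t_j^{\pm1}$; comparing with $e$ (whose first letter is $t_1$) forces the dichotomy according to how that first letter of $w$ compares to $t_1$, \emph{unless} that first letter is $t_1$ itself, in which case one must descend further into $w$. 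The clean way to organize this is: $e$ is \emph{not} in the interval $[L,R]$ iff $e<L$ or $e>R$; and since $L,R$ both begin with $w$ while $e$ begins with $t_1$, if $w$ does not begin with $t_1$ then either $e<L\le R$ (when $(t_1{\blacktriangleleft})$ precedes the block of the first letter of $w$) or $e>R\ge L$, giving the dichotomy. The remaining, harder sub-case is $w\in(t_1{\star})$: then $wt_i$ and $w\overline t_i$ both begin with $t_1$, so I compare $e=t_1z_1^\infty$ with $L$ and $R$ \emph{after} the common $t_1$; here $z_1^\infty=(\Pi\overline t_{[n{\downarrow}1]})^\infty$ is by the Review the \emph{largest} end, hence $t_1z_1^\infty$ is the largest end beginning with $t_1$, so $e\ge R$ automatically, and $e>R$ unless $R=e$, which is ruled out because $R$, being $w\overline t_i\overline w(z_1^\infty)$ with $w$ a nontrivial word beginning with $t_1$ and $\overline w$ ending with $\overline t_1$, does not have the form $t_1\cdot(\text{largest tail})$ — concretely $w\overline t_i$ after its leading $t_1$ continues with something other than $\overline t_1$ (since $w$ is reduced and $|w|\ge1$), whereas $z_1^\infty$ after $t_1$ continues with $\overline t_1$; wait, one must be careful when $|w|=1$, i.e.\ $w=t_1$, in which case $R=t_1\overline t_i\overline t_1(z_1^\infty)$ and again the letter after the leading $t_1$ is $\overline t_i\ne\overline t_1$ (as $i\ge1$ and $n\ge3$ guarantees the relevant indices are distinct where needed). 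In all sub-cases one of~(a),~(b) holds, and the final ``hence'' clause is then the contrapositive of ``$e\in[L,R]$ implies $L\le e\le R$''. The delicate bookkeeping is exactly the cancellation analysis at the junctions $t_i\overline w$, $\overline t_i\overline w$, and the verification that the leading letters line up as claimed; I would present this via a short case split on whether $w=1$, whether $w\in(t_1{\star})$, and the first letter of $w$ otherwise, citing Notation~\ref{not:basic} for ``begins with'' and the Review for the extremal ends $\overline z_1^\infty,z_1^\infty$.
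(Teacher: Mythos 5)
Your overall strategy coincides with the paper's (unwind the definition of $<$ on shadows, then split on whether $w$ begins with $t_1$), but two of your key steps rest on false claims. First, in (i) you assert that $w t_i \overline w((\Pi t_{[1{\uparrow}n]})^\infty)$ lies in $(w t_i t_i{\blacktriangleleft})$ because ``the leading $\overline w$ is consumed.'' It is not: in $\overline w((\Pi t_{[1{\uparrow}n]})^\infty)$ the word $\overline w$ is generally \emph{not} cancelled, so the letter following $wt_i$ is the first letter of $\overline w$, i.e.\ the inverse of the last letter of $w$. For $n=3$, $i=1$, $w=t_2$ one gets $L = t_2t_1\overline t_2\,t_1t_2t_3\cdots \notin (t_2t_1t_1{\blacktriangleleft})$. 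The inequality still holds, but for a different reason, and the argument genuinely needs the case split the paper makes: if the first letter $x$ of $\overline w((\Pi t_{[1{\uparrow}n]})^\infty)$ equals $t_i$ then $\overline w$ is completely cancelled and equality holds; otherwise $x \notin \{t_i,\overline t_i\}$ and $(wt_ix{\blacktriangleleft}) < (wt_it_i{\blacktriangleleft})$ because $t_i$ is \emph{last} in the cyclic order following a prefix ending in $t_i$. The mirror claim in your (iii) has the same defect.

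Second, in (v), Case $w\in(t_1{\star})$, your proof that $R\ne e$ is broken. You write that ``$z_1^\infty$ after $t_1$ continues with $\overline t_1$,'' but $z_1^\infty=(\Pi\overline t_{[n{\downarrow}1]})^\infty=\overline t_n\overline t_{n-1}\cdots$, so $e=t_1\overline t_n\cdots$; its second letter is $\overline t_n$, and a second-letter comparison fails outright (take $w=t_1\overline t_n$, $i=1$: then $R$ also has second letter $\overline t_n$). What must actually be shown is that $\overline t_1 w\overline t_i\overline w \notin \gen{\Pi\overline t_{[n{\downarrow}1]}}$. The paper does this by writing $w=t_1u$ with $u\notin(\overline t_1{\star})$, computing the normal form $\overline t_1w\overline t_i\overline w = u\overline t_i\overline u\,\overline t_1$, and observing that for $u\ne 1$ this word is neither positive nor negative in the $t_j$, while for $u=1$ it is $\overline t_i\overline t_1$, which fails to lie in $\gen{\Pi\overline t_{[n{\downarrow}1]}}$ precisely because $n\ge 3$ --- note that your sketch never actually uses the hypothesis $n\ge 3$, which is a sign that the decisive step is missing. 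The remaining parts ((ii), (iv), and Case~1 of (v)) are essentially right, modulo the $w=1$, $i=1$ subcase of (v) that you flag but do not carry out.
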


\begin{proof} Recall that:

\centerline{ $(t_1{\blacktriangleleft}) 
< (\overline t_1{\blacktriangleleft}) <  (t_2{\blacktriangleleft}) 
< \cdots < (t_n{\blacktriangleleft}) 
< (\overline t_n{\blacktriangleleft}),$}

\centerline{
$(t_it_{i+1}{\blacktriangleleft}) 
< (t_i\overline t_{i+1}{\blacktriangleleft}) 
< \cdots 
< (t_i\overline t_n{\blacktriangleleft}) < (t_it_1{\blacktriangleleft}) 
< \cdots 
< (t_i\overline t_{i-1}{\blacktriangleleft}) <  (t_it_{i}{\blacktriangleleft}),$}

\centerline{
$(\overline t_i \overline t_{i}{\blacktriangleleft}) 
< (\overline t_i\overline t_{i+1}{\blacktriangleleft}) 
< \cdots 
< (\overline t_i\overline t_n{\blacktriangleleft}) < (\overline t_it_1{\blacktriangleleft}) 
< \cdots 
< (\overline t_i t_{i-1}{\blacktriangleleft}) <  (\overline t_i\overline t_{i-1}{\blacktriangleleft}).$}

\medskip

\eqref{it:int1}. It is straightforward to see that $
 w t_i ((\Pi t_{[i{\uparrow}n]} \Pi t_{[1{\uparrow}i-1]})^\infty) = \min(w t_i t_i{\blacktriangleleft})$.

Let $x$ denote the element of $t_{[1{\uparrow}n]} \cup \overline t_{[1{\uparrow}n]}$ such that 
 $\overline w((\Pi t_{[1{\uparrow}n]})^\infty) \in (x {\blacktriangleleft})$; notice that
$x \ne \overline t_i$.

\noindent  If $x \ne t_i$, then $ (w t_i x {\blacktriangleleft}) 
 \,\,<  \,\, (w t_i t_i{\blacktriangleleft})$, and we have
$$w t_i  \overline w((\Pi t_{[1{\uparrow}n]})^\infty) \,\,
\in(w t_i x {\blacktriangleleft})  \,\,<  \,\, (w t_i t_i{\blacktriangleleft}) 
\,\,\ni\,\, \min(w t_i t_i{\blacktriangleleft}).$$

\noindent  If $x = t_i$, then $\overline w$ is completely cancelled in 
$ \overline w((\Pi t_{[1{\uparrow}n]})^\infty)$, and, moreover, 
$$w t_i  \overline w((\Pi t_{[1{\uparrow}n]})^\infty) = w t_i ((\Pi t_{[i{\uparrow}n]}\Pi t_{[1{\uparrow}i-1]})^\infty)
= \min(w t_i t_i{\blacktriangleleft}).$$

Thus, \eqref{it:int1} holds.

\eqref{it:int22} is clear.

\eqref{it:int3}.  It is straightforward to see that 
$w \overline t_i ((\Pi \overline t_{[i{\downarrow}1]}\Pi \overline t_{[n{\downarrow}i+1]})^\infty)
= \max(w \overline t_i \overline t_i{\blacktriangleleft})$.

Let $x$ denote the element of $t_{[1{\uparrow}n]} \cup \overline t_{[1{\uparrow}n]}$ such that 
 $ \overline w((\Pi \overline t_{[n{\downarrow}1]})^\infty) 
\in (x {\blacktriangleleft})$;  notice that $x \ne  t_i$.

 \noindent If $x \ne \overline t_i$, then
 $ (w\overline t_i \overline t_i  {\blacktriangleleft})  
\,\,<  \,\, (w \overline t_i x{\blacktriangleleft})$, and we have
$$\max(w \overline t_i \overline t_i{\blacktriangleleft}) \,\,
\in\,\,(w\overline t_i \overline t_i  {\blacktriangleleft})  
\,\,<  \,\, (w \overline t_i x{\blacktriangleleft})
\,\,\ni\,\, w \overline t_i \overline w((\Pi \overline t_{[n{\downarrow}1]})^\infty).$$

\noindent  If $x = \overline t_i$, then $\overline w$ is completely cancelled in 
$ \overline w(\Pi \overline t_{[n{\downarrow}1]})^\infty$, and, moreover, 
$$w \overline t_i  \overline w((\Pi \overline t_{[n{\downarrow}1]})^\infty) = w \overline
t_i ((\Pi \overline t_{[i{\downarrow}1]} \Pi \overline t_{[n{\downarrow}i+1]})^\infty)
= \max(w \overline t_i \overline t_i{\blacktriangleleft}).$$

Thus, \eqref{it:int3} holds.

\eqref{it:int4}  follows from \eqref{it:int1}-\eqref{it:int3}.

\eqref{it:int5}. It is not difficult to see
 that

\centerline{$\text{ $ w t_i  \overline w((\Pi t_{[1{\uparrow}n]})^\infty) \in (w t_i {\blacktriangleleft})$ \quad and \quad
$ w \overline t_i \overline w((\Pi \overline t_{[n{\downarrow}1]})^\infty) \in
   (w \overline t_i {\blacktriangleleft})$}.$}

\noindent \textbf{Case 1.} $w  \not \in (t_1 {\star})$.

 If $w = 1$, then 
$$t_1((\Pi \overline t_{[n{\downarrow}1]})^\infty)\,\, \in\,\, (t_1\overline t_n{\blacktriangleleft}) 
\,\, < \,\, (t_it_1{\blacktriangleleft}) \,\,\ni\,\, t_i  ((\Pi t_{[1{\uparrow}n]})^\infty) =
w t_i  \overline w((\Pi t_{[1{\uparrow}n]})^\infty).$$

 If $w \ne 1$, then $t_1((\Pi \overline t_{[n{\downarrow}1]})^\infty)\,\, \in \,\,(t_1{\blacktriangleleft}) 
\,\, < \,\, (w{\blacktriangleleft}) \,\, \ni \,\, w t_i  \overline w((\Pi t_{[1{\uparrow}n]})^\infty)$.

In both subcases,  (a)  holds.

 \noindent   \textbf{Case 2.} $w  \in (t_1 {\star})$.

Here, $w \overline t_i \overline  w((\Pi \overline t_{[n{\downarrow}1]})^\infty) \,\,\in
\,\,   (w {\blacktriangleleft}) 
\,\, \subseteq \,\, (t_1  {\blacktriangleleft})$.
Hence, 
$$w \overline t_i \overline  w((\Pi \overline t_{[n{\downarrow}1]})^\infty)\,\, \le \,\,
\max(t_1{\blacktriangleleft}) \,\,=\,\,t_1((\Pi \overline t_{[n{\downarrow}1]})^\infty).$$
To prove that (b)  holds, it remains to show that 
$$w \overline t_i \overline  w((\Pi \overline t_{[n{\downarrow}1]})^\infty)\,\, \ne \,\,
t_1((\Pi \overline t_{[n{\downarrow}1]})^\infty),$$
that is,
$\overline t_1 w \overline t_i \overline  w((\Pi \overline t_{[n{\downarrow}1]})^\infty)\,\, \ne \,\,
(\Pi \overline t_{[n{\downarrow}1]})^\infty ,$ that is,
$\overline t_1 w \overline t_i \overline  w \not \in \gen{\Pi \overline t_{[n{\downarrow}1]}}$.
We can write $w = t_1 u$ where $u \not\in (\overline t_1 {\star})$.
Then $\overline t_1  w \overline t_i \overline w  =   u \overline t_i \overline u   \overline t_1$, in
normal form.  Thus it suffices to show that 
$u \overline t_i \overline  u \overline t_1 \not \in \gen{\Pi \overline t_{[n{\downarrow}1]}}$.

If $u = 1$, then  $ u \overline t_i \overline u \overline t_1  = \overline t_i \overline t_1
\not \in \gen{\Pi \overline t_{[n{\downarrow}1]}}$, since $n \ge 3$.

If $u \ne 1$, then  $ u \overline t_i \overline u \overline t_1 
\not \in \gen{\Pi \overline t_{[n{\downarrow}1]}}$, since  
$ u \overline t_i \overline u \overline t_1 $ does not lie in the submonoid
of $\Sigma_{0,1,n}$ generated by $t_{[1{\uparrow}n]}$, nor in the 
submonoid generated by  $\overline t_{[1{\uparrow}n]}$.

In both subcases, (b) holds.

\medskip

In both cases, \eqref{it:int5} holds.
\end{proof}

The following appeared as~\cite[Lema~2.2.17]{Bacardit}.

\begin{theorem}\label{th:square} If $n\ge 1$ then, for each $\phi\in\B_n$, 
$t_1^\phi((\Pi \overline t_{[n{\downarrow}1]})^\infty)$ is a squarefree end.
\end{theorem}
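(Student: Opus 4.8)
The plan is to argue by contradiction, using $\B_n$-equivariance to reduce an arbitrary $\phi$ to the identity and then invoking Lemma~\ref{lem:int}; parts~(iv) and~(v) of that lemma are, in effect, tailored for exactly this. First I would record two equivariance facts. Each $\sigma_i$ fixes $\Pi t_{[1{\uparrow}n]}$, since $t_i^{\sigma_i}t_{i+1}^{\sigma_i} = t_{i+1}\,t_i^{t_{i+1}} = t_i t_{i+1}$ and $\sigma_i$ fixes every other $t_k$; hence every $\phi \in \B_n$ fixes $z_1 = (\Pi t_{[1{\uparrow}n]})^{-1}$ and $\overline z_1$, and so fixes the ends $\overline z_1^\infty = (\Pi t_{[1{\uparrow}n]})^\infty$ and $z_1^\infty = (\Pi \overline t_{[n{\downarrow}1]})^\infty$. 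Moreover, the $\B_n$-action on $\mathfrak E$ recalled in Section~\ref{sec:ends} is the continuous extension of the action on $\Sigma_{0,1,n}$ and is order-preserving, so $(w(\mathfrak e))^\psi = w^\psi(\mathfrak e^\psi)$ for all $w \in \Sigma_{0,1,n}$, all $\mathfrak e \in \mathfrak E$, and all $\psi \in \B_n$. Combining these, applying $\overline\phi$ carries $t_1^\phi(z_1^\infty)$ to $t_1(z_1^\infty)$.

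Now take $n \ge 3$ (the cases $n \le 2$ being degenerate and treated directly) and suppose, for a contradiction, that $t_1^\phi(z_1^\infty)$ is not squarefree. Write its reduced expression as $a_1 a_2 \cdots$, let $k$ be least with $a_k = a_{k+1}$, set $W := \Pi a_{[1{\uparrow}k-1]}$, and write $a_k = x \in t_{[1{\uparrow}n]}\cup\overline t_{[1{\uparrow}n]}$, say $x \in \{t_j,\overline t_j\}$. Minimality of $k$ and the fact that $a_1 a_2 \cdots$ is reduced give $a_{k-1}\notin\{x,\overline x\}$ (vacuously if $k=1$), so $W \in \Sigma_{0,1,n}-({\star}t_j)-({\star}\overline t_j)$; and $t_1^\phi(z_1^\infty) \in (W x x{\blacktriangleleft}) \subseteq (W t_j t_j{\blacktriangleleft})\cup(W\overline t_j\overline t_j{\blacktriangleleft})$. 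By Lemma~\ref{lem:int}(iv), taken with $i=j$ and $w=W$, we get $t_1^\phi(z_1^\infty) \in [\,W t_j \overline W(\overline z_1^\infty),\; W\overline t_j\overline W(z_1^\infty)\,]$.

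Applying $\overline\phi$, which is order-preserving and fixes both $z_1^\infty$ and $\overline z_1^\infty$, and using the equivariance, I would get $t_1(z_1^\infty) \in [\,(W t_j\overline W)^{\overline\phi}(\overline z_1^\infty),\; (W\overline t_j\overline W)^{\overline\phi}(z_1^\infty)\,]$. Here $W t_j\overline W = t_j^{\overline W}$ is a conjugate of $t_j$, and $\overline\phi \in \B_n$ sends the class $[t_j]$ to some $[t_{j'}]$; hence $(W t_j\overline W)^{\overline\phi}$ is a conjugate of $t_{j'}$, and, by uniqueness of the reduced-conjugator form in the free group $\Sigma_{0,1,n}$, we may write it as $V t_{j'}\overline V$ with $V \in \Sigma_{0,1,n}-({\star}t_{j'})-({\star}\overline t_{j'})$; then $(W\overline t_j\overline W)^{\overline\phi} = V\overline t_{j'}\overline V$. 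Thus $t_1(z_1^\infty) \in [\,V t_{j'}\overline V(\overline z_1^\infty),\; V\overline t_{j'}\overline V(z_1^\infty)\,]$, which directly contradicts Lemma~\ref{lem:int}(v). Hence $t_1^\phi(z_1^\infty)$ is squarefree for every $\phi \in \B_n$.

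The skeleton above is short, so the real work --- and where I expect the only friction --- lies in the bookkeeping: establishing $(w(\mathfrak e))^\psi = w^\psi(\mathfrak e^\psi)$ cleanly from the continuous-extension description of the $\B_n$-action on $\mathfrak E$; confirming that $\B_n$ fixes $z_1$ and permutes the conjugacy classes $[t_1],\dots,[t_n]$; recalling the uniqueness of the reduced-conjugator form in a free group; and disposing of the degenerate small-$n$ cases, where $\mathfrak E$ is too small for Lemma~\ref{lem:int}(v) to have content.
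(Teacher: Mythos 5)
Your proposal is correct and follows essentially the same route as the paper: both rest on Lemma~\ref{lem:int}(iv) and (v) together with the facts that $\phi$ fixes the ends $z_1^\infty$, $\overline z_1^\infty$ and permutes the family of conjugates $\{wt_i\overline w\}$, the only difference being that you run the argument by contradiction (pulling a single offending interval back along $\overline\phi$) where the paper argues forward (the union of all such intervals is a $\phi$-invariant region avoiding $t_1(z_1^\infty)$). The deferred cases $n\le 2$ are indeed handled directly in the paper ($n=2$ by an explicit computation of the orbit $t_1^{\B_2}$), so you should supply that short computation rather than leave it as ``degenerate.''
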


\begin{proof} This is clear if $n = 1$.  

For $n = 2$, 
 $\B_2 = \gen{\sigma_1}$, and 
$$t_1^{\B_2} =  \{t_1^{\sigma_1^{2m}}, t_1^{\sigma_1^{1+2m}} \mid m \in \integers\}
=  \{t_1^{(t_1t_2)^m}, t_2^{(t_1t_2)^m} \mid m \in \integers\}.$$
Thus, every word in $t_1^{\B_2}$ is squarefree and does not end in $\overline t_2$.
Hence, every end in $t_1^{\B_2}((\Pi \overline t_{[n{\downarrow}1]})^\infty)$ is squarefree.

Thus, we may assume that $n \ge 3$.

Recall that $z_1 = \Pi \overline t_{[n{\uparrow}1]}$, and, hence, $\overline z_1 = \Pi t_{[1{\uparrow}n]}$.
Let $\cup[t]_{[1{\uparrow}n]}$ denote $\bigcup\limits_{i\in[1{\uparrow}n]} [t_i]$.
By Lemma~\ref{lem:int}\eqref{it:int5},  $t_1( z_1^\infty)$ does not lie in 
$$\textstyle\bigcup\limits_{x \in \cup[t]_{[1{\uparrow}n]}}[x(\overline z_1^\infty) , \,  
\overline x (z_1^\infty)]
\,\,\,\,\,\,(=\,\,\,\,\,\,
\textstyle \bigcup\limits_{i=1}^n \,\,\,
\bigcup\limits_{w \in \Sigma_{0,1,n} - ({\star} t_i) - ({\star} \overline t_i)}
[wt_i\overline w(\overline z_1^\infty), \,  
w\overline t_i\overline w (z_1^\infty) ]).$$
  Notice that $\phi$ permutes the elements of each of the following sets: $\cup[t]_{[1{\uparrow}n]}$;
$\{\overline z_1^\infty \}$;
$\{z_1^\infty\}$; and, $\bigcup\limits_{x \in \cup[t]_{[1{\uparrow}n]}}[x(\overline z_1^\infty), \,  
\overline x (z_1^\infty)]$.  Hence $ (t_1(z_1^\infty))^\phi$ does not lie in $\bigcup\limits_{x \in \cup[t]_{[1{\uparrow}n]}}[x(\overline z_1^\infty), \,  
\overline x (z_1^\infty)]$.
By Lemma~\ref{lem:int}\eqref{it:int4}, 
$$\textstyle\bigcup\limits_{x \in \cup[t]_{[1{\uparrow}n]}}[x(\overline z_1^\infty) , \,  
\overline x (z_1^\infty)] \,\,\,\,\,\,\supseteq\,\,\, \,\,\,\bigcup\limits_{i=1}^n \,\,\,
\textstyle\bigcup\limits_{w \in \Sigma_{0,1,n} - ({\star} t_i) - ({\star} \overline t_i)}
((w t_i t_i{\blacktriangleleft}) \cup (w \overline t_i \overline t_i{\blacktriangleleft})).$$
Hence, $ (t_1(z_1^\infty))^\phi$ does not lie in the latter set either, and, hence,
$ (t_1(z_1^\infty))^\phi$ is a squarefree end.  Since
$(t_1(z_1^\infty))^\phi  =  t_1^\phi(z_1^\infty)$, the desired result holds.
\end{proof}

We now obtain new information about the $\B_n$-orbit of $t_1$ in $\Sigma_{0,1,n}$.

\begin{corollary}\label{cor:square} Let $n \ge 1$, let $\phi\in\B_n$, and let $k \in [1{\uparrow}n]$.
\begin{enumerate}[\normalfont (i).]
\vskip-0.6cm \null
\item  $t_1^\phi$ is a squarefree word in $\Sigma_{0,1,n}$.
\vskip-0.6cm \null
\item  $t_1^\phi \not \in ( \Pi \overline t_{[n{\downarrow}k+1]} t_k {\star} ) -\{t_k^{\Pi t_{[k+1{\uparrow}n]}}\}$.
\vskip-0.6cm \null
\item  $t_1^\phi \not \in ( \Pi  t_{[1{\uparrow}k-1]} \overline t_{k} {\star} )$.
\end{enumerate}
\end{corollary}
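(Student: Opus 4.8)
My plan is to deduce all three parts from Theorem~\ref{th:square}, together with a bound on how much cancellation can occur when the reduced word $t_1^\phi$ is multiplied by the end $z_1^\infty=(\Pi\overline t_{[n{\downarrow}1]})^\infty$. Set up notation: write the normal form of $t_1^\phi$ as $\overline w t_{1^\pi}w$, where $\pi=\pi(\phi)$ and $w=w_1(\phi)$, so that by Notation~\ref{not:basic} the word $w$ begins with neither $t_{1^\pi}$ nor $\overline t_{1^\pi}$; thus $\abs{t_1^\phi}=2\abs w+1$, the central letter is $t_{1^\pi}$, and the $j$-th letter of $t_1^\phi$ is the inverse of its $(2\abs w+2-j)$-th letter. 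Two consequences: no square of $t_1^\phi$ can involve the central letter (else $w$ would begin with $t_{1^\pi}^{\pm1}$), and the reflection $j\mapsto 2\abs w+2-j$ carries any square lying in $w$ to one lying in $\overline w$. Since $z_1^\infty$ is a product of letters from $\overline t_{[1{\uparrow}n]}$ only, the suffix $s$ of $t_1^\phi$ that cancels when we form the end $t_1^\phi(z_1^\infty)$ is a product of letters from $t_{[1{\uparrow}n]}$ only; it cannot reach past the central letter $t_{1^\pi}$ into $\overline w$, for then $s$ would also contain the first letter of $w$, which lies in $\overline t_{[1{\uparrow}n]}$; hence $\abs s\le\abs w+1$, and so the initial segment $\overline w$ of $t_1^\phi$ is an initial segment of $t_1^\phi(z_1^\infty)$. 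Finally, recall from the proof of Theorem~\ref{th:square} that $t_1^\phi(z_1^\infty)=(t_1(z_1^\infty))^\phi$ lies outside $\bigcup_{x\in\cup[t]_{[1{\uparrow}n]}}[\,x(\overline z_1^\infty),\overline x(z_1^\infty)\,]$ and in particular is a squarefree end.

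Part (i) is then immediate: if $t_1^\phi$ were not squarefree, then by the reflection remark it would have a square inside $\overline w$, and, since $\overline w$ is an initial segment of $t_1^\phi(z_1^\infty)$, that square would persist in $t_1^\phi(z_1^\infty)$, contradicting squarefreeness of the latter.

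For (ii) and (iii) the mechanism is uniform. For a reduced word $u$, the set $(u{\blacktriangleleft})$ of ends beginning with $u$ is a block of the partitions used to define $<$ on $\mathfrak E$, hence a convex subset of $(\mathfrak E,<)$; having a least and a greatest element, it equals the closed interval $[\min(u{\blacktriangleleft}),\max(u{\blacktriangleleft})]$. So it suffices to exhibit, for a suitable conjugate $x$ of $t_k$, the inclusion $(u{\blacktriangleleft})\subseteq[\,x(\overline z_1^\infty),\overline x(z_1^\infty)\,]$; since $x\in\cup[t]_{[1{\uparrow}n]}$, this interval is one of those avoided by $t_1^\phi(z_1^\infty)$, whence $t_1^\phi(z_1^\infty)\notin(u{\blacktriangleleft})$. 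For (iii), with $u=\Pi t_{[1{\uparrow}k-1]}\overline t_k$, I take $x=t_k^{\Pi\overline t_{[k-1{\downarrow}1]}}=\Pi t_{[1{\uparrow}k-1]}\,t_k\,\Pi\overline t_{[k-1{\downarrow}1]}$; a direct evaluation of the two eventually periodic ends shows that no cancellation occurs in $\overline x(z_1^\infty)$ and that $\overline x(z_1^\infty)=\max(u{\blacktriangleleft})$, while $x(\overline z_1^\infty)$ lies in the block $(\Pi t_{[1{\uparrow}k-1]}t_k{\blacktriangleleft})$, which in the ordering sits immediately below $(u{\blacktriangleleft})$; this gives the inclusion. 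For (ii), with $u=\Pi\overline t_{[n{\downarrow}k+1]}t_k$, I take $x=t_k^{\Pi t_{[k+1{\uparrow}n]}}$, which is precisely the claimed exceptional element; the analogous computation gives $x(\overline z_1^\infty)=\min(u{\blacktriangleleft})$ and places $\overline x(z_1^\infty)$ in the block $(\Pi\overline t_{[n{\downarrow}k+1]}\overline t_k{\blacktriangleleft})$, which sits immediately above $(u{\blacktriangleleft})$, again yielding the inclusion. It remains to transfer $t_1^\phi(z_1^\infty)\notin(u{\blacktriangleleft})$ back to $t_1^\phi$. If $\abs w\ge\abs u$ — i.e.\ $\abs w\ge k$ in (iii) and $\abs w\ge n-k+1$ in (ii) — then $\abs s\le\abs w+1$ forces the cancelled suffix $s$ to be disjoint from the prefix $u$ of $t_1^\phi$, so $t_1^\phi\in(u{\star})$ would yield $t_1^\phi(z_1^\infty)\in(u{\blacktriangleleft})$, a contradiction; and a sign-by-sign inspection of the normal form $\overline w t_{1^\pi}w$ shows that $t_1^\phi\in(u{\star})$ is impossible for smaller $\abs w$, except that in (ii) the value $\abs w=n-k$ is permitted and forces $\overline w=\Pi\overline t_{[n{\downarrow}k+1]}$ and $t_{1^\pi}=t_k$, i.e.\ $t_1^\phi=t_k^{\Pi t_{[k+1{\uparrow}n]}}$, the exceptional element. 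This proves (ii) and (iii).

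The routine steps are the explicit evaluation of the periodic ends $x(\overline z_1^\infty)$, $\overline x(z_1^\infty)$ and the placement of the blocks $(\Pi t_{[1{\uparrow}k-1]}t_k{\blacktriangleleft})$ and $(\Pi\overline t_{[n{\downarrow}k+1]}\overline t_k{\blacktriangleleft})$ relative to $(u{\blacktriangleleft})$, all immediate from the definition of $<$. I expect the only delicate point — the main obstacle — to be the boundary analysis for small $\abs w$ in (ii) and (iii): one must check which prefixes the alternating word $\overline w t_{1^\pi}w$ can possibly have, using repeatedly that the letters of $\overline w$ are the inverses of those of $w$ in reverse order, that $t_{1^\pi}$ occupies the centre, and that $w$ does not begin with $t_{1^\pi}^{\pm1}$; it is exactly this case that accounts for the single exception appearing in~(ii).
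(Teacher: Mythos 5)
Your part (i) is essentially the paper's own argument: the paper likewise observes that $t_1^\phi(z_1^\infty)\in(\overline w_1{\blacktriangleleft})$, deduces from Theorem~\ref{th:square} that $\overline w_1$ is squarefree, and passes to $t_1^\phi$ via the same central-letter and reflection observations that you spell out. For (ii) and (iii) you take a genuinely different route. The paper extracts (ii) from squarefreeness of the end alone (if $w_1$ ended with $\overline t_k\Pi t_{[k+1{\uparrow}n]}$, the cancellation in $w_1(z_1^\infty)$ would leave the square $\overline t_k\overline t_k$), and then obtains (iii) by conjugating by the orientation-reversing automorphism $\xi\colon t_j\mapsto\overline t_{n+1-j}$. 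You instead return to the interval-avoidance statement inside the proof of Theorem~\ref{th:square} and show directly that the shadow $(u{\blacktriangleleft})$ of each forbidden prefix $u$ sits inside one of the avoided intervals $[x(\overline z_1^\infty),\overline x(z_1^\infty)]$; your identifications of $\min$, $\max$ and the adjacent blocks are correct, and your small-$\abs{w}$ analysis correctly isolates the single exception $t_k^{\Pi t_{[k+1{\uparrow}n]}}$. Your version needs no extra symmetry for (iii); the paper's is shorter and uses only the statement of Theorem~\ref{th:square} rather than a fact buried in its proof.

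Two repairs are needed. First, your justification that the cancelled suffix $s$ cannot reach past the central letter is wrong as written: the first letter of $w$ need \emph{not} lie in $\overline t_{[1{\uparrow}n]}$ (e.g.\ $t_1^{\sigma_1^2}=\overline t_2 t_1 t_2$ has $w=t_2$). The correct argument is that $s$ consists of positive letters, and if it reached into $\overline w$ it would contain both the first letter of $w$ and the last letter of $\overline w$, which are mutually inverse and so cannot both be positive. Second, the avoidance of $\bigcup_{x}[x(\overline z_1^\infty),\overline x(z_1^\infty)]$ is established in the proof of Theorem~\ref{th:square} only after the reduction to $n\ge3$, since Lemma~\ref{lem:int}(v) requires $n\ge3$; hence your proofs of (ii) and (iii) are incomplete for $n=2$, where (ii) with $k=1$ is not vacuous (it is exactly the case producing the exceptional element $t_1^{t_2}$). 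You should either check $n\le 2$ directly from the explicit description $t_1^{\B_2}=\{t_1^{(t_1t_2)^m},t_2^{(t_1t_2)^m}\}$, or derive (ii) from squarefreeness of the end as the paper does, which is available for all $n\ge1$.
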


\begin{proof}  Recall from Notation~\ref{not:basic} that we write
$t_1^{\phi} = t_{1^{\pi(\phi)}}^{w_1(\phi)}$. Let $\pi = \pi(\phi)$ and $w_1 = w_1(\phi)$.

It is not difficult to see that  
$$ t_1^\phi(z_1^\infty) 
 = \overline w_1 t_{1^{\pi}} w_1 ((\Pi \overline t_{[n{\downarrow}1]})^\infty)
\,\,\, \in \,\,\, (\overline w_1{\blacktriangleleft}).$$
By Theorem~\ref{th:square}, $t_1^\phi(z_1^\infty) $ is a squarefree end. 
Hence, $\overline w_1$ is a squarefree word,  and $w_1 \not \in ({\star} \overline t_k \Pi t_{[k+1{\downarrow}n]} )$.

Since $\overline w_1$ is a squarefree word, $t_1^{\phi}$ is also a squarefree word.  Hence (i) holds.

Also,  $w_1 \not \in ({\star} \overline t_k \Pi t_{[k+1{\uparrow}n]} )$  implies that
  $\overline  w_1 \not \in ( \Pi \overline t_{[n{\downarrow}k+1]} t_k {\star} )$ 
and, hence, $t_1^{\phi} \not \in ( \Pi \overline t_{[n{\downarrow}k+1]} t_k {\star} ) -\{t_k^{\Pi t_{[k+1{\uparrow}n]}}\}$
and, also,  $\overline t_1^{\phi} \not \in ( \Pi \overline t_{[n{\downarrow}k+1]} t_k {\star} )$.
In particular, (ii) holds.

Let $\xi$  be the automorphism of $\Sigma_{0,1,n}$ determined by \begin{tabular}
{
>{$}r<{$}  
@{} 
>{$}c<{$} 
@{\hskip0cm} 
>{$}l<{$}
}
\setlength\extrarowheight{3pt}
&\underline{\scriptstyle j \in [1{\uparrow}n]}&
\\[.15cm]
&(t_j)^{\xi}&\\  
=(&\overline t_{n+1-j} &)
\end{tabular}.
Then $\xi^2 = 1$ and $\xi \in \Out^{-}_{0,1,n}:= \Out_{0,1,n}-\Out^{+}_{0,1,n}$.  Also,
$$
t_n^{\phi^\xi} = t_n^{\xi\phi\xi} = \overline t_1^{\phi \xi} \not  \in 
( \Pi \overline t_{[n{\downarrow}k+1]} t_k {\star} )^{\xi} = ( \Pi  t_{[1{\uparrow}n-k]} \overline t_{n+1-k} {\star} ).
$$  
It follows that $t_n^{\B_n^\xi} \cap ( \Pi  t_{[1{\uparrow}n-k]} \overline t_{n+1-k} {\star} ) = \emptyset$.
Since $\B_n^\xi = \B_n$ and $t_n^{\B_n} = t_1^{\B_n}$,  we see that
$t_1^{\phi} \not \in ( \Pi  t_{[1{\uparrow}n-k]} \overline t_{n+1-k} {\star} )$.  
Now replacing $k$ with $n+1-k$ gives (iii).
\end{proof}
In Remark~\ref{rem:square2}, we shall give a second proof of Corollary~\ref{cor:square} using Larue-Whitehead
diagrams.

\section{Actions on free products of cyclic groups}\label{sec:freeprods}

\begin{notation}\label{not:powers}  Throughout this section, we assume that $n \ge 1$ and we 
fix a positive integer $N$.

Let $p_{([1{\uparrow}N])}$ be a partition of $n$.  Thus,
$p_{([1{\uparrow}N])}$  is an $N$-tuple  for $[1{\uparrow}\infty[$
such that $p_1 + \cdots +p_N = n$.

Let $m_{([1{\uparrow}N])}$ be an $N$-tuple for $\naturals  - \{1\}$. 

We let $\Sigma_{0,1,p_1^{(m_1)}\perp p_2^{(m_2)} \perp \cdots \perp p_N^{(m_n)}}$ denote the
group with presentation $$\gen{z,\tau_{[1{\uparrow}n]} \mid 
z\Pi  \tau_{[1{\uparrow}n]}, \{\tau_{j+\sum p_{[1{\uparrow}i-1]}}^{m_i}\}_{i \in [1{\uparrow}N], j  \in 
[1{\uparrow}p_i]}}.$$
Thus, $\Sigma_{0,1,p_1^{(m_1)}\perp p_2^{(m_2)} \perp \cdots \perp p_N^{(m_N)}}$  is isomorphic to
a free product of cyclic groups,
 $C_{m_1}^{\ast p_1} \ast C_{m_2}^{\ast p_2} \ast \cdots C_{m_N}^{\ast p_N}$, where $C_0$ is interpreted as
$C_\infty$, and $p_i^{(0)}$ is also written $p_i$ with no exponent.

We let $\Out_{0,1,p_1^{(m_1)}\perp p_2^{(m_2)} \perp \cdots \perp p_N^{(m_N)}}$
denote the group of all automorphisms of 
 $\Sigma_{0,1,p_1^{(m_1)}\perp p_2^{(m_2)} \perp \cdots \perp p_N^{(m_n)}}$
which map $\{z,\overline z\}$ and $$
\{\{\{[\tau_i],[\overline \tau_i]\} \mid i \in [p_1 + ... + p_{j-1} + 1{\uparrow}p_1 + ... + p_{j}]\}\mid
 j \in [1{\uparrow}N]\}$$
to themselves. 

We let $\Out^+_{0,1,p_1^{(m_1)}\perp p_2^{(m_2)} \perp \cdots \perp p_N^{(m_n)}}$
denote the group of all automorphisms of 
 $\Sigma_{0,1,p_1^{(m_1)}\perp p_2^{(m_2)} \perp \cdots \perp p_N^{(m_n)}}$
which map $\{z\}$ and $$
\{\{[\tau_i] \mid i \in [p_1 + ... + p_{j-1} + 1{\uparrow}p_1 + ... + p_{j}]\}\mid j \in [1{\uparrow}N]\}$$
to themselves.

In the case where all the $m_i$ are $0$, we get  groups denoted
$\Out_{0,1,p_1 \perp p_2  \perp \cdots \perp p_N }$ and
$\Out^+_{0,1,p_1 \perp p_2  \perp \cdots \perp p_N }$.  Notice that 
$\Out_{0,1,p_1 \perp p_2  \perp \cdots \perp p_N }$
is the subgroup of $\Out_{0,1,n}$ consisting of those
 elements such that the permutation in $\Sym_n$,
arising from the permutation  
of $\{\{[t_1],[\overline t_1]\}, \ldots, \{[t_n],[\overline t_n]\}\}$,
lies in the natural image of $\Sym_{p_1} \times \Sym_{p_2} \times \cdots \times \Sym_{p_N}$
in $\Sym_n$.

There are natural maps
\begin{align}\label{eq:maps0}
\Out_{0,1,p_1\perp p_2 \perp \cdots \perp p_N} 
&\to 
\Out_{0,1,p_1^{(m_1)}\perp p_2^{(m_2)} \perp \cdots \perp p_N^{(m_n)}},
\\
\Out^+_{0,1,p_1\perp p_2 \perp \cdots \perp p_N} 
&\to 
\Out^+_{0,1,p_1^{(m_1)}\perp p_0^{(m_2)} \perp \cdots \perp p_N^{(m_n)}}.  \label{eq:maps1} 
\end{align}
Since~\eqref{eq:maps1} is of index two in~\eqref{eq:maps0}, we see that~\eqref{eq:maps0} is injective, 
resp. surjective, resp. bijective, if and only if~\eqref{eq:maps1} is.
\hfill\qed   
\end{notation}

For topological reasons, we suspect that~\eqref{eq:maps0} and~\eqref{eq:maps1} are isomorphisms. 
In this section, we shall prove that this holds in the case where all the $m_i$ are equal or $N=1$.
We begin by proving that~\eqref{eq:maps0} and~\eqref{eq:maps1} are injective, which seems to be new.

\begin{theorem}\label{th:BHplus} With {\normalfont Notation~\ref{not:powers}},  the maps 
\setcounter{theorem}{1}
\begin{align} 
\Out_{0,1,p_1\perp p_2 \perp \cdots \perp p_N} 
&\to 
\Out_{0,1,p_1^{(m_1)}\perp p_2^{(m_2)} \perp \cdots \perp p_N^{(m_n)}},
 \\
\Out^+_{0,1,p_1\perp p_2 \perp \cdots \perp p_N} 
&\to 
\Out^+_{0,1,p_1^{(m_1)}\perp p_2^{(m_2)} \perp \cdots \perp p_N^{(m_n)}}  
\end{align}
are injective. 
\setcounter{theorem}{2}
\end{theorem}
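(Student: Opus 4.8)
The plan is to reduce the injectivity of the two maps to a single combinatorial assertion about reduced words, exactly in the spirit of the earlier Crisp--Paris / Larue--Shpilrain arguments that the excerpt promises to use. By the final sentence of Notation~\ref{not:powers}, it suffices to prove injectivity of the non-$+$ map, since~(2.2) is of index two in~(2.1); so I would work with $\Out_{0,1,p_1\perp\cdots\perp p_N}$, regarded as a subgroup of $\Out_{0,1,n}=\Out^+_{0,1,n}\rtimes\langle\text{flip}\rangle$ acting on the free group $\Sigma_{0,1,n}$ on $t_{[1{\uparrow}n]}$, and its image in $\Out_{0,1,p_1^{(m_1)}\perp\cdots\perp p_N^{(m_n)}}$ acting on $\overline\Sigma:=\Sigma_{0,1,p_1^{(m_1)}\perp\cdots\perp p_N^{(m_n)}}$, the free product of cyclic groups obtained from $\Sigma_{0,1,n}$ by killing $\tau_i^{m(i)}$, where $m(i)$ denotes the relevant $m_j$. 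The natural map $q\colon\Sigma_{0,1,n}\onto\overline\Sigma$ sends $t_i\mapsto\tau_i$, and the point is that an automorphism $\phi\in\Out_{0,1,p_1\perp\cdots\perp p_N}$ descends to $\overline\phi$ on $\overline\Sigma$ with $q\circ\phi=\overline\phi\circ q$.

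The core step is the following normal-form statement: if $w\in\Sigma_{0,1,n}$ is a reduced word of the special form $t_i^{w'}$ for some $i$ and some $w'$ — equivalently, any cyclically reduced conjugate of some $t_i$ — then $q(w)$ is a nontrivial element of $\overline\Sigma$ whose normal form in the free product $C_{m_1}^{\ast p_1}\ast\cdots\ast C_{m_N}^{\ast p_N}$ is obtained from the reduced word $w$ in $\Sigma_{0,1,n}$ by replacing each syllable block $t_i^{\pm1}$ by the corresponding generator of the appropriate cyclic factor, with \emph{no} cancellation or coalescing occurring. The reason is that a reduced word representing $t_i^{w'}$ has, as its letters, elements $t_j^{\pm1}$ with $j$ ranging over indices; when $j$ runs over a block of indices all belonging to the same cyclic factor $C_{m_k}$, two consecutive letters $t_j^{\varepsilon}t_{j'}^{\varepsilon'}$ with $j,j'$ in that block and $j=j'$ would force $\varepsilon'=\varepsilon$ (else the word is not reduced), but a power $t_j^{\pm2}$ cannot appear in a reduced conjugate of a \emph{generator} $t_i$ once $m_k\ge 2$ unless... — here one must be slightly careful, and this is where Corollary~\ref{cor:square} enters: it tells us that $t_1^\phi$ (hence every $B_n$-translate of a generator, hence, after composing with the flip and with subscript-shifting as in the proof of Theorem~\ref{th:order}, every $\Out_{0,1,p_1\perp\cdots\perp p_N}$-translate of a generator) is \emph{squarefree}. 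Squarefreeness of the reduced word $t_i^\phi$ guarantees exactly that no syllable of length $\ge 2$ occurs, so $q$ carries the reduced word letter-by-letter to a reduced word of the free product without any length drop, and in particular $t_i^\phi\ne 1$ forces $\overline{t_i^\phi}=q(t_i^\phi)\ne t_j$ whenever $t_i^\phi\ne t_j$.

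Granting that, injectivity is immediate: suppose $\phi\in\Out_{0,1,p_1\perp\cdots\perp p_N}$ maps to the identity of $\Out_{0,1,p_1^{(m_1)}\perp\cdots}$. Then for every $i\in[1{\uparrow}n]$ we have $q(t_i^\phi)=\overline{t_i}^{\,\overline\phi}=\overline{t_i}=q(t_i)$, up to the admissible ambiguity (conjugation within a cyclic factor and $z\mapsto z$); but by the normal-form statement the reduced word $t_i^\phi$ in $\Sigma_{0,1,n}$ is recovered from its image $q(t_i^\phi)$ by reading off the syllables, and similarly for $t_i$, so $t_i^\phi=t_i$ for all $i$, whence $\phi=1$ in $\Out_{0,1,n}$ and a fortiori in $\Out_{0,1,p_1\perp\cdots\perp p_N}$. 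I would present this by first isolating the lemma ``$q$ is injective on squarefree cyclically-reduced conjugates of generators, and length-preserving there'', prove it by the elementary free-product normal-form argument, invoke Corollary~\ref{cor:square}(i) (suitably transported by the flip automorphism $\xi$ and subscript shifts, exactly as already done inside the proof of Corollary~\ref{cor:square}) to see the hypothesis is met, and then conclude as above.

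The main obstacle I anticipate is bookkeeping rather than depth: making precise the claim that killing the relators $\tau_i^{m(i)}$ does nothing to a \emph{squarefree} reduced word — one has to check that not only are there no $t_j^{\pm2}$ subwords, but also that adjacent syllables from the \emph{same} cyclic factor but \emph{different} indices $j\ne j'$ remain distinct letters in the free product (they do, since distinct generators of $C_{m_k}^{\ast p_k}$ are distinct free-product letters), so that no coalescing occurs; and one must handle the possibility $m(i)=2$, where $\tau_j=\overline\tau_j$, by noting that squarefreeness already rules out the only configuration ($t_j t_j$ or $t_j\overline t_j$) where that identification could shorten the word. Once the squarefreeness input is in hand, everything else is the standard Crisp--Paris observation, so I would keep the write-up short and lean on Corollary~\ref{cor:square}.
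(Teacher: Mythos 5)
Your proposal is correct and follows essentially the same route as the paper: the paper's proof also takes $\phi$ in the kernel, notes it must lie in $\Out^+_{0,1,n}$, invokes the squarefreeness of the words $t_i^\phi$ (Theorem~\ref{th:square} via Corollary~\ref{cor:square}(i)), and concludes that squarefree reduced words have the same normal form in $\Sigma_{0,1,n}$ and in the free product of cyclic groups, forcing $t_i^\phi=t_i$ for all $i$. Your extra care about syllable coalescing and the $m=2$ case just makes explicit what the paper leaves as an observation about normal forms.
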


\begin{proof} Suppose that $\phi$ is an element of the kernel of 
 \eqref{eq:maps0} or  \eqref{eq:maps1}.   Clearly, $\phi \in \Out^+_{0,1,n}$,
and $t^\phi_{([1{\uparrow}n])}$, $t_{([1{\uparrow}n])}$
have the same image  in $\Sigma_{0,1,p_1^{(m_1)}\perp p_2^{(m_2)} \perp \cdots \perp p_N^{(m_N)}}$.
By Theorem~\ref{th:square}, $(t_{([1{\uparrow}n])})^\phi$ is an $n$-tuple of squarefree words
in $\Sigma_{0,1,n}$, and, hence, has the same normal form in $\Sigma_{0,1,n}$ and in
$\Sigma_{0,1,p_1^{(m_1)}\perp p_2^{(m_2)} \perp \cdots \perp p_N^{(m_N)}}$.  Hence  
 $t^\phi_{([1{\uparrow}n])} = t_{([1{\uparrow}n])}$ as $n$-tuples for $\Sigma_{0,1,n}$.  Thus $\phi = 1$, and
the result is proved.
\end{proof}

\begin{history}\label{rems:BH}  
Let us now restrict to the classic case where $N=1$.
Here, for an integer $m\ge 2$,
 we are considering the action of $\Out_{0,1,n}$ on $C_m^{\ast n}$, and it induces maps
\begin{align}
\Out_{0,1,n} &\to \Out_{0,1,n^{(m)}}, \label{eq:BH} \\
\Out^+_{0,1,n} &\to \Out^+_{0,1,n^{(m)}}.\label{eq:BH2}
\end{align}

Theorem~\ref{th:BHplus} shows that these maps are injective.
Birman-Hilden~\cite[Theorem 7]{BirmanHilden}  gave a topological proof that~\eqref{eq:BH2}
 is injective,
thus answering a question of Magnus.
Crisp-Paris~\cite{CrispParis2} gave an elegant algebraic
proof of the injectivity of~\eqref{eq:BH2} 
using the trichotomy argument of Larue~\cite{Larue94}
and Shpilrain~\cite{Shpilrain}.
The Crisp-Paris argument can be summarized as follows.

For each $i \in [1{\uparrow}n]$, 
let  $(\gen{\tau_{i}}{\star})$ denote the set of
elements of $\Sigma_{0,1,n^{(m)}}$ whose free-product normal form
begins with an element of $\gen{\tau_i}-\{1\}$. 

Suppose that  $\phi$ is a non-trivial element of $\B_n = \Out^+_{0,1,n}$.
We will show that $\phi$ acts non-trivially on $\Sigma_{0,1,n^{(m)}}$.

We may assume that $n\ge 3$.  By Theorem~\ref{th:order},
by replacing $\phi$ with $\overline \phi$ if necessary, we may assume that $\phi$
is $\sigma$-neg\-ative.  Thus there exists some\linebreak
$i \in [1{\uparrow}n-1]$ such that $\phi$ 
is the product of a finite sequence of elements of 
$\sigma_{[i+1{\uparrow}n-1]} \cup \overline \sigma_{[i{\uparrow}n-1]}$,
and $\overline \sigma_i$ appears at least once in the sequence.

With Notation~\ref{not:alpha},
 $$(\gen{\tau_{i}}{\star})^{\overline \sigma_i} = (\gen{\tau_{i}}{\star})^{\overline \sigma_i'' \overline \sigma_i'} 
= (\gen{\tau_{i+1}}{\star})^{\overline \sigma_i'} 
\subseteq  (\tau_i(\gen{\tau_{i+1}}{\star})) 
\subset (\gen{\tau_{i}}{\star}),$$
since $n \ge 3$. 
Because the elements of $\sigma_{[i+1{\uparrow}n-1]} \cup \overline \sigma_{[i{\uparrow}n-1]}$ 
act as injective self-maps on $(\gen{\tau_{i}}{\star})$, it follows that 
$(\gen{\tau_{i}}{\star})^{\phi} \subset (\gen{\tau_{i}}{\star})$, 
and, hence, $\phi$ acts non-trivially on $\Sigma_{0,1,n^{(m)}}$, as desired.
 \hfill\qed
\end{history}

Let us now verify the surjectivity  of the maps 
\eqref{eq:BH} and \eqref{eq:BH2}.
The case where  $m = 2$  was verified by Stephen Humphries~\cite[Lemma~2.1.7]{Sakuma}.

\begin{notation} Let $m,\,n \in \naturals$ with $n \ge 1$ and  $m \ge 2$.
Let $\lfloor\frac{m}{2}\rfloor$ denote the greatest integer not exceeding $\frac{m}{2}$.
Then $[0{\uparrow}\lfloor\frac{m}{2}\rfloor ] \cup [-1{\downarrow}(-\lfloor\frac{m-1}{2}\rfloor)]$ 
is a set of representatives for the integers modulo~$m$.    
For $\tau^k \in \gen{\tau \mid \tau^m=1}$, we define $\abs{\tau^k}$ by
\begin{tabular}
{
>{$}r<{$} 
@{} 
>{$}l<{$}
@{\hskip .5cm}
>{$}r<{$} 
@{\hskip 0cm} 
>{$}l<{$}
}
\setlength\extrarowheight{3pt}
 &\hskip-.4cm\underline{\scriptstyle k \in [0{\uparrow}\lfloor\frac{m}{2}\rfloor] } 
&&\hskip-1.4cm\underline{\scriptstyle k \in [-1{\downarrow}-\lfloor\frac{m-1}{2}\rfloor]}
\\[.25cm]
(&\,\,\abs{\tau^k}&  \abs{\tau^k}\phantom{1}&)\\  
=(&\,\,2k & -2k-1 &)
\end{tabular};
we extend $\abs{-}$ to all of $\Sigma_{0,1,n^{(m)}}$ by using normal forms for the free product $C_m^\ast$. 

Let $\phi \in \Out^+_{0,1,n^{(m)}}$.
 There exists
a unique permutation $\pi \in \Sym_n$, and a unique $(n+2)$-tuple $(w_{([0{\uparrow}n+1])})$ for $\Sigma_{0,1,n^{(m)}}$
 such that
$w_0 = 1$ and $w_{n+1} = 1$, and,
for each $i \in [1{\uparrow}n]$,  $w_i \not\in (t_{i^\pi}{\star})\cup (\overline t_{i^\pi}{\star})$ 
and  $t_i^\phi = t_{i^\pi}^{w_i}$.
For each $i \in [0{\uparrow}n]$, let $u_i = w_i \overline w_{i+1}$. 
We define    $ \pi(\phi) := \pi$, $w_i(\phi) :=  w_i$, $i \in [0{\uparrow}n+1]$, and 
$u_i(\phi):=u_i$, $i \in [0{\uparrow}n]$.  
We write $\norm{\phi}:=  n + 2 \sum\limits_{i \in [1{\uparrow}n]} \abs{w_i(\phi)}$.
\hfill\qed
\end{notation}

The following is similar to Artin's Lemma~\ref{lem:art2}.

\begin{lemma}\label{lem:humph}
Let $n\ge 1$, $m \ge 2$ and let $\phi \in \Out_{0,1,n^{(m)}}$.  Let $\pi = \pi(\phi)$.
For each  $i \in[0{\uparrow}n]$,  let
$u_i = u_i(\phi)$.  
For each
 $i \in[1{\uparrow}n]$,  let $a_i$, $b_i$ denote the elements of $[0,m-1]$ determined by the following:
there exists some $u_i' \in \Sigma_{0,1,n^{(m)}} - (\star \gen{\tau_{i^\pi}})$ such that
$u_{i-1} = u_i'\tau_{i^\pi}^{a_i}$;
there exists some $u_i''\in \Sigma_{0,1,n^{(m)}} - (\gen{\tau_{i^\pi}}\star )$ such that
$u_{i} = \tau_{i^\pi}^{b_i}u_i''$.   In particular, $a_1 = b_{n} = 0$.
\begin{enumerate}[\normalfont (i).] 
\vskip-0.7cm \null
\item  Suppose that there exists some $i \in [2{\uparrow}n]$
such that  $a_i \in[\lfloor\frac{m}{2}\rfloor {\uparrow} m-1]$.  Then
$\norm{\sigma_{i-1}\phi} < \norm{\phi}$. 
\vskip-0.7cm \null
\item   Suppose that there exists some $i \in [1{\uparrow}n-1]$
such that   $b_i \in [\lfloor\frac{m+1}{2}\rfloor {\uparrow}m-1]$.  Then
$\norm{\overline\sigma_{i}\phi} < \norm{\phi}$. 
\vskip-0.7cm \null
\item If $\phi \ne 1$, there exists some 
$\sigma_i^\epsilon \in \sigma_{[1{\uparrow}n-1]} \cup
\overline \sigma_{[1{\uparrow}n-1]}$ such that $\norm{\sigma_i^\epsilon \phi} < \norm{\phi}$.
\end{enumerate}
\end{lemma}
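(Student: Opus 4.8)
The plan is to imitate the proof of Artin's Lemma~\ref{lem:art2}, recording how left multiplication by $\sigma_{i-1}$, resp. $\overline\sigma_i$, changes the data $\pi(\phi)$, $w_k(\phi)$, $u_k(\phi)$, and then comparing norms. Since $\phi$ fixes $z$, we have $\prod_{k\in[1{\uparrow}n]}\tau_{k^\pi}^{\,w_k}=\prod_{k\in[1{\uparrow}n]}\tau_k$ in $\Sigma_{0,1,n^{(m)}}$, the free-product analogue of the identity used for Lemma~\ref{lem:art2}(iii); here $\pi=\pi(\phi)$, $w_k=w_k(\phi)$, and I abbreviate $a_i=a_i(\phi)$, $b_i=b_i(\phi)$, $u_i=u_i(\phi)$.

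For (i): left multiplication by $\sigma_{i-1}$ leaves $t_k^{\sigma_{i-1}\phi}=t_k^\phi$ for $k\notin\{i-1,i\}$, gives $t_{i-1}^{\sigma_{i-1}\phi}=t_i^\phi=\tau_{i^\pi}^{\,w_i}$, and gives $t_i^{\sigma_{i-1}\phi}=(t_{i-1}^\phi)^{t_i^\phi}=\tau_{(i-1)^\pi}^{\,u_{i-1}\tau_{i^\pi}w_i}$, using $u_{i-1}=w_{i-1}\overline w_i$. Hence $w_{i-1}(\sigma_{i-1}\phi)=w_i$, while $w_i(\sigma_{i-1}\phi)$ is obtained from $u_{i-1}\tau_{i^\pi}w_i$ by deleting its leading $\tau_{(i-1)^\pi}$-syllable, if any; consequently $\norm{\sigma_{i-1}\phi}-\norm{\phi}=2\bigl(\abs{w_i(\sigma_{i-1}\phi)}-\abs{w_{i-1}}\bigr)$, so it suffices to prove $\abs{w_i(\sigma_{i-1}\phi)}<\abs{w_{i-1}}$. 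Write $u_{i-1}=u_i'\tau_{i^\pi}^{a_i}$ with $u_i'\notin(\star\gen{\tau_{i^\pi}})$; since also $w_i\notin(\gen{\tau_{i^\pi}}{\star})$ and $w_{i-1}=u_{i-1}w_i$, both $u_i'\tau_{i^\pi}^{a_i}w_i$ and $u_i'\tau_{i^\pi}^{a_i+1}w_i$ are free-product normal forms when $a_i\le m-2$, so $\abs{u_{i-1}\tau_{i^\pi}w_i}-\abs{w_{i-1}}=\abs{\tau_{i^\pi}^{a_i+1}}-\abs{\tau_{i^\pi}^{a_i}}$; evaluating the defining table of $\abs{-}$ shows this is $-1$, $-2$ or $-3$ for every $a_i\in[\lfloor\tfrac m2\rfloor{\uparrow}m-2]$, and deleting a leading syllable only decreases length. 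When $a_i=m-1$, the condition $w_i\notin(\gen{\tau_{i^\pi}}{\star})$ forces $u_i'=1$, hence $u_{i-1}=\tau_{i^\pi}^{m-1}$ and $u_{i-1}\tau_{i^\pi}w_i=w_i$, with $\abs{w_{i-1}}>\abs{w_i}$; so again $\abs{w_i(\sigma_{i-1}\phi)}<\abs{w_{i-1}}$. Part (ii) is proved the same way: now $t_i^{\overline\sigma_i\phi}=(t_{i+1}^\phi)^{\overline{t_i^\phi}}=\tau_{(i+1)^\pi}^{\,\overline u_i\overline\tau_{i^\pi}w_i}$, $w_{i+1}(\overline\sigma_i\phi)=w_i$, and, writing $u_i=\tau_{i^\pi}^{b_i}u_i''$, one reduces to comparing $\abs{\tau_{i^\pi}^{-b_i-1}}$ with $\abs{\tau_{i^\pi}^{-b_i}}$, which is $-2$ for $b_i\in[\lfloor\tfrac{m+1}2\rfloor{\uparrow}m-2]$; the case $b_i=m-1$ is easier because here $\abs{\tau_{i^\pi}^{-b_i}}=\abs{\tau_{i^\pi}^{1}}=2$ leaves slack to absorb any coalescence.

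For (iii) I argue by contraposition: assume $\phi\ne1$ and that neither the hypothesis of (i) nor that of (ii) holds, so $a_i<\lfloor\tfrac m2\rfloor$ for all $i\in[2{\uparrow}n]$ and $b_i<\lfloor\tfrac{m+1}2\rfloor$ for all $i\in[1{\uparrow}n-1]$; recall also $a_1=b_n=0$. Then for each $i\in[1{\uparrow}n]$ the integer $c_i:=a_i+1+b_i$ satisfies $1\le c_i\le\lfloor\tfrac m2\rfloor+\lfloor\tfrac{m+1}2\rfloor-1=m-1$, so $\tau_{i^\pi}^{c_i}\ne1$. Writing $u_{i-1}=u_i'\tau_{i^\pi}^{a_i}$ and $u_i=\tau_{i^\pi}^{b_i}u_i''$ for each $i$, and letting $u_i^\circ$ be the word obtained from $u_i$ by deleting these boundary $\tau$-powers (so $u_0^\circ=u_0$ and $u_n^\circ=u_n$, since $a_1=b_n=0$), each junction of $u_0\tau_{1^\pi}u_1\tau_{2^\pi}\cdots\tau_{n^\pi}u_n$ collapses to the single non-trivial syllable $\tau_{i^\pi}^{c_i}$, and one checks directly that $u_0^\circ\tau_{1^\pi}^{c_1}u_1^\circ\tau_{2^\pi}^{c_2}\cdots\tau_{n^\pi}^{c_n}u_n^\circ$ is a free-product normal form. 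Since this product equals $\prod_{k\in[1{\uparrow}n]}\tau_k$, whose normal form has exactly $n$ syllables, uniqueness of the normal form forces every $u_i^\circ$ to be trivial and $\tau_{i^\pi}^{c_i}=\tau_i$, i.e. $i^\pi=i$ and $c_i=1$, whence $a_i=b_i=0$; therefore every $u_i=1$, so $w_n=u_n=1$ and inductively $w_k=1$ for all $k$, and $\pi$ is trivial. Thus $\phi=1$, a contradiction. Hence $\phi\ne1$ forces one of the hypotheses of (i), (ii), which supplies the required $\sigma_i^\epsilon\in\sigma_{[1{\uparrow}n-1]}\cup\overline\sigma_{[1{\uparrow}n-1]}$.

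The main obstacle is the cancellation bookkeeping, because the length function $\abs{-}$ on $\Sigma_{0,1,n^{(m)}}$ is not sub-additive (for instance $\abs{\tau^{-2}}=3>\abs{\tau^{-1}}+\abs{\tau^{-1}}$): one must keep precise track of which syllables of a product survive in the normal form and which coalesce. The asymmetry between $\abs{\tau^{k}}$ and $\abs{\tau^{-k}}$ is exactly what forces the two different thresholds $\lfloor\tfrac m2\rfloor$ and $\lfloor\tfrac{m+1}2\rfloor$ in (i) and (ii), and the genuinely delicate points are the extreme powers $a_i=m-1$ and $b_i=m-1$, where a syllable vanishes and one has to fall back on the structural constraint $w_i\notin(\gen{\tau_{i^\pi}}{\star})$.
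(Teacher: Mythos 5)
Your strategy for (i) and (ii) is the paper's: track how left multiplication by $\sigma_{i-1}$, resp.\ $\overline\sigma_i$, replaces the conjugator $w_{i-1}$, resp.\ $w_{i+1}$, by a word in which the distinguished $\tau_{i^\pi}$-syllable has its exponent shifted by one, and then compare $\abs{\tau_{i^\pi}^{a+1}}$ with $\abs{\tau_{i^\pi}^{a}}$ (resp.\ $\abs{\overline\tau_{i^\pi}^{\,b+1}}$ with $\abs{\overline\tau_{i^\pi}^{\,b}}$); your arithmetic for these comparisons agrees with the paper's Claims. Your part (iii) is organized a little differently: you prove the contrapositive by exhibiting $u_0^\circ\tau_{1^\pi}^{c_1}u_1^\circ\cdots\tau_{n^\pi}^{c_n}u_n^\circ$ as a free-product normal form for $\Pi\tau_{[1{\uparrow}n]}$ and counting syllables, whereas the paper first manufactures a distinguished index with $(a_i,b_i)\ne(0,0)$ and $a_i+1+b_i\in\{m,m+1\}$ and then runs the dichotomy on $a_i$. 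The two arguments have the same content and yours is sound; if anything it is slightly cleaner.

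The genuine problem is your treatment of the extreme case $a_i=m-1$ in (i). You assert that ``the condition $w_i\notin(\gen{\tau_{i^\pi}}{\star})$ forces $u_i'=1$.'' That is a non sequitur: $u_i'$ is the prefix of $u_{i-1}=w_{i-1}\overline w_i$ obtained by stripping the trailing $\tau_{i^\pi}$-power, and the normalization condition on $w_i$ (a different element) places no restriction on it. Nothing prevents $w_{i-1}=x\,\overline\tau_{i^\pi}\,y$ and $w_i=y$ with $x\ne1$, which gives $a_i=m-1$ and $u_i'=x\ne1$. Without $u_i'=1$, your computation in this case reduces to bounding $\abs{u_i'w_i}$ by $\abs{u_i'}+\abs{w_i}$, and, as you yourself point out, $\abs{-}$ is not subadditive: when $\tau_{i^\pi}^{m}=1$ disappears, the last syllable of $u_i'$ and the first syllable of $w_i$ can coalesce into a single \emph{longer} syllable (already $\abs{\tau_j^{-2}}=3>\abs{\tau_j^{-1}}+\abs{\tau_j^{-1}}$ for $m\ge5$, and for even $m$ the gain can even be $2$, e.g.\ $\abs{\tau_j^{-3}}=6>1+3$ when $m=6$). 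Since the deleted syllable $\tau_{i^\pi}^{m-1}$ has length only $1$, such a gain can cancel the whole decrease, so this case cannot be dispatched by the false claim; it needs an actual analysis of the junction (and, for the same reason, your assertion that the case $b_i=m-1$ in (ii) has enough ``slack'' should be accompanied by an explicit bound on how much one coalescence can gain). So you have correctly identified where the delicacy lies, but the patch you offer for it does not work.
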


\begin{proof} (i). Let $a = a_i$. There exists some  
$v \in \Sigma_{0,1,n^{(m)}} - (\star \gen{\tau_{i^\pi}})$ such that $u_{i-1} = v\tau_{i^\pi}^{a}$.  
Since $w_{i-1}(\phi) = u_{i-1}w_{i}(\phi)$, we have 
\begin{equation}\label{eq:ws2}
w_{i-1}(\phi)  = v\tau_{i^\pi}^aw_{i}(\phi);
\end{equation}
since $w_{i}(\phi) \not\in (\gen{\tau_{i^\pi}}\star)$  
and   $v \not\in (\star \gen{\tau_{i^\pi}})$,
 $v \tau_{i^\pi}^aw_{i}(\phi)$ is a free-product normal form for $w_{i-1}(\phi)$.
 
\medskip

\noindent \textbf{Claim.} {\it 
$\abs{\tau_{i^\pi}^{a+1}} < \abs{\tau_{i^\pi}^a}$.}

\begin{proof}   If $a \in[\lfloor\frac{m}{2}\rfloor +1 {\uparrow} m-1]$, then $a-m \in 
 [-\lfloor\frac{m-1}{2}\rfloor {\uparrow} -1]$, and, hence,
$$\abs{\tau_{i^\pi}^{a}} = \abs{\tau_{i^\pi}^{a-m}} = -2(a-m)-1 = 2m-2a-1.$$

Therefore, if $a\in [\lfloor\frac{m}{2}\rfloor {\uparrow} m-2]$, 
$\abs{\tau_{i^\pi}^{a+1}} = 2m -2(a+1) -1 = 2m -2a -3$.

Thus, $\abs{\tau_{i^\pi}^{a+1}} < \abs{\tau_{i^\pi}^a}$ if $a \in[\lfloor\frac{m}{2}\rfloor +1 {\uparrow} m-2]$.

For $a = \lfloor\frac{m}{2}\rfloor$, $a \ge \frac{m-1}{2}$, and
$\abs{\tau_{i^\pi}^a} =  2a > 2m-2a-3 = \abs{\tau_{i^\pi}^{a+1}}$.

For $a = m-1$, $\abs{\tau_{i^\pi}^a}= 1$ and $\abs{\tau_{i^\pi}^{a+1}} = 0$.
\end{proof} 

 Thus, $$\abs{w_{i-1}(\phi)} = \abs{v} + \abs{\tau_{i^\pi}^a} +  \abs{w_{i}(\phi)} 
>  \abs{v} + \abs{\tau_{i^\pi}^{a+1}} +  \abs{w_{i}(\phi)}.$$

By~\eqref{eq:ws2}, $w_{i-1}(\phi) \overline w_{i}(\phi)\tau_{i^{\pi}} =  v\tau_{i^\pi}^{a+1}$; hence
$$\tau_{i}^{\sigma_{i-1}\phi} = (\tau_{i-1}^{\tau_{i}})^{\phi} 
= (\tau_{(i-1)^\pi}^{w_{i-1}(\phi)})^{(\tau_{i^\pi}^{w_{i}(\phi)})}
 = \tau_{(i-1)^\pi}^{ v \tau_{i^\pi}^{a+1}w_{i}(\phi)}.$$
Hence, $\abs{w_{i}(\sigma_{i-1}\phi)} = \abs{ v \tau_{i^\pi}^{a+1}w_{i}(\phi)} \le  \abs{v} +  \abs{\tau_{i^\pi}^{a+1}} + \abs{w_{i}(\phi)}
< \abs{w_{i-1}(\phi)}$.

 For each $j \in [1{\uparrow}i-2] \cup [i+1{\uparrow}n]$, $\tau_j^{\sigma_{i-1}\phi} = \tau_j^{\phi}$,
 and, hence,
$\abs{w_j(\sigma_{i-1}\phi)}= \abs{w_j(\phi)}$.

Also, $\tau_{i-1}^{\sigma_{i-1}\phi}= \tau_{i}^{\phi}$; in particular, 
$\abs{w_{i-1}(\sigma_{i-1}\phi)}= \abs{w_{i}(\phi)}$.

It now follows that $\norm{\sigma_{i-1}\phi} < \norm{\phi}$.

\medskip

(ii).  Let $b = b_i$.  
There exists some $v \in \Sigma_{0,1,n^{(m)}} -(\gen{\tau_{i^\pi}}\star)$ such that $ u_i = \tau_{i^\pi}^b v$. 
Since $w_{i+1}(\phi) = \overline u_i w_{i}(\phi)$, we have 
\begin{equation}\label{eq:wsss}
w_{i+1}(\phi)  = \overline v \,\,\overline \tau_{i^\pi}^b w_{i}(\phi).
\end{equation}
Since $w_{i}(\phi) \not\in (\gen{\tau_{i^\pi}}\star)$  
and   $\overline v \not\in (\star \gen{\tau_{i^\pi}})$,
 $\overline v \,\,\overline \tau_{i^\pi}^b w_{i}(\phi)$ is a free-product normal form
for $w_{i+1}(\phi)$.  Hence, 
$\abs{w_{i+1}(\phi)} =  \abs{\overline v} + \abs{\overline \tau_{i^\pi}^b} +  \abs{w_{i}(\phi)}.$

\medskip

\noindent \textbf{Claim.} {\it 
$\abs{\overline \tau_{i^\pi}^{\,\,b+1}} < \abs{\overline \tau_{i^\pi}^b}$.}

\begin{proof}   For any $b \in[\lfloor\frac{m+1}{2}\rfloor{\uparrow} m]$, then $m-b \in 
 [\lfloor\frac{m}{2}\rfloor {\downarrow} 0]$, and, hence,
$$\abs{\overline \tau_{i^\pi}^{b}} = \abs{\tau_{i^\pi}^{m-b}} = 2(m-b) = 2m -2b.$$
Therefore, since $b \in [\lfloor\frac{m+1}{2}\rfloor {\uparrow} m-1]$, 
$$\abs{\overline \tau_{i^\pi}^{b+1}} = 2m -2(b+1) = 2m -2b -2< \abs{\overline \tau_{i^\pi}^b}, $$
as claimed.
\end{proof} 

Hence 
$\abs{w_{i+1}(\phi)} >  \abs{\overline v} + \abs{\overline \tau_{i^\pi}^{b+1}} +  \abs{w_{i}(\phi)}.$

For all $j \in [1{\uparrow}i-1]\cup[i+2{\uparrow}n]$, $\tau_j^{\overline\sigma_i\phi} = \tau_j^\phi$; hence, 
$\abs{w_j(\overline\sigma_i\phi)}  =  \abs{w_{j}(\phi)}$.

Since $\tau_{i+1}^{\overline\sigma_i\phi} = \tau_{i}^\phi$, 
we see  that  $\abs{w_{i+1}(\overline \sigma_i\phi)}  =  \abs{w_{i}(\phi)}$.

By~\eqref{eq:wsss}, $w_{i+1}(\phi) \overline w_{i}(\phi)\overline \tau_{i^{\pi}} 
=  \overline v\,\,\overline \tau_{i^{\pi}}^{b+1}$; hence
$$\tau_{i}^{\overline\sigma_i\phi} = (\tau_{i+1}^{\overline \tau_{i}})^{\phi} 
= (\tau_{(i+1)^\pi}^{w_{i+1}(\phi)})^{(\overline \tau_{i^\pi}^{w_{i}(\phi)})}
 = \tau_{i^\pi}^{ \overline v \,\,\overline \tau_{i^{\pi}}^{b+1} w_{i}(\phi)}.$$
Hence, $\abs{w_{i}(\overline \sigma_i\phi)}  = \abs{ \overline v \,\,\overline \tau_{i^{\pi}}^{b+1} w_{i}(\phi)}
\le \abs{\overline v} + \abs{\overline \tau_{i^{\pi}}^{b+1}}  +\abs{w_{i}(\phi)} < \abs{w_{i+1}(\phi)}$.

It now follows that $\norm{\overline\sigma_i\phi} < \norm{\phi}$, and (ii) is proved.

\medskip

(iii). If $\phi \ne 1$, we choose a distinguished element of $[1{\uparrow}n]$ as follows.

If, for some $i \in [1{\uparrow}n]$, $\tau_{i^\pi}^{a_i+1+b_i} =  1$,
we take any such $i$ to be our
distinguished element of $[1{\uparrow}n]$.

Consider then the case where, for all $i \in [1{\uparrow}n]$,
$\tau_{i^\pi}^{a_i+1+b_i} \ne  1$.  Thus, there is no further cancellation in
$\Pi  \tau^\phi_{[1{\uparrow}n]}$.
Since $\phi$ fixes $\Pi  \tau_{[1{\uparrow}n]}$, it is not difficult to
see that, for all $i \in [1{\uparrow}n]$, $\tau_{i^\pi}^{a_i+1+b_i} = \tau_i$.
Since $\phi \ne 1$,  it is then not difficult to show that there exists some
$i \in [1{\uparrow}n]$ such that $(a_i,b_i) \ne (0,0)$.  We take any such $i$ to be our
distinguished element of $[1{\uparrow}n]$.

Let $i$ denote our distinguished element of $[1{\uparrow}n]$.  

Notice that  $(a_i,b_i) \ne (0,0)$ and that $\tau_{i^\pi}^{a_i+1+b_i} \in \{1,\tau_{i^\pi}\}$.
Hence, $a_i+1+b_i \in \{m, m+1\}$, and, hence, $b_i \in \{m-a_i-1,m-a_i\}$.

\medskip

\noindent\textbf{Case 1.} $a_i \in [\lfloor\frac{m}{2}\rfloor {\uparrow} m-1]$. 

Here, $i \in [2{\uparrow}n]$ and, by (i),  $\norm{\sigma_{i-1}\phi} < \norm{\phi}$.

\medskip

\noindent\textbf{Case 2.} $a_i \in[0{\uparrow}\lfloor\frac{m-2}{2}\rfloor]$

Here, $m-a_i-1 \in [m-1{\downarrow}\lfloor\frac{m+1}{2}\rfloor]$,
and, hence,  $b_i \in [\lfloor\frac{m+1}{2}\rfloor {\uparrow}m-1]$.
Here, $i \in [1{\uparrow}n-1]$ and, by (ii),  $\norm{\overline \sigma_i\phi} < \norm{\phi}$.
\end{proof}

\begin{theorem} Let $n\ge 1$, $m \ge 2$.  
The natural map $\Out^+_{0,1,n} \to \Out^+_{0,1,n^{(m)}}$ is an isomorphism, and, 
hence, the natural map $\Out_{0,1,n} \to \Out_{0,1,n^{(m)}}$ is an isomorphism.

With {\normalfont Notation~\ref{not:powers}},  the maps 
$\Out_{0,1,p_1\perp p_2 \perp \cdots \perp p_N} 
\to 
\Out_{0,1,p_1^{(m)}\perp p_2^{(m)} \perp \cdots \perp p_N^{(m)}},$
and $
\Out^+_{0,1,p_1\perp p_2 \perp \cdots \perp p_N} 
\to 
\Out^+_{0,1,p_1^{(m)}\perp p_2^{(m)} \perp \cdots \perp p_N^{(m)}}$
are isomorphisms.  \hfill\qed
\end{theorem}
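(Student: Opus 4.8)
The key new ingredient is Lemma~\ref{lem:humph}, which gives an Artin-style length-reduction algorithm for $\Out_{0,1,n^{(m)}}$ exactly parallel to Artin's Proposition~\ref{prop:artin} for $\B_n$. The plan is to use it to show $\sigma_{[1{\uparrow}n-1]}$ generates $\Out^+_{0,1,n^{(m)}}$, deduce surjectivity of \eqref{eq:BH2}, combine with the already-proved injectivity (Theorem~\ref{th:BHplus}) to get that \eqref{eq:BH2} is an isomorphism, pass to \eqref{eq:BH} via the index-two relationship, and finally bootstrap the partitioned case from the case $N=1$.

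\smallskip

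\textbf{Step 1: surjectivity of $\Out^+_{0,1,n}\to\Out^+_{0,1,n^{(m)}}$.} Let $\phi\in\Out^+_{0,1,n^{(m)}}$. By Lemma~\ref{lem:humph}(iii), if $\phi\ne 1$ then some $\sigma_i^{\epsilon}\in\sigma_{[1{\uparrow}n-1]}\cup\overline\sigma_{[1{\uparrow}n-1]}$ satisfies $\norm{\sigma_i^{\epsilon}\phi}<\norm{\phi}$. Since $\norm{\cdot}$ is a nonnegative integer that is $n$ precisely when $\phi=1$ (i.e.\ when all $w_i(\phi)=1$ and $\pi(\phi)=1$; one should note here that $\norm{\phi}=n$ forces $\phi$ to be the identity, using that $\phi$ permutes the conjugacy classes $\{[\tau_i]\}$ and fixes $z$), induction on $\norm{\phi}$ shows $\phi$ is a word in $\sigma_{[1{\uparrow}n-1]}$. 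Hence $\Out^+_{0,1,n^{(m)}}=\gen{\sigma_{[1{\uparrow}n-1]}}$, and since $\sigma_{[1{\uparrow}n-1]}$ is the image of Artin's generators of $\B_n=\Out^+_{0,1,n}$, the map \eqref{eq:BH2} is surjective. Combined with Theorem~\ref{th:BHplus}, \eqref{eq:BH2} is an isomorphism. Since \eqref{eq:BH2} has index two in \eqref{eq:BH} on both sides, \eqref{eq:BH} is an isomorphism too.

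\smallskip

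\textbf{Step 2: the partitioned case.} Fix $m\ge 2$ and a partition $p_{([1{\uparrow}N])}$ of $n$. Injectivity of the two partitioned maps is Theorem~\ref{th:BHplus}. For surjectivity, observe that $\Sigma_{0,1,p_1^{(m)}\perp\cdots\perp p_N^{(m)}}$ is canonically $\Sigma_{0,1,n^{(m)}}$ (all cyclic factors have the same order $m$), and under this identification the partitioned automorphism group $\Out^+_{0,1,p_1^{(m)}\perp\cdots\perp p_N^{(m)}}$ is the subgroup of $\Out^+_{0,1,n^{(m)}}$ of elements whose induced permutation of $\{[\tau_1],\dots,[\tau_n]\}$ lies in the image of $\Sym_{p_1}\times\cdots\times\Sym_{p_N}$. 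By Step~1, $\Out^+_{0,1,n^{(m)}}=\Out^+_{0,1,n}$ canonically, and under this identification the permutation induced on $\{[\tau_i]\}$ is exactly the permutation $\pi(\phi)$ induced on $\{[t_i]\}$ in $\B_n$. Therefore the partitioned subgroup of $\Out^+_{0,1,n^{(m)}}$ corresponds precisely to the partitioned subgroup $\Out^+_{0,1,p_1\perp\cdots\perp p_N}$ of $\Out^+_{0,1,n}$, which is the image of \eqref{eq:maps1}. Hence \eqref{eq:maps1}, with all $m_i=m$, is bijective, and then so is \eqref{eq:maps0} by the index-two argument of Notation~\ref{not:powers}.

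\smallskip

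\textbf{Main obstacle.} The substantive content is entirely in Step~1, and within it the only point requiring care beyond invoking Lemma~\ref{lem:humph} is the base case of the induction: checking that $\norm{\phi}=n$ (equivalently, all $w_i(\phi)$ trivial) implies $\phi=1$. Here one must rule out a nontrivial permutation $\pi(\phi)$ and nontrivial ``rotation'' behaviour on the $\gen{\tau_i}$; this follows because $\phi$ fixes $z$ hence $\Pi\tau_{[1{\uparrow}n]}$, and an automorphism of the free product $C_m^{\ast n}$ that preserves each conjugacy class $[\tau_i]$ (not merely the unordered set) and fixes $\Pi\tau_{[1{\uparrow}n]}$ must be the identity — a standard free-product normal-form argument, essentially the analogue of Lemma~\ref{lem:art2}(iii). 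Everything else is bookkeeping with the index-two relationships already recorded in the excerpt.
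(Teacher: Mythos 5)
Your argument is exactly the one the paper intends: the theorem is stated with no written proof because it is meant to follow at once from Lemma~\ref{lem:humph}(iii) (norm-reduction gives generation of $\Out^+_{0,1,n^{(m)}}$ by $\sigma_{[1{\uparrow}n-1]}$, hence surjectivity) together with the injectivity of Theorem~\ref{th:BHplus} and the index-two observation, and your Step~2 identification of the partitioned subgroups is the natural way to deduce the remaining claims. Your worry about a separate ``base case'' is unnecessary — Lemma~\ref{lem:humph}(iii) is stated for all $\phi\ne 1$, so the descent terminates only at $\phi=1$ — but this does no harm.
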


The following is essentially 
an algebraic translation of a part of a topological argument in~\cite[Section~3]{PerronVannier}.

\begin{proposition}\label{prop:faith}
With {\normalfont Notation~\ref{not:powers}},  let $H$ be a subgroup of
$$\Sigma_{0,1,p_1^{(m_1)}\perp p_2^{(m_2)} \perp \cdots \perp p_N^{(m_n)}}$$
of finite index, and let $A$ be the subgroup of 
$$\Out_{0,1,p_1^{(m_1)}\perp p_2^{(m_2)} \perp \cdots \perp p_N^{(m_n)}}$$ consisting of
elements which map $H$ to itself.  Then,
either the induced map
$A \to \Aut(H)$
is injective or $(n,N,m_1) = (2,1,2)$.
\end{proposition}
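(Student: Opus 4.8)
The statement is a finite-index analogue of the faithfulness results of the previous section, so the plan is to reduce the injectivity of $A \to \Aut(H)$ to the already-established injectivity of $\Out_{0,1,p_1^{(m_1)}\perp\cdots\perp p_N^{(m_N)}} \to \Aut\bigl(\Sigma_{0,1,p_1^{(m_1)}\perp\cdots\perp p_N^{(m_N)}}\bigr)$, which holds outside the genuinely exceptional case. Write $\Sigma := \Sigma_{0,1,p_1^{(m_1)}\perp\cdots\perp p_N^{(m_N)}}$ and $G := \Out_{0,1,p_1^{(m_1)}\perp\cdots\perp p_N^{(m_N)}}$. Suppose $\phi \in A$ acts trivially on $H$. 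I want to show $\phi$ acts trivially on all of $\Sigma$ (hence $\phi = 1$ by Theorem~\ref{th:BHplus}, unless we are in the excluded case), using the fact that a finite-index subgroup of a free product of cyclic groups is ``large enough'' to detect automorphisms.

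**Key steps.** First I would observe that $\Sigma$ is virtually free (a free product of finite and infinite cyclic groups), so $H$ is itself a free product of cyclic groups of finite index, and in particular $H$ is infinite as soon as $\Sigma$ is infinite, i.e. as soon as we are not in a degenerate case with $n$ small and all $m_i$ small. Second, the crucial point: for each $i \in [1{\uparrow}n]$ the cyclic subgroup $\gen{\tau_i}$ has the property that $H \cap \gen{\tau_i}^g$ is nontrivial for suitable conjugates $g$; more usefully, some positive power $\tau_i^{k_i}$ lies in $H$ (if $m_i = 0$, because $\gen{\tau_i}\cong\integers$ meets every finite-index subgroup in a nontrivial subgroup; if $m_i \geq 2$ then $\gen{\tau_i}$ is finite, and while $\gen{\tau_i}$ itself need not lie in $H$, a suitable conjugate argument applies). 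Since $\phi$ fixes $H$ pointwise, it fixes $\tau_i^{k_i}$, and since $\phi \in \Out^+_{0,1,n}$ maps $[\tau_i]$ to some $[\tau_{j}]$ with $\tau_i^\phi$ a conjugate of $\tau_{i^\pi}$, fixing a nontrivial power forces, via the malnormality/root-uniqueness properties of cyclic factors in a free product, that $\tau_i^\phi = \tau_{i^\pi}^{w_i}$ with the conjugating element acting trivially — one extracts that $\phi$ fixes $\tau_i$ up to the constraint that $w_i$ centralizes $\tau_{i^\pi}^{k_i}$, i.e. $w_i \in \gen{\tau_{i^\pi}}$, whence (normalizing $w_i$ as in the normal form $w_i \notin (\tau_{i^\pi}\star)\cup(\overline\tau_{i^\pi}\star)$) $w_i = 1$. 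Third, once $\tau_i^\phi = \tau_i$ for all $i$ (and $z^\phi = z$, which is automatic from $z\Pi\tau_{[1{\uparrow}n]}=1$), $\phi$ is the identity of $\Sigma$, hence the map $A \to \Aut(H)$ is injective. Finally I would isolate exactly when the argument above can fail: it fails only when $\Sigma$ is small enough that there is no room — precisely when $n$ is so small and the $m_i$ so small that $H$ (or the relevant cyclic traces) cannot pin down $\phi$; running through the cases $n=1$ (where $\Out^+$ is already trivial or handled) and $n=2$ shows the only genuine exception is $(n,N,m_1) = (2,1,2)$, where $\Sigma = C_2 \ast C_2$ is infinite dihedral, $\Out^+_{0,1,2^{(2)}}$ is infinite cyclic generated by the image of $\sigma_1$ acting as translation, and a finite-index $H\cong\integers$ (the translation subgroup) can be fixed pointwise by no nontrivial outer automorphism — but $A$ contains the ``half-translation'' obstruction; here $A\to\Aut(H)$ genuinely has a kernel.

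**Main obstacle.** The delicate part is the case analysis at the end — verifying that $(2,1,2)$ is the \emph{only} exception. For $n\ge 3$, or $n=2$ with some $m_i=0$, or $n = 2$ with $N = 2$, the group $\Sigma$ is large enough (contains a nonabelian free subgroup, or at least is not virtually $\integers$) that a finite-index subgroup $H$ always contains enough conjugates of proper powers of each $\tau_i$ to reconstruct $\phi$ on the nose, so the reduction to Theorem~\ref{th:BHplus} goes through cleanly; but making the ``enough conjugates'' statement precise, and checking the borderline $n=2$, $N=1$, $m_1 \geq 3$ case (where $\Sigma = C_{m_1}\ast C_{m_1}$ is virtually free of rank $\geq 1$ but $\Out^+$ is still essentially $\integers$), requires care — one must check that in those cases $H$ meets two \emph{distinct} conjugates of cyclic factors nontrivially in a way that forces $\pi(\phi)$ and all $w_i(\phi)$ trivial. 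I expect that invoking the structure of $H$ as a free product via Kurosh, together with the already-available Theorem~\ref{th:BHplus} and Theorem~\ref{th:square}-type control on $t_1^\phi$, lets one dispatch these subcases in a few lines each.
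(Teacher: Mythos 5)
There is a genuine gap in your second key step. You claim that for each $i$ some nontrivial power of $\tau_i$ (or of a suitable conjugate of $\gen{\tau_i}$) lies in $H$. This is true when $m_i=0$, since then $\gen{\tau_i}\cong\integers$ and every infinite cyclic subgroup meets a finite-index subgroup nontrivially. But when $m_i\ge 2$ the subgroup $\gen{\tau_i}$ and all its conjugates are \emph{finite}, and a finite-index subgroup $H$ may well be torsion-free, in which case $H\cap\gen{\tau_i}^g=\{1\}$ for \emph{every} $g$; no "suitable conjugate argument" can rescue this. This is not a marginal case: the proposition's main application in the paper is to $H=\Phi_n$, the free (hence torsion-free) subgroup of index two in $\Sigma_{0,1,(n+1)^{(2)}}=C_2^{\ast(n+1)}$, so your argument fails exactly where the result is needed. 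Consequently your route to $\tau_i^\phi=\tau_i$ via root-uniqueness of a fixed power $\tau_i^{k_i}$ does not get off the ground, and your subsequent case analysis is built on this missing foundation.

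The paper avoids the problem by working with the \emph{infinite-order} element $\Pi\tau_{[1{\uparrow}n]}=\overline z$ and its conjugates $(\Pi\tau_{[1{\uparrow}n]})^{\tau_i}$: since right multiplication permutes the finite set $H\backslash G$, some power of each such element lies in $H$ and is therefore fixed by $\phi$. From $\phi$ fixing $(\Pi\tau_{[1{\uparrow}n]})^k$ one first gets $(\Pi\tau_{[1{\uparrow}n]})^\phi=\Pi\tau_{[1{\uparrow}n]}$ (ruling out the inverse because the element has infinite order), and from $\phi$ fixing $(\Pi\tau_{[1{\uparrow}n]})^{\tau_i k}$ one deduces that $\tau_i^\phi\overline\tau_i$ centralizes $(\Pi\tau_{[1{\uparrow}n]})^k$, hence lies in $\gen{\Pi\tau_{[1{\uparrow}n]}}$ by a normal-form argument. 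Writing $\tau_i^\phi=(\Pi\tau_{[1{\uparrow}n]})^j\tau_i$ and comparing with the requirement that $\tau_i^\phi$ be conjugate to $\tau_{i^{\pi(\phi)}}$ forces $j=0$ except when cyclic cancellation can occur, and that analysis is what isolates $(n,N,m_1)=(2,1,2)$. If you want to salvage your outline, you should replace your appeal to powers of the $\tau_i$ landing in $H$ by this use of conjugates of the infinite-order boundary element.
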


\begin{proof} 
Suppose that $\phi \in \Out_{0,1,p_1^{(m_1)}\perp p_2^{(m_2)} \perp \cdots \perp p_N^{(m_n)}}$,
and that $\phi$ acts as the identity on~$H$.  We shall show that $\phi = 1$ or  $(n,N,m_1) = (2,1,2)$. 

Let $G =  \Sigma_{0,1,p_1^{(m_1)}\perp p_2^{(m_2)} \perp \cdots \perp p_N^{(m_n)}}$.

  For any $g \in G$, right multiplication by $g$ permutes the
elements of the finite set $H\backslash G$, so there exists some positive integer $k$ such that
 $g^k$ acts trivially on $H\backslash G$.  In particular, 
$Hg^k = H$ and, hence, $g^{k} \in H$.

Hence, there exists some positive integer $k$ such that $(\Pi \tau_{[1{\uparrow}n]})^k \in H$.
Now $(\Pi \tau_{[1{\uparrow}n]})^\phi = (\Pi \tau_{[1{\uparrow}n]})^{\epsilon}$ 
for some $\epsilon \in \{1,-1\}$, and, hence,
$$(\Pi \tau_{[1{\uparrow}n]})^k = (\Pi \tau_{[1{\uparrow}n]})^{k\phi} = 
(\Pi \tau_{[1{\uparrow}n]})^{\phi k} = (\Pi \tau_{[1{\uparrow}n]})^{\epsilon k}= (\Pi \tau_{[1{\uparrow}n]})^{ k\epsilon}.  $$
Since $\Pi \tau_{[1{\uparrow}n]}$ has infinite order in $G$, we see that $\epsilon = 1$.  Thus
$\phi$ fixes $\Pi \tau_{[1{\uparrow}n]}$.

Consider any $i \in [1{\uparrow}n]$.  Since $(\Pi \tau_{[1{\uparrow}n]})^{\tau_i} \in G$,  there exists some
positive integer $k$ such that  $(\Pi \tau_{[1{\uparrow}n]})^{\tau_ik} \in H$.  
Hence, 
$$(\Pi \tau_{[1{\uparrow}n]})^{k\tau_i} =(\Pi \tau_{[1{\uparrow}n]})^{\tau_ik} =  (\Pi \tau_{[1{\uparrow}n]})^{\tau_ik\phi} =  
(\Pi \tau_{[1{\uparrow}n]})^{k \phi \tau_i^\phi}
 =(\Pi \tau_{[1{\uparrow}n]})^{k\tau_i^\phi}.$$ 
Hence $\tau_i^\phi\overline \tau_i$ commutes with $(\Pi \tau_{[1{\uparrow}n]})^k$. 
 A straightforward normal-form argument
shows that $\tau_i^\phi\overline \tau_i \in \gen{\Pi \tau_{[1{\uparrow}n]}}$.

Hence there exists an integer $j$ such that $\tau_i^\phi = (\Pi \tau_{[1{\uparrow}n]})^j\tau_i$.
Since $\tau_i^\phi$ is a conjugate of $\tau_{i^{\pi(\phi)}}$, the cyclically-reduced form of 
$(\tau_{[1,n]})^j\tau_i$ is $\tau_{i^{\pi(\phi)}}$.  Either $j=0$, or
there must be cyclic cancellation, and a straightforward analysis then shows that
$(n,N,m_1) = (2,1,2)$.  Since $i$ was arbitrary, this completes the proof.
\end{proof}

\section{The $\B_{n+1}$-group $\Phi_{n}$}\label{sec:finite index}

\begin{notation}\label{not:F} 
Recall that
$\Sigma_{0,1,(n+1)^{(2)}} = C_2^{\ast (n+1)} = \gen{\tau_{[1{\uparrow}n+1]} 
\mid \tau_{[1{\uparrow}n+1]}^2 =  1}.$
We define
$\Phi_{n}$ to be the $\B_{n+1}$-group consisting of 
the set of elements of  $\Sigma_{0,1,(n+1)^{(2)}}$
which have even exponent sum in the $\tau_i$.
It is not difficult to see that $\Phi_{n}$ is  a free group of rank $n$,
and that there is induced a map from $\Out_{0,1,n+1} = \Out_{0,1,(n+1)^{(2)}}$ to $\Aut \Phi_{n}$.
Since $\B_{n+1} = \Out^+_{0,1,n+1} = \Out^+_{0,1,(n+1)^{(2)}}$,
$\Phi_{n}$ has a $\B_{n+1}$-action; we say that $\Phi_{n}$ is a $\B_{n+1}$-group, 
and that $\Phi_{n}$ is a $\B_{n+1}$-subgroup of $\Sigma_{0,1,(n+1)^{(2)}}$.

Proposition~\ref{prop:faith} shows that, if $n \ne 1$, then the map from
$\Out_{0,1,n+1} = \Out_{0,1,(n+1)^{(2)}}$ to $\Aut \Phi_{n}$ is injective, 
and we say that the $\B_{n+1}$-action is {\it faithful}, 
and that $\Phi_{n}$ is a {\it faithful} $\B_{n+1}$-group. 
\hfill\qed
\end{notation}

Over the course of this section, we shall choose various free generating sets of $\Phi_{n}$ to
obtain interesting actions.  In the next two examples, we identify 
$\Sigma_{g,1,0}$ with $\Phi_{2g}$ and  $\Sigma_{g,2,0}$ with $\Phi_{2g+1}$.

\begin{example}\label{ex:g1}  Now that algebraic proofs of the requisite theorems are known to us, 
let us review~\cite[Example~15.6]{DF05} which was an algebraic approximation of results in~\cite[Section~3]{PerronVannier}.

 Let $g \in \naturals$.  Let 
$$\Sigma_{g,1,0} := \gen{x_1,y_1,\ldots, x_g,y_g, z_1\mid [x_1,y_1] \cdots [x_g,y_g]z_1 = 1},$$
where the commutator $[x,y]$ of group elements $x$, $y$ is $\overline x\,\, \overline y x y$.
Let $\Out^+_{g,1,0}$ denote the group of all automorphisms of 
$\Sigma_{g,1,0}$ which fix $z_1$.
Then $\Sigma_{g,1,0}$ is free of rank $2g$ with ordered free generating set $(x_1,y_1, \ldots, x_g,y_g)$, 
and $\Out^+_{g,1,0}$ is the group of all automorphisms of $\Sigma_{g,1,0}$ which fix 
$[x_1,y_1]\cdots[x_g,y_g]$.

We now recall some Dehn-twist elements of $\Out^+_{g,1,0}$ 
from Definitions~3.10 and Remarks~5.1 of~\cite{DF05}.

 For each $i \in [1{\uparrow}g]$, 
we define  $\alpha_i$, $\beta_i \in \Out_{g,1,0}^+$ by
\medskip

\centerline{
\begin{tabular}
{
>{$}r<{$} 
@{} 
>{$}l<{$} 
@{\hskip .5cm}  
>{$}c<{$} 
@{\hskip .5cm}  
>{$}c<{$}
@{\hskip .5cm}  
>{$}r<{$} 
@{\hskip 0cm} 
>{$}l<{$}
}
\setlength\extrarowheight{3pt}
&\hskip-.1cm\underline{\scriptstyle k\in[1{\uparrow}i-1]}&&&&
\hskip-1.4cm\underline{\scriptstyle k \in [i+1{\uparrow}g]}
\\[.15cm]
(&x_k\hskip .5cm y_k& x_i&y_i&x_k\hskip .5cm y_k&)^{\alpha_i}\\  
=(&x_k\hskip .5cm y_k&\overline y_ix_i&y_i&x_k\hskip .5cm y_k &),
\end{tabular}
\quad \hskip-5pt \text{and} \hskip -5pt \quad
\begin{tabular}
{
>{$}r<{$} 
@{} 
>{$}l<{$} 
@{\hskip .5cm}  
>{$}c<{$} 
@{\hskip .5cm}  
>{$}c<{$}
@{\hskip .5cm}  
>{$}r<{$} 
@{\hskip 0cm} 
>{$}l<{$}
}
\setlength\extrarowheight{3pt}
&\hskip-.1cm\underline{\scriptstyle k\in[1{\uparrow}i-1]}&&&&
\hskip-1.4cm\underline{\scriptstyle k \in [i+1{\uparrow}g]}
\\[.15cm]
(&x_k\hskip .5cm y_k& x_i&y_i&x_k\hskip .5cm y_k&)^{\beta_i}\\  
=(&x_k\hskip .5cm y_k&x_i&x_iy_i&x_k\hskip .5cm y_k &).
\end{tabular}}

\bigskip

 For each $i \in [1{\uparrow}g-1]$, 
we define  $\gamma_i \in \Out_{g,1,0}^+$ by
\medskip

\centerline{
\begin{tabular}
{
>{$}r<{$} 
@{} 
>{$}l<{$} 
@{\hskip .5cm}  
>{$}c<{$} 
@{\hskip .5cm}  
>{$}c<{$} 
@{\hskip .5cm}  
>{$}c<{$} 
@{\hskip .5cm}  
>{$}c<{$}
@{\hskip .5cm}  
>{$}r<{$} 
@{\hskip 0cm} 
>{$}l<{$}
}
\setlength\extrarowheight{3pt}
&\hskip-.1cm\underline{\scriptstyle k\in[1{\uparrow}i-1]}&&&&&&
\hskip-1.4cm\underline{\scriptstyle k \in [i+2{\uparrow}g]}
\\[.15cm]
(&x_k\hskip .5cm y_k& x_i&y_i&x_{i+1}&y_{i+1}&x_k\hskip .5cm y_k&)^{\gamma_i}\\  
=(&x_k\hskip .5cm y_k&y_{i+1}^{x_{i+1}}\overline y_{i}x_{i}
&y_{i}^{\overline y_{i+1}^{x_{i+1}}}
&x_{i+1}y_{i}\overline y_{i+1}^{x_{i+1}}
&y_{i+1}&x_k\hskip .5cm y_k &).
\end{tabular}}

\bigskip

Let us identify $\Sigma_{g,1,0}$ with $\Phi_{2g}$ via

\medskip

\centerline{
\begin{tabular}
{
>{$}r<{$} 
@{} 
>{$}l<{$} 
@{\hskip .5cm}  
>{$}c<{$} 
@{\hskip .7cm}   
>{$}r<{$} 
@{\hskip 0cm} 
>{$}l<{$}
}
\setlength\extrarowheight{3pt}
&&\hskip-2.9cm \underline{\hskip1.8cm \scriptstyle k\in[1{\uparrow}g]\hskip1.8cm}&&
\\[.15cm]
(&x_k&y_k& z_1&)^{\Sigma_{g,1,0} \overset{\sim}{\to} \Phi_{2g}}\\  
=(&\Pi\tau_{[2k+1{\downarrow}2k]}&\tau_{2k+1}\Pi\tau_{[1{\uparrow}2k+1]}&z_1^2 &).
\end{tabular}}

\bigskip

\noindent Notice that $[x_k,y_k] = \overline x_k \overline y_k x_ky_k $ is then identified with 
$$\Pi\tau_{[2k{\uparrow}2k+1]} \Pi  \tau_{[2k+1{\downarrow}1]}\tau_{2k+1} \Pi\tau_{[2k+1{\downarrow}2k]}\tau_{2k+1}
\Pi\tau_{[1{\uparrow}2k+1]}$$
which equals $\Pi \tau_{[2k-1{\downarrow}1]} \Pi\tau_{[2k{\uparrow}2k+1]} \Pi\tau_{[1{\uparrow}2k+1]}$.
Hence $\mathop{\Pi}\limits_{k\in[1{\uparrow}g]}[x_k,y_k]$ is identified with $(\Pi\tau_{[1{\uparrow}2g+1]})^2$.

This corresponds to the surface of genus $g$ with one boundary component arising as a 
two-sheeted branched cover of a sphere with one boundary component and $2g+1$ double points.
Then $\B_{2g+1} = \Out^+_{0,1,2g+1} = \Out^+_{0,1,(2g+1)^{(2)}}$ becomes embedded in $\Out^+_{g,1,0}$ via
the homomorphism represented as 
$$\begin{pmatrix}
\sigma_1&\sigma_2&\sigma_3&\sigma_4&\sigma_5&\cdots&\sigma_{2g-2}&\sigma_{2g-1}&\sigma_{2g}\\
\alpha_1&\beta_1&\gamma_1&\beta_2&\gamma_2&\cdots&\beta_{g-1}&\gamma_{g-1}&\beta_g
\end{pmatrix}.$$
\vskip -.8cm \hfill\qed \vskip .5cm
\end{example}

Clearly, in the preceding example, the subgroup $\B_{2g}$ of $\B_{2g+1}$ is also embedded in 
$\Out_{g,1,0}$, but it is more natural to remove from the surface  a handle containing the boundary component
(a sphere with three boundary components or a `pair of pants'), and embed 
$\B_{2g}$ in $\Out_{g-1,2,0}$, as follows.

\begin{example}\label{ex:g2} Now that algebraic proofs of the requisite theorems are known to us, 
let us review~\cite[Example~15.7]{DF05} which was an algebraic approximation of 
results in~\cite[Section~3]{PerronVannier}.

 Let $g \in \naturals$.  Let 
$$\Sigma_{g,2,0} := \gen{x_{[1{\uparrow}g]}, y_{[1{\uparrow}g]}, z_{[1{\uparrow}2]}
\mid (\textstyle \mathop{\Pi}\limits_{i\in[1{\uparrow}g]}[x_i,y_i]) \mathop{\Pi} z_{[1{\uparrow}2]} = 1}.$$
Recall that   $[x,y]:= \overline x\,\, \overline y x y$.
Then $\Sigma_{g,2,0}$ is free of rank $2g+1$ with free generating set
 $(x_{[1{\uparrow}g]},y_{[1{\uparrow}g]}, z_1)$ and distinguished element $z_2$ such that
$\overline z_2 = (\mathop{\Pi}\limits_{i\in[1{\uparrow}g]}[x_i,y_i]) z_1$.
Let $\Out^+_{g,1\perp 1,0}$ denote the group of all automorphisms of 
$\Sigma_{g,2,0}\ast \gen{e_1\mid\quad}$ which map $\Sigma_{g,2,0}$ to itself, and
fix $z_1^{e_1}$ and $z_2$.  It can be shown
 that $\Out^+_{g,1\perp 1,0}$ acts faithfully on the subset
$\Sigma_{g,2,0} \cup \Sigma_{g,2,0}e_1$ of $\Sigma_{g,2,0}\ast \gen{e_1\mid\quad}$.

Here, $e_1$ represents an arc from the base-point of one
boundary component, to the base-point of the other boundary component. 
Karen Vogtmann calls such an arc a  `tether joining the basepoint to the second boundary component'. 
For any surface-with-boundaries, A'Campo~\cite[Section 4, Remarque 6]{ACampo},~\cite[p.232]{PerronVannier}
 identifies basepoints of 
all the boundary components, 
which makes tethers into loops,
to obtain a topological quotient space whose fundamental group is acted on, faithfully,
 by the mapping-class group of the 
sur\-face\d1with\d1bound\-aries.

We now recall some Dehn-twist elements of $\Out^+_{g,1\perp1,0}$ 
from Definitions~3.10 and Remarks~5.1 of~\cite{DF05}.

 For each $i \in [1{\uparrow}g]$, 
we define  $\alpha_i$, $\beta_i \in \Out_{g,1\perp1,0}^+$ by

\medskip

\centerline{
\begin{tabular}
{
>{$}r<{$} 
@{} 
>{$}l<{$} 
@{\hskip .5cm}  
>{$}c<{$} 
@{\hskip .5cm}  
>{$}c<{$} 
@{\hskip .5cm} 
>{$}c<{$} 
@{\hskip .5cm} 
>{$}c<{$}
@{\hskip .5cm}  
>{$}r<{$} 
@{\hskip 0cm} 
>{$}l<{$}
}
\setlength\extrarowheight{3pt}
&\hskip-.1cm\underline{\scriptstyle k\in[1{\uparrow}i-1]}&&&
\hskip-.1cm\underline{\scriptstyle k \in [i+1{\uparrow}g]}&&&
\\[.15cm]
(&x_k\hskip.5cm y_k& x_i&y_i&x_k\hskip.5cm y_k&z_1&e_1&)^{\alpha_i}\\  
=(&x_k\hskip.5cm y_k&\overline y_ix_i&y_i&x_k\hskip.5cm y_k &z_1&e_1&),
\end{tabular}
}

\bigskip

\centerline{
\begin{tabular}
{
>{$}r<{$} 
@{} 
>{$}l<{$} 
@{\hskip .5cm}  
>{$}c<{$} 
@{\hskip .5cm}  
>{$}c<{$} 
@{\hskip .5cm} 
>{$}c<{$} 
@{\hskip .5cm} 
>{$}c<{$}
@{\hskip .5cm}  
>{$}r<{$} 
@{\hskip 0cm} 
>{$}l<{$}
}
\setlength\extrarowheight{3pt}
&\hskip-.1cm\underline{\scriptstyle k\in[1{\uparrow}i-1]}&&&
\hskip-.1cm\underline{\scriptstyle k \in [i+1{\uparrow}g]}&&&
\\[.15cm]
(&x_k\hskip.5cm y_k& x_i&y_i&x_k\hskip.5cm y_k&z_1&e_1&)^{\beta_i}\\  
=(&x_k\hskip.5cm y_k&x_i&x_iy_i&x_k\hskip.5cm y_k&z_1&e_1 &).
\end{tabular}
}

\bigskip

For each $i \in [1{\uparrow}g-1]$, 
we define  $\gamma_i \in \Out_{g,1\perp1,0}^+$ by
\medskip

\centerline{
\begin{tabular}
{
>{$}r<{$} 
@{} 
>{$}l<{$} 
@{\hskip .5cm}  
>{$}c<{$} 
@{\hskip .5cm}  
>{$}c<{$} 
@{\hskip .5cm}  
>{$}c<{$} 
@{\hskip .5cm}  
>{$}c<{$} 
@{\hskip .5cm} 
>{$}c<{$} 
@{\hskip .5cm} 
>{$}c<{$}
@{\hskip .5cm}  
>{$}r<{$} 
@{\hskip 0cm} 
>{$}l<{$}
}
\setlength\extrarowheight{3pt}
&\hskip-.1cm\underline{\scriptstyle k\in[1{\uparrow}i-1]}&&&&&
\hskip-.1cm\underline{\scriptstyle k \in [i+2{\uparrow}g]}&&&
\\[.15cm]
(&x_k\hskip.5cm y_k& x_i&y_i&x_{i+1}&y_{i+1}&x_k\hskip.5cm y_k&z_1&e_1&)^{\gamma_i}\\  
=(&x_k\hskip.5cm y_k&y_{i+1}^{x_{i+1}}\overline y_{i}x_{i}
&y_{i}^{\overline y_{i+1}^{x_{i+1}}}
&x_{i+1}y_{i}\overline y_{i+1}^{x_{i+1}}
&y_{i+1}&x_k\hskip.5cm y_k &z_1&e_1&),
\end{tabular}
}

\bigskip

\noindent and we define $\gamma_g \in \Out_{g,1\perp1,0}^+$

\medskip

\centerline{
\begin{tabular}
{
>{$}r<{$} 
@{} 
>{$}l<{$} 
@{\hskip .5cm}  
>{$}c<{$} 
@{\hskip .5cm}  
>{$}c<{$} 
@{\hskip .5cm}  
>{$}c<{$} 
@{\hskip .5cm}  
>{$}r<{$} 
@{\hskip 0cm} 
>{$}l<{$}
}
\setlength\extrarowheight{3pt}
&\hskip-.1cm\underline{\scriptstyle k\in[1{\uparrow}i-1]}&&&&&
\\[.15cm]
(&x_k\hskip.5cm y_k& x_{g}&y_{g}&z_1&e_1&)^{\gamma_g}\\  
=(&x_k\hskip.5cm y_k&\overline z_{1} 
\overline y_{g}x_{g}&y_{g}^{z_1}&z_1^{y_gz_1}&\overline z_1 \overline y_g e_1&).
\end{tabular}
}

\bigskip

Let us identify $\Sigma_{g,2,0}$ with $\Phi_{2g+1}$ and $\Sigma_{g,2,0} \cup \Sigma_{g,2,0}e_1$ 
with $\Sigma_{0,1,(2g+2)^{(2)}}$ via the map $\Sigma_{g,2,0}\ast \gen{e_1} \to \Sigma_{0,1,(2g+2)^{(2)}}$ 
determined by

\bigskip

\centerline{
\begin{tabular}
{
>{$}r<{$} 
@{} 
>{$}l<{$} 
@{\hskip .5cm}  
>{$}c<{$} 
@{\hskip .5cm}  
>{$}c<{$} 
@{\hskip .5cm}  
>{$}c<{$} 
@{\hskip .5cm}  
>{$}r<{$} 
@{\hskip 0cm} 
>{$}l<{$}
}
\setlength\extrarowheight{3pt}
&&\hskip-2.6cm \underline{\hskip1.7cm \scriptstyle k\in[1{\uparrow}g]\hskip1.7cm}&&&&
\\[.15cm]
(&\phantom{\Pi \tau 2} x_k&y_k& z_1&e_1&z_2&)^{\Sigma_{g,2,0}\ast \gen{e_1} \to \Sigma_{0,1,(2g+2)^{(2)}}}\\  
=(&\Pi\tau_{[2k+1{\downarrow}2k]}&\tau_{2k+1}\Pi\tau_{[1{\uparrow}2k+1]}&z_1^{\tau_{2g+2}}&\tau_{2g+2}&z_1 &).
\end{tabular}
}

\bigskip

\noindent This corresponds to the surface of genus $g$ with two boundary components arising as a 
two-sheeted branched cover of a sphere with one boundary component and $2g+2$ double points.
Now $\B_{2g+2} = \Out^+_{0,1,2g+2} = \Out^+_{0,1,(2g+2)^{(2)}}$ is embedded in $\Out^+_{g,1\perp1,0}$ via
a homomorphism represented as
$$\left(\hskip-.1cm \begin{array}{cccccccccc}
\sigma_1&\sigma_2&\sigma_3&\sigma_4&\sigma_5&\cdots&\sigma_{2g-2}&\sigma_{2g-1}&\sigma_{2g}&\sigma_{2g+1}\\
\alpha_1&\beta_1&\gamma_1&\beta_2&\gamma_2&\cdots&\beta_{g-1}&\gamma_{g-1}&\beta_g&\gamma_g
\end{array}\hskip-.2cm\right).$$
For $g \ge 1$, Proposition~\ref{prop:faith} shows that this is an embedding. 
In the case where $g=0$, the interpretation of the notation is as follows:
$\sigma_1$ is mapped to $\gamma_0$; $\gamma_0$ fixes $z_1$ and sends 
$e_1$ to $\overline z_1e_1$.
\hfill\qed
\end{example}

Clearly, in the preceding example, the subgroup $\B_{2g+1}$ of $\B_{2g+2}$ is also embedded in 
$\Out^+_{g,1\perp1,0}$, but it is more natural to remove from the surface a disc containg 
the two boundary components (a sphere with three boundary components or a `pair of pants'), and embed 
$\B_{2g+1}$ in $\Out^+_{g,1,0}$, as in Example~\ref{ex:g1}.

\medskip

We next discuss the  Perron-Vannier isomorphism  $\B_{n+1}\ltimes \Phi_{n} \simeq \Artin\gen{D_{n+1}}$ for $n \ge 1$.
The following was shown to us by Mladen Bestvina.

\begin{lemma}\label{lem:D} Let $n \ge 2$.  Then, 
$\Artin\langle D_n\rangle$ has a unique automorphism 
$\upsilon$ of order two which fixes $d_1,\ldots,d_{n-2}$ and interchanges $d_{n-1}$ and $d_n$.
 The semidirect product $\Artin\langle D_n\rangle\rtimes \gen{\upsilon}$ has presentation
$$\Artin\langle \,\,d_1 
 \ray  d_2   \ray  
\cdots  \ray d_{n-3} \ray d_{n-2} \ray d_{n-1}  \doubleray  \upsilon \mid \upsilon^2 = 1\,\,\rangle.
$$
\end{lemma}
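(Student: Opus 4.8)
The plan is to treat the two assertions in turn. Throughout, write $G=\Artin\langle D_n\rangle$, with standard generators $d_{[1{\uparrow}n]}$; the substantive case is $n\ge3$, and when $n=2$ the diagram $D_n$ degenerates, so there the statement should be checked directly against the conventions in force.

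\emph{The automorphism.} Uniqueness is immediate, since an endomorphism of $G$ is determined by its values on $d_{[1{\uparrow}n]}$, and these are prescribed. For existence, I would verify that the assignment $d_i\mapsto d_i$ for $i\in[1{\uparrow}n-2]$, $d_{n-1}\mapsto d_n$, $d_n\mapsto d_{n-1}$ respects the defining relations of $G$. The only relations not visibly fixed are those involving $d_{n-1}$ or $d_n$: the braid relation $d_{n-2}d_{n-1}d_{n-2}=d_{n-1}d_{n-2}d_{n-1}$ is carried to $d_{n-2}d_nd_{n-2}=d_nd_{n-2}d_n$, the commutation $d_{n-1}d_n=d_nd_{n-1}$ is carried to itself, and the commutations of $d_{n-1}$ and of $d_n$ with a fixed $d_i$ ($i\le n-3$) are carried to one another; all of these are again relations. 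Hence the assignment extends to an endomorphism $\upsilon$ whose square is the identity on generators, so $\upsilon\in\Aut(G)$ and $\upsilon^2=1$. That $\upsilon$ has order exactly two is the assertion $d_{n-1}\ne d_n$ in $G$, which follows from the canonical surjection of $G$ onto the Coxeter group $W(D_n)$ (obtained by imposing $d_i^2=1$), in which $d_{n-1}$ and $d_n$ map to distinct reflections.

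\emph{The presentation.} Let $H$ be the group with the presentation in the statement, on generators $d_1,\dots,d_{n-1},\upsilon$, and let $\widetilde G=G\rtimes\langle\upsilon\rangle$, so that in $\widetilde G$ one has $\upsilon d_i\upsilon=d_i$ for $i\le n-2$ and $\upsilon d_{n-1}\upsilon=d_n$. I would exhibit mutually inverse homomorphisms $\Phi\colon H\to\widetilde G$ and $\Psi\colon\widetilde G\to H$. Define $\Phi$ by $d_i\mapsto d_i$ ($i\le n-1$) and $\upsilon\mapsto\upsilon$: the braid and commutation relations of $H$ among the $d_i$ are relations of $G$; the relations $d_i\upsilon=\upsilon d_i$ ($i\le n-2$) hold because $\upsilon$ fixes $d_i$; the relation $d_{n-1}\upsilon d_{n-1}\upsilon=\upsilon d_{n-1}\upsilon d_{n-1}$ becomes $d_{n-1}d_n=d_nd_{n-1}$, which holds in $G$ since $d_{n-1}$ and $d_n$ are non-adjacent in $D_n$; and $\upsilon^2=1$ holds in $\widetilde G$. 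For $\Psi$, first define $\psi\colon G\to H$ by $d_i\mapsto d_i$ ($i\le n-1$) and $d_n\mapsto\upsilon d_{n-1}\upsilon$. The relations of $G$ not involving $d_n$ are relations of $H$; for those involving $d_n$, after commuting the $\upsilon$'s past the fixed generators and using $\upsilon^2=1$, the relation $d_{n-2}d_nd_{n-2}=d_nd_{n-2}d_n$ reduces to the braid relation of $H$ between $d_{n-2}$ and $d_{n-1}$, the relations $d_id_n=d_nd_i$ ($i\le n-3$) reduce to commutations in $H$, and the relation $d_{n-1}d_n=d_nd_{n-1}$ reduces to precisely the relation of $H$ between $d_{n-1}$ and $\upsilon$. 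One then checks $\psi(\upsilon(g))=\upsilon\,\psi(g)\,\upsilon^{-1}$ on the generators $g$ of $G$, so that $\psi$ together with $\upsilon\mapsto\upsilon$ defines $\Psi\colon\widetilde G\to H$. Finally $\Psi\Phi$ and $\Phi\Psi$ act as the identity on generators — the one non-tautological point being $\Phi(\Psi(d_n))=\Phi(\upsilon d_{n-1}\upsilon)=\upsilon d_{n-1}\upsilon=d_n$ in $\widetilde G$ — so $\Phi$ and $\Psi$ are inverse isomorphisms and $\widetilde G\cong H$.

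The only step calling for genuine care is the well-definedness of $\psi$, i.e.\ checking that the three defining relations of $\Artin\langle D_n\rangle$ involving $d_n$ become consequences of the relations of $H$ after the substitution $d_n=\upsilon d_{n-1}\upsilon$; this is a brief but slightly fiddly computation with the $\upsilon$'s, and it is precisely this computation that forces the double edge $d_{n-1}\doubleray\upsilon$ rather than a single edge. All remaining verifications are formal.
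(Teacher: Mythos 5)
Your proposal is correct and takes essentially the same route as the paper: both hinge on eliminating $d_n$ via the substitution $d_n=\upsilon d_{n-1}\upsilon$ and observing that the commutation $d_{n-1}d_n=d_nd_{n-1}$ then becomes the double-edge Artin relation $d_{n-1}\upsilon d_{n-1}\upsilon=\upsilon d_{n-1}\upsilon d_{n-1}$. The paper merely records this Tietze transformation on the sub-presentation generated by $d_{n-1}$, $d_n$, $\upsilon$ and states that the rest follows easily, whereas you write out the resulting mutually inverse homomorphisms (and the existence, uniqueness and order of $\upsilon$) in full.
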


\begin{proof} Notice that 
$\gen{d_{n-1}, d_n, \upsilon \mid \upsilon^2 = 1, d_{n-1}^\upsilon = d_n, d_{n-1}d_n = d_nd_{n-1}}$
is isomorphic to $\gen{d_{n-1},  \upsilon \mid \upsilon^2 = 1,  d_{n-1}d_{n-1}^\upsilon = d_{n-1}^\upsilon d_{n-1}}$, 
and the latter is $\Artin\langle \,\,  d_{n-1}  \doubleray  \upsilon \mid \upsilon^2 = 1\,\,\rangle$.  The result
now follows easily.
\end{proof}

Part of the following appears in~\cite{PerronVannier} and~\cite{CrispParis1}.

\begin{theorem}[Perron-Vannier~\cite{PerronVannier}]\label{th:PV}
Let $n \ge 2$.  The semidirect product $\B_n \ltimes \Phi_{n-1}$ has
presentation
\begin{align*} 
&\Artin\langle \sigma_1 
 \ray  \sigma_2   \ray  
\cdots \ray \sigma_{n-3}  
\begin{aligned}
[b]\sigma_{n-1}&\tau_n\tau_{n-1}\\[-2pt]\rule[-4pt]{0.4pt}{.6cm}\hskip2pt&\\[-3pt]\ray\sigma&\null_{n-2}\ray
\end{aligned} \sigma_{n-1}\rangle \simeq \Artin\gen{D_n}.
\intertext{Hence, $\B_n \ltimes \Phi_{n-1}$ has a unique automorphism $\upsilon$ of order two which fixes
$\sigma_1,\ldots,\sigma_{n-2}$ and interchanges $\sigma_{n-1}$ and $\sigma_{n-1}\tau_n\tau_{n-1}$.
The double semidirect product $(\B_n \ltimes \Phi_{n-1})\rtimes \gen{\upsilon}$ has presentation}
&
\Artin\langle \sigma_1 
 \ray  \sigma_2   \ray  
\cdots  \ray \sigma_{n-3} \ray \sigma_{n-2} \ray \sigma_{n-1}  \doubleray  \upsilon \mid \upsilon^2 = 1\rangle.
\end{align*}
\end{theorem}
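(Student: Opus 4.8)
The plan is to realize $\Artin\gen{D_n}$ concretely as an index-two subgroup of $\Artin\gen{B_n}$, after imposing a single squared relation, using only ingredients already in hand: Lemma~\ref{lem:man} together with Theorem~\ref{th:artpres} (equivalently Corollary~\ref{cor:Manfred}), which presents $\Artin\gen{B_n}$ as $\B_n\ltimes\Sigma_{0,1,n}$ with $\sigma_{[1{\uparrow}n-1]}$ carried to $\sigma_{[1{\uparrow}n-1]}$ and the special double-bonded generator carried to $\overline t_n\in\Sigma_{0,1,n}$; and Lemma~\ref{lem:D}, which says that $\Artin\gen{D_n}\rtimes\gen{\upsilon}$ is presented by $\Artin\langle\sigma_1\ray\cdots\ray\sigma_{n-1}\doubleray\upsilon\mid\upsilon^2=1\rangle$. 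The point driving everything is that this last group is exactly $\Artin\gen{B_n}$ with the relation $\upsilon^2=1$ imposed on its special generator.

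Carrying this out, I would first note that the special generator squared is $\overline t_n^{\,2}=t_n^{-2}$, so imposing that relation passes $\B_n\ltimes\Sigma_{0,1,n}$ to $G:=(\B_n\ltimes\Sigma_{0,1,n})/\langle\langle t_n^2\rangle\rangle$. Since $\B_n$ permutes $[t_1],\ldots,[t_n]$ transitively, the normal closure of $t_n^2$ in $\B_n\ltimes\Sigma_{0,1,n}$ coincides with the normal closure of $\{t_1^2,\ldots,t_n^2\}$ inside the free normal subgroup $\Sigma_{0,1,n}$, which is a $\B_n$-invariant subgroup; hence $G=\B_n\ltimes\bigl(\Sigma_{0,1,n}/\langle\langle t_1^2,\ldots,t_n^2\rangle\rangle\bigr)=\B_n\ltimes\Sigma_{0,1,n^{(2)}}$, the action being the standard $\B_n$-action on $\Sigma_{0,1,n^{(2)}}=C_2^{\ast n}$ (via $\B_n=\Out^+_{0,1,n}=\Out^+_{0,1,n^{(2)}}$). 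On the other hand, imposing $\upsilon^2=1$ on $\Artin\gen{B_n}$ gives, by Lemma~\ref{lem:D}, precisely $\Artin\gen{D_n}\rtimes\gen{\upsilon}$. Thus there is an isomorphism $\Artin\gen{D_n}\rtimes\gen{\upsilon}\isom\B_n\ltimes\Sigma_{0,1,n^{(2)}}$ sending $d_i\mapsto\sigma_i$ for $i\in[1{\uparrow}n-1]$ and $\upsilon\mapsto\overline t_n=\tau_n$.

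It then remains to restrict this isomorphism to index-two subgroups. On the right, the ``$\tau$-exponent-sum mod $2$'' homomorphism $\Sigma_{0,1,n^{(2)}}\to\integers/2$ is $\B_n$-invariant, because each $\tau_i^{\phi}$ is a conjugate of some $\tau_j$ and hence of odd $\tau$-length; so it extends to a homomorphism $\mu\colon\B_n\ltimes\Sigma_{0,1,n^{(2)}}\to\integers/2$ killing $\B_n$, with $\ker\mu=\B_n\ltimes\Phi_{n-1}$ by the definition of $\Phi_{n-1}$. On the left, $\Artin\gen{D_n}=\ker\bigl(\Artin\gen{D_n}\rtimes\gen{\upsilon}\to\gen{\upsilon}\bigr)$. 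These two maps to $\integers/2$ agree on generators under the isomorphism ($\sigma_i,d_i\mapsto 0$; $\tau_n,\upsilon\mapsto 1$), so they are equal, and the isomorphism carries $\Artin\gen{D_n}$ onto $\B_n\ltimes\Phi_{n-1}$, with $d_i\mapsto\sigma_i$ for $i\in[1{\uparrow}n-1]$ and $d_n=\upsilon d_{n-1}\upsilon\mapsto\tau_n\sigma_{n-1}\tau_n$; a one-line calculation from $\tau_n^{\sigma_{n-1}}=\tau_n\tau_{n-1}\tau_n$ then gives $\tau_n\sigma_{n-1}\tau_n=\sigma_{n-1}\tau_n\tau_{n-1}$. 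Reading off the $D_n$-diagram with this labelling yields the displayed presentation of $\B_n\ltimes\Phi_{n-1}$. For the two ``Hence'' assertions, one transports the automorphism $\upsilon$ of $\Artin\gen{D_n}$ (and its uniqueness) across the isomorphism to obtain the asserted order-two automorphism of $\B_n\ltimes\Phi_{n-1}$ fixing $\sigma_1,\ldots,\sigma_{n-2}$ and interchanging $\sigma_{n-1}$ with $\sigma_{n-1}\tau_n\tau_{n-1}$, and then $(\B_n\ltimes\Phi_{n-1})\rtimes\gen{\upsilon}\simeq\Artin\gen{D_n}\rtimes\gen{\upsilon}$ is presented, by Lemma~\ref{lem:D} again, as $\Artin\langle\sigma_1\ray\cdots\ray\sigma_{n-2}\ray\sigma_{n-1}\doubleray\upsilon\mid\upsilon^2=1\rangle$.

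The main point requiring care is bookkeeping rather than any hard idea. One must check that the normal closure of $t_n^2$ really is $\langle\langle t_1^2,\ldots,t_n^2\rangle\rangle$ (so that the quotient is the honest free product of cyclic groups $\Sigma_{0,1,n^{(2)}}$ with the standard $\B_n$-action, and its even-$\tau$-length subgroup is $\Phi_{n-1}$ as in Notation~\ref{not:F}), and that the two retractions onto $\integers/2$ genuinely correspond under the isomorphism --- which is exactly where the $\B_n$-invariance of $\tau$-exponent-sum mod $2$ is used. Everything else is an application of Lemma~\ref{lem:man}, of Lemma~\ref{lem:D}, and of the single conjugation identity $\tau_n\sigma_{n-1}\tau_n=\sigma_{n-1}\tau_n\tau_{n-1}$.
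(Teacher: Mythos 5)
Your proposal is correct and follows essentially the same route as the paper's proof: both pass from $\Artin\gen{B_n}=\B_n\ltimes\Sigma_{0,1,n}$ (Corollary~\ref{cor:Manfred}) to $\B_n\ltimes\Sigma_{0,1,n^{(2)}}$ by imposing $t_n^2=1$, invoke Lemma~\ref{lem:D} to recognize the index-two kernel of the retraction onto $\gen{\tau_n}$ as $\Artin\gen{D_n}$, and use the identity $\sigma_{n-1}^{\tau_n}=\sigma_{n-1}\tau_n\tau_{n-1}$ to read off the generators. The only difference is that you spell out two bookkeeping points the paper leaves implicit (that the normal closure of $t_n^2$ is that of all the $t_i^2$, and that the retraction's kernel is the even-exponent-sum subgroup $\B_n\ltimes\Phi_{n-1}$).
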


\begin{proof} By Corollary~\ref{cor:Manfred}, we have a presentation
\begin{align*} 
\B_n \ltimes \Sigma_{0,1,n} \,\,\,&= \,\,\, \Artin\langle \sigma_1 
 \ray  
\cdots  \ray \sigma_{n-1}  \doubleray \,\, \overline t_n\rangle.
\end{align*}
If we impose the relation $t_n^2 = 1$, we transform $\B_n \ltimes \Sigma_{0,1,n}$ into
$\B_n \ltimes \Sigma_{0,1,n^{(2)}}$, and we have
\begin{align*} 
\B_n \ltimes \Sigma_{0,1,n^{(2)}} \,\,\,&= \,\,\, \Artin\langle \sigma_1 
 \ray  
\cdots  \ray \sigma_{n-1}  \doubleray \,\, \tau_n \mid \tau_n^2 = 1\rangle.
\end{align*}
Here, there exists a retraction to $\gen{\tau_n}$ with kernel the normal subgroup generated by
 $\sigma_{[1{\uparrow}n-1]}$.  This normal subgroup contains
$\sigma_{i}^{\tau_{i+1}} = \sigma_{i}\tau_{i+1}\tau_{i}$ for all $i \in[1{\uparrow}n-1]$.
By Lemma~\ref{lem:D}, the normal subgroup has presentation
\begin{align*} 
\B_n \ltimes \Phi_{n-1} \,\,\,&= \,\,\, \Artin\langle \,\,\sigma_1 
 \ray  
\cdots  \ray \sigma_{n-3}  \begin{aligned}
[b] \sigma&\null_{n-1}^{\tau_n}&\\[-2pt]\rule[-4pt]{0.4pt}{.6cm}\hskip2pt&\\[0pt]\ray \sigma&\null_{n-2}\ray
\end{aligned} \hskip -0.3cm \sigma_{n-1}  \,\,\rangle,
\end{align*}
and this agrees with the desired presentation.
\end{proof}

\begin{remarks}
Corollary~\ref{cor:Manfred} says that, for $n \ge 1$, we can go down by index 
$n+1$ from $\Artin\langle A_n\rangle$ by squaring the last generator,
and arrive at
$\Artin\langle B_n\rangle \simeq \Artin\langle A_{n-1}\rangle \ltimes \Sigma_{0,1,n}$.

Theorem~\ref{th:PV} says that, for $n \ge 2$, we can kill the square of the new last generator, go down by index 2,
 and arrive at
$\Artin\langle D_n \rangle \simeq \Artin\langle A_{n-1}\rangle \ltimes \Phi_{n-1}$.
\hfill\qed
\end{remarks}

\medskip

We now record some other free generating sets of $\Phi_{n}$ which appear in the literature.

\begin{examples}\label{ex:wada}  Recall Notation~\ref{not:F}. 
In particular,  the $\B_{n+1}$-action on $\Phi_{n}$ is faithful if $n \ne 1$.  

(1). For each  $k \in [1{\uparrow}n]$,  set $x_k = \tau_{k} \tau_{k+1}$ in $\Phi_{n}$.
Then $x_{[1{\uparrow}n]}$ is a free generating set for $\Phi_{n}$, and, 
for each $i \in [1{\uparrow}n]$,  the action of $\sigma_i$ 
on $\Phi_{n}$ is determined by

\bigskip

\centerline{
\begin{tabular}{ 
>{$}r<{$} 
@{\hskip0cm}    
>{$}l<{$}   
@{\hskip0.7cm}    
>{$}c<{$}   
@{\hskip0.7cm}      
>{$}c<{$} 
@{\hskip0.7cm}     
>{$}c<{$} 
@{\hskip0.7cm}
>{$}r<{$} 
@{\hskip0cm}  
>{$}l<{$}}
\setlength\extrarowheight{3pt}
\\[-.7cm]&\hskip-.5cm\underline{\scriptstyle k \in [1{\uparrow}i-2]}& & & &&
\hskip -1cm \underline{\scriptstyle  k\in[i+2{\uparrow}n]}
\\ [.15cm]
(&x_k  &x_{i-1} & x_{i} & x_{i+1} &x_k &)^{\sigma_i}\\
=(&  x_k  &  x_{i-1}x_i & x_{i} & \overline x_{i} x_{i+1}  & x_k &),
\end{tabular}}

\medskip

\noindent interpretated appropriately for $i = 1$ and $i = n$.

\bigskip

(2).  For each  $k \in [1{\uparrow}n]$,  set $x_k = \tau_{n+1}\tau_{k}$ in $\Phi_{n}$,
Then $x_{[1{\uparrow}n]}$ is a free generating set for $\Phi_{n}$, and,  
for each $i \in [1{\uparrow}n-1]$,  $\sigma_i$   acts on $x_{[1{\uparrow}n]}$ as follows.

\bigskip

\centerline{
\begin{tabular}
{
>{$}r<{$} 
@{} 
>{$}l<{$} 
@{\hskip .4cm} 
>{$}c<{$} 
@{\hskip .8cm}  
>{$}c<{$}
@{\hskip .8cm}
>{$}r<{$} 
@{} 
>{$}l<{$}
}
\setlength\extrarowheight{3pt}
\\[-.8cm] &\hskip -.4cm\underline{\scriptstyle k \in [1{\uparrow}i-1]}  &   & &
 &\hskip -1cm\underline{\scriptstyle k \in [i+2{\uparrow}n]}
\\[.15cm]
(&x_k   & x_{i} & x_{i+1} &x_k &)^{\sigma_i}\\
=(&  x_k  & x_{i+1} & x_{i+1}\overline x_{i}x_{i+1} & x_k &).
\end{tabular}
\qquad
\begin{tabular}{ 
>{$}r<{$} 
@{\hskip0cm}    
>{$}l<{$}   
@{\hskip0.5cm}    
>{$}r<{$}   
@{\hskip0cm}  
>{$}l<{$}}
\setlength\extrarowheight{3pt}
\\[-.8cm] & \hskip -.2cm \underline{\scriptstyle  k\in[1{\uparrow}n-1]} & &
\\[.15cm]
(&\,\,\,\,\,x_k    &x_{n} &)^{\sigma_{n}}\\
=(&  x_{n-1}x_k   & x_{n} &).
\end{tabular}
}

\bigskip

(3).  We next consider the  free generating set used in the proof of~\cite[Proposition A.1(2)]{CrispParis2}.

For each  $k \in [1{\uparrow}n]$,  set $x_k = \tau_{n+1}^{\Pi\tau_{[1{\uparrow}k]}}\tau_{k+1}$ in $\Phi_{n}$.
Then $x_{[1{\uparrow}n]}$ is a free generating set for $\Phi_{n}$, and,
for each $i \in [1{\uparrow}n-1]$,  $\sigma_i$  acts on $x_{[1{\uparrow}n]}$ as follows,

\bigskip

\centerline{
\begin{tabular}
{
>{$}r<{$} 
@{} 
>{$}l<{$} 
@{\hskip .4cm} 
>{$}c<{$} 
@{\hskip .8cm}  
>{$}c<{$}
@{\hskip .8cm}
>{$}r<{$} 
@{} 
>{$}l<{$}
}
\setlength\extrarowheight{3pt}
 &\hskip -.4cm\underline{\scriptstyle k \in [1{\uparrow}i-1]}  &   & &
 &\hskip -1cm\underline{\scriptstyle k \in [i+2{\uparrow}n]}
\\[.15cm]
(&x_k   & x_{i} & x_{i+1} &x_k &)^{\sigma_i}\\
=(&  x_k  & x_i \Pi x_{[i{\uparrow}i+1]} & \Pi \overline x_{[i+1{\downarrow}i]} x_{i+1} & x_k &).
\end{tabular}}

\bigskip 

\noindent Let $w =  (\Pi x^2_{[1{\uparrow}n-1]} x_{n})^{-1}$; then $\sigma_{n}$ acts as follows. 

\bigskip

\centerline{
\begin{tabular}{ 
>{$}r<{$} 
@{\hskip0cm}    
>{$}l<{$}   
@{\hskip0.5cm}    
>{$}r<{$}   
@{\hskip0cm}  
>{$}l<{$}}
\setlength\extrarowheight{3pt}
\\[-.8cm] & \hskip 0cm \underline{\hskip.6cm\scriptstyle  k\in[1{\uparrow}n-1]\hskip.6cm} & &
\\[.15cm]
(&\hskip1cm x_k    & x_{n}\phantom{wwwwww}&)^{\sigma_{n}}\\
=(& w^{(-1)^{k}\Pi x_{[1{\uparrow}k-1]} }x_k \,\,\,  & 
 w^{(-1)^{n} \Pi x_{[1{\uparrow}n-1]}}  x_{n}w &).
\end{tabular}
}

\bigskip

(4).  By reflecting the previous example, we can invert the elements of $\sigma_{[1{\uparrow}n]}$.

For each  $k \in [1{\uparrow}n]$,  set 
$x_k = (\tau_{n+1}^{\,\,\Pi \tau_{[n{\downarrow}1]}} \tau_k)^{\Pi\tau_{[k{\uparrow}n+1]}}$ in $\Phi_{n}$.
Then $x_{[1{\uparrow}n]}$ is a free generating set for $\Phi_{n}$, and,  
for each $i \in [1{\uparrow}n-1]$,  $\sigma_i$  acts on $x_{[1{\uparrow}n]}$ as follows. 

\bigskip

\centerline{
\begin{tabular}
{
>{$}r<{$} 
@{} 
>{$}l<{$} 
@{\hskip .4cm} 
>{$}c<{$} 
@{\hskip .8cm}  
>{$}c<{$}
@{\hskip .8cm}
>{$}r<{$} 
@{} 
>{$}l<{$}
}
\setlength\extrarowheight{3pt}
 &\hskip -.4cm\underline{\scriptstyle k \in [1{\uparrow}i-1]}  &   & &
 &\hskip -1cm\underline{\scriptstyle k \in [i+2{\uparrow}n]}
\\[.15cm]
(&x_k   & x_{i} & x_{i+1} &x_k &)^{\sigma_i}\\
=(&  x_k  & x_i \Pi \overline x_{[i+1{\downarrow}i]} & \Pi   x_{[i{\uparrow}i+1]} x_{i+1} & x_k &).
\end{tabular}}
\bigskip 

\noindent Let $w =  \Pi x^2_{[1{\uparrow}n-1]} x_{n}$; then $\sigma_{n}$ acts as follows. 

\bigskip

\centerline{
\begin{tabular}{ 
>{$}r<{$} 
@{\hskip0cm}    
>{$}l<{$}   
@{\hskip0.5cm}    
>{$}r<{$}   
@{\hskip0cm}  
>{$}l<{$}}
\setlength\extrarowheight{3pt}
\\[-.8cm] & \hskip 0cm \underline{\hskip.6cm\scriptstyle  k\in[1{\uparrow}n-1]\hskip.6cm} & &
\\[.15cm]
(&\hskip 1cm x_k    &x_{n}\phantom{wwwwww} &)^{\sigma_{n}}\\
=(& w^{(-1)^{k}\Pi x_{[1{\uparrow}k-1]} }x_k \,\,\,  & 
 w^{(-1)^{n} \Pi x_{[1{\uparrow}n-1]}}  x_{n}w &).
\end{tabular}
} \vskip -.5cm  \hfill\qed \vskip .5cm
\end{examples}

\begin{history}  Let us view $\B_{n}$ as a subgroup of 
$\B_{n+1}$ by suppressing~$\sigma_{n}$.   Then
the $\B_{n+1}$-group $\Phi_{n}$ becomes a faithful $\B_{n}$-group,
 even if $n = 1$.  

Wada~\cite{Wada} defined various left actions of $\B_n$ on a free group 
of rank $n$.  All but four of them are obviously non-faithful, and
two of the remaining four actions are obviously equivalent up to changing the free generating set,
leaving three actions to be studied for faithfulness.   Shpilrain~\cite{Shpilrain}
 ingeniously used the $\sigma_1$-trichotomy to prove 
that these three are all faithful.
Crisp-Paris~\cite[Proposition~A.1(2)]{CrispParis2} 
showed that the second and third of these three Wada actions 
are equivalent up to changing the free generating set.
They correspond to Examples~\ref{ex:wada}(2), (4), above,
with $\sigma_n$ suppressed. 
Notice that our actions on the right
are the inverses of their actions on the left.  
In summary, the {\it second} and {\it third} Wada actions are
obtained by choosing suitable free generating sets of the Perron-Vannier $\B_{n+1}$-group $\Phi_n$.

The {\it first} Wada action is constructed by choosing a non-zero integer
$m$, and, for each $1\in [1{\uparrow}n-1]$, letting $\sigma_i$ act on
$\gen{x_{[1{\uparrow}n]}\mid \quad}$ via
\bigskip

\centerline{
\begin{tabular}
{
>{$}r<{$} 
@{} 
>{$}l<{$} 
@{\hskip .4cm} 
>{$}c<{$} 
@{\hskip .8cm}  
>{$}c<{$}
@{\hskip .8cm}
>{$}r<{$} 
@{} 
>{$}l<{$}
}
\setlength\extrarowheight{3pt}
 &\hskip -.4cm\underline{\scriptstyle k \in [1{\uparrow}i-1]}  &   & &
 &\hskip -1cm\underline{\scriptstyle k \in [i+2{\uparrow}n]}
\\[.15cm]
(&x_k   & x_{i} & x_{i+1} &x_k &)^{\sigma_i}\\
=(&  x_k & x_{i+1}&x_{i}^{x_{i+1}^m} & x_k &).
\end{tabular}}

\medskip

\noindent Edward Formanek has pointed out that 
$x_{[1,n]}^m$ is then a free generating set of a faithful 
$\B_n$-subgroup of $\gen{x_{[1,n]} \mid \quad}$, where faithfulness 
can be seen from the fact that the $\B_n$-action
is the standard Artin action with respect to this free generating set.
This gives a transparent proof that the first Wada action is faithful.
\hfill\qed 
\end{history}

\bigskip

\bigskip

\centerline{\LARGE\textbf{Appendix. Larue-Whitehead diagrams}}

\addvspace \baselineskip

In this appendix, we rework ideas from Larue's thesis~\cite[Chapter~2 and Appendix A]{LarueThesis94},
using combinatorial arguments to obtain a description of the
$\B_n$-orbit of $t_1$ when $n \ge 1$.   A topological treatment of similar ideas was given in~\cite{Fenn}, 
and it was arrived at independently of~\cite{LarueThesis94}.  See \cite[Chapters~5,~6]{DDRW02}.

\setcounter{section}{0}
\renewcommand\thesection{\Roman{section}}
\section{Self-homeomorphisms}\label{sec:self}
\setcounter{theorem}{1}

This section is purely motivational.  We shall
 briefly indicate the map\-ping\d1class viewpoint of  the braid group,
and the Jordan-curve nature of the Whitehead graphs of the elements in the 
$\B_n$-orbit of $t_1$ if $n \ge 1$.   

Let $\complexes$ denote the complex plane, and  $\widehat \complexes$  the 
Riemann sphere, or 
projective complex line, $\complexes\cup\{\infty\}$. For each $z \in \complexes$ and each non-negative
real number $r$, let $\mathbf{D}(z,r)$, resp. $\mathbf{D}^\circ(z,r)$,
 denote the closed, resp. open, disc in $\complexes$ 
with centre $z$ and radius $r$.

Let $S_{0,1,n}$ denote the surface formed by 
deleting from a sphere an open disc and $n$ points.
We shall think of the discs and points as being 
distinguished rather than deleted; for example, it is then meaningful to speak of
the self-homeomorphisms of $S_{0,1,n}$ as permuting the points.
We take as our model of $S_{0,1,n}$ the sphere $\widehat \complexes$
having $[1{\uparrow}n]$ as its set of $n$
distinguished points,  
and $\mathbf{D}^\circ(0,\frac{1}{2})$ as its distinguished open disc.
We are particularly interested in the set   $[0{\uparrow}n]$,
and, in our diagrams, we shall mark these points out by drawing discs of small radii around them.

For each distinguished point 
$k \in [0{\uparrow}n]$, we have a distinguished oriented tether, or arc,
 $\{k - r\mathbf{i} \mid -\infty \le r\le 0\}$,   joining $\infty$ to $k$. 
We label the right flank of this oriented arc $t_k$,  
and label the left flank $\overline t_k$;  we then cut $\widehat \complexes$ open  along these arcs
and obtain a $(2n +2)$-gon, with clockwise
 boundary label $\mathop{\Pi}\limits_{k\in[0{\uparrow}n]} (t_k \overline t_k)$; 
see Fig.~\ref{fig:circletwist}. 
We shall use $t_0$ and $z_1$ interchangeably in this section. 
Performing the boundary identifications then gives back $\widehat\complexes$.

The self-homeomorphism $\lambda$ of $\mathbf{D}(0,1)$ given by
$\lambda(re^{\mathbf{i}\theta}) := re^{\mathbf{i}(\theta - 2\pi r)}$ fixes the boundary of $\mathbf{D}(0,1)$ 
and interchanges $\frac{1}{2}$ and $-\frac{1}{2}$; see Fig.~\ref{fig:Dehntwist}. For each \phantom{  $i \in [1{\uparrow}n-1]$,}
\begin{figure}[H] 
\vspace{-.4cm}
\begin{center}
\setlength{\unitlength}{1.3mm}
\begin{picture}(55,25)
\put(0,0){\epsfig{file= 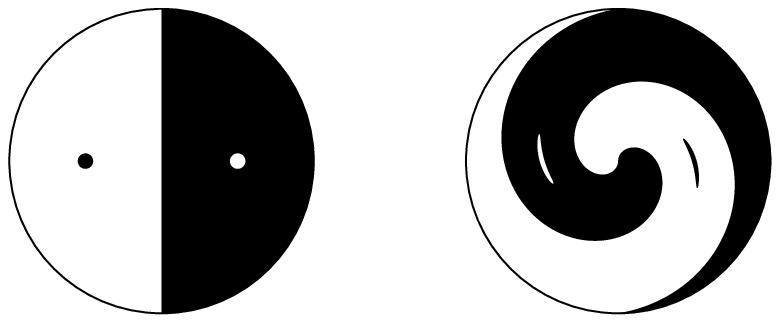,width=7.5truecm}}
\end{picture}
\caption{The map $\lambda\colon\mathbf{D}(0,1) \to \mathbf{D}(0,1) $, $re^{\mathbf{i}\theta} \mapsto re^{\mathbf{i}(\theta - 2\pi r)}$.}\label{fig:Dehntwist}
\end{center}
\vspace{-.7cm}
\end{figure}
\noindent 
   $i \in [1{\uparrow}n-1]$, let $\phi_i$ denote the self-homeomorphism of $\widehat\complexes$
which, on $\widehat\complexes - \mathbf{D}(i + \frac{1}{2},1)$, acts as the identity map,
 and, on $\mathbf{D}(i + \frac{1}{2},1)$, acts by
$z \mapsto \lambda(z- i - \frac{1}{2}) + i + \frac{1}{2}$.
Then $\phi_{[1{\uparrow}n-1]}$ generates a group $\gen{\phi_{[1{\uparrow}n-1]}}$ 
of self-homeomorphisms of $\widehat \complexes$,
which will shed light on the $\B_n$-orbit of $t_1$.
To describe the  induced action of 
$\gen{\phi_{[1{\uparrow}n-1]}}$  on the fundamental 
group of $S_{0,1,n}$, we first give $\widehat \complexes$ a CW-structure by specifying a graph 
$S^{(1)}_{0,1,n}$ embedded in $\complexes \subset \widehat\complexes$.

For each $k \in [-1{\uparrow}n]$, we have vertices $w_k := k + \frac{1}{2} -\mathbf{i}$ and 
$v_k := k + \frac{1}{2} + \mathbf{i}$, and  an oriented straight edge $f_k$ joining $w_k$ to $v_k$.
For each $k \in [0{\uparrow}n]$, we have an oriented straight edge $e_k$ joining $w_{k-1}$ to $w_k$,
and an oriented straight edge $d_k$ joining $v_{k-1}$ to $v_k$.
This completes the description of the graph $S^{(1)}_{0,1,n}$. For $n=3$, 
$S^{(1)}_{0,1,3}$ can be seen in Fig.~\ref{fig:X4}. 
Each distinguished point $k \in [0{\uparrow}n]$ is the midpoint of the  
rectangle in $\complexes$ cut out by the path $f_{k-1}d_k \overline f_k \overline e_k$. 
\begin{figure}
\vspace{0cm}
\begin{center}
\setlength{\unitlength}{.75mm}
\begin{picture}(80,45)
\put(0,0){\epsfig{file= 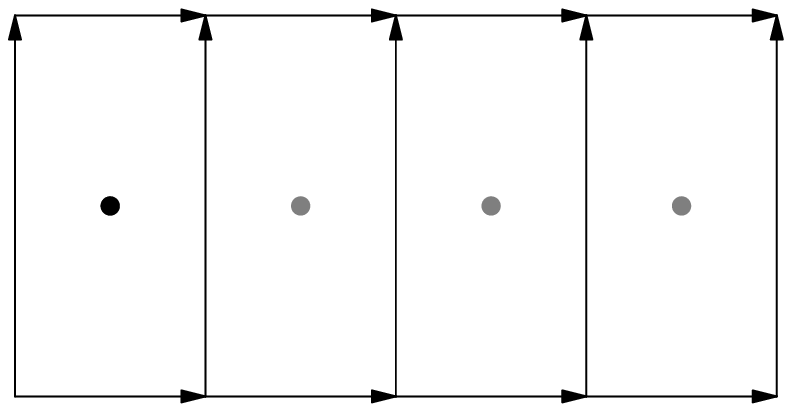,width=6truecm}}
\put(-2,-2) {\makebox(0,0)[l]{\small $w_{-1}$}}
\put(18,-2) {\makebox(0,0)[l]{\small $w_0$}}
\put(38,-2) {\makebox(0,0)[l]{\small $w_1$}}
\put(56,-2) {\makebox(0,0)[l]{\small $w_2$}}
\put(76,-2) {\makebox(0,0)[l]{\small $w_3$}}
\put(-2,42){\makebox(0,0)[l]{\small $v_{-1}$}} 
\put(18,42){\makebox(0,0)[l]{\small $v_0$}} 
\put(38,42){\makebox(0,0)[l]{\small $v_1$}} 
\put(56,42){\makebox(0,0)[l]{\small $v_2$}} 
\put(76,42){\makebox(0,0)[l]{\small $v_3$}} 
\put(10,-2) {\makebox(0,0)[l]{\small $e_0$}}
\put(30,-2) {\makebox(0,0)[l]{\small $e_1$}}
\put(48,-2) {\makebox(0,0)[l]{\small $e_2$}}
\put(68,-2) {\makebox(0,0)[l]{\small $e_3$}}
\put(10,42){\makebox(0,0)[l]{\small $d_0$}} 
\put(30,42){\makebox(0,0)[l]{\small $d_1$}} 
\put(48,42){\makebox(0,0)[l]{\small $d_2$}} 
\put(68,42){\makebox(0,0)[l]{\small $d_3$}} 
\put(-5.5,22) {\makebox(0,0)[l]{\small $f_{-1}$}}
\put(16,22) {\makebox(0,0)[l]{\small $f_0$}}
\put(35,22) {\makebox(0,0)[l]{\small $f_1$}}
\put(54,22) {\makebox(0,0)[l]{\small $f_2$}}
\put(73,22) {\makebox(0,0)[l]{\small $f_3$}}
\end{picture}
\caption{$S_{0,1,3}$.}\label{fig:X4}
\end{center}
\vspace{-.7cm}
\end{figure}

Let $\gen{S^{(1)}_{0,1,n}\mid \quad}$ denote the (free) fundamental groupoid of
   $S^{(1)}_{0,1,n}$, and let\linebreak
$\gen{S^{(1)}_{0,1,n}\mid \quad}(w_{-1},w_{-1})$ denote 
the (free) fundamental group of $S^{(1)}_{0,1,n}$ at $w_{-1}$.
The subgraph of $S^{(1)}_{0,1,n}$ spanned by 
$e_{[0{\uparrow}n]}\cup f_{[-1{\uparrow}n]}$ is a maximal subtree of $S^{(1)}_{0,1,n}$,
and   $d_{[0{\uparrow}n]}$ then determines a free 
generating set $t_{[0{\uparrow}n]}$  of $\gen{S^{(1)}_{0,1,n}\mid \quad}(w_{-1},w_{-1})$;
explicitly,  for each $k \in [0{\uparrow}n]$, 
$t_k = \Pi e_{[0{\uparrow}k-1]}f_{k-1}d_k \overline f_k \Pi \overline e_{[k{\downarrow}0]}$.

The path $f_{-1}\Pi d_{[0{\uparrow}n]}\overline f_{n}\Pi \overline e_{[n{\downarrow}0]}$
cuts out a rectangle in $\complexes$; the
complementary region in $\widehat \complexes$ together with the
graph $S^{(1)}_{0,1,n}$ is then a retract of 
$\widehat\complexes -[0{\uparrow}n]$.  Let $\sim$ denote homotopy for closed paths at $w_{-1}$ in
 $\widehat\complexes -[0{\uparrow}n]$.
We can identify the fundamental groupoid of $S_{0,1,n}$ with
$\gen{S^{(1)}_{0,1,n}\mid f_{-1}\Pi d_{[0{\uparrow}n]}\overline f_{n}\Pi \overline e_{[n{\downarrow}0]} \sim w_{-1}}.$
We then identify $\Sigma_{0,1,n}$ with the fundamental group of $S_{0,1,n}$ at $w_{-1}$, 
\begin{align*}
\Sigma_{0,1,n} &= \gen{S^{(1)}_{0,1,n}\mid f_{-1}\Pi d_{[0{\uparrow}n]}\overline f_{n}
\Pi \overline e_{[n{\downarrow}0]} \sim w_{-1}}(w_{-1},w_{-1})\\
&= \gen{ t_{[0{\uparrow}n]} \mid \Pi t_{[0{\uparrow}n]} = 1}.
\end{align*}

Consider the action of $\phi_1$ on the graph $S^{(1)}_{0,1,n}$.
For $n=3$, the result can be \phantom{seen} 
\begin{figure}[H]
\begin{center}
\setlength{\unitlength}{.75mm}
\begin{picture}(80,45)
\put(0,0){\epsfig{file= 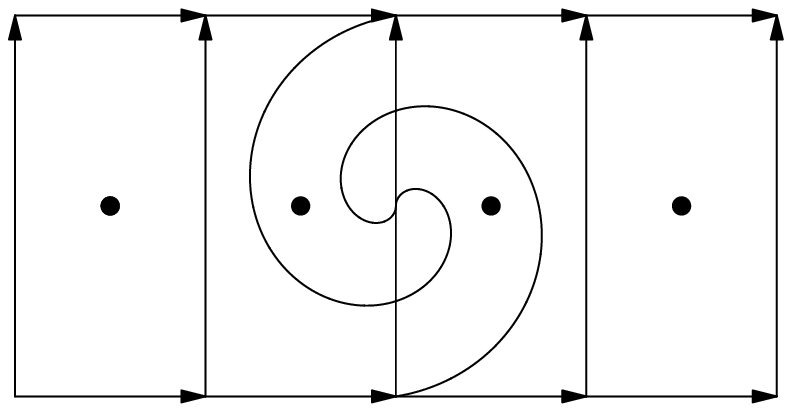,width=6truecm}}
\put(10,-2) {\makebox(0,0)[l]{\small $e_0$}}
\put(30,-2) {\makebox(0,0)[l]{\small $e_1$}}
\put(48,-2) {\makebox(0,0)[l]{\small $e_2$}}
\put(68,-2) {\makebox(0,0)[l]{\small $e_3$}}
\put(10,42){\makebox(0,0)[l]{\small $d_0$}} 
\put(30,42){\makebox(0,0)[l]{\small $d_1$}} 
\put(48,42){\makebox(0,0)[l]{\small $d_2$}} 
\put(68,42){\makebox(0,0)[l]{\small $d_3$}} 
\put(-6,22) {\makebox(0,0)[l]{\small $f_{-1}$}}
\put(30,30) {\makebox(0,0)[l]{\small $f_1^{\phi_1}$}}
\put(16,22) {\makebox(0,0)[l]{\small $f_0$}}
\put(34,6) {\makebox(0,0)[l]{\small $f_1$}}
\put(60,22) {\makebox(0,0)[l]{\small $f_2$}}
\put(73,22) {\makebox(0,0)[l]{\small $f_3$}}
\end{picture}
\caption{$S^{(1)}_{0,1,3}$ and its image under  $\phi_1$.}\label{fig:X4twist}
\end{center}
\vspace{-.7cm}
\end{figure}
\noindent
seen in Fig.~\ref{fig:X4twist}.   
The crucial point is that 
 $f_1^{\phi_1} \sim e_2 f_2 \overline d_2 \overline f_1 \overline e_1 f_0 d_1$,
and all the other elements of $S^{(1)}_{0,1,3}$ are fixed by $\phi_1$; 
this makes the action quite simple algebraically.
Then, $\overline f_1^{\,\,\phi_1} \sim \overline d_1 \overline f_0 e_1 f_1 d_2 \overline f_2 \overline e_2$, 
and, for the free generator $t_1 = e_0  f_0 d_1 \overline f_1 \overline e_{[1,0]}$, 
we have $$t_1^{\phi_1} \sim
e_0  f_0 d_1 (\overline d_1 \overline f_0 e_1 f_1 d_2 \overline f_2 \overline e_2) \overline e_{[1,0]}
\sim  e_{[0,1]} f_1 d_2 \overline f_2 \overline e_{[2,0]} = t_2.$$
Similarly,  for this element, $t_2$, 
we have
 $$t_2^{\phi_1} \sim
e_{[0,1]} (e_2 f_2 \overline d_2 \overline f_1 \overline e_1 f_0 d_1) d_2 \overline f_2 \overline e_{[2,0]} \sim
e_{[0,2]} f_2 \overline d_2 \overline f_1 \overline e_1 f_0 d_{[1,2]} \overline f_2 \overline e_{[2,0]} 
\sim \overline t_2 t_1t_2,$$
where the latter homotopy can be seen directly
by collapsing the elements of $e_{[0,2]} \cup f_{[0,2]}$, which lie in the maximal subtree.
Thus, we see that $\phi_{1}$  acts on $\Sigma_{0,1,n}$ as the automorphism $\sigma_1$.

\begin{figure}
\vspace{1.4cm}
\begin{center}
\setlength{\unitlength}{1mm}
\begin{picture}(100,75)
\put(29.6,76.5){\makebox(0,0)[l]{\small $z_1$}} 
\put(34.5,76.5){\makebox(0,0)[l]{\small $\overline z_1$}} 
\put(44.5,76.5){\makebox(0,0)[l]{\small $t_1$}} 
\put(49,76.9){\makebox(0,0)[l]{\small $\overline t_1$}} 
\put(58.5,76.5){\makebox(0,0)[l]{\small $t_2$}} 
\put(63,76.5){\makebox(0,0)[l]{\small $\overline t_2$}} 
\put(72.5,76.5){\makebox(0,0)[l]{\small $t_3$}} 
\put(77.5,76.5){\makebox(0,0)[l]{\small $\overline t_3$}} 
\put(11,38.5){\makebox(0,0)[l]{\small $z_1$}} 
\put(25,38.5){\makebox(0,0)[l]{\small $\overline z_1$}} 
\put(37.5,26.5){\makebox(0,0)[l]{\small $t_1$}} 
\put(38,14.5){\makebox(0,0)[l]{\small $\overline t_1$}}
\put(25,1){\makebox(0,0)[l]{\small $ t_2$}} 
\put(10.5,2){\makebox(0,0)[l]{\small $\overline t_2$}}  
\put(0,26.5){\makebox(0,0)[l]{\small $\overline t_3$}} 
\put(0,14.5){\makebox(0,0)[l]{\small $ t_3$}} 
\put(71,38.5){\makebox(0,0)[l]{\small $z_1$}} 
\put(85,38.5){\makebox(0,0)[l]{\small $\overline z_1$}} 
\put(97.5,26.5){\makebox(0,0)[l]{\small $t_1$}} 
\put(98,14.5){\makebox(0,0)[l]{\small $\overline t_1$}}
\put(70.5,2){\makebox(0,0)[l]{\small $\overline t_2$}}  
\put(85,1){\makebox(0,0)[l]{\small $ t_2$}} 
\put(60,26.5){\makebox(0,0)[l]{\small $\overline t_3$}} 
\put(60,14.5){\makebox(0,0)[l]{\small $ t_3$}} 
\put(25,45){\epsfig{file= 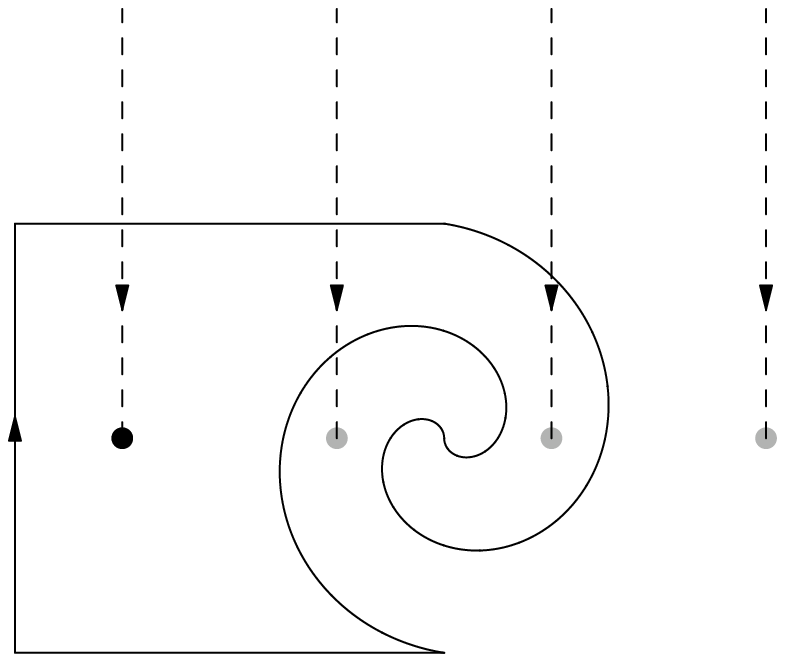,width=5.325truecm}}
\put(0,0){\epsfig{file= 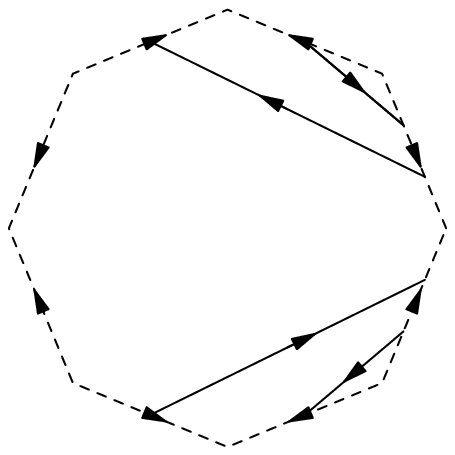,width=4truecm}}
\put(60,0){\epsfig{file= 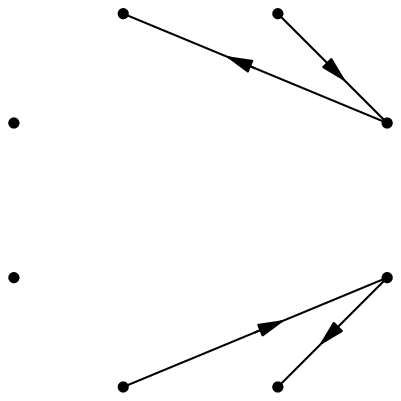,width=4truecm}}
\end{picture}
\caption{Jordan curves for $z_1t_1^{\overline \phi_1}$ and a Whitehead graph for  
$t_1^{\overline \sigma_1} = t_1t_2\overline t_1$.}\label{fig:circletwist}
\end{center}
\vspace{-.7cm}
\end{figure}

 It follows that the action of any given element of $\B_n$ on $\Sigma_{0,1,n}$ is induced by some
 self\d1homeo\-mor\-phism $\phi \in \gen{\phi_{[1{\uparrow}n-1]}}$.  The interesting feature now is that 
$\phi$ carries the oriented Jordan curve 
 $f_{-1}d_{[0{\uparrow}1]}\overline f_1 \overline e_{[1{\downarrow}0]} \,\,\,(\sim t_0t_1)$ to 
an oriented
Jordan curve $f_{-1}d_{[0{\uparrow}1]}\overline f_1^{\,\phi} \overline e_{[1{\downarrow}0]} 
\,\,\, (\sim (t_0t_1)^\phi \sim t_0t_1^\phi)$.  Recall that $\widehat \complexes$ is obtained 
by edge identification from  the $(2n+2)$-gon with clockwise, boundary label
 $\mathop{\Pi}\limits_{i\in[0{\uparrow}n]} (t_i \overline t_i)$. 
The Jordan curve
$f_{-1}d_{[0{\uparrow}1]}\overline f_1^{\,\phi} \overline e_{[1{\downarrow}0]}$
has  as its preimage,  in the $(2n+2)$-gon, the union of a family of (disjoint) 
oriented arcs.  These arcs can be used to reconstruct~$t_1^\phi$, since the 
Jordan curve cyclically reads off $t_0t_1^\phi$ from its meetings with 
the labelled oriented tethers; notice that the set of tethers is now 
dual to the set of generators~$t_{[0{\uparrow}n]}$.  
The purpose of this appendix is to define and study a combinatorial representation of 
the family of arcs, and recover Larue's  
characterization of the elements of~$t_1^{\B_n}$. 

Although it will not be used in our arguments,  let us mention the fact that, 
on collapsing the interior of each labelled edge of the $(2n+2)$-gon to a labelled vertex, 
each oriented arc in the family becomes an oriented edge, and we recover the
(directed, multi-edge, non-cyclic) Whitehead graph of  $t_1^\phi$; see Fig.~\ref{fig:circletwist}.

\section{Nested sets}\label{sec:wh0}

We now introduce some formal definitions that will allow us to associate
 a combinatorial Jordan curve to each element of $t_1^{\B_n}$.

\begin{definitions} Let $(A,\le)$ be a finite ordered set, and let $m \in \naturals$.

Let $N$ denote the number of elements of $A$.  Then $A$ is order-isomorphic to $[1{\uparrow}N]$
in a unique way, and we assign to $A$ the induced metric, denoted $d_A$.  Thus $d_A(a_1,a_2) = 1$
if and only if $a_1 \ne a_2$ and no element of $A$ lies strictly between $a_1$ and $a_2$.

Recall that the elements of $A^{m}$ are called  {\it $m$-tuples for } $A$.

Let $a_1,a_2,b_1,b_2$ be   elements of $A$.  We say that 
$\{a_1,b_1\}$ is {\it nested with} $\{a_2,b_2\}$ (for $(A,\le)$) if
$a_1,a_2,b_1,b_2$ are  distinct elements of $A$, and
either both of, or neither of, $a_2$ and $b_2$ lie between $a_1$ and $b_1$ in $(A,\le)$.
 It is not difficult to see that, in this event,
$\{a_2,b_2\}$ is   nested with  $\{a_1,b_1\}$.

Let $a_{([1{\uparrow}m])}$ and $b_{([1{\uparrow}m])}$ be $m$-tuples of $A$.

We say that $a_{([1{\uparrow}m])}$ is an
 $m$-tuple {\it without repetitions} if $a_i \ne a_j$ 
for all $i \ne j$ in $[1{\uparrow}m]$.

We say that $(a_{[1{\uparrow}m]})$  is an {\it  ascending} $m$-tuple (for $(A,\le)$) 
if $a_1 \le a_2 \le \cdots \le a_{m}$ in $(A,\le)$.

We say that
 $\{\{a_i,b_i\}\}_{i\in[1{\uparrow}m]}$ is {\it nested} (for $(A,\le)$) if, for all 
$i \ne j$ in $[1{\uparrow}m]$, $\{a_i,b_i\}$ is nested with $\{a_j,b_j\}$ for $(A,\le)$. 

We let $\Sym_m$ act on $A^m$, on the left, by 
$\null^\pi(a_{([1{\uparrow}m])}): = a_{([1{\uparrow}m])^{\overline\pi}}$.
For example, $\null^{(1,2,3)}(a_1,a_2,a_3)= (a_3, a_1, a_2)$, and, hence,
$\null^{(1,2,3)}(a,b,c)= (c,a,b)$.
The {\it ascending rearrangement} of $a_{([1{\uparrow}m])}$ is the unique
ascending $m$-tuple for $(A,\le)$ that lies in the $\Sym_m$-orbit 
of $a_{([1{\uparrow}m])}$.

Let $a_{([1{\uparrow}2m])}$ be a $2m$-tuple for $A$.  

A permutation $\pi \in \Sym_{2m}$ is said to  {\it embed 
$a_{([1{\uparrow}2m])}$ in a plane} if   $\null^\pi a_{([1{\uparrow}2m)])}$ is ascending for $(A,\le)$, and
 both $\{[2i-1{\uparrow}2i]^{\pi}\}_{i\in[1{\uparrow}m]}$ and 
 $\{[2i{\uparrow}2i+1]^{\pi}\}_{i\in[1{\uparrow}m-1]}$ are nested in
$(\naturals, \le)$.

 We say that
  $a_{([1{\uparrow}2m])}$   is  a  {\it planar} $2m$-tuple (for $(A,\le)$) 
if  there exists some $\pi \in \Sym_{2m}$ which embeds $a_{([1{\uparrow}2m])}$ in a plane.
(If no two consecutive terms of $a_{([1{\uparrow}2m])}$ are equal, 
$\pi$ is then unique, but we shall not need this fact.)
There is then an associated  diagram formed as follows. We assign, to each point
$i \in[1{\uparrow}2m] \subset \reals \subset \complexes,$ 
the label $a_{i^{\overline \pi}}$; notice that this means that  the label of $i^{\pi}$ is~$a_i$.
For each $i \in [1{\uparrow}m]$,  we join $(2i-1)^{\pi}$ (labelled $a_{2i-1}$)
to $(2i)^{ \pi}$ (labelled $a_{2i}$)
by an oriented semi-circle in the upper half-plane, and for each $i \in [1{\uparrow}m-1]$, 
 we join $(2i)^{\pi}$  (labelled $a_{2i}$) to $(2i+1)^{\pi}$
(labelled $a_{2i+1}$)
by an oriented semi-circle in the lower half-plane.
These oriented semi-circles form an oriented arc with no crossings which traces
out the $2m$-tuple $a_{([1{\uparrow}2m])}$.
\hfill\qed
\end{definitions}

\begin{example}\label{ex:diag} Suppose that 
$a_{([1{\uparrow}8])} = (\overline z_1, t_1, \overline t_1, t_2, \overline  t_2, \overline t_1, t_1,z_1)$
 is an
8-tuple for some ordered set $(A,\le)$, and that 
 the ascending rearrangement of $a_{([1{\uparrow}8])}$  
 is $( \overline z_1, t_1, t_1, \overline t_1, \overline t_1, t_2, \overline t_2, z_1).$

The permutation $\left(\begin{matrix}
1&2&3&4&5&6&7&8\\
1&2&5&6&7&4&3&8
\end{matrix}\right) = (3,5,7)(4,6)$
embeds $a_{([1{\uparrow}8])}$ in a plane since both
 $\{\{1,2\},\{5,6\},\{7,4\},\{3,8\}\}$  and   $\{\{2,5\},\{6,7\},\{4,3\}\}$ are nested in $(\naturals,\le)$, 
and $\null^{(3,5,7)(4,6)}(\overline z_1, t_1, \overline t_1, t_2, \overline  t_2, \overline t_1, t_1,z_1) 
\hskip -2pt = \hskip-2pt
( \overline z_1, t_1, t_1, \overline t_1, \overline t_1, t_2, \overline t_2, z_1).$  The associated diagram can
be seen in Fig.~\ref{fig:xy1}.
\begin{figure}[H]
\vspace{-.3cm}
\begin{center}
\setlength{\unitlength}{.1mm}
\begin{picture}(706,400) 
\put(0,0){\epsfig{file= 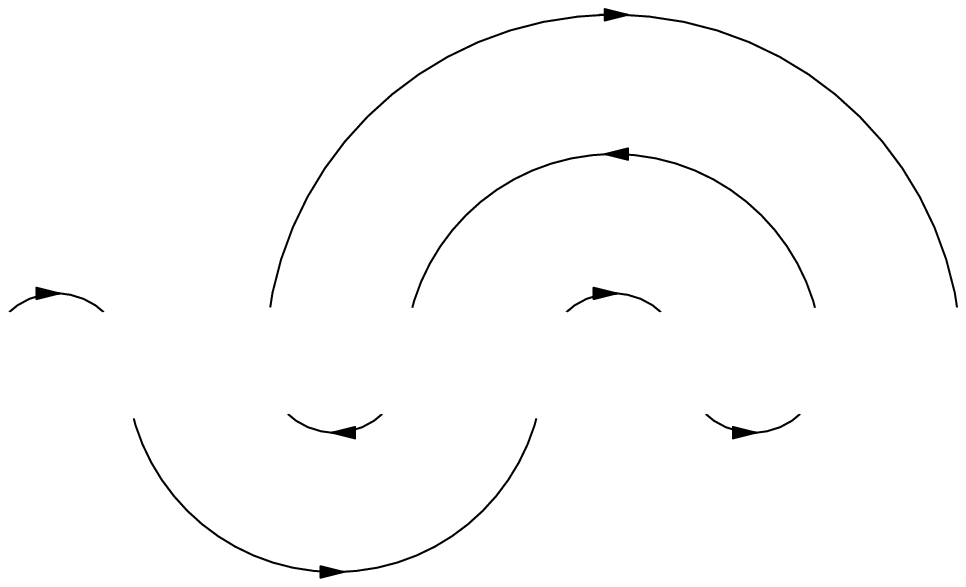,width=70.6truemm}}
\put(55,136){\makebox(0,0)[c]{$\underset{1}{\overline z_1}$}} 
\put(140,136){\makebox(0,0)[c]{$\underset{2}{t_1}$}} 
\put(228,136){\makebox(0,0)[c]{$\underset{3}{t_1}$}} 
\put(313,136){\makebox(0,0)[c]{$\underset{4}{\overline t_1}$}} 
\put(401,136){\makebox(0,0)[c]{$\underset{5}{\overline t_1}$}} 
\put(483,136){\makebox(0,0)[c]{$\underset{6}{t_2}$}} 
\put(571,136){\makebox(0,0)[c]{$\underset{7}{\overline t_2}$}} 
\put(656,136){\makebox(0,0)[c]{$\underset{8}{z_1}$}} 
\end{picture}
\caption{$(\overline z_1, t_1, \overline t_1, t_2, \overline  t_2, \overline t_1, t_1,z_1)$.}\label{fig:xy1}
\end{center}
\vspace{-.7cm}
\end{figure}
\vskip -.7cm \hfill\qed \vskip 0.4cm
\end{example}

Let us record two results which will be useful later.

\begin{lemma}\label{lem:nest3} Let $(A,\le)$ be an ordered set, and 
let $m$ be a positive integer.  Let $c_{[1{\uparrow}m]}$ and
$\overline c_{[1{\uparrow}m]}$ be $m$-tuples without repetitions
for   $(A,\le)$ such that $\{\{c_i,\overline c_i\}\}_{i\in [1{\uparrow}m]}$ is nested,
and $\max(c_{[1{\uparrow}m]}) < \min(\overline c_{[1{\uparrow}m]})$.
If $c_{([1{\uparrow}m])}$ is ascending, then $\overline c_{([m{\downarrow}1])}$ is also ascending.
\end{lemma}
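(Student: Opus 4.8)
The plan is to prove the statement directly: assuming $c_{([1{\uparrow}m])}$ is ascending, I would show that $\overline c_i > \overline c_j$ whenever $i < j$ in $[1{\uparrow}m]$, which is exactly the assertion that $\overline c_{([m{\downarrow}1])}$ is ascending (the tuples being repetition-free, ``ascending'' here means strictly increasing).

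First I would extract from the hypothesis $\max(c_{[1{\uparrow}m]}) < \min(\overline c_{[1{\uparrow}m]})$ the cruder consequence that every entry of $c_{([1{\uparrow}m])}$ is strictly less than every entry of $\overline c_{([1{\uparrow}m])}$; in particular $c_j < \overline c_i$ for all indices $i,j$. Next, fixing $i < j$, I would use that $c_{([1{\uparrow}m])}$ is ascending and repetition-free to get $c_i < c_j$, and combine this with $c_j < \overline c_i$ to obtain $c_i < c_j < \overline c_i$; thus $c_j$ lies strictly between $c_i$ and $\overline c_i$ in $(A,\le)$.

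Then I would invoke the hypothesis that $\{c_i,\overline c_i\}$ is nested with $\{c_j,\overline c_j\}$: the four elements $c_i,\overline c_i,c_j,\overline c_j$ are distinct, and, since $c_j$ lies between $c_i$ and $\overline c_i$, the definition of nestedness forces $\overline c_j$ also to lie between $c_i$ and $\overline c_i$, whence $c_i < \overline c_j < \overline c_i$ and in particular $\overline c_j < \overline c_i$. As $i < j$ was arbitrary, $\overline c_1 > \overline c_2 > \cdots > \overline c_m$, i.e.\ $\overline c_{([m{\downarrow}1])}$ is ascending, completing the proof.

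I do not anticipate a genuine obstacle here; the only point requiring a little care is keeping the order of the inequalities straight, and noting that the distinctness of $c_i,\overline c_i,c_j,\overline c_j$ needed to apply the nestedness definition is already built into that definition, so no separate argument for it is required.
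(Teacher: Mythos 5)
Your proof is correct and rests on exactly the same key step as the paper's: from $c_i<c_j<\overline c_i$ the nestedness of $\{c_i,\overline c_i\}$ with $\{c_j,\overline c_j\}$ forces $c_i<\overline c_j<\overline c_i$. The paper packages this as an induction on $m$, checking only the pair of indices $1,2$ and invoking the inductive hypothesis for the rest, whereas you run the same argument directly over all pairs $i<j$; this is a purely organizational difference.
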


\begin{proof} We argue by induction on $m$.  If $m = 1$, the conclusion is trivial.
Now, assume that $m \ge 2$ and that the implication
holds with $m-1$ in place of $m$.  We see that
$c_1 < c_2 \le \max(c_{[1{\uparrow}m]}) < \min(\overline c_{[1{\uparrow}m]}) \le \overline c_1.$
Since $\{c_1,\overline c_1\}$ is nested with $\{c_2,\overline c_2\}$, we also see that
$c_1 < \overline c_2 < \overline c_1$.
By the induction hypothesis, $\overline c_{([m{\downarrow}2])}$ is ascending, and hence 
$\overline c_{([m{\downarrow}1])}$ is ascending.  Hence,  the result is proved.
\end{proof}

\begin{lemma}\label{lem:B}
Let $(A,\le)$ be an ordered set, let $m \in \naturals$, and 
let $a_{([1{\uparrow}2m])}$ be a $2m$-tuple for $A$.

Then $a_{([1{\uparrow}2m])}$ is planar for $(A,\le)$ if and only if there exists
an ordered set $(B,\le)$, and a $2m$-tuple $b_{([1{\uparrow}2m])}$ for $B$, without repetitions,
and an ordered-set map  $B \to A$, $b \mapsto \lab(b)$,
such that $b_{[1{\uparrow}2m]} = B$,  $\lab(b_{([1{\uparrow}2m])}) = a_{([1{\uparrow}2m])}$,  and 
$\{b_{[2i{\uparrow}2i+1]}\}_{i\in[1{\uparrow}m-1]}$ and $\{b_{[2i-1{\uparrow}2i]}\}_{i\in[1{\uparrow}m]}$
are nested for $(B,\le)$.
\end{lemma}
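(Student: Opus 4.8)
The plan is to set up a dictionary between permutations $\pi\in\Sym_{2m}$ that embed $a_{([1{\uparrow}2m])}$ in a plane and triples $\bigl((B,\le),\,b_{([1{\uparrow}2m])},\,\lab\bigr)$ of the kind described in the statement, under which the defining conditions on $\pi$ correspond term by term to the conditions on the triple; the lemma is then immediate. Both implications are a matter of unwinding the definitions of ``embeds in a plane,'' of the left $\Sym_{2m}$-action, and of nestedness.

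For the ``only if'' direction, given $\pi$ that embeds $a_{([1{\uparrow}2m])}$ in a plane, I would take $(B,\le)$ to be $[1{\uparrow}2m]$ with its usual order, set $b_i:=i^\pi$ for each $i\in[1{\uparrow}2m]$, and define $\lab\colon B\to A$ by $\lab(j):=a_{j^{\overline\pi}}$, i.e.\ $\lab(j)$ is the label carried by the point $j$ in the associated diagram. Then $b_{([1{\uparrow}2m])}$ is a permutation of $[1{\uparrow}2m]$, hence a $2m$-tuple without repetitions with $b_{[1{\uparrow}2m]}=B$; the computation $\lab(b_i)=a_{(i^\pi)^{\overline\pi}}=a_i$ gives $\lab(b_{([1{\uparrow}2m])})=a_{([1{\uparrow}2m])}$; the map $\lab$ is order-preserving exactly because $\null^\pi a_{([1{\uparrow}2m])}=(\lab(1),\ldots,\lab(2m))$ is ascending for $(A,\le)$; and since $b_{[2i-1{\uparrow}2i]}=[2i-1{\uparrow}2i]^\pi$ and $b_{[2i{\uparrow}2i+1]}=[2i{\uparrow}2i+1]^\pi$, the two nestedness requirements for $(B,\le)$ in the statement are literally the two nestedness requirements in $(\naturals,\le)$ from the definition of ``embeds in a plane.''

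For the ``if'' direction, given $(B,\le)$, $b_{([1{\uparrow}2m])}$ and $\lab$ as in the statement, I note that ``without repetitions'' together with $b_{[1{\uparrow}2m]}=B$ forces $B$ to have exactly $2m$ elements; let $\theta\colon B\isom [1{\uparrow}2m]$ be the unique order isomorphism and let $\pi\in\Sym_{2m}$ be the permutation $i\mapsto\theta(b_i)$. Unwinding, $j^{\overline\pi}$ is the index $i$ for which $b_i$ is the $j$-th smallest element of $B$, so the $j$-th entry $a_{j^{\overline\pi}}$ of $\null^\pi a_{([1{\uparrow}2m])}$ equals $\lab$ applied to the $j$-th smallest element of $B$; since $\lab$ is order-preserving, this list is ascending. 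Because $\theta$ is an order isomorphism it preserves betweenness, hence nestedness; transporting the two nestedness hypotheses for $(B,\le)$ through $\theta$, using $\theta(b_{2i-1})=(2i-1)^\pi$, $\theta(b_{2i})=(2i)^\pi$, $\theta(b_{2i+1})=(2i+1)^\pi$, yields the nestedness of $\{[2i-1{\uparrow}2i]^\pi\}_{i\in[1{\uparrow}m]}$ and of $\{[2i{\uparrow}2i+1]^\pi\}_{i\in[1{\uparrow}m-1]}$ in $(\naturals,\le)$. Hence $\pi$ embeds $a_{([1{\uparrow}2m])}$ in a plane.

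I expect no genuine obstacle; the argument is pure bookkeeping, and the only real care needed is with the conventions — the left action $\null^\pi a_{([1{\uparrow}2m])}=a_{([1{\uparrow}2m])^{\overline\pi}}$ and the reading ``the point $i^\pi$ carries the label $a_i$,'' the convention that $b_{[i{\uparrow}j]}$ denotes the \emph{set} $\{b_k\mid k\in[i{\uparrow}j]\}$, and the understanding of ``ordered-set map'' as weakly monotone (so $\lab$ may collapse distinct elements to the same label, which is exactly what allows $a_{([1{\uparrow}2m])}$ to have repeated terms). One should also record that nestedness presupposes distinctness of the four points involved, which is supplied by ``without repetitions'' together with the disjointness of the index pairs $[2i-1{\uparrow}2i]$ for distinct $i$. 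With these points fixed, both implications fall out of the displayed identities.
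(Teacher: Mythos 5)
Your proposal is correct and takes essentially the same route as the paper: both directions construct the dictionary by identifying $B$ with $[1{\uparrow}2m]$, setting $b_i=i^\pi$ and $\lab(j)=a_{j^{\overline\pi}}$ in one direction and recovering $\pi$ from the unique order isomorphism $B\isom[1{\uparrow}2m]$ in the other. You merely spell out the verifications that the paper compresses into ``All the conditions are satisfied.''
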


\begin{proof} Suppose first that $a_{([1{\uparrow}2m])}$ is planar for $(A,\le)$, and 
let $\pi$ be an element of $\Sym_{2m}$ that embeds $a_{([1{\uparrow}2m])}$ in a plane. 
We take $B$ to be $[1{\uparrow}2m]$ with the usual ordering.
 For each  $i \in [1{\uparrow}2m]$, let $\lab(i) = a_{i^{\overline \pi}}$ and let $b_i = i^\pi$; thus, 
$\lab(b_i) = \lab(i^\pi) = a_i$.  All the conditions are satisfied.

Conversely, if $B$ exists, we can identify $B$ with $[1{\uparrow}2m]$ with the usual ordering, in a unique way.
Then the map $i \mapsto b_i$ is an element  $\pi$ of $\Sym_{2m}$ that embeds $a_{([1{\uparrow}2m])}$ in a plane. 
\end{proof}

\section{Planar words in $\Sigma_{0,1,n}$}\label{sec:wh}

\begin{definitions}\label{defs:rect} Let  $A$ be the monoid generating set  
$\{z_1, \overline z_1\} \cup t_{[1{\uparrow}n]} \cup \overline t_{[1{\uparrow}n]}$ of $\Sigma_{0,1,n}$.
We form the ordered set $(A,\le)$ with
$$\overline z_1 < t_1 < \overline t_1 < \cdots < t_n < \overline t_n < z_1.$$
We remark that, for $n \ne 1$, the ordering on $A$ is
reminiscent of the ordering of the ends of  $\Sigma_{0,1,n}$ in Section~\ref{sec:ends}.
We emphasize that, even if $n = 1$, $z_1 \ne \overline t_1$ in~$A$.

Let $m \in \naturals$.  
Consider an $m$-tuple $a_{([1{\uparrow}m])}$ for $t_{[1{\uparrow}n]} \cup \overline t_{[1{\uparrow}n]}$, and let
$w = \Pi a_{[1{\uparrow}m]}  \in \Sigma_{0,1,n}$; thus
  $a_{([1{\uparrow}m])}$ is an  expression
for $w$.
We define the {\it Whitehead expansion} of $a_{([1{\uparrow}m])}$ to be the  $(2m+2)$-tuple 
$$(\overline z_1, a_1, \overline a_1, a_2, \overline a_2, \ldots, a_m, \overline a_m, z_1)$$
for $A$, and we shall express it in the format
 $(\overline z_1,((a_i,\overline a_i))_{i\in[1{\uparrow}m]},z_1)$.
We say that $a_{([1{\uparrow}m])}$ is a {\it planar} expression
for $w$ if the Whitehead expansion of $a_{([1{\uparrow}m])}$
 is planar for $(A,\le)$.  
If the unique reduced expression for $w$ is a planar expression for $w$, then
we say that $w$ is a {\it planar} word in $\Sigma_{0,1,n}$.
\hfill\qed
\end{definitions}

\begin{examples} (i). The word $t_1\overline t_2\overline t_1$ is planar, 
since the Whitehead expansion of the reduced expression is
$( \overline z_1, t_1, \overline t_1, t_2, \overline  t_2, \overline t_1, t_1, z_1),$
and, by Example~\ref{ex:diag}, $( \overline z_1, t_1, \overline t_1, t_2, \overline  t_2, \overline t_1, t_1, z_1)$ is 
planar for $(A,\le)$;  in a sense, Fig.~\ref{fig:xy1}  reflects  Fig.~\ref{fig:circletwist}.
We call Fig.~\ref{fig:xy1} the Larue-Whitehead diagram of $t_1\overline t_2\overline t_1$.

(ii).  The word $t_1 \overline t_2$ is not planar; there is only one permutation to consider.

(iii).  The word $t_1^2$ is  not planar;  there are four permutations to consider.

(iv).   The word $t_3^{ t_1 \overline t_2 \overline t_1}$ is planar, while 
the word $t_3^{t_1t_2\overline t_1}$ is not planar, and these two words have the same Whitehead graph.
\hfill\qed
\end{examples}

\begin{proposition} Let $w \in \Sigma_{0,1,n}$. If there exists some planar expression for~$w$,
 then $($the reduced expression for$)$ $w$ is planar.
\end{proposition}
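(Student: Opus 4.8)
The statement asserts that planarity of the Whitehead expansion does not depend on which expression for $w$ we choose: if \emph{some} expression for $w$ is planar, then the reduced expression is planar. Since any expression for $w$ can be obtained from the reduced one by inserting cancelling pairs $a\overline a$ (and, conversely, reducing removes such pairs), it suffices by induction on the number of letters to prove the following single reduction step: if $a_{([1{\uparrow}m])}$ is an expression for $w$ in which $a_{j+1} = \overline a_j$ for some $j$, and $a_{([1{\uparrow}m])}$ is planar, then the shorter expression $a_{([1{\uparrow}j-1])}a_{([j+2{\uparrow}m])}$ (which is still an expression for $w$) is also planar. Iterating this step transforms any planar expression into the reduced one while preserving planarity, which is exactly the claim.

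First I would set up the combinatorial picture using Lemma~\ref{lem:B}: planarity of the Whitehead expansion $(\overline z_1,((a_i,\overline a_i))_{i\in[1{\uparrow}m]},z_1)$ is equivalent to the existence of an ordered set $(B,\le)$, a repetition-free $2m+2$-tuple $b_{([1{\uparrow}2m+2])}$ with $b_{[1{\uparrow}2m+2]} = B$, and a label map $\lab\colon B \to A$ carrying $b_{([1{\uparrow}2m+2])}$ to the Whitehead expansion, such that the two families of consecutive pairs are nested for $(B,\le)$. Concretely, label the $2m+2$ points of $B$ so that the ``upper'' arcs pair up $\overline z_1$ with $z_1$, and $a_i$ with $\overline a_i$ for each $i$, while the ``lower'' arcs pair $a_i^{\text{(the $\overline a_i$ copy)}}$ with $a_{i+1}$, i.e. they record the reading order. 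The hypothesis $a_{j+1}=\overline a_j$ means the two points of $B$ labelled $a_j$-occurrence-$\overline a_j$ and $a_{j+1}$ carry opposite letters and are joined by an upper arc to the partner points labelled $a_j$ (position $2j-1$-image) and $\overline a_{j+1}=a_j$ (position $2j+3$-image) respectively. I would then show that the four points of $B$ corresponding to positions $2j$, $2j+1$, $2j+2$, $2j+3$ of the Whitehead expansion form, together with their arcs, a ``trivial'' bigon-like configuration that can be excised: deleting these four points from $B$, and reconnecting the two lower arcs that met them into a single lower arc joining the $b$-partner of position $2j-1$ to $a_{j+2}$, yields an ordered set $B'$ with a repetition-free tuple and label map realizing the Whitehead expansion of the reduced expression, with both nesting conditions inherited. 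The nesting is inherited because removing points from a nested family of chords in a linearly ordered set, and merging two chords that shared an endpoint-neighbourhood, cannot create a crossing — this is the one verification requiring care.

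The main obstacle, and the step I would spend the most effort on, is precisely this ``excision preserves nesting'' lemma in the presence of the merge of two lower arcs. One has to check that the merged lower arc $\{b\text{-partner of }2j-1,\ 2j+4\text{-image}\}$ is nested with every other chord of both families in $B'$. This follows from the fact that in $B$ the four deleted points sit consecutively (after relabelling, or at least within a configuration controlled by the two upper arcs from positions $2j$ and $2j+1$ both landing ``inward''), so any chord that was nested with both of the two original lower arcs is nested with their concatenation; the key sub-fact is that the point labelled $a_j$ at position $2j-1$-image and the point labelled $a_j$ at position $2j+3$-image are ``adjacent'' in $B$ once the intervening four points are removed — more precisely, no chord separates them in $B'$. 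I would prove this by a short case analysis on where a putative separating endpoint could lie, using that the pair $\{2j,2j+3\}^{\pi}$-region and the pair $\{2j+1,2j+2\}^{\pi}$-region are each nested blocks. Once this lemma is in hand, the induction on expression length closes immediately and the proposition follows.

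\emph{Remark on bookkeeping.} It is cleanest to phrase the induction as: among all planar expressions for $w$, take one of minimal length; if it is not reduced it has an adjacent cancelling pair, apply the excision step to get a shorter planar expression, contradiction; hence the minimal-length planar expression is the reduced one, so the reduced expression is planar. This avoids having to track the explicit permutation $\pi$ through the reduction and lets one work entirely with the $(B,\le)$ model of Lemma~\ref{lem:B}.
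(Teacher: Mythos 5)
Your overall strategy coincides with the paper's: realize the planar Whitehead expansion via Lemma~\ref{lem:B} as a repetition-free tuple $b_{([1{\uparrow}2m+2])}$ in an ordered set $(B,\le)$, excise the four points $b_{2j},b_{2j+1},b_{2j+2},b_{2j+3}$ corresponding to a cancelling pair $a_{j+1}=\overline a_j$, replace the three odd-to-even pairs meeting them by the single pair $\{b_{2j-1},b_{2j+4}\}$, and check that both nestings survive. You also correctly locate the crux: everything reduces to the adjacency in $B$ of the two excised points labelled $a_j$, namely $b_{2j}$ and $b_{2j+3}$ (your indices ``$2j-1$-image'' and ``$2j+3$-image'' are off by one, but the intent is clear).

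The gap is that this adjacency is \emph{false} for an arbitrary cancelling pair, so the ``short case analysis'' you defer cannot succeed as stated; and your bookkeeping remark (take a planar expression of minimal length) does not repair it, because that minimality says nothing about \emph{which} cancelling pair you excise. Concretely, take $n=1$ and the expression $(t_1,t_1,\overline t_1,t_1,\overline t_1,\overline t_1)$ (an expression for $1$), whose Whitehead expansion has length $14$; the tuple $b_{([1{\uparrow}14])}=(1,2,13,4,11,8,7,6,9,10,5,12,3,14)$ in $B=[1{\uparrow}14]$, with labels $1\mapsto\overline z_1$, $[2{\uparrow}7]\mapsto t_1$, $[8{\uparrow}13]\mapsto\overline t_1$, $14\mapsto z_1$, has both families of pairs nested, so this is a planar embedding. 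The cancelling pairs are at $j=2,3,4$. For $j=2$ one has $b_4=4$ and $b_7=7$, which are \emph{not} adjacent ($b_{11}=5$ and $b_8=6$ lie strictly between them), and excising positions $4,5,6,7$ creates the odd-to-even pair $\{b_3,b_8\}=\{13,6\}$, which fails to be nested with $\{b_{11},b_{12}\}=\{5,12\}$. The paper's proof avoids this by choosing, among all cancelling pairs, one for which $d_B(b_{2j+1},b_{2j+2})$ is minimal, and then proving the Claim $d_B(b_{2j},b_{2j+3})=1$ by a descent: any point strictly between $b_{2j}$ and $b_{2j+3}$ forces, via Lemma~\ref{lem:nest3} and the nesting of both families, another cancelling pair whose inner pair lies inside $[b_{2j+2},b_{2j+1}]$, contradicting minimality unless it is the pair $\{b_{2j+1},b_{2j+2}\}$ itself. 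This choice of $j$ and the accompanying descent are the ideas missing from your proposal; without them the excision step genuinely fails (in the example, $j=3$ and $j=4$ are the minimizing pairs and do excise correctly).
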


\begin{proof}  Suppose that $ a_{([1{\uparrow} m])}$ is a planar expression for $w$,
as in Definitions~\ref{defs:rect}.

By Lemma~\ref{lem:B}, there exists an ordered set $(B,\le)$, 
and a planar $(2m+2)$-tuple $b_{([1{\uparrow}2m+2])}$ for $(B,\le)$, 
without repetitions, 
and a labelling $B \to A$, $b \mapsto \lab(b)$,
such that the labelling respects the orderings and 
$\lab(b_{([1{\uparrow}2m+2])})$ is the Whitehead expansion of $ a_{([1{\uparrow} m])}$.
Moreover, $B = b_{[1{\uparrow}2m+2]}$.

Suppose that the given planar expression $ a_{([1{\uparrow} m])}$   is not reduced.  
We shall find a shorter planar expression for $w$.

There exists some $j \in [1{\uparrow}m-1]$ such that
$a_{j+1} = \overline a_j$ in $t_{[1{\uparrow}n]} \cup \overline t_{[1{\uparrow}n]}$, and we may suppose
that we have chosen this $j$ in such a way that $d_B(b_{2j+1},b_{2j+2})$ has the minimum possible value.
Notice that $\lab(b_{([2j{\uparrow}2j+3])}) = (a_j, \overline a_j, \overline a_j, a_j)$.

Clearly, $w = \Pi a_{[1{\uparrow}j-1]}  \Pi a_{[j+1{\uparrow}m]}$, and 
$\lab(b_{([1{\uparrow}2j-1])},b_{([2j+4{\uparrow}2m+2])})$ is
$$(\overline z_1,((a_i,\overline a_i))_{i\in[1{\uparrow}j-1]}, ((a_i,\overline a_i))_{i\in[j+2{\uparrow}m]},z_1)$$
$$(\overline z_1, a_1, \overline a_1, \ldots, a_{j-1}, \overline a_{j-1}, a_{j+2}, 
\overline a_{j+2}, \ldots, a_m, \overline a_m, z_1).$$
It suffices to show that $(b_{([1{\uparrow}2j-1])},b_{([2j+4{\uparrow}2m+2])})$ is planar for $(B, \le)$.

\medskip

\noindent \textbf{Claim.} $d_B(b_{2j},b_{2j+3}) = 1$.

\begin{proof} 
Consider any $k \in [1{\uparrow}2m-1]$ such that $b_{k}$ lies between $b_{2j}$ and $b_{2j+3}$.

Let  $\eta$ denote $(-1)^k$.

Since $\lab(b_{2j}) = \lab(b_{2j+3}) = a_j$, we see that 
$\lab(b_{k}) =a_j$.  Hence 
$\lab(b_{k+\eta}) =  \overline a_j = \lab(b_{2j+1}) = \lab(b_{2j+2})$.

Either $a_j < \overline a_j$ or $a_j > \overline a_j$ in $(A,\le)$.  Hence,
\begin{list}{}{}
\item either $\max\{b_{2j}, b_{k}, b_{2j+3}\} < \min\{b_{2j+1}, b_{k+\eta}, b_{2j+2}\}$ in $(B,\le)$, 
\item or $\min\{b_{2j}, b_{k}, b_{2j+3}\} > \max\{b_{2j+1}, b_{k+\eta}, b_{2j+2}\}$ in $(B,\le)$,
\end{list}
respectively. 

Since $\{ \{b_{2j}, b_{2j+1}\}, \{b_{2j+2}, b_{2j+3}\}, \{b_{k}, b_{k+\eta}\}\}$ is nested,
and $b_{k}$ lies between  $b_{2j}$ and $b_{2j+3}$, we see, from Lemma~\ref{lem:nest3}, that 
$b_{k+\eta}$ lies between $b_{2j+1}$ and $b_{2j+2}$.

Since  $\{b_{2j+1}, b_{2j+2}\}$  is nested with $\{b_{k+\eta}, b_{k+2\eta}\}$ and
$b_{k+\eta}$ lies between $b_{2j+1}$ and $b_{2j+2}$, we see that
$b_{k+2\eta}$ lies between $b_{2j+1}$ and $b_{2j+2}$.  
Hence, $$d_B(b_{k+2\eta}, b_{k+\eta}) \le d_B(b_{2j+1}, b_{2j+2}),$$
with equality holding only if
$\{b_{k+2\eta}, b_{k+\eta}\} = \{b_{2j+1}, b_{2j+2}\}$.
Also, $\lab(b_{k+ 2\eta}) = \overline a_j$, and, hence,
$\lab(b_{k+ 3\eta}) = a_j$.  Thus 
$$\lab(b_{k}, b_{k+\eta}, b_{k+2\eta}, b_{k+3\eta}) = ( a_j,
\overline a_j, \overline a_j, a_j ).$$  
By the minimality of $d_B(b_{2j+1}, b_{2j+2})$,  
we see that   $k = 2j$ or $k = 2j+3$.  This proves the claim.
\end{proof}

Now consider the passage from $b_{([1{\uparrow}2m+2])}$  to $b_{([1{\uparrow}2j-1])},b_{([2j+4{\uparrow}2m+2])}$.

On the odd-to-even steps, we pass from
$\{b_{[2i-1{\uparrow}2i]}\}_{i\in[1{\uparrow}m+1]}$ to 
$$\{b_{[2i-1{\uparrow}2i]}\}_{i\in[1{\uparrow}j-1]\cup[j+3{\uparrow}m+1]} \cup \{\{b_{2j-1},b_{2j+4}\}\}.$$
Thus, we remove $\{b_{2j-1}, b_{2j}\}$,  $\{b_{2j+1}, b_{2j+2}\}$,  $\{b_{2j+3}, b_{2j+4}\}$,
and we add only  $\{b_{2j-1}, b_{2j+4}\}$.  To see that, for all $k \in [1{\uparrow}j-1]\cup[j+3{\uparrow}m+1]$,
  $\{b_{2k-1}, b_{2k}\}$ is nested with
 $\{b_{2j-1}, b_{2j+4}\}$, we note the following:
\begin{align*}
(b_{2j-1} \text{ lies }&\text{between } b_{2k-1} \text { and } b_{2k}) 
\\&\Leftrightarrow
(b_{2j} \text{ lies between } b_{2k-1} \text { and } b_{2k})
\\ &\hskip 2cm  \text{since } 
\{b_{2j-1},b_{2j}\} \text{ is nested with } \{b_{2k-1}, b_{2k}\}
\\&\Leftrightarrow
(b_{2j+3} \text{ lies between } b_{2k-1} \text { and } b_{2k})
\\&\hskip 2cm \text{since } 
d_B(b_{2j},b_{2j+3}) = 1
\\&\Leftrightarrow
(b_{2j+4} \text{ lies between } b_{2k-1} \text { and } b_{2k})
\\&\hskip 2cm \text{since } 
\{b_{2j+3},b_{2j+4}\} \text{ is nested with } \{b_{2k-1}, b_{2k}\}.
\end{align*} 

On the even-to-odd steps,
we pass from
$\{b_{[2i{\uparrow}2i+1]}\}_{i\in[1{\uparrow}m]}$ to 
$$\{b_{[2i{\uparrow}2i+1]}\}_{i\in[1{\uparrow}j-1]\cup[j+2{\uparrow}m]}.$$
Thus, we remove  $\{b_{2j}, b_{2j+1}\}$ and $\{b_{2j+2}, b_{2j+3}\}$,
and we add  nothing.  Hence this remains nested.  This completes the proof.
\end{proof}

\begin{proposition}\label{prop:square4} Let $w$ be a planar word in $\Sigma_{0,1,n}$, and let $k \in [1{\uparrow}n]$.
\begin{enumerate}[\normalfont (i).]
\vskip-0.7cm \null
\item  $w$ is a squarefree word in $\Sigma_{0,1,n}$.
\vskip-0.7cm \null
\item  $w \not \in ( \Pi \overline t_{[n{\downarrow}k+1]} t_k {\star} ) -\{t_k^{\Pi t_{[k+1{\uparrow}n]}}\}$.
\vskip-0.7cm \null
\item  $w \not \in ( \Pi  t_{[1{\uparrow}k-1]} \overline t_{k} {\star} )$.
\end{enumerate}
\end{proposition}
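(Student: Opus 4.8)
The plan is to prove all three parts simultaneously by induction on the length $\abs{w}$ of the planar word $w$, exploiting the combinatorial structure of its Larue-Whitehead diagram. Recall that $w$ being planar means its reduced expression $a_{([1{\uparrow}m])}$ has a Whitehead expansion $(\overline z_1,((a_i,\overline a_i))_{i\in[1{\uparrow}m]},z_1)$ which is planar for $(A,\le)$, i.e.\ there is an ordered set $(B,\le)$ with a label-preserving structure as in Lemma~\ref{lem:B}, where the upper semicircles $\{b_{[2i-1{\uparrow}2i]}\}$ and lower semicircles $\{b_{[2i{\uparrow}2i+1]}\}$ are each nested. The key point is that the ordering on $A$, namely $\overline z_1 < t_1 < \overline t_1 < \cdots < t_n < \overline t_n < z_1$, is precisely what records the cyclic position of tethers in the $(2n+2)$-gon from Section~\ref{sec:self}, so that planarity of the diagram forces the reduced word to look like a non-self-crossing arc.

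First I would establish (i). Suppose for contradiction that $w$ is not squarefree, so for some $i$ we have $a_i = a_{i+1}$ in $t_{[1{\uparrow}n]}\cup\overline t_{[1{\uparrow}n]}$; then the labels at positions $2i,2i+1,2i+2,2i+3$ of the Whitehead expansion read $(\overline a_i, a_i, a_i, \overline a_i)$ (note: not the cancelling pattern, since the word is reduced — rather the repeated-letter pattern). The upper semicircle $\{b_{2i+1},b_{2i+2}\}$ joins two vertices both labelled $a_i$, and the lower semicircle $\{b_{2i},b_{2i+1}\}$ and $\{b_{2i+2},b_{2i+3}\}$ join vertices labelled $\overline a_i$ to vertices labelled $a_i$. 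Since $a_i\ne\overline a_i$ in $A$ (by construction, even when $n=1$, as emphasised in Definitions~\ref{defs:rect}), and since $b_{2i+1},b_{2i+2}$ are distinct elements of $B$ with the same label, a nesting argument analogous to the Claim inside the previous proposition shows that the semicircle $\{b_{2i+1},b_{2i+2}\}$ must nest against every other semicircle with no label-$a_i$ vertex strictly between $b_{2i+1}$ and $b_{2i+2}$, and then one extracts a minimal such configuration and derives that $b_{2i+1}$ and $b_{2i+2}$ are adjacent in $B$. But then the reduced expression would contain the length-$2$ block $a_ia_i$ isolated in the diagram, and tracking the arc shows it would have to re-enter the same tether region immediately, contradicting that the arc traces out a reduced word. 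I expect the cleanest route is actually to reduce (i) to a purely formal statement about nested families — essentially the same machinery as the Claim in the proof of the preceding proposition — so this can be quoted rather than redone.

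Next, for (ii) and (iii), I would argue directly from the position of the endpoints of the arc relative to the labelled points $z_1,\overline z_1$ in $(A,\le)$. The arc begins at the vertex labelled $\overline z_1$ (the smallest element of $A$) and ends at the vertex labelled $z_1$ (the largest), and planarity means it is monotone in a suitable sense. If $w\in(\Pi\overline t_{[n{\downarrow}k+1]}t_k{\star})$ but $w\ne t_k^{\Pi t_{[k+1{\uparrow}n]}}$, then the reduced word begins $\overline t_n\overline t_{n-1}\cdots\overline t_{k+1}t_k$ followed by at least one further letter not completing the conjugate pattern; I would translate this prefix into a constraint on the first few semicircles of the diagram, observe that the labels $\overline t_n,\overline t_{n-1},\ldots$ are the largest elements of $A$ below $z_1$, and show that such an initial descent, once it is not immediately "closed off" by the matching ascent $t_{[k+1{\uparrow}n]}$, forces the arc to cross itself — precisely because the semicircle from the $\overline z_1$-vertex cannot nest with the semicircles forced by this prefix. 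Part (iii) then follows either by the same argument with $t$'s and $\overline t$'s interchanged, or — cleaner — by applying the length-reversing/reflection automorphism $\xi$ (as used in the proof of Corollary~\ref{cor:square}(iii)), noting that $\xi$ preserves planarity since it reverses the order on $A$ and hence preserves the nesting relation.

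The main obstacle I anticipate is part (i): making the "an arc that traces a non-reduced word must cross itself" intuition into a rigorous combinatorial statement about the ordered set $(B,\le)$ and its two nested families. The difficulty is that one must show the repeated-label block $a_ia_i$ produces three semicircles (one upper joining the two $a_i$-vertices, two lower flanking it) that cannot all be nested with the rest unless the block is "trivial", and trivial blocks are excluded because the expression is reduced. I would handle this by the same minimality trick used in the Claim in the preceding proof — choose the non-squarefree position $i$ minimising $d_B(b_{2i+1},b_{2i+2})$ — and then run the chain of nesting implications to force adjacency and a contradiction. Parts (ii) and (iii) I expect to be comparatively routine once (i) is in hand, since they only concern the first (resp.\ last) block of the word and the extreme elements $\overline z_1, z_1$ of $A$.
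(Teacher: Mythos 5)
There is a genuine gap in part (i), which you rightly identify as the crux. First, the local pattern is misread: if $a_j=a_{j+1}$, the Whitehead expansion at positions $2j,\ldots,2j+3$ reads $(a_j,\overline a_j,a_j,\overline a_j)$, so the upper semicircle $\{b_{2j+1},b_{2j+2}\}$ joins a vertex labelled $\overline a_j$ to one labelled $a_j$ --- it does not join two vertices with the same label. More seriously, the contradiction you aim for (``the arc would have to re-enter the same tether region immediately, contradicting that the arc traces out a reduced word'') cannot work: the block $a_ja_j$ \emph{is} reduced, so no conflict with reducedness is available, and the Claim inside the preceding proposition --- which concerns cancelling pairs $a_j\overline a_j$ and is used to shorten non-reduced expressions --- cannot simply be quoted here. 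The paper's proof of (i) is a different, global argument: it considers \emph{all} vertices labelled $t_i$ or $\overline t_i$, notes that the lower (even-to-odd) pairing matches them as $\{c_k,\overline c_k\}$ in the rainbow pattern forced by Lemma~\ref{lem:nest3}, observes that a square forces the upper pairing to contain some $\{c_k,\overline c_j\}$, and then a maximality argument on $k+j$ forces $k=j=m_i$, so that $\{c_{m_i},\overline c_{m_i}\}$ lies in both pairings and the diagram acquires a closed two-vertex component, contradicting that it is a single arc from $\overline z_1$ to $z_1$. Your sketch contains no mechanism that would replace this.

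Parts (ii) and (iii) also need more than you provide. For (ii) the paper does not argue that ``the arc crosses itself''; it runs a letter-by-letter forcing argument: once $w$ begins with $\Pi\overline t_{[n{\downarrow}k+1]}t_k$, nesting against the pair $\{t_{k+1},t_k\}$, together with (i) and reducedness, forces the next letter to be $t_{k+1}$, then the next to be $t_{k+2}$, and so on, until $w=t_k^{\Pi t_{[k+1{\uparrow}n]}}$ --- which is exactly where the exceptional element in the statement comes from, and which your vague non-nesting claim would wrongly exclude as well. For (iii), your reduction via $\xi$ is incomplete even if one grants that $\xi$ preserves planarity (the swap of the extreme labels $z_1$ and $\overline z_1$ needs checking): applying (ii) to $w^\xi$ leaves open the residual possibility $w^\xi=t_{n+1-k}^{\Pi t_{[n+2-k{\uparrow}n]}}$, that is, $w=\Pi t_{[1{\uparrow}k-1]}\overline t_k\Pi\overline t_{[k-1{\downarrow}1]}$, which does begin with $\Pi t_{[1{\uparrow}k-1]}\overline t_{k}$ and therefore must still be excluded by a separate argument, as the paper does by running the direct forcing argument of (ii) on the prefix $(\overline z_1,t_1,\overline t_1,\ldots,t_{k-1},\overline t_{k-1},\overline t_k,t_k)$.
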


\begin{proof}  For some $m \in \naturals$, there exists a reduced  expression $a_{([1{\uparrow}m])}$ 
for $w$.  

(i). Suppose that $w$ is  not squarefree, say $t_i, t_i$ occurs in $a_{([1{\uparrow}m])}$, 
then  $t_i, \overline t_i, t_i, \overline t_i$ occurs in  
$$(\overline z_1,((a_i,\overline a_i))_{i\in[1{\uparrow}m]},z_1).$$

Let $m_i$ be the number of occurrences of $t_i^{\pm 1}$ in $a_{([1{\uparrow}m])}$.

Suppose $c_{([1{\uparrow}m_i])}$ are labelled $t_i$ and $\overline c_{([m_i{\downarrow}1]}$ are such that the
even-to-odd pairing contains $\{\{c_k,\overline c_k\}\}_{k\in[1{\uparrow}m_i]}$.
The odd-to-even pairing contains $\{c_k, \overline c_j\}$ for some $k,j \in [1{\uparrow}m_i]$.
Let us choose $(k,j)$ so that $k+j$ is as large as possible.
Then $c_k < c_{k+1} < \overline c_j$.  Whatever $c_{k+1}$ is paired with 
in the odd-to-even pairing must lie in the interval $[c_k, \overline c_j]$ and 
cannot have label $t_i$ since the signs alternate, so $c_{j+1}$ 
is paired with $\overline c_k$ for some $k > j$.
This contradicts the maximality of $k+j$.  Hence $k = m_i$.  Similarly,
$j = m_i$.  Thus $\{c_{m_i}, \overline c_{m_i}\}$ lies in both the 
even-to-odd pairings and the odd-to-even pairings.
This gives a sinlge component, which is a contradiction.

(ii). Suppose that  $w   \in ( \Pi \overline t_{[n{\downarrow}k+1]} t_k {\star} )$.

Thus $(\overline z_1, ((a_i, \overline a_i))_{i\in[1{\uparrow}n-k+2}])$ is
$$(\overline z_1, \overline t_n, t_n, \overline t_{n-1}, t_{n-1}, \ldots, \overline t_{k+1}, t_{k+1},  
t_k, \overline t_k,
a_{n-k+2}, \overline a_{n-k+2})$$
Notice that $\{\overline t_k, a_{n-k+2}\}$ must be nested with $\{t_{k+1},t_k\}$, and, hence $a_{n-k+2}$ must lie in
$\{t_k, \overline  t_{k}, t_{k+1}\}$. By (i), $a_{n-k+2} \ne t_k$.  Since $a_{([1{\uparrow}m])}$ is a reduced expression,
$a_{n-k+2} \ne \overline t_k$.  Hence $a_{n-k+2} = t_{k+1}$. Let us denote this term $t_{k+1}'$ to distinguish it from
the preceding occurrence of $t_{k+1}$. $\{\overline t_k, t_{k+1}'\}$  is nested with $\{t_{k+1},t_k\}$.  Hence, 
Then $t_{k+1}' < t_{k+1}$.   By Lemma~\ref{lem:nest3},  $\overline t_{k+1}' > \overline t_{k+1}$.

Thus $(\overline z_1, ((a_i, \overline a_i))_{i\in[1{\uparrow}n-k+3}])$ is
$$(\overline z_1, \overline t_n, t_n, \overline t_{n-1}, t_{n-1}, \ldots, \overline t_{k+1}, t_{k+1},  
t_k, \overline t_k, t_{k+1}', \overline t_{k+1}',
a_{n-k+3}, \overline a_{n-k+3})$$
Notice that $\{\overline t_{k+1}', a_{n-k+3}\}$ must be nested with $\{t_{k+2}, \overline t_{k+1} \}$, and,
 hence, $a_{n-k+3}$ must lie in
$\{\overline  t_{ k+1 }, t_{k+2}\}$.   Since $a_{([1{\uparrow}m])}$ is a reduced rexpression,
$a_{n-k+3} \ne \overline t_{k+1}$.  Hence $a_{n-k+3} = t_{k+2}$, and we denote this by $t_{k+2}'$.  
Then  $t_{k+2}' < t_{k+2}$, and,  by Lemma~\ref{lem:nest3},  $\overline t_{k+2}' > \overline t_{k+2}$.

By repeating the argument in the last paragraph, we eventually find that $w = t_k^{\Pi t_{[k+1{\uparrow}n]}}$.

(iii).  Suppose that $w \in ( \Pi  t_{[1{\uparrow}k-1]} \overline t_{k} {\star} )$. 

Then
$(\overline z_1, ((a_i, \overline a_i))_{i\in[1{\uparrow}2k]})=
(\overline z_1, t_1, \overline t_1, t_2, \overline t_{2},   \ldots, t_{k-1}, \overline t_{k-1}, 
\overline t_{k},  t_k),$
and by an argument similar to that in (ii), we find that this is impossible.
\end{proof}

\section{$\B_n$ permutes the planar words in $\Sigma_{0,1,n}$}\label{sec:wh2}

\begin{proposition} Let $w \in \Sigma_{0,1,n}$ and let $i \in [1{\uparrow}n-1]$.  
If $w$ is a planar word in $\Sigma_{0,1,n}$, then $w^{\sigma_i}$
is a planar word in $\Sigma_{0,1,n}$. 
\end{proposition}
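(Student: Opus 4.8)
The plan is to work directly with the Whitehead expansion of a reduced expression for $w$, and track how the application of $\sigma_i$ changes this expansion. First I would fix a reduced expression $a_{([1{\uparrow}m])}$ for $w$ and, since by hypothesis $w$ is planar, choose a permutation $\pi \in \Sym_{2m+2}$ embedding the Whitehead expansion $(\overline z_1, ((a_j,\overline a_j))_{j\in[1{\uparrow}m]}, z_1)$ in a plane; equivalently, by Lemma~\ref{lem:B}, realise the expansion as the label of a repetition-free nested $2m+2$-tuple $b_{([1{\uparrow}2m+2])}$ for an ordered set $(B,\le)$. I would then recall from Notation~\ref{not:alpha} the factorization $\sigma_i = \sigma_i'\sigma_i''$ and the description of how $\sigma_i$ acts on the alternating decomposition of a normal form into blocks from $\gen{t_{[i{\uparrow}i+1]}}$ and blocks from $\gen{t_{[1{\uparrow}i-1]\cup[i+2{\uparrow}n]}}$: on the latter blocks it is the identity, and on a block from $\gen{t_{[i{\uparrow}i+1]}}$ it interchanges $t_i \leftrightarrow t_{i+1}$ and then conjugates by $t_i$. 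The key geometric intuition (made precise in Section~\ref{sec:self} via the Dehn twist $\phi_i$ on a disc around $i+\frac12$) is that applying $\sigma_i$ amounts to a ``local surgery'' on the planar diagram near the tethers labelled $t_i^{\pm1}, t_{i+1}^{\pm1}$, which preserves planarity because a Dehn twist on an embedded disc carries an embedded arc system to an embedded arc system.

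The main work is to carry out this surgery combinatorially. I would proceed in steps. Step~1: handle the subscript-relabelling part $\sigma_i''$, which just swaps the letters $t_i \leftrightarrow t_{i+1}$ (and their inverses) in each block of $w$ coming from $\gen{t_{[i{\uparrow}i+1]}}$; since the order $A$ places $t_i < \overline t_i < t_{i+1} < \overline t_{i+1}$ consecutively, one can exhibit an explicit order-automorphism-like rearrangement of $(B,\le)$ (swapping the corresponding intervals of points) showing the nestedness conditions are preserved, giving planarity of $w^{\sigma_i''}$. Step~2: handle the conjugation-by-$t_i$ part $\sigma_i'$ acting on each $\gen{t_{[i{\uparrow}i+1]}}$-block $u$: here $u \mapsto \overline t_i u t_i$, possibly with cancellation at the ends. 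In the Whitehead expansion this inserts a new pair of points labelled $(\overline t_i, t_i)$ before the block and a new pair $(t_i,\overline t_i)$ after it (with cancellation corresponding to collapsing adjacent equal-label pairs, which is exactly the reduction move already shown to preserve planarity in Proposition~\ref{prop:square4}'s preceding proposition). I would show the new points can be inserted into $(B,\le)$ just outside the extremes of the block's point set, and that the two new odd-to-even / even-to-odd pairs nest correctly with everything, using Lemma~\ref{lem:nest3} exactly as in the reduction argument. Step~3: since the blocks from $\gen{t_{[1{\uparrow}i-1]\cup[i+2{\uparrow}n]}}$ are untouched and their point-intervals in $B$ are disjoint from those of the $\gen{t_{[i{\uparrow}i+1]}}$-blocks, the local modifications in Steps~1--2 can be performed simultaneously, and nestedness across distinct blocks is automatic. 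Finally, since planarity of some (possibly non-reduced) expression for $w^{\sigma_i}$ implies planarity of $w^{\sigma_i}$ by the preceding proposition, we are done.

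The hard part will be Step~2, specifically tracking the cancellation when $\overline t_i u t_i$ reduces: one must verify that each elementary cancellation in passing from the inserted expression to the reduced expression for $w^{\sigma_i}$ is an instance of the planarity-preserving reduction move, and in particular that the relevant distances $d_B(\cdot,\cdot)$ behave as required by Lemma~\ref{lem:nest3}. A secondary subtlety is the boundary behaviour at the two ends of the whole word $w$ (the interplay with the fixed outermost labels $\overline z_1$ and $z_1$), and the degenerate cases $i$ adjacent to the first or last strand; these I would dispatch by the same local argument, noting that $z_1,\overline z_1$ are never among $t_{[i{\uparrow}i+1]}^{\pm1}$ so the outer frame is untouched. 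I expect the cleanest write-up to phrase Steps~1--2 as: exhibit explicitly a new ordered set $(B',\le)$, a new $2m'+2$-tuple $b'_{([1{\uparrow}2m'+2])}$, and a labelling, whose label is the Whitehead expansion of the natural (not-yet-reduced) expression for $w^{\sigma_i}$, verify the two nestedness conditions block by block, and invoke Lemma~\ref{lem:B} together with the reduction-invariance proposition.
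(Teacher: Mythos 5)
Your overall plan---realise the planar structure via Lemma~\ref{lem:B}, perform a local surgery on the nested configuration corresponding to the substitution, and finish by noting that the existence of a planar (possibly unreduced) expression forces the reduced word to be planar---has the right shape, and the last step is indeed how the paper concludes. But the way you organise the surgery contains a fatal step: you factor $\sigma_i=\sigma_i'\sigma_i''$ and claim in Step~1 that $w^{\sigma_i''}$ is planar (and, implicitly in Step~2, that $\sigma_i'$ also preserves planarity). Neither factor preserves planarity. Take $n=2$, $i=1$, $w=t_1t_2$: this is planar (its Whitehead expansion $(\overline z_1,t_1,\overline t_1,t_2,\overline t_2,z_1)$ is already ascending and both pairings are nested), but $w^{\sigma_1''}=t_2t_1$ is not, since the unique permutation putting $(\overline z_1,t_2,\overline t_2,t_1,\overline t_1,z_1)$ into ascending order yields the odd-to-even pairing $\{\{1,4\},\{2,5\},\{3,6\}\}$, and $\{1,4\}$ is not nested with $\{2,5\}$. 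Likewise $t_2$ is planar but $t_2^{\sigma_1'}=\overline t_1t_2t_1$ is not: every admissible permutation produces the pair $\{1,2^\pi\}$ with $2^\pi\in\{4,5\}$ clashing with $\{3^\pi,6\}$, $3^\pi\in\{2,3\}$. The underlying reason is that $\sigma_i'$ and $\sigma_i''$ do not fix $z_1$, hence are not elements of $\B_n$, while planarity is an invariant of $\B_n$-orbits; your proposed ``interval swap'' of the four blocks of points labelled $t_i,\overline t_i,t_{i+1},\overline t_{i+1}$ is precisely the move that destroys nestedness in the first example.

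The paper's proof never passes through the intermediate words. It applies the full substitution at once, rewriting consecutive pairs of the Whitehead expansion by $(t_i,\overline t_i)\mapsto(t_{i+1},\overline t_{i+1})$, $(\overline t_i,t_i)\mapsto(\overline t_{i+1},t_{i+1})$, $(t_{i+1},\overline t_{i+1})\mapsto(\overline t_{i+1},t_{i+1},t_i,\overline t_i,t_{i+1},\overline t_{i+1})$ and $(\overline t_{i+1},t_{i+1})\mapsto(\overline t_{i+1},t_{i+1},\overline t_i,t_i,t_{i+1},\overline t_{i+1})$, performing no cancellation; it then inserts all $4m_{i+1}$ new points of the new ordered set as one single interval placed immediately before the whole block $[c_1{\uparrow}\overline d_1]$ (not one small interval around each $\gen{t_{[i{\uparrow}i+1]}}$-syllable of $w$, as you propose), relabels the old points so that the label map is again order-preserving, and only then verifies the two nestedness conditions for the combined configuration. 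To salvage a two-step argument you would need an intermediate invariant weaker than planarity that each of $\sigma_i'$, $\sigma_i''$ does preserve, and your proposal does not supply one.
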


\begin{proof} Suppose that $r_{([1{\uparrow} m])}$ is any planar expression for $w$, 
as in Definitions~\ref{defs:rect}.

In applying $\sigma_i$ to $(\overline z_1,((r_i,\overline r_i))_{i\in[1{\uparrow}m]},z_1)$, 
we 
\begin{list}{}{}
\vskip -.8cm\null
\item replace each $t_i,\overline t_i $ with $t_{i+1},\overline t_{i+1}$,
\vskip -.8cm\null
\item replace each $\overline t_i,t_i$ with $\overline t_{i+1},t_{i+1}$, 
\vskip -.8cm\null
\item replace each $t_{i+1},\overline t_{i+1}$ with 
$\overline t_{i+1}, t_{i+1}, t_i, \overline t_i, t_{i+1}, \overline t_{i+1}$, 
\vskip -.8cm\null
\item replace each  $\overline t_{i+1}, t_{i+1}$ with 
$\overline t_{i+1},t_{i+1}, \overline t_i, t_i, t_{i+1}, \overline t_{i+1}$.
\vskip -.8cm\null
\end{list} 
We will not perform any cancellations in the resulting sequence. 

Let $\pi \in \Sym_{2m+2}$ be a permutation which
embeds $(\overline z_1,((r_i,\overline r_i))_{i\in[1{\uparrow}m]},z_1)$ in a plane.
By Lemma~\ref{lem:B}, there exists an ordered set $(B,\le)$, and
a $(2m+2)$-tuple $p_{([1{\uparrow}2m+2])}$ without repetitions, 
for $(B,\le)$, such that $\pi$ embeds $p_{([1{\uparrow}2m+2])}$ in a plane.
Moreover, there exists a labelling $B \to A$, $b \mapsto \lab(b)$,
such that the labelling respects the orderings and
$$\lab(p_{([1{\uparrow}2m+2])}) =(\overline z_1,((r_i,\overline r_i))_{i\in[1{\uparrow}m]},z_1).$$
Moreover, $B = p_{[1{\uparrow}2m+2]}$.

Let $m_i$ denote the number of elements of $B$ with label $t_i$, 
and let $m_{i+1}$ denote the number of elements of $B$ with label $t_{i+1}$.
To begin, we have to add $4m_{i+1}$ elements to $B$, and we have to specify the ordering 
on the expanded set.

Let $c_{[1{\uparrow}m_i]}$ denote the set, in ascending order, of those 
elements of $B$ which have the label $t_i$.
Let $\overline c_{[m_i{\downarrow}1]}$ denote the set, in ascending order, 
of those elements of $B$ which have the label $\overline t_i$.
Let  $d_{[1{\uparrow}m_{i+1}]}$ denote the set, in ascending order, 
of those elements of $B$ which have the label   $t_{i+1}$.
Let $\overline d_{[m_{i+1}{\downarrow}1]}$ denote the set, in ascending order, 
of those elements of $B$ which have the label $\overline t_{i+1}$.
Thus we have $$c_{1} <  \ldots < c_{m_{i}} < \overline c_{m_{i}} < \ldots  < \overline c_{1} < d_{1} < \ldots < d_{m_{i+1}} < 
\overline d_{m_{i+1}} < \ldots <  \overline d_{1}$$
and no other element of $B$ lies in the interval $[c_1{\uparrow} \overline d_1]$. 
We write $$[c_1{\uparrow} \overline d_1] =
 (c_{([1{\uparrow}m_{i}])}, \overline c_{([m_{i}{\downarrow}1])}, 
d_{([1{\uparrow}m_{i+1}])}, \overline d_{([m_{i+1}{\downarrow}1])})$$
to express this.

\begin{figure}[H]
\vspace{0cm}
\begin{center}
\setlength{\unitlength}{.1mm}
\begin{picture}(711,551) 
\put(0,0){\epsfig{file= 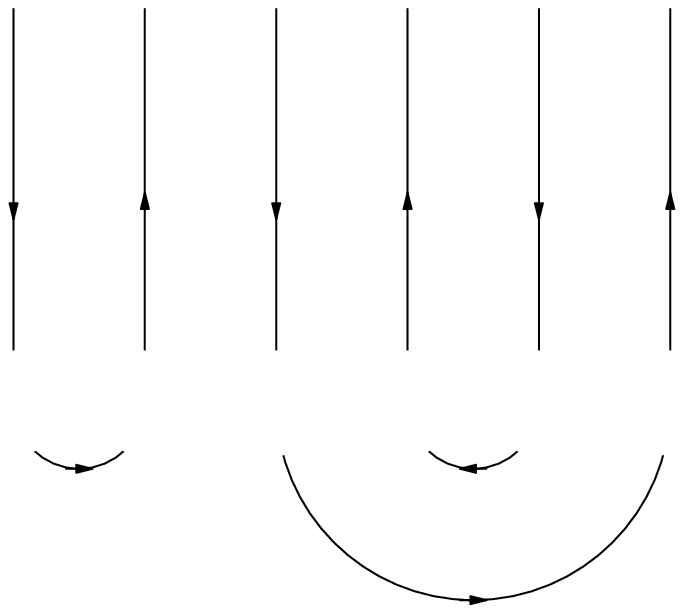,width=71.1truemm}}
\put(68,190){\makebox(0,0)[c]{$\textstyle \underset{t_{i}}{c_1}$}}
\put(187,190){\makebox(0,0)[c]{$\textstyle \underset{\overline t_{i}}{\overline c_1}$}}
\put(302,190){\makebox(0,0)[c]{$\textstyle \underset{t_{i+1}}{d_1}$}}
\put(415,190){\makebox(0,0)[c]{$\textstyle\underset{t_{i+1}}{d_2}$}}
\put(537,188){\makebox(0,0)[c]{$\textstyle\underset{\overline t_{i+1}}{\overline d_2}$}}
\put(653,188){\makebox(0,0)[c]{$\textstyle \underset{\overline t_{i+1}}{\overline d_1}$}}
\end{picture}
\caption{$(c_{([1{\uparrow}1])}, \overline c_{([1{\downarrow}1])}, 
d_{([1{\uparrow}2])}, \overline d_{([2{\downarrow}1])})$.}\label{fig:xy7}
\end{center}
\vspace{-.7cm}
\end{figure}

With the preceding notation, we create an interval of $4m_{i+1}$ new elements denoted 
$$[a_1{\uparrow}\overline b_1] = 
(a_{([1{\uparrow}m_{i+1}])}, \overline a_{([m_{i+1}{\downarrow} 1])}, b_{([1{\uparrow}m_{i+1}])},
 \overline b_{([m_{i+1}{\downarrow}1])}).$$ 
We expand $B$ by inserting this interval just before $c_1$, that is, just before the interval
$[c_1{\uparrow}\overline d_1]$.  We now have a new ordered set $B'$ with $2m+2 + 4m_{i+1}$ elements.

We have to specify the new labelling $B' \to A$.  On $c_{[1{\uparrow}m_{i}]}$, we change the labels from
$t_{i}$ to $t_{i+1}$.  On $\overline c_{[m_{i}{\downarrow}1]}$, we change the labels from
$\overline t_i$ to $\overline t_{i+1}$.  On $d_{[1{\uparrow}m_{i+1}]}$, we change the
labels from $t_{i+1}$ to $\overline t_{i+1}$.  On $\overline d_{[m_{i+1}{\downarrow}1]}$, we keep the same 
labels, $\overline t_{i+1}$.  On the rest of $B - [c_1{\uparrow}\overline d_1]$, we keep the same labels.
We give all the elements of $a_{[1{\uparrow}m_{i+1}]}$ the label $t_i$; we give all the elements of
 $\overline a_{[m_{i+1}{\downarrow}1]}$ the label $\overline t_i$; we give all the
elements of $b_{[1{\uparrow}m_{i+1}]}$ and 
$\overline b_{[m_{i+1}{\downarrow}1]}$ the label $t_{i+1}$. 
 The labelling clearly respects the orderings of $B'$ and~$A$.

For the even-to-odd steps, it follows from Lemma~\ref{lem:nest3} that
$$\{p_{[2k{\uparrow}2k+1]}\}_{k\in[1{\uparrow}m]} \,\,\,
\supseteq \,\,\, \{\{c_i,\overline c_i\}\}_{i\in[1{\uparrow}r]} \cup \{ \{d_j,\overline d_j\}\}_{j\in[1{\uparrow}s]}.$$

Let $q_{([1{\uparrow}2m+4m_{i+1}])}$ be the $2m+4s$-tuple obtained from $p_{([1{\uparrow}2m+2])}$ as follows.
For each $j \in[1{\uparrow}m_{i+1}]$, there exists a unique $i \in [1{\uparrow}m]$ such that
$p_{[2i-1{\uparrow}2i]}=\{d_j,\overline d_j \}$.  If $p_{([2i-1{\uparrow}2i])} = (d_j,\overline d_j)$,
then it is to be expanded to $(d_j, \overline b_j, a_j, \overline a_j, b_j, \overline d_j)$.
If $p_{([2i-1{\uparrow}2i])}=(\overline d_j, d_j)$,
then it is to be expanded to $(\overline d_j,  b_j, \overline a_j,  a_j, \overline b_j,  d_j)$.
This completes the definition of $q_{([1{\uparrow}2m+4m_{i+1}])}$.
\begin{figure}[H]
\vspace{0cm}
\begin{center}
\setlength{\unitlength}{.2mm}
\begin{picture}(711,412) 
\put(0,0){\epsfig{file= 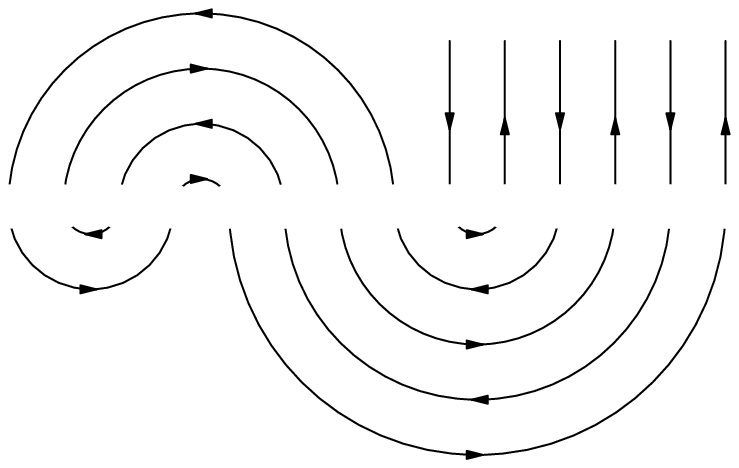,width=142.2truemm}}
\put(39,230){\makebox(0,0)[c]{$\scriptstyle \underset{t_i}{a_1}$}}
\put(89,230){\makebox(0,0)[c]{$\scriptstyle \underset{t_i}{a_2}$}}
\put(138,230){\makebox(0,0)[c]{$\scriptstyle \underset{\overline t_i}{\overline a_2}$}}
\put(187,230){\makebox(0,0)[c]{$\scriptstyle \underset{\overline t_i}{\overline a_1}$}}
\put(233,230){\makebox(0,0)[c]{$\scriptstyle \underset{t_{i+1}}{b_1}$}}
\put(283,230){\makebox(0,0)[c]{$\scriptstyle \underset{t_{i+1}}{b_2}$}}
\put(332,230){\makebox(0,0)[c]{$\scriptstyle \underset{t_{i+1}}{\overline b_2}$}}
\put(381,230){\makebox(0,0)[c]{$\scriptstyle \underset{t_{i+1}}{\overline b_1}$}}
\put(431,230){\makebox(0,0)[c]{$\scriptstyle \underset{t_{i+1}}{c_1}$}}
\put(479,230){\makebox(0,0)[c]{$\scriptstyle \underset{\overline t_{i+1}}{\overline c_1}$}}
\put(529,230){\makebox(0,0)[c]{$\scriptstyle \underset{\overline t_{i+1}}{d_1}$}}
\put(578,230){\makebox(0,0)[c]{$\scriptstyle \underset{\overline t_{i+1}}{d_2}$}}
\put(627,230){\makebox(0,0)[c]{$\scriptstyle \underset{\overline t_{i+1}}{\overline d_2}$}}
\put(676,230){\makebox(0,0)[c]{$\scriptstyle \underset{\overline t_{i+1}}{\overline d_1}$}}
\end{picture}
\caption{$(a_{([1{\uparrow}2])}, \overline a_{([2{\downarrow} 1])}, b_{([1{\uparrow}2])},
 \overline b_{([2{\downarrow}1])}, c_{([1{\uparrow}1])}, \overline c_{([1{\downarrow}1])}, 
d_{([1{\uparrow}2])}, \overline d_{([2{\downarrow}1])})$.}\label{fig:xy6}
\end{center}
\vspace{-.7cm}
\end{figure}

In passing from $\{p_{[2k{\uparrow}2k+1]}\}_{k\in[1{\uparrow}m-1]}$ 
to $\{q_{[2k{\uparrow}2k+1]}\}_{k\in[1{\uparrow}m+2m_{i+1}-1]}$,
we add $\{\{ \overline b_j, a_j\}\}_{j\in[1{\uparrow}s]} \cup \{ \{ \overline a_j, b_j\}\}_{j\in[1{\uparrow}s]}$.
In $B'$, for each $j \in [1{\uparrow}s]$,
\begin{align*}
[\overline a_j{\uparrow}b_j] &= (\overline a_{([j{\downarrow}1])}, b_{([1{\uparrow}j])}) 
\\&\text{ and the underlying set is }  \cup\{\overline a_k, b_k\}_{k\in[1{\uparrow}j]},\\
[a_j{\uparrow}\overline b_j] &= (a_{[j{\uparrow}s]}, 
\overline a_{[s{\downarrow}1]}, b_{[1{\uparrow}s]}, \overline b_{[s{\downarrow}j]})
\\&\text{ and the underlying set is }  \cup\{\overline a_k, b_k\}_{k\in[1{\uparrow}s]} \cup \cup\{\overline b_k, a_k\}_{k\in[j{\uparrow}s]}.
\end{align*}
Both of these intervals are closed under the pairing-off of 
$$\{q_{[2k{\uparrow}2k+1]}\}_{k\in[1{\uparrow}m+2m_{i+1}-1]}.$$
Thus, 
$\{q_{[2k{\uparrow}2k+1]}\}_{k\in[1{\uparrow}m+2m_{i+1}-1]}$ is also nested.

In passing from $\{p_{[2k-1{\uparrow}2k]}\}_{k\in[1{\uparrow}m]}$ to 
$\{q_{[2k-1{\uparrow}2k]}\}_{k\in[1{\uparrow}m+m_{i+1}]}$,
we delete\newline
$\{\{d_j,\overline d_j\}\}_{j\in[1{\uparrow}s]}$,
and add
$\{\{d_j,\overline b_j\}\}_{j\in[1{\uparrow}s]} \cup \{\{a_j,\overline a_j\}\}_{j\in[1{\uparrow}s]}
\cup \{\{b_j,\overline d_j\}\}_{j\in[1{\uparrow}s]}$.
In $B'$,  for each $j \in [1{\uparrow}s]$,
\begin{align*}
[a_j, \overline a_j] &= (\overline a_{([j{\downarrow}1])}, a_{([1{\uparrow}j])}) \\&\text{ and the underlying set is } 
\mathop{\cup}\limits_{k\in[1{\uparrow}j]} \{a_k,\overline a_k\},\\
[\overline b_j, d_j] &= (\overline b_{([j{\downarrow}1])}, c_{([1{\uparrow}r])}, 
\overline c_{([r{\downarrow}1])}, d_{([1{\uparrow}j])})\\
 &\text{ and the underlying set is }  \mathop{\cup}\limits_{k\in[1{\uparrow}j]} \{d_k,\overline b_k\}\cup 
\mathop{\cup}\limits_{i\in[1{\uparrow}r]}\{c_i,\overline c_i\},\\
[ b_j, \overline d_j] &= 
(b_{([j,s])}, \overline b_{([s{\downarrow}1])}, c_{([1{\uparrow}r])},
 \overline c_{([r{\downarrow}1])}, d_{([1{\uparrow}s])}, \overline d_{([s{\downarrow}j])})\\
&\text{ and the underlying set is }   \mathop{\cup}\limits_{k\in[1{\uparrow}s]}  \{d_k,\overline b_k\}
\cup  \mathop{\cup}\limits_{k\in[j{\uparrow}s]}\{ b_k, \overline d_k\}
 \cup \mathop{\cup}\limits_{i\in[1{\uparrow}r]}\{c_i,\overline c_i\}.
\end{align*}
Each of these intervals is closed under the pairing-off of $\{q_{[2k-1{\uparrow}2k]}\}_{k\in[1{\uparrow}m+2m_{i+1}]}$.
Thus, 
$\{q_{[2k-1{\uparrow}2k]}\}_{k\in[1{\uparrow}m+2m_{i+1}]}$ is nested.
\end{proof}

A similar argument shows that $\overline \sigma_i$ carries planar words to planar words. 

\begin{theorem}\label{th:permutes} The group $\B_n$ acts on the set of planar words in $\Sigma_{0,1,n}$, 
and, hence, if $n\ge1$, then every element of $t_1^{\B_n}$ is a planar word.
\end{theorem}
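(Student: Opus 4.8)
The plan is to prove Theorem~\ref{th:permutes} by first establishing the two ``action'' statements --- that $\sigma_i$ and $\overline\sigma_i$ each carry planar words to planar words for all $i \in [1{\uparrow}n-1]$ --- and then deducing the orbit statement. The proposition immediately preceding this theorem already handles $\sigma_i$, and the remark just after it asserts the analogous claim for $\overline\sigma_i$; so the first task is simply to observe that since $\sigma_{[1{\uparrow}n-1]} \cup \overline\sigma_{[1{\uparrow}n-1]}$ generates $\B_n$ (by Artin's Proposition~\ref{prop:artin}), and each of these generators maps the set $P_n$ of planar words of $\Sigma_{0,1,n}$ into itself, the set $P_n$ is invariant under the subgroup they generate, namely all of $\B_n$. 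One small subtlety worth spelling out: a priori ``maps into'' gives only a monoid action, but since $\sigma_i^{-1} = \overline\sigma_i$ and both map $P_n$ into $P_n$, each generator restricts to a \emph{bijection} of $P_n$, hence $\B_n$ genuinely acts (by permutations) on $P_n$. This is the first sentence of the conclusion.

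For the second half --- that every element of $t_1^{\B_n}$ is planar when $n \ge 1$ --- I would argue as follows. First check the base word: $t_1$ itself is a planar word in $\Sigma_{0,1,n}$. Its reduced expression is the $1$-tuple $(t_1)$, whose Whitehead expansion is the $4$-tuple $(\overline z_1, t_1, \overline t_1, z_1)$; one exhibits a permutation $\pi \in \Sym_4$ embedding this in a plane --- e.g. the pairing that joins $\overline z_1$ to $t_1$ and $\overline t_1$ to $z_1$ in the upper half-plane and $t_1$ to $\overline t_1$ in the lower half-plane --- and verifies directly that the required two families of pairs are nested in $(\naturals,\le)$, which is trivial since there is only one pair in each family (nestedness of a one-element family is vacuous, and a two-element family $\{\{1,2\},\{3,4\}\}$ is plainly nested). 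Alternatively one simply notes that the ascending rearrangement of $(\overline z_1, t_1, \overline t_1, z_1)$ is itself, the identity permutation embeds it, and the pairings $\{[1{\uparrow}2]\} = \{\{1,2\}\}$ and $\{\}$ together with $\{[2{\uparrow}3]\} = \{\{2,3\}\}$ are (vacuously or trivially) nested. Hence $t_1 \in P_n$.

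Now, given any $\phi \in \B_n$, since $\B_n$ acts on $P_n$ and $t_1 \in P_n$, we get $t_1^\phi \in P_n$; that is, $t_1^\phi$ is a planar word. As $t_1^{\B_n} = \{t_1^\phi \mid \phi \in \B_n\}$, every element of the $\B_n$-orbit of $t_1$ is a planar word, which is exactly the second assertion. I do not anticipate any real obstacle here: the entire content of the theorem has been pushed into the preceding proposition (invariance of $P_n$ under the $\sigma_i$) and its companion remark (invariance under the $\overline\sigma_i$), and what remains is the short bookkeeping with generation plus the one-line verification that $t_1$ is planar. If anything, the only point requiring a moment's care is the upgrade from ``each generator maps $P_n$ into $P_n$'' to ``$\B_n$ acts on $P_n$'', for which one uses that $\overline\sigma_i$ is the inverse of $\sigma_i$ and also preserves $P_n$, so that on $P_n$ each $\sigma_i$ and each $\overline\sigma_i$ restricts to a permutation; the subgroup of $\Sym(P_n)$ these restrictions generate is a quotient of $\B_n$, and the orbit of $t_1$ under this quotient action coincides with $t_1^{\B_n}$, which suffices.
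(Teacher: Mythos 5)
Your proposal is correct and follows essentially the same route as the paper, which states the theorem without proof as an immediate consequence of the preceding proposition (that each $\sigma_i$ preserves planarity) and the remark that the same holds for each $\overline\sigma_i$. Your explicit verification that $t_1$ is planar and your care in upgrading ``each generator maps the set of planar words into itself'' to a genuine group action are exactly the bookkeeping the paper leaves implicit.
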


\begin{remark}\label{rem:square2} By combining Theorem~\ref{th:permutes} and 
Proposition~\ref{prop:square4}, we get another proof of Corollary~\ref{cor:square}.
\hfill\qed
\end{remark}

\section{The $\B_n$-orbits of the planar words in $\Sigma_{0,1,n}$}

In this section we rework~\cite[Lemma~2.3.12]{LarueThesis94} 
and in this case our argument seems to be longer than Larue's.  
The object is to show that the number of
$\B_n$-orbits in the set of all planar words in $\Sigma_{0,1,n}$ is $n+1$,
and that  $\{\Pi t_{[1{\uparrow}k]}\}_{k \in [0{\uparrow}n]}$ is a complete set of
representatives.

\begin{lemma}\label{lem:length2} Let $i$, $j$ be elements of $[1{\uparrow}n]$ such that $j \le i-1$, 
let $\phi = \Pi  \sigma_{[j{\uparrow}i-1]}$, and 
let $w$ be a planar word in~$\Sigma_{0,1,n}$. 
\begin{enumerate}[\normalfont(i)]
\vskip-0.7cm \null
\item  If  $w \in (\Pi t_{[1{\uparrow}i]} t_j {\star})$, then 
 $\abs{w^{\phi}} < \abs{w}$.
\vskip-0.7cm \null
\item If  $w \in (\Pi t_{[1{\uparrow}i]} \overline t_j{\star})$, then 
 $\abs{w^{\phi}} < \abs{w}$. 
\end{enumerate}
\end{lemma}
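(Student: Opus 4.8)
\textbf{Proof proposal for Lemma~\ref{lem:length2}.}

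The plan is to reduce to a direct analysis of what happens to the Whitehead expansion (equivalently, to the normal form of $w$) when we successively apply $\sigma_j,\sigma_{j+1},\dots,\sigma_{i-1}$. First I would record the explicit action of $\phi=\Pi\sigma_{[j{\uparrow}i-1]}$ on the relevant generators: a short induction on $i-j$ using the definition of the $\sigma_k$ in Notation~\ref{not:basic} shows that $t_k^\phi = t_{k+1}$ for $k\in[j{\uparrow}i-1]$, that $t_j^{\phi}$-type conjugations telescope, and more precisely that $(\Pi t_{[1{\uparrow}i]}t_j)^\phi$ and $(\Pi t_{[1{\uparrow}i]}\overline t_j)^\phi$ simplify dramatically. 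The key computation is that $(\Pi t_{[1{\uparrow}i]})^\phi = \Pi t_{[1{\uparrow}i]}$ while $t_j^{\phi}$, read as a conjugate $t_{j}^{\,t_{j+1}t_{j+2}\cdots t_i}$ or similar, interacts with the tail of $\Pi t_{[1{\uparrow}i]}$ so as to cancel. I expect that in case (i) the prefix $\Pi t_{[1{\uparrow}i]}t_j$ maps to something strictly shorter after free reduction, and similarly in case (ii), so that the total length drops; crucially, because $w$ is planar, Proposition~\ref{prop:square4} tells us $w$ is squarefree and restricts which prefixes are possible, which is what rules out pathological interference between the conjugating words and the rest of $w$.

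The main structural step is therefore: write $w = \Pi t_{[1{\uparrow}i]}t_j v$ (resp.\ $\Pi t_{[1{\uparrow}i]}\overline t_j v$) in normal form, where $v$ is the appropriate suffix with no cancellation at the junction, apply the formula for $\phi$, and show the only place length can change is within the controlled prefix. I would exploit Remarks~\ref{rems:length}-style bookkeeping together with the planarity/squarefreeness constraints from Proposition~\ref{prop:square4}(i)--(iii): since $w$ is planar, $w\notin(\Pi\overline t_{[n{\downarrow}k+1]}t_k{\star})-\{\cdots\}$ and $w\notin(\Pi t_{[1{\uparrow}k-1]}\overline t_k{\star})$, which constrain how $v$ begins and hence guarantee no new cancellation is introduced that would offset the prefix shortening. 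A clean way to package this is to treat $\phi$ as a composite and track $\abs{w^{\sigma_{[j{\uparrow}k]}}}$ as $k$ increases from $j$ to $i-1$, showing it is non-increasing at each step and strictly decreasing at the last one (or at some step), again using the planarity of the intermediate words, which holds by Theorem~\ref{th:permutes}.

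The hard part will be case (ii), the $\overline t_j$ prefix: here applying $\sigma_j$ replaces $t_j,\overline t_j$-blocks in a way that can temporarily lengthen the word (as in Remarks~\ref{rems:length}, where $\abs{w^{\sigma_i}}$ can grow), so one cannot argue step-by-step monotonicity naively. I would instead compute $w^\phi$ in one stroke, using the telescoped formula for $\phi$, and verify directly that $(\Pi t_{[1{\uparrow}i]}\overline t_j)^\phi$ has smaller length than $\Pi t_{[1{\uparrow}i]}\overline t_j$ while the suffix $v$ contributes no compensating growth — this last point being where planarity (squarefreeness plus the forbidden-prefix conditions of Proposition~\ref{prop:square4}) does the real work by pinning down the first letter of $v$. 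I anticipate the bulk of the write-up is the explicit normal-form manipulation; conceptually the lemma is just the statement that the "obvious" handle-reduction move $\Pi\sigma_{[j{\uparrow}i-1]}$ genuinely shortens any planar word that begins with the corresponding forbidden-looking prefix.
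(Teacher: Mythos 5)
Your outline correctly telescopes $\phi$ (the paper records $t_j^\phi=t_i$, $t_k^\phi=t_{k-1}^{t_i}$ for $k\in[j+1{\uparrow}i]$, all other generators fixed), but the core of your plan --- control the prefix $\Pi t_{[1{\uparrow}i]}t_j^{\pm1}$ and then argue that the suffix $v$ ``contributes no compensating growth'' because planarity pins down the first letter of $v$ --- has a genuine gap. The automorphism $\phi$ sends each single letter $t_k$ with $k\in[j+1{\uparrow}i]$ to the length-three word $t_{k-1}^{t_i}$, so every occurrence of such a letter anywhere in the interior of the suffix is a potential source of growth. Knowing the first letter of $v$, or invoking the squarefreeness and forbidden-prefix statements of Proposition~\ref{prop:square4}, says nothing about those interior occurrences; and your fallback of tracking $\abs{w^{\Pi\sigma_{[j{\uparrow}k]}}}$ step by step fails for exactly the reason you flag yourself, since an individual $\sigma_k$ can lengthen a word.

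What the paper actually extracts from planarity is a global structural statement about all of $w$, not just its prefix: because $t_it_j$ (resp.\ $t_i\overline t_j$) occurs as a subword, the corresponding arc in the Larue--Whitehead diagram fences off the labels in the range from $t_j$ to $\overline t_i$, and nestedness forces \emph{every} letter of $w$ lying in $t_{[j{\uparrow}i]}\cup\overline t_{[j{\uparrow}i]}$ (in case (ii), $t_{[j+1{\uparrow}i]}\cup\overline t_{[j+1{\uparrow}i]}$) to sit inside a reduced subword $av\overline b$ with $a,b\in\{t_j,\overline t_i\}$ and $v\in\gen{t_{[j{\uparrow}i]}}$. The hypothesis that $w$ begins with $\Pi t_{[1{\uparrow}i]}$ then rules out $a=\overline t_i$ and $b=\overline t_i$, so each such block is a conjugate $t_jv\overline t_j$, which $\phi$ shortens by exactly $2$ (the outer $t_i\,\overline t_i$ produced by $t_j^\phi$ cancels against the conjugating letters of $v^\phi$), while all remaining letters of $w$ lie outside the range and are fixed by $\phi$. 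Summing over this syllable decomposition, and noting that at least one block exists, gives the strict inequality. Without this ``every middle-range letter lives in a $t_j(\cdot)\overline t_j$ block'' step --- the diagrammatic analogue of Lemma~\ref{lem:nest3}, and not a consequence of Proposition~\ref{prop:square4} --- your computation cannot close.
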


\begin{proof}
It is straightforward to show that $\phi$ acts as 

\centerline{\begin{tabular}
{
>{$}r<{$}  
@{\hskip0cm} 
>{$}l<{$} 
@{\hskip.3cm} 
>{$}c<{$} 
@{\hskip.7cm} 
>{$}c<{$}
@{\hskip1cm}  
>{$}r<{$}
@{\hskip 0cm} 
>{$}l<{$}
}
\setlength\extrarowheight{3pt}
&\hskip-.4cm\underline{\scriptstyle k \in [1,j-1]}&   & 
\underline{\scriptstyle k \in [j+1,i]}&   &\hskip-1cm\underline{\scriptstyle k \in [i+1,n]}
\\[.15cm]
(&t_k&  t_j& t_{k}& t_k&)^{\phi}\\  
=(&t_k & t_{i}& t_{k-1}^{t_{i}}&  t_k &).
\end{tabular}}

\bigskip
\bigskip

(i). Suppose that $w \in (\Pi t_{[1{\uparrow}i]} t_j {\star})$.

 \begin{figure}[H]
\vspace{-.3cm}
\begin{center}
\setlength{\unitlength}{.1mm}
\begin{picture}(1100,458) 
\put(0,0){\epsfig{file= 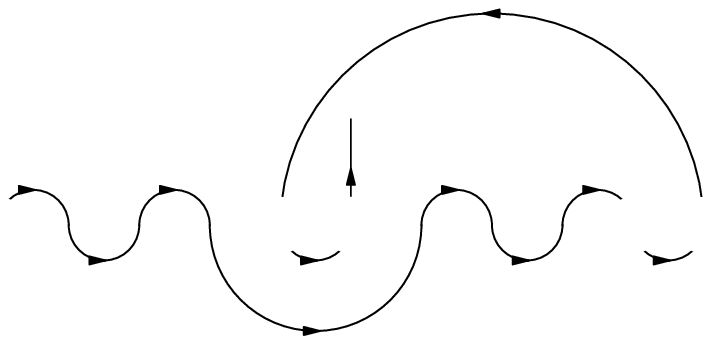,width=110truemm}}
\put(70,155){\makebox(0,0)[c]{$\overline z_1$}} 
\put(450,156){\makebox(0,0)[c]{$t_j$}} 
\put(553,162){\makebox(0,0)[c]{$\overline t_j$}} 
\put(945,156){\makebox(0,0)[c]{$t_i$}} 
\put(1040,162){\makebox(0,0)[c]{$\overline t_i$}} 
\end{picture}
\caption{$w \in (\Pi t_{[1{\uparrow}i]} t_j {\star})$, $j \le i-1$.}\label{fig:xy4}
\end{center}
\vspace{-.7cm}
\end{figure}

Since $t_it_j$ is a subword of $w$, 
every letter occurring in $w$ that belongs to $t_{[j{\uparrow}i]}\cup\overline t_{[j{\uparrow}i]}$
 belongs to a (reduced) subword of $w$ of the form $av\overline b$, where $a,b \in \{\overline t_i, t_j\}$
 and $v \in \gen{t_{[j{\uparrow}i]}}$.  Since, moreover, $w$ begins with $\Pi t_{[1{\uparrow}i]}$,
it can be shown that it is not possible to have $a= \overline t_i$ or $ b= \overline  t_i$.  Thus $a = b = t_j$.
Here, $\abs{(av\overline b)^\phi} = \abs{avb} -2$. 

We factor $w$ into syllables consisting of all such subwords together with the 
individual remaining letters, all of which lie in $t_{[1{\uparrow}j-1]}\cup t_{[i+1{\uparrow}n]}$,
and all of which are mapped to single letters by $\phi$.

Since $t_j$ occurs in $w$, we see that  $\abs{w^{\phi}} < \abs{w}$.

(ii). Suppose that $w \in (\Pi t_{[1{\uparrow}i]} \overline t_j{\star})$.

\begin{figure}[H]
\vspace{-.3cm}
\begin{center}
\setlength{\unitlength}{.1mm}
\begin{picture}(1100,407) 
\put(0,0){\epsfig{file= 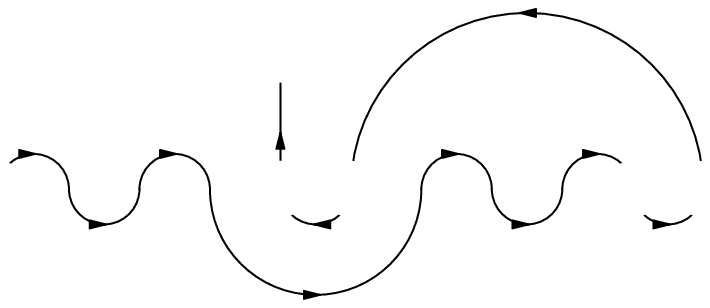,width=110truemm}}
\put(70,155){\makebox(0,0)[c]{$\overline z_1$}} 
\put(450,156){\makebox(0,0)[c]{$t_j$}} 
\put(553,162){\makebox(0,0)[c]{$\overline t_j$}} 
\put(945,156){\makebox(0,0)[c]{$t_i$}} 
\put(1040,162){\makebox(0,0)[c]{$\overline t_i$}} 
\end{picture}
\caption{$w \in (\Pi t_{[1{\uparrow}i]} \overline t_j{\star})$,  $j \le i-1$.}\label{fig:xy5}
\end{center}
\vspace{-.7cm}
\end{figure}

 Since $t_i \overline t_j$ is a subword of $w$, 
every letter occurring in $w$ that belongs to $t_{[j+1{\uparrow}i]}\cup\overline t_{[j+1{\uparrow}i]}$
 belongs to a (reduced) subword of $w$ of the form $av\overline b$, where $a,b \in \{t_j, \overline t_i\}$
 and $v \in \gen{t_{[j+1{\uparrow}i]}}$.  Since, moreover, $w$ begins with $\Pi t_{[1{\uparrow}i]}$,
it can be shown that it is not possible to have $a= \overline t_i$ or $b = \overline t_i$.  
Thus $a = b =  t_j$.
Here, $\abs{(av\overline b)^\phi} = \abs{avb} -2$. 

We factor $w$ into syllables consisting of all such subwords together with the 
individual remaining letters, all of which lie in $t_{[1{\uparrow}j]}\cup t_{[i+1{\uparrow}n]}$,
and all of which are mapped to single letters by $\phi$.

Since $t_i$ occurs in $w$, it is then clear that $\abs{w^\phi} \le \abs{w} -2$. 
\end{proof}

\begin{lemma}\label{lem:length1} Let $i$, $j$ be elements of $[1{\uparrow}n]$ such that $j \ge i+2$, 
let $\phi = \Pi \overline \sigma_{[j-1{\downarrow}i+1]}$, and 
let $w$ be a planar word in~$\Sigma_{0,1,n}$. 
\begin{enumerate}[\normalfont(i)]
\vskip-0.7cm \null
\item  If  $w \in (\Pi t_{[1{\uparrow}i]} t_j {\star})$, then 
 $\abs{w^{\phi}} \le \abs{w}$, and, moreover, if  $\abs{w^{\phi}} = \abs{w}$ then
$w^{\phi} \in (\Pi t_{[1{\uparrow}i+1]} {\star})$.
\vskip-0.7cm \null
\item If  $w \in (\Pi t_{[1{\uparrow}i]} \overline t_j{\star})$, then 
 $\abs{w^{\phi}} < \abs{w}$. 
\end{enumerate}
\end{lemma}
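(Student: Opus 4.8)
The statement is the mirror image of Lemma~\ref{lem:length2}, so the plan is to run the same syllable‐decomposition argument but with the roles of the indices reflected and with $\phi = \Pi \overline\sigma_{[j-1{\downarrow}i+1]}$ rather than a product of positive $\sigma$'s. First I would work out how $\phi$ acts on the free generators. Just as in Lemma~\ref{lem:length2} one checks by an easy induction (using the defining expressions for $\overline\sigma_k$ from Notation~\ref{not:basic}) that $\phi$ fixes $t_k$ for $k \in [1{\uparrow}i]\cup[j{\uparrow}n]$, sends $t_{i+1}$ to (a conjugate of) $t_j$, and sends each $t_k$ with $k\in[i+2{\uparrow}j]$ to $t_{k-1}^{t_j}$ (up to the appropriate conjugating word); the precise table can be displayed exactly as in the proof of Lemma~\ref{lem:length2}. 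The point of this table, as there, is that a letter of $t_{[i+1{\uparrow}j]}\cup\overline t_{[i+1{\uparrow}j]}$ is not mapped to a single letter, whereas every other letter is.

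The heart of the argument is the planarity constraint, which controls how the letters of the middle block $t_{[i+1{\uparrow}j]}\cup\overline t_{[i+1{\uparrow}j]}$ can appear in a reduced planar word $w$ that begins with $\Pi t_{[1{\uparrow}i]}$ and whose $(i+1)$-st letter is $t_j$ (case (i)) or $\overline t_j$ (case (ii)). As in Lemma~\ref{lem:length2}, every occurrence of such a letter lies inside a reduced subword $av\overline b$ with $v\in\gen{t_{[i+1{\uparrow}j]}}$ and $a,b$ drawn from the two ``walls'' bounding this block; the planarity of the Whitehead expansion, together with the fact that $w$ begins with $\Pi t_{[1{\uparrow}i]}$, forbids the configurations in which $a$ or $b$ is the ``wrong'' wall letter (this is where one invokes the nested‐set arithmetic, in the spirit of Lemma~\ref{lem:nest3} and Proposition~\ref{prop:square4}(ii)). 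I would then factor $w$ into syllables: the subwords $av\overline b$ together with the remaining individual letters, all of which lie outside the middle block and are sent to single letters by $\phi$. Each syllable $av\overline b$ satisfies $\abs{(av\overline b)^\phi} = \abs{avb}$ or $\abs{avb}-2$ depending on whether the wall letter involved is $t_j$ or $\overline t_j$: here is where (i) and (ii) diverge. In case (i), where the forced walls give $\abs{(av\overline b)^\phi}=\abs{avb}$, the total length is unchanged unless some wall letter of the ``shortening'' type actually occurs, so equality forces $w^\phi$ to begin with $\Pi t_{[1{\uparrow}i+1]}$. In case (ii), where $w$ contains the subword $t_i\overline t_j$ and the forced walls are again $t_j$ at both ends of each middle syllable, the occurrence of $t_j$ as an honest middle letter (guaranteed because $\overline t_j$, hence $t_j$, appears) produces a strict drop, so $\abs{w^\phi}<\abs{w}$.

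I would present this with the same two figures-and-syllables format as Lemma~\ref{lem:length2}: state the action table for $\phi$, then in (i) observe that the walls must both be $t_j$, hence $\abs{(av\overline b)^\phi}=\abs{avb}$ and length is preserved with the stated boundary refinement when equality holds; in (ii) observe that the walls are again $t_j$ but now $t_j$ necessarily occurs as a middle letter, forcing $\abs{w^\phi}\le\abs{w}-2$. The main obstacle — and the only step that needs genuine care rather than bookkeeping — is the planarity argument that pins down the walls: one must verify, from the definition of planar word and the nestedness of the odd-to-even and even-to-odd pairings, that a syllable $av\overline b$ sitting to the right of an initial $\Pi t_{[1{\uparrow}i]}$ cannot have $\overline t_i$ (nor, in case (i), $\overline t_j$) as a wall. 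This is exactly the type of nested-interval reasoning carried out in the proof of Proposition~\ref{prop:square4}, and I would cite the relevant pieces of that argument rather than redo the nested-set lemmas from scratch.
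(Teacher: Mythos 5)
Your overall strategy is the paper's: compute the action of $\phi$ on the generators, use planarity to confine the letters of the ``middle block'' to syllables with forced walls, and count lengths syllable by syllable. But the table you write down is not the action of $\phi=\Pi \overline \sigma_{[j-1{\downarrow}i+1]}$; it is the action of its inverse $\Pi \sigma_{[i+1{\uparrow}j-1]}$. The correct table is: $t_k^\phi=t_k$ for $k\in[1{\uparrow}i]\cup[j+1{\uparrow}n]$, $t_k^\phi=t_{k+1}^{\overline t_{i+1}}$ for $k\in[i+1{\uparrow}j-1]$, and $t_j^\phi=t_{i+1}$. In particular $t_j$ and $\overline t_j$ \emph{are} sent to single letters, and the letters that get expanded are those of $t_{[i+1{\uparrow}j-1]}\cup\overline t_{[i+1{\uparrow}j-1]}$, not of $t_{[i+1{\uparrow}j]}\cup\overline t_{[i+1{\uparrow}j]}$ as you state. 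This reversal propagates into your bookkeeping: with the correct table the forced walls in \emph{both} cases are $a=b=\overline t_j$, and in case (i) every syllable $\overline t_j v t_j$ with $1\ne v\in\gen{t_{[i+1{\uparrow}j-1]}}$ satisfies $\abs{(\overline t_j v t_j)^\phi}=\abs{v}=\abs{\overline t_j v t_j}-2$, i.e.\ every syllable shortens strictly in case (i) as well. The dichotomy you propose (``length preserved when the wall is $t_j$, dropped by $2$ when it is $\overline t_j$'') does not occur.

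As a consequence, your mechanism for the refinement in (i) is not right. Equality $\abs{w^\phi}=\abs{w}$ holds exactly when there are \emph{no} such syllables at all, i.e.\ when no letter of $t_{[i+1{\uparrow}j-1]}\cup\overline t_{[i+1{\uparrow}j-1]}$ occurs in $w$, so that $w\in\gen{t_{[1{\uparrow}i]}\cup t_{[j{\uparrow}n]}}$ and every letter is sent to a single letter; then $w$ begins with $\Pi t_{[1{\uparrow}i]}t_j$ and $w^\phi$ begins with $\Pi t_{[1{\uparrow}i]}\,t_j^\phi=\Pi t_{[1{\uparrow}i+1]}$ -- the identity $t_j^\phi=t_{i+1}$ is the whole point of using $\Pi \overline \sigma_{[j-1{\downarrow}i+1]}$ here, and it is invisible in your table. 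Similarly, in (ii) the strict drop comes simply from the fact that the wall $\overline t_j$ actually occurs (it is the $(i+1)$-st letter of $w$), not from ``$\overline t_j$, hence $t_j$, appears,'' which is unjustified. The planarity step you defer to (ruling out $t_i$ as a wall, in the spirit of Proposition~\ref{prop:square4}) is indeed the delicate part and is treated the same way in the paper, but as written your proof would not go through until the action table and the resulting case analysis are corrected.
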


\begin{proof} It is straightforward to show that $\phi$ acts as 

\centerline{\begin{tabular}
{
>{$}r<{$}  
@{\hskip0cm} 
>{$}l<{$} 
@{\hskip.3cm} 
>{$}c<{$} 
@{\hskip.7cm} 
>{$}c<{$}
@{\hskip1cm}  
>{$}r<{$}
@{\hskip 0cm} 
>{$}l<{$}
}
\setlength\extrarowheight{3pt}
&\hskip-.4cm\underline{\scriptstyle k \in [1,i]}& \underline{\scriptstyle k \in [i+1,j-1]}  & 
&   &\hskip-1cm\underline{\scriptstyle k \in [j+1,n]}
\\[.15cm]
(&t_k&  t_k& t_{j}& t_k&)^{\phi}\\  
=(&t_k & t_{k+1}^{\overline t_{i+1}}& t_{i+1} &  t_k &).
\end{tabular}}

\bigskip

(i). Suppose that $w \in (\Pi t_{[1{\uparrow}i]} t_j {\star})$.

 \begin{figure}[H]
\vspace{-.3cm}
\begin{center}
\setlength{\unitlength}{.1mm}
\begin{picture}(1100,306) 
\put(0,0){\epsfig{file= 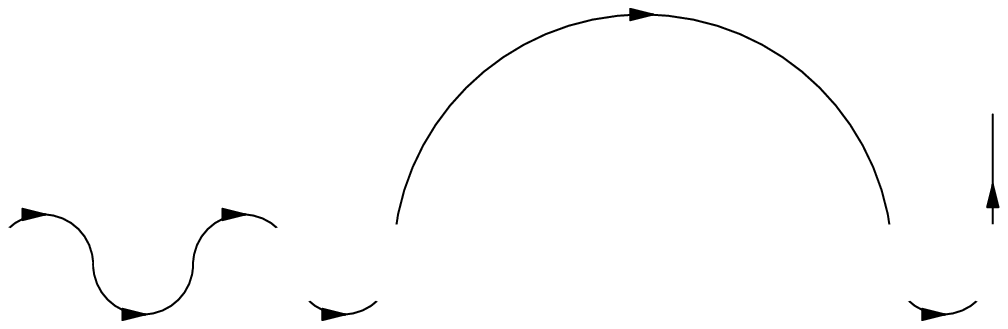,width=110truemm}}
\put(70,50){\makebox(0,0)[c]{$\overline z_1$}} 
\put(360,51){\makebox(0,0)[c]{$t_i$}} 
\put(450,57){\makebox(0,0)[c]{$\overline t_i$}} 
\put(945,51){\makebox(0,0)[c]{$t_j$}} 
\put(1040,57){\makebox(0,0)[c]{$\overline t_j$}} 
\end{picture}
\caption{$w \in (\Pi t_{[1{\uparrow}i]} t_j {\star})$, $j \ge i+2$.}\label{fig:xy2}
\end{center}
\vspace{-.7cm}
\end{figure}

Since $t_it_j$ is a subword of $w$, 
every letter occurring in $w$ that belongs to $t_{[i+1{\uparrow}j-1]}\cup\overline t_{[i+1{\uparrow}j-1]}$
 belongs to a (reduced) subword of $w$ of the form $av\overline b$, where $a,b \in \{t_i, \overline t_j\}$
 and $v \in \gen{t_{[i+1{\uparrow}j-1]}}$.  Since, moreover, $w$ begins with $\Pi t_{[1{\uparrow}i]}$,
it can be shown that it is not possible to have $a= t_i$ or $b=t_i$.  Thus $a = b = \overline t_j$.
Here, $\abs{(av\overline b)^\phi} = \abs{avb} -2$. 

We factor $w$ into syllables consisting of all such subwords together with the 
individual remaining letters, all of which lie in $t_{[1{\uparrow}i]}\cup t_{[j{\uparrow}n]}$,
and all of which are mapped to single letters by $\phi$.

It is then clear that   $\abs{w^{\phi}} \le \abs{w}$.

Moreover, if $\abs{w^{\phi}} = \abs{w}$, then $w \in \gen{t_{[1{\uparrow}i]}\cup t_{[j{\uparrow}n]}}$,
and $w^{\phi} \in (\Pi t_{[1{\uparrow}i+1]} {\star})$.

(ii). Suppose that $w \in (\Pi t_{[1{\uparrow}i]} \overline t_j{\star})$.

\begin{figure}[H]
\vspace{-.3cm}
\begin{center}
\setlength{\unitlength}{.1mm}
\begin{picture}(1100,357) 
\put(0,0){\epsfig{file= 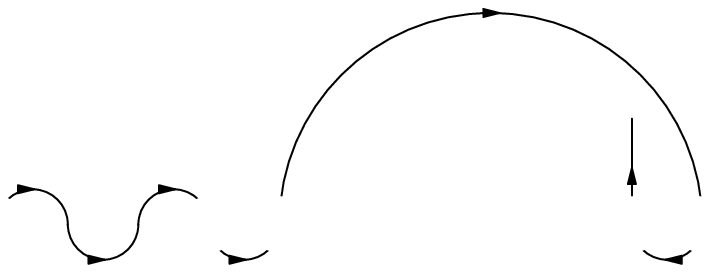,width=110truemm}}
\put(70,50){\makebox(0,0)[c]{$\overline z_1$}} 
\put(360,51){\makebox(0,0)[c]{$t_i$}} 
\put(450,57){\makebox(0,0)[c]{$\overline t_i$}} 
\put(945,51){\makebox(0,0)[c]{$t_j$}} 
\put(1040,57){\makebox(0,0)[c]{$\overline t_j$}} 
\end{picture}
\caption{$w \in (\Pi t_{[1{\uparrow}i]} \overline t_j{\star})$, $j \ge i+2$.}\label{fig:xy3}
\end{center}
\vspace{-.7cm}
\end{figure}

 Since $t_i \overline t_j$ is a subword of $w$, 
every letter occurring in $w$ that belongs to $t_{[i+1{\uparrow}j]}\cup\overline t_{[i+1{\uparrow}j]}$
 belongs to a (reduced) subword of $w$ of the form $av\overline b$, where $a,b \in \{t_i, \overline t_j\}$
 and $v \in \gen{t_{[i+1{\uparrow}j]}}$.  Since, moreover, $w$ begins with $\Pi t_{[1{\uparrow}i]}$,
it can be shown that it is not possible to have $a= t_i$ or $b=t_i$.  Thus $a = b = \overline t_j$.
Here, $\abs{(av\overline b)^\phi} = \abs{avb} -2$. 

We factor $w$ into syllables consisting of all such subwords together with the 
individual remaining letters, all of which lie in $t_{[1{\uparrow}i]}\cup t_{[j+1{\uparrow}n]}$,
and all of which are mapped to single letters by $\phi$.

Since $\overline t_j$ occurs in $w$, it is then clear that $\abs{w^\phi} \le \abs{w} -2$. 
\end{proof}

\begin{theorem}[Larue]\label{th:reps} The set $\{\Pi t_{[1{\uparrow}k]}\}_{k \in [0{\uparrow}n]}$ is a complete set of
representatives of the $\B_n$-orbits in the set of all planar words in $\Sigma_{0,1,n}$.
\end{theorem}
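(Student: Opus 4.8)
The plan is to show two things: first, that every planar word in $\Sigma_{0,1,n}$ lies in the $\B_n$-orbit of some $\Pi t_{[1{\uparrow}k]}$ with $k \in [0{\uparrow}n]$; and second, that these $n+1$ words lie in distinct orbits. The second part is the easy half: by Proposition~\ref{prop:square4} (or Corollary~\ref{cor:square}), every element of $t_1^{\B_n} = \Pi t_{[1{\uparrow}1]}{}^{\B_n}$ is squarefree and avoids the forbidden sets, and more generally a short computation shows that the abelianization image of $w$ in $\Sigma_{0,1,n}^{\mathrm{ab}} \cong \integers^{n-1}$ (recording, say, the cyclic $z_1$-count, equivalently the permutation-sign-free count of how many $t_i$ appear) is a $\B_n$-invariant of the orbit, and the $n+1$ words $\Pi t_{[1{\uparrow}k]}$, $k \in [0{\uparrow}n]$, have pairwise distinct such images; alternatively, and more in the spirit of the paper, one observes that the $\B_n$-action permutes the conjugacy classes $\{[t_i]\}$ among themselves and fixes $z_1$, so $\Pi t_{[1{\uparrow}k]}$, which is the normal form of a specific element, cannot be carried to $\Pi t_{[1{\uparrow}k']}$ for $k \ne k'$ because length is a crude but sufficient separator once one checks these are the unique shortest elements in their orbits — which is exactly what the induction below will establish.

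For the main part, I would argue by induction on $\abs{w}$, the normal-form length of the planar word $w$. If $\abs{w} = k$ and $w$ is already of the form $\Pi t_{[1{\uparrow}k]}$ we are done; the base case $\abs{w}=0$ gives $w=1=\Pi t_{[1{\uparrow}0]}$. For the inductive step, suppose $w$ is planar and is not one of the $\Pi t_{[1{\uparrow}k]}$. Write $w$ in normal form and consider its first few letters. The Larue–Whitehead diagram of $w$ is, by Theorem~\ref{th:permutes} applied backwards, a ``combinatorial Jordan curve'' whose nesting structure is highly constrained; concretely, I would use the planarity together with Proposition~\ref{prop:square4}(ii),(iii) to pin down that $w$ must begin with a block $\Pi t_{[1{\uparrow}i]}$ followed by either $t_j$ or $\overline t_j$ for some $j$, where either $j \le i-1$ or $j \ge i+2$ (the cases $j=i$ and $j=i+1$ being excluded by squarefreeness and by the structure of the diagram). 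The key point is that planarity forbids $w$ from beginning with $\Pi t_{[1{\uparrow}i]}$ and then continuing with $t_{i+1}$ unless the whole initial segment extends to a longer $\Pi t_{[1{\uparrow}\cdot]}$; so if $w$ is not already a representative, one of the four configurations of Lemmas~\ref{lem:length2} and~\ref{lem:length1} must apply at the front of $w$.

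In each of those four configurations, the relevant lemma supplies an explicit element $\phi$ — a product of consecutive $\sigma$'s (resp.\ consecutive $\overline\sigma$'s) — with $\abs{w^\phi} < \abs{w}$, except in the one boundary case of Lemma~\ref{lem:length1}(i) where $\abs{w^\phi}=\abs{w}$ but then $w^\phi$ begins with a strictly longer product $\Pi t_{[1{\uparrow}i+1]}$. Since $\phi \in \B_n$ and, by Theorem~\ref{th:permutes}, $w^\phi$ is again planar, we may replace $w$ by $w^\phi$: either the length strictly drops, or the length stays the same but the length of the initial ``ascending'' segment $\Pi t_{[1{\uparrow}\cdot]}$ strictly increases (and that segment length is bounded by $n$). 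A double induction on the pair $(\abs{w}, n - (\text{length of initial ascending block}))$ ordered lexicographically then terminates, landing us at a word which is forced to be $\Pi t_{[1{\uparrow}k]}$ for some $k\in[0{\uparrow}n]$. Combined with the orbit-distinctness from the first paragraph, this proves the theorem. The main obstacle I anticipate is the combinatorial bookkeeping needed to \emph{verify} that a planar word which is not a representative really does present one of the four front-end configurations of the two lemmas — i.e.\ ruling out all other possibilities for the first ``descent'' in $w$ purely from the nested-set constraints of Section~\ref{sec:wh0} together with Proposition~\ref{prop:square4}; this is the case analysis that the authors themselves flag as being longer than Larue's original.
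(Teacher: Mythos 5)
Your proposal follows essentially the same route as the paper: take a planar word that is minimal in its orbit for the lexicographic pair (length, complement of the initial ascending block), and rule out every possible continuation after $\Pi t_{[1{\uparrow}i]}$ using Lemmas~\ref{lem:length2} and~\ref{lem:length1} for $j\le i-1$ and $j\ge i+2$, Proposition~\ref{prop:square4}(i),(iii) for $t_i$ and $\overline t_{i+1}$, and maximality of $i$ for $t_{i+1}$ --- and the case analysis you worry about is exhaustive by construction, so there is no hidden obstacle. The only slip is incidental: $\Sigma_{0,1,n}^{\mathrm{ab}}\cong\integers^{n}$, not $\integers^{n-1}$, though your exponent-sum invariant still separates the representatives (a distinctness point the paper leaves implicit).
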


\begin{proof}  Let $w$ be a planar word in $\Sigma_{0,1,n}$.
We wish to show that there exists some $k \in [0{\uparrow}n]$ such that
$t_{[1{\uparrow}k]} \in w^{\B_n}$.

Let $i$ be the largest integer such that $w \in (\Pi t_{[1{\uparrow}i]}{\star})$.  

We may assume that, for all $v \in w^{\B_n}$, 
 $\abs{v} \ge \abs{w}$, and if $\abs{v} = \abs{w}$, then $v \not \in  (\Pi t_{[1{\uparrow}i+1]}{\star})$.

By Lemma~\ref{lem:length2},   for all $j \in [1{\uparrow}i-1]$, 
$w \not \in (\Pi t_{[1{\uparrow}i]}t_j{\star}) \cup (\Pi t_{[1{\uparrow}i]}\overline t_j{\star})$.

By Proposition~\ref{prop:square4}(i),   $w \not \in (\Pi t_{[1{\uparrow}i]}t_i{\star})$.

By the maximality of $i$, $w \not \in (\Pi t_{[1{\uparrow}i]} t_{i+1}{\star})$.

By Proposition~\ref{prop:square4}(iii),  $w \not \in (\Pi t_{[1{\uparrow}i]} \overline t_{i+1}{\star})$.

By Lemma~\ref{lem:length1},  for all $j \in [i+2{\uparrow}n]$, 
$w \not \in (\Pi t_{[1{\uparrow}i]}t_j{\star}) \cup (\Pi t_{[1{\uparrow}i]}\overline t_j{\star})$.

Hence, $w = \Pi t_{[1{\uparrow}i]}$, as desired.
\end{proof}

\begin{remarks} (i). Let $w$ be a planar word in $\Sigma_{0,1, n}$. 

Lemmas~\ref{lem:length2} and~\ref{lem:length1} give an effective procedure
for finding $\phi \in \B_n$ first to minimize $\abs{w^{\phi}}$, 
and then to obtain the form $w^\phi = \Pi t_{[1{\uparrow}k]}$ for some $k \in [0{\uparrow}n]$.

(ii). Let $n\ge 1$ and let $w$ be a word in $\Sigma_{0,1, n}$.  

Theorem~\ref{th:reps} shows that $w$ lies in the $\B_n$-orbit of $t_1$ if and only if
the cyclically-reduced form of $w$ lies in $t_{[1{\uparrow}n]}$ and
$w$ is planar.  Moreover, in this event,
Lemmas~\ref{lem:length2} and~\ref{lem:length1} effectively produce a 
$\phi \in \B_n$ such that $w^\phi = t_1$.

(iii). There is then an algorithm which, for any $k \in [1{\uparrow}n]$,
and any $k$-tuple $w_{([1{\uparrow}k])}$ for $\Sigma_{0,1,n}$,
decides if there exists some $\phi \in \B_n$ such that
$w^\phi_{([1{\uparrow}k])} = t_{([1{\uparrow}k])}$, and 
effectively finds such a $\phi$. We proceed as follows.  
We first convert $w_1$ to $t_1$ if possible,
and then we restrict to $\gen{\sigma_{[2{\uparrow}n-1]}}$.

It is interesting to compare this algorithm for $\B_n$  with the Whitehead algorithm
 for the much larger group $\Aut(\Sigma_{0,1,n})$.
The information provided by planarity is more detailed then the information
carried by the Whitehead graph used in the Whitehead algorithm.
\hfill\qed
\end{remarks}

We record the following.

\begin{theorem}[Larue] Let $n\ge 1$ and let $w \in \Sigma_{0,1, n}$.  
Then $w$ lies in the $\B_n$-orbit of $t_1$ if and only if
the cyclically-reduced form of $w$ lies in $t_{[1{\uparrow}n]}$ and
$w$ is planar. \hfill\qed
\end{theorem}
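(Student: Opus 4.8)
The statement is an immediate consequence of two facts already established in the appendix, together with Theorem~\ref{th:permutes} and Theorem~\ref{th:reps}, so the plan is essentially to chain these together.  First I would recall that $t_1$ is a planar word in $\Sigma_{0,1,n}$: its reduced expression is the $1$-tuple $(t_1)$, whose Whitehead expansion is the $4$-tuple $(\overline z_1, t_1, \overline t_1, z_1)$, and this is trivially planar for $(A,\le)$ since every $4$-tuple without too many repetitions is planar (the identity permutation works, as the ascending rearrangement of $(\overline z_1,t_1,\overline t_1,z_1)$ is itself, and both $\{[1{\uparrow}2]\}$, the single pair $\{1,2\}$, and $\{[2{\uparrow}3]\}$, the single pair $\{2,3\}$, are vacuously nested).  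By Theorem~\ref{th:permutes}, $\B_n$ permutes the set of planar words, so every element of $t_1^{\B_n}$ is a planar word; moreover, since conjugation by $\phi\in\B_n$ is an automorphism of $\Sigma_{0,1,n}$ fixing the conjugacy class structure, the cyclically-reduced form of $t_1^\phi$ is a conjugate (in $\Sigma_{0,1,n}$) of $t_1^\phi$, hence lies in the $\B_n$-orbit of $t_1$ as well and is still cyclically reduced, so it is a planar word lying in a single $\gen{t_{[1{\uparrow}n]}}$-generator; in fact $\phi$ maps $[t_1]$ to $[t_{1^{\pi(\phi)}}]$, so the cyclically-reduced form of $t_1^\phi$ is literally some $t_j$ with $j\in[1{\uparrow}n]$.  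This gives the forward implication.

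For the converse, suppose $w\in\Sigma_{0,1,n}$ has cyclically-reduced form in $t_{[1{\uparrow}n]}$ and $w$ is planar.  By Theorem~\ref{th:reps}, the $\B_n$-orbit of $w$ contains some $\Pi t_{[1{\uparrow}k]}$ with $k\in[0{\uparrow}n]$.  Now I would observe that the $\B_n$-action on $\Sigma_{0,1,n}$ preserves the abelianized length, or more simply the exponent-sum homomorphism $\Sigma_{0,1,n}\to\integers$ sending each $t_i\mapsto1$ (this is well-defined because the relator $z_1\Pi t_{[1{\uparrow}n]}=1$ only relates $z_1$ to the $t_i$, and $\B_n=\Out^+_{0,1,n}$ maps each $[t_i]$ to some $[t_j]$, hence preserves this homomorphism up to the obvious relabelling — in fact since the image is a single copy of $\integers$ generated by the common image of all the $t_i$, every element of $\B_n$ fixes this homomorphism).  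The exponent sum of $w$ equals that of its cyclically-reduced form, which is $1$ since that form is a single $t_j$.  But the exponent sum of $\Pi t_{[1{\uparrow}k]}$ is $k$.  Hence $k=1$, so $w\in(\Pi t_{[1{\uparrow}1]})^{\B_n}=t_1^{\B_n}$, as desired.

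\textbf{Main obstacle.}  The only non-routine point is the precise bookkeeping needed to justify that the cyclically-reduced form of $t_1^\phi$ actually lies in $t_{[1{\uparrow}n]}$ (not merely in $t_{[1{\uparrow}n]}\cup\overline t_{[1{\uparrow}n]}$), and that $w$ planar with cyclically-reduced form in $t_{[1{\uparrow}n]}$ forces the orbit representative to be exactly $t_1$ rather than some other $\Pi t_{[1{\uparrow}k]}$.  The first is handled by noting $\phi\in\B_n=\Out^+_{0,1,n}$ maps the set $\{[t_i]\}_{i\in[1{\uparrow}n]}$ (not $\{[t_i],[\overline t_i]\}$) to itself, by the very definition of $\Out^+_{0,1,n}$; so $t_1^\phi$ is conjugate to some $t_j$ and its cyclically-reduced form is $t_j$ itself.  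The second is handled by the exponent-sum argument above, which I expect to be the crispest route; alternatively one could cite the fact, visible from the generators $\sigma_i$, that $\B_n$ preserves each of the orbits $\{\Pi t_{[1{\uparrow}k]}\}^{\B_n}$ distinctly, but the exponent-sum invariant makes the separation transparent.  Everything else is a direct appeal to Theorems~\ref{th:permutes} and~\ref{th:reps}, so the proof should be short.
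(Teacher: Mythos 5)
Your proposal is correct and follows essentially the same route as the paper, which derives the theorem from Theorem~\ref{th:permutes} (every element of $t_1^{\B_n}$ is planar) together with Theorem~\ref{th:reps} (the words $\Pi t_{[1{\uparrow}k]}$ represent the $\B_n$-orbits of planar words), exactly as indicated in the remark preceding the statement. Your explicit exponent-sum argument to rule out $k\ne 1$ is a clean way of making precise the step the paper leaves implicit, but it is not a different method.
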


\vskip 1cm

\noindent{\textbf{\Large{Acknowledgments}}}

\medskip
\footnotesize

The research of both authors was 
funded by the MEC (Spain) and the EFRD (EU) through 
Projects BFM2003-06613 and MTM2006-13544.

We are grateful to Patrick Dehornoy for encouraging us to study the work of Larue, to Bert Wiest for supplying us with
photocopies of many pages of Larue's thesis, and to David Larue for kindly making his thesis available online.

We thank Mladen Bestvina and Edward Formanek for many interesting observations. 
\vskip -.5cm\null

\bibliographystyle{amsplain}

\medskip

\noindent\textsc{Departament de Matem\`atiques,\newline
Universitat Aut\`onoma de Barcelona,\newline
E-08193 Bella\-terra (Barcelona), Spain
}

\medskip

\noindent \emph{E-mail addresses}{:\;\;}\texttt{lluisbc@mat.uab.cat, dicks@mat.uab.cat}

\medskip

\noindent \emph{URL}{:\;\;}\texttt{http://mat.uab.cat/$\sim$dicks/}

\end{document}